\author{Rebecca Bellovin}
\title{$p$-adic Hodge theory in rigid analytic families}
\address{
Department of Mathematics,
Evans Hall,
University of California,
Berkeley, CA 94720
}\email{rmb@math.berkeley.edu}
\begin{document}

\begin{abstract}
We study the functors $\D_{\B_\ast}(V)$, where $\B_\ast$ is one of Fontaine's period rings and $V$ is a family of Galois representations with coefficients in an affinoid algebra $A$.  We first relate them to $(\varphi,\Gamma)$-modules, showing that $\D_{\HT}(V)=\oplus_{i\in\Z}\left(\D_{\Sen}(V)\cdot t^i\right)^{\Gamma_K}$, $\D_{\dR}(V)=\D_{\dif}(V)^{\Gamma_K}$, and $\D_{\cris}(V)=\D_{\rig}(V)[1/t]^{\Gamma_K}$; this generalizes results of Sen, Fontaine, and Berger.  We then deduce that the modules $\D_{\HT}(V)$ and $\D_{\dR}(V)$ are coherent sheaves on $\Sp(A)$, and $\Sp(A)$ is stratified by the ranks of submodules $\D_{\HT}^{[a,b]}(V)$ and $\D_{\dR}^{[a,b]}(V)$ of ``periods with Hodge--Tate weights in the interval $[a,b]$''.  Finally, we construct functorial $\B_\ast$-admissible loci in $\Sp(A)$, generalizing a result of Berger-Colmez to the case where $A$ is not necessarily reduced.
\end{abstract}

\maketitle

\tableofcontents

\setlength{\parskip}{1ex} 
\setlength{\parindent}{0ex}

\section{Introduction}

\subsection{Background}

In this article, we study rigid analytic families of representations of $\Gal_K$, where $K$ is a finite extension of $\Q_p$ and $\Gal_K:=\Gal(\overline{K}/K)$ is its absolute Galois group.  More precisely, we consider vector bundles $\mathscr{V}$ over a rigid analytic space $X$ over $\Q_p$ equipped with a continuous $\mathscr{O}_X$-linear action of $\Gal_K$.  Thus, if we specialize $\mathscr{V}$ at any closed point of $X$, we get a representation of $\Gal_K$ on a finite-dimensional $\Q_p$-vector space.  Families of Galois representations arise, for example, on the generic fibers of Galois deformation rings, as in~\cite{kisin}.  Such families of Galois representations also arise from families of $p$-adic modular forms.

The study of $p$-adic representations of $p$-adic Galois groups is quite technical, so we put off precise definitions to the body of this paper and give an overview here.  Given a finite-dimensional $\Q_p$-vector space $V$ equipped with a continuous $\Q_p$-linear action of $\Gal_K$, one can capture the information of $V$ in terms of a semilinear Frobenius $\varphi$ and a semilinear action of a one-dimensional $p$-adic Lie group, at the expense of making the coefficients more complicated.  More precisely, work of Fontaine and many others defines equivalences of categories between the category $\Rep_{E}(\Gal_K)$ of finite-dimensional $E$-linear representations of $\Gal_K$, where $E$ is some finite-dimensional $\Q_p$-algebra, and various kinds of \'etale $(\varphi,\Gamma)$-modules (see e.g. \cite{wintenberger}, \cite{cc}).

The same theory lets us sort $p$-adic Galois representations based on how ``nice'' or arithmetically significant they are.
One accomplishes this by defining certain ``period rings'' $\mathbf{B}_\ast$, such as $\mathbf{B}_{\HT}$, $\mathbf{B}_{\dR}$, $\mathbf{B}_{\st}$, and $\mathbf{B}_{\cris}$, which are equipped with Galois actions and ``linear algebra structures'', and defining $\D_{\mathbf{B}_\ast}(V):=(\mathbf{B}_\ast\otimes_{\Q_p}V)^{\Gal_K}$.  We say that $V$ is $\mathbf{B}_\ast$-admissible (or, for the specific examples of $\mathbf{B}_\ast$ listed above, ``Hodge--Tate'', ``de Rham'', ``semi-stable'', or ``crystalline'') if the $\Q_p$-dimension of $V$ is the same as the $\mathbf{B}_\ast^{\Gal_K}$-dimension of $\D_{\B_\ast}(V)$ (as part of the definition of a period ring, $\mathbf{B}_\ast^{\Gal_K}$ is required to be a field). 

In the paper~\cite{bc}, Berger and Colmez associate to a rank-$d$ Galois representation $V$ with coefficients in a Banach algebra $A$ a family of $(\varphi,\Gamma)$-modules $\D^\dagger(V)$, under the supplementary hypothesis that $V$ admits a Galois-stable integral lattice.  As an application, they show that if $A$ is an affinoid algebra, then the locus of closed points $x\in\Sp(A)$ where the specialization $V_x$ is $\B_\ast$-admissible with Hodge--Tate weights in a fixed interval $[a,b]$ is a closed analytic set, and if $A$ is reduced and $V_x$ is $\B_\ast$-admissible for every $x\in\Sp(A)$, then $\D_{\B_\ast}(V):=\left((A\widehat\otimes \B_\ast)\otimes_AV\right)^{\Gal_K}$ is a locally free $A\otimes_{\Q_p}\B_\ast^{\Gal_K}$-module of rank $d$.

In this paper, we make a closer study of the functors $\D_{\B_\ast}(V)$ for $\ast\in\{\HT,\dR,\st,\cris\}$, where $V$ is a finite projective $A$-module of rank $d$ equipped with a continuous $A$-linear action of $\Gal_K$, for some affinoid algebra $A$.  We actually treat vector bundles over rigid analytic spaces in Sections~\ref{db-functors} and \ref{b-adm-loci}, but we state our results with affinoid coefficients here.

The first theorem we prove relates $\D_{\B_\ast}(V)$ to families of $(\varphi,\Gamma)$-modules.  The modules $\D_{\Sen}(V)$, $\D_{\dif}(V)$, $\D_{\rig}^\dagger(V)$, and $\D_{\log}^\dagger(V)$ are defined in section~\ref{tate-sen}.
\begin{thm}\label{db-functors-thm}
Let $A$ and $V$ be as above.  Then 
\begin{enumerate}
\item	$\D_{\HT}^K(V)=\oplus_{i\in\Z}\left(\D_{\Sen}^K(V)\cdot t^i\right)^{\Gamma_K}$
as submodules of $(A\widehat\otimes\B_{\HT})\otimes_{A}V$,
\item	$\D_{\dR}^K(V)=\left(\D_{\dif}^K(V)\right)^{\Gamma_K}$, as submodules of $(A\widehat\otimes\B_{\dR})\otimes_{A}V$, and
\item	$\D_{\cris}^K(V)=\left(\D_{\rig,K}^\dagger(V)[1/t]\right)^{\Gamma_K}$, and $\D_{\st}^K(V)=\left(\D_{\log,K}^\dagger(V)[1/t]\right)^{\Gamma_K}$.  The first equality is as submodules of $(A\widehat\otimes\widetilde{\B}_{\rig}^\dagger)\otimes_{A}V$, and the second is as submodules of $(A\widehat\otimes\widetilde{\B}_{\log}^\dagger)\otimes_{A}V$
\end{enumerate}
\end{thm}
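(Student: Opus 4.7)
\medskip

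\textbf{Proof proposal.} The plan is to handle all three parts with the same two-step strategy: factor the $\Gal_K$-invariants defining $\D_{\B_\ast}^K(V)$ through $H_K$-invariants (using that $H_K$ is normal in $\Gal_K$ with quotient $\Gamma_K$), and then identify the intermediate $H_K$-fixed module with the corresponding object $\D_{\Sen}^K(V)$, $\D_{\dif}^K(V)$, $\D_{\rig,K}^\dagger(V)$, or $\D_{\log,K}^\dagger(V)$ from Appendix~\ref{tate-sen}.

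For part~(1), I would decompose $\B_{\HT} = \bigoplus_{i\in\Z} \C_p \cdot t^i$ and take invariants summand by summand, which gives
\[
\D_{\HT}^K(V) = \bigoplus_{i} \bigl(\,((A\widehat\otimes\C_p)\otimes_A V)^{H_K}\cdot t^i\,\bigr)^{\Gamma_K}.
\]
The Tate--Sen formalism in Appendix~\ref{tate-sen} identifies $\D_{\Sen}^K(V)$ with a $\Gamma_K$-stable submodule of $((A\widehat\otimes\C_p)\otimes_A V)^{H_K}$ whose completion recovers the full $H_K$-invariants. Because $\Gamma_K$ acts on $t^i$ via the $i$-th power of the cyclotomic character, a decompletion step (part of the Tate--Sen formalism) shows that any element of $((A\widehat\otimes\C_p)\otimes_A V)^{H_K}\cdot t^i$ that is $\Gamma_K$-fixed already lies in $\D_{\Sen}^K(V)\cdot t^i$, giving~(1).

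For part~(2), the same $\Gal_K = H_K\cdot\Gamma_K$ factorization gives $\D_{\dR}^K(V) = \bigl(((A\widehat\otimes\B_{\dR})\otimes_A V)^{H_K}\bigr)^{\Gamma_K}$. By construction $\D_{\dif}^K(V)$ is $H_K$-fixed and serves as the natural decompleted analogue of the module of $H_K$-invariants above; a Tate--Sen descent argument (the family version of Berger's theorem) shows that taking $\Gamma_K$-invariants erases the difference, proving~(2). Part~(3) proceeds analogously, using the family overconvergence theorem from Appendix~\ref{tate-sen}: $\D_{\rig,K}^\dagger(V)$ (resp.\ $\D_{\log,K}^\dagger(V)$) is identified with the overconvergent $H_K$-fixed subobject of $(A\widehat\otimes\widetilde{\B}_{\rig}^\dagger)\otimes_A V$ (resp.\ $(A\widehat\otimes\widetilde{\B}_{\log}^\dagger)\otimes_A V$). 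Inverting $t$ and taking $\Gamma_K$-invariants then yields $\D_{\cris}^K(V)$ and $\D_{\st}^K(V)$ via the inclusions $\widetilde{\B}_{\rig}^\dagger[1/t]\subset\B_{\cris}$ and $\widetilde{\B}_{\log}^\dagger[1/t]\subset\B_{\st}$, the point being that $\B_{\cris}^{\Gal_K}=\Q_p$ prevents extra $\Gal_K$-invariants from appearing outside the overconvergent locus.

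The main obstacle will be establishing $H_K$-descent in the non-reduced family setting: for reduced $A$ admitting a $\Gal_K$-stable lattice this is essentially the work of Berger--Colmez, but for general affinoid $A$ one loses access to integral-lattice techniques, so the Appendix must develop a Tate--Sen formalism that works uniformly with arbitrary affinoid coefficients and gives both the decompletion and overconvergence statements invoked above. Granting those inputs, the three identities reduce to the two-step invariant factorization together with the compatibility of $\D_{\Sen}^K$, $\D_{\dif}^K$, $\D_{\rig,K}^\dagger$, and $\D_{\log,K}^\dagger$ with $\Gamma_K$-invariants.
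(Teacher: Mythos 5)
Your two-step outline (take $H_K$-invariants via the natural comparison isomorphisms, then descend by taking $\Gamma_K$-invariants) does match the paper's overall strategy, and you are right that the underlying difficulty is a Tate--Sen formalism with non-reduced affinoid coefficients. Part (1) is adequately described. But parts (2) and (3) gloss over the actual arguments, and part (3) contains a real error.

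For part (2), "a Tate--Sen descent argument erases the difference" is not a proof. The passage from $((A\widehat\otimes\B_{\dR})\otimes_AV)^{H_K}$ to $\D_{\dif}^K(V)$ does not follow by directly invoking the decompletion machinery of TS3, because the Tate--Sen axioms control $\widehat L_\infty$-coefficients, not $L_\infty[\![t]\!]$-coefficients. The paper instead reduces modulo $t^m$, inducts on $m$, and uses Lemma~\ref{dsen-finite-orbit} as the crucial input: an element of $\widehat L_\infty\widehat\otimes_{L_n}\D_{\Sen}^{L_n}(V)$ whose $\Gamma_{L_n}$-orbit spans an $A$-finite module already lies in $\bigcup_{n'}L_{n'}\otimes_{L_n}\D_{\Sen}^{L_n}(V)$. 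This finite-orbit criterion, together with the $t$-adic induction and a lifting step, is the real content of the descent; your sketch does not supply it.

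For part (3), the route you describe cannot work. You invoke inclusions $\widetilde{\B}_{\rig}^\dagger[1/t]\subset\B_{\cris}$ and $\widetilde{\B}_{\log}^\dagger[1/t]\subset\B_{\st}$, but no such inclusions exist --- in fact $\B_{\max}^+\hookrightarrow\widetilde{\B}_{\rig}^+$, and $\widetilde{\B}_{\log}^\dagger$ is strictly larger than $\B_{\st}$. The essential step is therefore not an invariance computation but \emph{Frobenius regularization}: one must show that $\Gal_K$-invariant elements of $(A\widehat\otimes\widetilde{\B}_{\log}^\dagger)\otimes_AV$ lie in $(A\widehat\otimes\B_{\st}^+)\otimes_AV$. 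For non-reduced $A$ this requires Lemma~\ref{embed} to embed $A$ into a finite product of artin rings over complete discretely valued fields with perfect residue field, so that Frobenius regularization can be applied componentwise (Proposition~\ref{dlog-dst-field}, Corollary~\ref{dlog-dst}); one then needs a separate density argument via the sections $R_k$ to compare $\widetilde{\D}_{\log,K}^\dagger(V)$ with $\D_{\log,K}^\dagger(V)$ (Proposition~\ref{dlog-dlog}). None of this appears in your outline, and the claim "$\B_{\cris}^{\Gal_K}=\Q_p$" is also incorrect --- it equals $K_0$.
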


\begin{remark}
The first two parts of Theorem~\ref{db-functors-thm} are used in the proofs of \cite[Th\'eor\`eme 5.1.4]{bc} and \cite[Th\'eor\`eme 5.3.2]{bc}, respectively.  We are not aware of proofs of these facts in the literature when $A$ is not $\Q_p$-finite, so for the convenience of the reader we provide proofs for general $\Q_p$-affinoid algebras.
\end{remark}

We can then deduce that $\D_{\B_\ast}(V)$ is a finite $A\otimes_{\Q_p}\B_\ast^{\Gal_K}$-module, and that the formation of $\D_{\HT}(V)$ and $\D_{\dR}(V)$ commutes with flat base change on $A$.  In particular, $\D_{\HT}(V)$ and $\D_{\dR}(V)$ are coherent sheaves on the rigid analytic space $\Sp(A)$.  Together with Theorem~\ref{db-functors-thm}, this is used in~\cite{diao-liu} to prove properness of the eigencurve over weight space.  We further conjecture that the formation of $\D_{\st}(V)$ and $\D_{\cris}(V)$ also commutes with flat base change.

The key to our base change theorems is that we can express $\D_{\HT}(V)$ and $\D_{\dR}(V)$ as cohomology groups of a complex which has finite cohomology.  We do not know how to do the same for $\D_{\st}(V)$ and $\D_{\cris}(V)$.  However, the cohomological finiteness theorem of~\cite{kpx} implies that if $K/\Q_p$ is finite, then for any $\alpha\in A^\times$, the formation of $\D_{\cris}(V)^{\varphi=\alpha}$ commutes with flat base change on $A$ (\cite[Theorem 4.4.3(2)]{kpx}).  Cohomological finiteness similarly underlies the results of~\cite{liu-fsf} on interpolation of semi-stable periods.

We then have a pair of theorems about the $\B_\ast$-admissible loci in $\Sp(A)$, generalizing the results of~\cite{bc} to a base that is not necessarily reduced:

\begin{thm}\label{adm-locus-thm}
Let $A$ and $V$ be as above, and let $\ast\in\{\HT,\dR,\st,\cris\}$.  Then there is a quotient $A\twoheadrightarrow A_{\B_\ast}^{[a,b]}$ such that for any $\Q_p$-finite algebra $B$, a map $A\rightarrow B$ factors through $A_{\B_\ast}^{[a,b]}$ if and only if the induced $\Q_p$-finite $B$-linear Galois representation $V_B:=V\otimes_AB$ is $\B_\ast$-admissible with Hodge--Tate weights in the interval $[a,b]$.
\end{thm}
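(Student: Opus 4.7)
The plan is to realize $A_{\B_\ast}^{[a,b]}$ as a Fitting-ideal quotient attached to the coherent sheaf of periods of bounded Hodge-Tate weight. For $\ast\in\{\HT,\dR\}$, let $M := \D_{\B_\ast}^{[a,b]}(V)$, the coherent submodule of $\D_{\B_\ast}(V)$ consisting of periods with Hodge-Tate weights in $[a,b]$, as introduced in the abstract. For $\ast\in\{\cris,\st\}$, define $M$ to be the preimage in $\D_{\B_\ast}(V)$ of $\D_{\dR}^{[a,b]}(V)\subset\D_{\dR}(V)$ under the comparison map induced by $\B_\ast\hookrightarrow\B_{\dR}$. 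Using Theorem~\ref{db-functors-thm} together with the coherence of the $(\varphi,\Gamma)$-modules $\D_{\rig,K}^\dagger(V)$, $\D_{\log,K}^\dagger(V)$, $\D_{\dif}^K(V)$, and $\D_{\Sen}^K(V)$, one checks that $M$ is a finite $A\otimes_{\Q_p}\B_\ast^{\Gal_K}$-module whose pointwise fiber rank over $\B_\ast^{\Gal_K}$ is bounded by $d$ (Fontaine's inequality), with equality at a classical point $x$ precisely when $V_x$ is $\B_\ast$-admissible with Hodge-Tate weights in $[a,b]$.

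Let $I\subset A$ be the ideal (extracted from a suitable $A$-linear Fitting ideal of $M$, or equivalently from $\Fitt_{d-1}(M)$ computed over the finite flat $A$-algebra $A\otimes_{\Q_p}\B_\ast^{\Gal_K}$) cutting out the closed locus where $M$ attains fiber rank $\geq d$. Set $A_{\B_\ast}^{[a,b]}:=A/I$. Since Fitting ideals commute with arbitrary base change, a map $A\to B$ factors through $A_{\B_\ast}^{[a,b]}$ if and only if $M\otimes_A B$ has fiber rank $\geq d$ over $B\otimes_{\Q_p}\B_\ast^{\Gal_K}$ everywhere on $\Sp B$. To match this condition with the admissibility of $V_B$, construct the natural, functorial, injective comparison map
\[
\alpha_B\colon M\otimes_A B \hookrightarrow \D_{\B_\ast}^{[a,b]}(V_B),
\]
whose injectivity is formal (taking $\Gal_K$-invariants commutes with flat base change up to $\B_\ast$, and the subspace of periods with weights in $[a,b]$ is cut out by a functorial operator on the ambient object). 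If $A\to B$ factors through $A_{\B_\ast}^{[a,b]}$, then the source of $\alpha_B$ has rank $\geq d$ while the target has rank $\leq d$ by Fontaine, so $\alpha_B$ is an isomorphism between rank-$d$ modules, and this is exactly $\B_\ast$-admissibility of $V_B$ with HT weights in $[a,b]$.

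The remaining direction --- that admissibility of $V_B$ forces $A\to B$ to factor through $A_{\B_\ast}^{[a,b]}$ --- is the main obstacle, and the reason one needs a non-reduced generalization. One must show that when $V_B$ is $\B_\ast$-admissible with HT weights in $[a,b]$, the comparison $\alpha_B$ is surjective, so that $M\otimes_A B$ has rank exactly $d$ and $\Fitt_{d-1}$ vanishes over $B$. Berger-Colmez establish this base-change statement when $A$ is reduced; the present extension to arbitrary $E$-finite $B$ is carried out by passing to the $(\varphi,\Gamma)$-module side via Theorem~\ref{db-functors-thm}, where $\B_\ast$-admissibility of $V_B$ forces $\D_{\rig,K}^\dagger(V_B)[1/t]$, $\D_{\log,K}^\dagger(V_B)[1/t]$, $\D_{\dif}^K(V_B)$, or $\D_{\Sen}^K(V_B)$ (according to $\ast$) to be free of the expected rank over the appropriate coefficient ring, and this trivialization descends to surjectivity of $\alpha_B$. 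The key technical input is that the coherence and base-change results of Section~\ref{db-functors} apply uniformly over possibly non-reduced $B$, so that the Berger-Colmez-style descent never requires reducing $B$, and the Fitting-ideal formalism then upgrades the pointwise admissibility statement to the scheme-theoretic universal property.
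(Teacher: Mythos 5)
Your plan founders on a base-change issue that is, in fact, the central difficulty the paper is organized around. You set $M:=\D_{\B_\ast}^{[a,b]}(V)$, extract a Fitting ideal, and assert that the comparison map $\alpha_B:M\otimes_AB\hookrightarrow \D_{\B_\ast}^{[a,b]}(V_B)$ is injective ``because taking $\Gal_K$-invariants commutes with flat base change.'' But the base changes $A\to B$ appearing in the theorem are maps to $E$-finite artin rings, which are almost never $A$-flat, and in that generality $\alpha_B$ is neither injective nor surjective. Concretely, $M=\H^0(\Gamma_{L_n},\widetilde M)$ for $\widetilde M = \oplus_{i=a}^b t^i\D_{\Sen}^{L_n}(V)$ (resp.\ $t^a\D_{\dif}^{L_n,+}(V)/t^b$), and the low-degree exact sequence of Corollary~\ref{coh-dim-1} shows the kernel of the natural map $\H^0(\Gamma_{L_n},\widetilde M)\otimes_AB\to \H^0(\Gamma_{L_n},\widetilde M\otimes_AB)$ is $\Tor_2^A(\H^1(\Gamma_{L_n},\widetilde M),B)$ and the cokernel is $\Tor_1^A(\H^1(\Gamma_{L_n},\widetilde M),B)$, neither of which you can control a priori. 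Because $\H^0$ does not commute with base change, $\Fitt_{d-1}(M)\otimes_AB$ has no direct relation to $\D_{\B_\ast}^{[a,b]}(V_B)$, so neither implication of the ``if and only if'' follows. This is precisely the reason Berger--Colmez originally needed $A$ reduced, and the fix cannot be achieved by the $\H^0$-level Fitting ideal argument alone. The paper's resolution is to apply the Fitting ideal machinery to $\H^1(\Gamma_{L_n},\widetilde M)$ instead of $\H^0$: since $\Gamma_{L_n}$ has $p$-cohomological dimension $1$, $\H^1$ does commute with arbitrary affinoid base change (Corollary~\ref{coh-dim-1}), and Lemma~\ref{artin-local-free} (an Euler-characteristic argument for the two-term complex $0\to M_B\xrightarrow{\gamma-1}M_B\to 0$) converts rank-$d$ freeness of $\H^1$ over $B\otimes_{\Q_p}L_n$ into rank-$d$ freeness of $\H^0$, which is admissibility by Proposition~\ref{artin-adm}.

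A second gap is in your handling of $\ast\in\{\st,\cris\}$. Defining $M$ as a preimage inside $\D_{\B_\ast}(V)$ and running the Fitting-ideal argument there presupposes that $\D_{\cris}^K(V)$ and $\D_{\st}^K(V)$ are coherent sheaves whose formation behaves well under base change; the paper only proves they are finite $A$-modules (Corollary~\ref{dht-ddr-finite}) and explicitly leaves the base-change statement as Conjecture~\ref{dcris-sheaf}. The paper's actual construction of the semi-stable and crystalline loci does not use Fitting ideals at all: it proves ``de Rham implies uniformly potentially semi-stable'' (Theorem~\ref{pst}) by embedding $A$, via Lemma~\ref{embed}, into a product of artinian algebras finite over complete discretely valued fields with perfect residue field and applying the $p$-adic local monodromy theorem (Theorem~\ref{monodromy}) over each factor, then uses Proposition~\ref{bdr-bst} to descend the semi-stable periods back to $A$. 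Given the uniform extension $L/K$, the semi-stable locus is cut out inside $\Sp(A_{\dR}^{[a,b]})$ by requiring the inertia type of the $I_{L/K}$-action on $\D_{\st}^L(V)$ to be trivial, which carves out a union of connected components (Theorem~\ref{pst-galois-type}); the crystalline locus is then the further closed condition $N=0$. You should replace your preimage construction with this mechanism, since the coherence input it would require is not available.
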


\begin{thm}\label{pointwise-adm-thm}
Let $A$ and $V$ be as above, and let $\ast\in\{\HT,\dR,\st,\cris\}$.  Suppose that $V_B$ is $\B_\ast$-admissible with Hodge--Tate weights in the interval $[a,b]$ for every homomorphism $A\rightarrow B$, where $B$ is an $\Q_p$-finite algebra.  Then
\begin{enumerate}
\item\label{adm-proj}	the module $\D_{\B_\ast}^K(V)$ is a projective $A\otimes_{\Q_p}\B_\ast^{\Gal_K}$-module of rank $d$, 
\item\label{adm-comp-isom}	the natural homomorphism $(A\otimes_{\Q_p}\B_\ast)\otimes_{A\otimes \B_\ast^{\Gal_K}}\D_{\B_\ast}^K(V)\rightarrow (A\otimes_{\Q_p}\B_\ast)\otimes_AV$ is an isomorphism, and
\item\label{adm-base-change}	the formation of $\D_{\B_\ast}^K(V)$ commutes with arbitrary $\Q_p$-affinoid base change on $A$.
\end{enumerate}
\end{thm}
In fact, assuming part \ref{adm-proj}, parts \ref{adm-comp-isom} and \ref{adm-base-change} are equivalent.  We do not know whether part \ref{adm-proj} implies \ref{adm-comp-isom} and \ref{adm-base-change}.

For $\ast\in\{\HT,\dR\}$, we actually prove a more general pair of theorems.  Namely, we let $\D_{\B_\ast}^{[a,b]}(V)$ be the module of ``$\B_\ast$-admissible periods with Hodge--Tate weights in the interval $[a,b]$'' (this is precisely defined in Section~\ref{b-adm-loci}), and we show that $\Sp(A)$ is stratified by the rank of the fibral modules $\D_{\B_\ast}^{[a,b]}(V_x)$.
\begin{thm}\label{dht-ddr-periods-thm}
Let $A$ and $V$ be as above, let $X=\Sp(A)$, and let $\ast\in\{\HT,\dR\}$.  There is a Zariski-open subspace $X_{\B_\ast,\leq d'}^{[a,b]}\subset X$ and a Zariski-closed subspace $X_{\B_\ast,\geq d'}^{[a,b]}\hookrightarrow X$ such that $x:\Sp(B)\rightarrow X$ factors through $X_{\B_\ast,d'}^{[a,b]}:=X_{\B_\ast,\leq d'}^{[a,b]}\cap X_{\B_\ast,\geq d'}^{[a,b]}$ if and only if $\D_{\B_\ast}^{[a,b]}(V_x)$ is a free $B\otimes_{\Q_p}K$-module of rank $d'$, where $B$ is an $\Q_p$-finite artin local ring and $V_B:=V\otimes_AB$.
\end{thm}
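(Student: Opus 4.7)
My plan is to realize the rank strata as Fitting ideal loci of the coherent sheaf $\calF := \D_{\B_\ast}^{[a,b]}(V)$, and then match the resulting algebraic criterion with the test-object condition of the theorem via a base change identity.

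The input I need is that $\calF$ is coherent as an $\O_X \otimes_{\Q_p} K$-module; this should follow from the coherence assertion for $\D_{\B_\ast}(V)$ established in Section~\ref{db-functors}, combined with the construction of the $[a,b]$-truncation in Section~\ref{b-adm-loci}. Once coherence is in hand, I invoke the standard Fitting ideal criterion: for a finite module $M$ over a Noetherian ring $R$ and any ring map $R \to C$, the tensor product $M \otimes_R C$ is $C$-free of rank exactly $d'$ if and only if $\Fitt_{d'-1}(M) \cdot C = 0$ and $\Fitt_{d'}(M) \cdot C = C$.

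Applying this to $\calF$ as an $(A \otimes_{\Q_p} K)$-module, I define $X_{\B_\ast,\geq d'}^{[a,b]} \hookrightarrow X$ to be the Zariski-closed subspace given by the image of $V(\Fitt_{d'-1}(\calF))$ along the finite map $\Sp(A \otimes_{\Q_p} K) \to X$, and $X_{\B_\ast,\leq d'}^{[a,b]}$ to be the admissible open complement of the analogous image of $V(\Fitt_{d'}(\calF))$. A map $x : \Sp(B) \to X$ from an $E$-finite Artin local ring then factors through the intersection $X_{\B_\ast,d'}^{[a,b]}$ if and only if $\Fitt_{d'-1}(\calF) \cdot (B \otimes_{\Q_p} K) = 0$ and $\Fitt_{d'}(\calF) \cdot (B \otimes_{\Q_p} K) = B \otimes_{\Q_p} K$, which by the Fitting ideal criterion is equivalent to $\calF \otimes_A B$ being $B \otimes_{\Q_p} K$-free of rank $d'$.

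The main obstacle is establishing the base change identity $\calF \otimes_A B \cong \D_{\B_\ast}^{[a,b]}(V_B)$ for $B$ an $E$-finite Artin local ring, since this base change is generally not flat. My strategy is to use Theorem~\ref{db-functors-thm}, which realizes $\D_{\HT}^K$ and $\D_{\dR}^K$ as $\Gamma_K$-invariants of the Sen and de Rham families $\D_{\Sen}^K$ and $\D_{\dif}^K$ built via the Tate-Sen formalism of Appendix~\ref{tate-sen}. These auxiliary families are finite projective, and their construction by Tate-Sen descent should commute with arbitrary base change on $A$. The $[a,b]$-truncation restricts $\Gamma_K$ to act through a bounded window of Hodge-Tate weights, so the invariants functor detects the relevant periods on a finite-dimensional representation-theoretic quotient and commutes with tensoring over $A$ with the Artin local ring $B$.
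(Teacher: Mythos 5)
The Fitting-ideal framing is the right one, but you are applying it to the wrong module, and the base change identity you are hoping to establish is exactly the thing that fails.

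You set $\calF := \D_{\B_\ast}^{[a,b]}(V)$ and try to show $\calF\otimes_A B \cong \D_{\B_\ast}^{[a,b]}(V_B)$ for artinian $B$. Writing $M = \oplus_{i=a}^b t^i\D_{\Sen}^{L_n}(V)$ (or $t^a\D_{\dif}^{L_n,+}(V)/t^b$), you have $\calF = \H^0(\Gamma_{L_n}, M)$, and while $M$ itself does commute with arbitrary base change, taking $\Gamma_{L_n}$-invariants does not. Corollary~\ref{coh-dim-1} of the paper gives a low-degree exact sequence
\begin{equation*}
\begin{split}
0\rightarrow \H^0(\Gamma,M)\otimes_A B/\Tor_2^A&(\H^1(\Gamma,M),B)\rightarrow \H^0(\Gamma,M\otimes_A B)	\\
&\rightarrow \Tor_1^A(\H^1(\Gamma,M),B)\rightarrow 0
\end{split}
\end{equation*}
and the $\Tor$ terms are the obstruction; this is precisely the mechanism by which the fibral rank of $\D_{\B_\ast}^{[a,b]}$ jumps, which is what the theorem is designed to stratify. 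If $\H^0$ commuted with all base changes, the rank would be locally constant and there would be nothing to prove. Your appeal to "a finite-dimensional representation-theoretic quotient" does not repair this: the truncation $[a,b]$ makes $M$ finite over $A$, but finiteness of $M$ does not make $\H^0(\Gamma,M\otimes_A -)$ right-exact.

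The fix is the one the paper uses. Because $\Gamma_{L_n}$ is procyclic of cohomological dimension one, Lemma~\ref{artin-local-free} (via the two-term complex $M_B\xrightarrow{\gamma-1} M_B$) shows that $\H^0(\Gamma,M_B)$ is $B\otimes_{\Q_p}K$-free of rank $d'$ if and only if $\H^1(\Gamma,M_B)$ is, and $\H^1$ sits in top cohomological degree so its formation \emph{does} commute with arbitrary affinoid base change on $A$ (Corollary~\ref{coh-dim-1}). One therefore applies the Fitting-ideal criterion to $\H^1(\Gamma_{L_n},M)$, using the explicit finite presentation $M\xrightarrow{\gamma-1} M\to\H^1\to 0$: the minors of $\gamma-1$ cut out the closed stratum $\Fitt_{d'-1}=0$ and the open stratum $\Fitt_{d'}=(1)$. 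Your proposal skips the $\H^0$-to-$\H^1$ translation, which is the crux of the argument, and substitutes an assertion that does not hold.
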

The $X_{\B_\ast,d'}^{[a,b]}$ give a stratification of $X$, in the sense that $X_{\B_\ast,\leq d'-1}^{[a,b]}=X_{\B_\ast,\leq d'}^{[a,b]}\smallsetminus X_{\B_\ast,d'}^{[a,b]}$ and $X=X_{\B_\ast,\leq d}^{[a,b]}$.

\begin{thm}\label{pointwise-dht-ddr-periods-thm}
Let $X$ and $V$ be as above, and let $\ast\in\{\HT,\dR\}$. Suppose that for every $\Q_p$-finite artinian point $x:A\rightarrow B$, the $B\otimes_{\Q_p}K$-module $\D_{\B_\ast}^{[a,b]}(V_x)$ is free of rank $d'$, where $0\leq d'\leq d$.  Then
\begin{enumerate}
\item	$\D_{\B_\ast}^{[a,b]}(V)$ is a locally free $A\otimes_{\Q_p}K$-module of rank $d'$, and each $\left(t^k\cdot\D_{\Sen}^{K_n}(V)\right)^{\Gamma_K}$ is a locally free $A\otimes_{\Q_p} K$-module, 
\item	the formation of $\D_{\B_\ast}^{[a,b]}(V)$ commutes with arbitrary $\Q_p$-affinoid base change $A\rightarrow A'$
\end{enumerate}
If $d'=d$, then
\begin{enumerate}
\item[(3)]	$\D_{\B_\ast}(V)=\D_{\B_\ast}^{[a,b]}(V)$,
\item[(4)]	the natural morphism 
$$\alpha_{V}:(A\widehat\otimes\B_\ast)\otimes_{A\otimes_{\Q_p}K}\D_{\B_\ast}^K(V)\rightarrow (A\widehat\otimes\B_\ast)\otimes_{A}V$$
is an isomorphism.
\end{enumerate}
\end{thm}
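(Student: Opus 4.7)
My plan is to deduce everything from the stratification Theorem~\ref{dht-ddr-periods-thm} via a standard constant-fiber-rank-implies-local-freeness criterion over a noetherian ring. The hypothesis says precisely that the stratum $X_{\B_\ast,d'}^{[a,b]}$ of that theorem equals $X$ as a rigid space, and not merely on closed points: testing on every $E$-finite artin local quotient $A\twoheadrightarrow B$ captures the infinitesimal information needed to see both the Zariski-open condition rank $\leq d'$ and the Zariski-closed condition rank $\geq d'$. The construction in the proof of Theorem~\ref{dht-ddr-periods-thm} also shows that $\D_{\B_\ast}^{[a,b]}(V)$ is a coherent $A\otimes_{\Q_p}K$-module, so we are left with a finite module over a noetherian affinoid whose fiber at every artinian point is free of the same rank $d'$.

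\textbf{Paragraph 2 (Parts 1 and 2).} I then invoke the standard fact that a finite module $M$ over a noetherian ring $R$ is locally free of rank $d'$ if $M/\m^n M$ is $R/\m^n$-free of rank $d'$ for every maximal ideal $\m$ and every $n\ge 1$: the $\m$-adic completion $\widehat M_\m$ is $\widehat R_\m$-free by lifting bases through the inverse system, and then faithful flatness of the completion map on the finite module $M_\m$ descends freeness to $R_\m$. Applied to $R=A\otimes_{\Q_p}K$ this yields Part~(1) for $\D_{\B_\ast}^{[a,b]}(V)$; local freeness of each $(t^k\D_{\Sen}^{K_n}(V))^{\Gamma_K}$ then follows from the direct sum decomposition in Theorem~\ref{db-functors-thm}(1) for $\ast=\HT$, and from the filtration of $\B_{\dif}^+$ by powers of $t$ for $\ast=\dR$. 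Part~(2) is then essentially automatic: the hypothesis is preserved under an affinoid base change $A\to A'$ (pullbacks of artinian points are artinian), so both sides of the natural base-change map $A'\otimes_A \D_{\B_\ast}^{[a,b]}(V)\to \D_{\B_\ast}^{[a,b]}(V\otimes_A A')$ are locally free of the same rank $d'$, and the map is surjective (by reducing to artinian $A'$, which was treated in \cite{bc}), hence an isomorphism.

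\textbf{Paragraph 3 (Parts 3 and 4, and the main obstacle).} In the $d'=d$ case, $\D_{\B_\ast}^{[a,b]}(V)\subset\D_{\B_\ast}(V)$ is an inclusion into an ambient module whose rank is bounded by $d$ (by the admissibility inequality already available in families), hence an equality, giving Part~(3). For Part~(4), the map $\alpha_V$ is injective on general grounds; to establish surjectivity I would pass to Hodge-Tate graded pieces (for $\ast=\HT$) or to $t$-adic associated graded pieces (for $\ast=\dR$) of $A\widehat\otimes\B_\ast$, on each of which source and target are finite $A\widehat\otimes C$-modules, coherent over $A$. At every $E$-finite artinian $x:A\to B$, the specialization $\alpha_{V_x}$ is the classical comparison isomorphism for the $\B_\ast$-admissible representation $V_x$, so the cokernel of $\alpha_V$ vanishes on every artinian fiber, and Nakayama applied to the graded pieces forces $\alpha_V$ itself to be an isomorphism. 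The main obstacle lies precisely in this last step: because $A\widehat\otimes\B_\ast$ is not coherent over $A$, one cannot apply Nakayama directly, and the reduction to graded pieces together with sufficient control of the $\Gamma_K$-invariants in the non-reduced Tate-Sen formalism of Appendix~\ref{tate-sen} is what is needed to extend the Berger-Colmez strategy beyond the reduced case.
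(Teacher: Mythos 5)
Your plan hinges on the criterion in Paragraph~2: a finite $R$-module $M$ with $M/\mathfrak{m}^n M$ free of rank $d'$ over $R/\mathfrak{m}^n$ for all $\mathfrak{m},n$ is locally free. That criterion is correct, but you are applying it to $M = \D_{\B_\ast}^{[a,b]}(V)$ on the strength of the hypothesis about $\D_{\B_\ast}^{[a,b]}(V_x)$ --- and these are not a priori the same thing. The hypothesis is about $\H^0\bigl(\Gamma_{L_n}, (t^a\D_{\dif}^{L_n,+}(V)/t^b)\otimes_A B\bigr)$, while the criterion requires knowledge of $\H^0\bigl(\Gamma_{L_n}, t^a\D_{\dif}^{L_n,+}(V)/t^b\bigr)\otimes_A B$. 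Continuous cohomology does not commute with non-flat base change in general, and establishing that it does here is precisely the content of the proof, not a preliminary remark. The paper's route is: show first that $\H^1(\Gamma_{L_n}, M)$ commutes with arbitrary affinoid base change because $\Gamma_{L_n}$ has cohomological dimension $1$ (Corollary~\ref{coh-dim-1}); then, since the artinian hypothesis forces $\H^1(\Gamma_{L_n}, M)\otimes_A B$ to be free of constant rank (via Lemma~\ref{artin-local-free}), conclude that $\H^1(\Gamma_{L_n},M)$ is a \emph{projective} $A$-module, so the higher $\Tor$'s in Pottharst's spectral sequence vanish; only then does the low-degree exact sequence show that $\H^0(\Gamma_{L_n},M)$ commutes with arbitrary base change, at which point your local-freeness criterion applies. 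By skipping the detour through $\H^1$, you are implicitly assuming the base-change compatibility you need to establish.

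Your Paragraph~2 treatment of Part~(2) has the same problem: you claim the base-change map is surjective ``by reducing to artinian $A'$,'' but surjectivity on artinian quotients does not imply surjectivity for a general morphism of affinoids; again one needs the Tor-vanishing from the $\H^1$ argument. Your Paragraph~3 correctly identifies the non-coherence of $A\widehat\otimes\B_\ast$ over $A$ as an obstacle, but stops short of resolving it; the paper addresses this by reducing to $A$-finite truncations $\D_{\dif}^{L_n,+}(V)/t^k$ (resp.\ finite direct sums of $t^i\D_{\Sen}$), where the comparison can be checked modulo powers of maximal ideals using the artinian admissibility formalism (Proposition~\ref{artin-adm}) and the fact that affinoid algebras are Jacobson. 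Similarly, the Part~(3) argument ``ambient module has rank bounded by $d$'' is too weak as stated; the paper instead argues by contradiction that an invariant of weight outside $[a,b]$ must survive to some artinian fiber, which cannot happen when the fibral $\D_{\HT}$ is already $d$-dimensional. In short, your high-level picture (reduce to a constant-fiber-rank criterion) is the right one, but the essential mechanism --- Pottharst's cohomology-and-base-change spectral sequence, run through $\H^1$ first --- is missing, and it cannot be bypassed.
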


To prove Theorems~\ref{dht-ddr-periods-thm} and \ref{pointwise-dht-ddr-periods-thm}, we use Theorem~\ref{db-functors-thm} and Pottharst's theory of Galois cohomology with affinoid coefficients~\cite{pottharst}.  This approach makes transparent the role of boundedness of Hodge--Tate weights in the results of~\cite{bc}: boundedness of Hodge--Tate weights is equivalent to finiteness of a certain Galois cohomology group, and finiteness of cohomology groups is the essential ingredient in cohomology and base change results.

\begin{remark}
Shah~\cite{shrenik} has obtained similar results on the behavior of $\D_{\HT}(V)$ and $\D_{\dR}(V)$ when $A$ is reduced by studying the Galois cohomology of $\B_{\dR}^+$ directly.
\end{remark}

The proofs of Theorems~\ref{adm-locus-thm} and \ref{pointwise-adm-thm} when $\ast\in\{\st,\cris\}$ are quite different.  In the latter case, we largely follow the strategy of~\cite{bc}; the new ingredient that permits us to handle non-reduced coefficients is Lemma~\ref{embed}, which gives a generalization of the Shilov boundary points of a reduced Berkovich space.  This permits us to prove that ``de Rham implies uniformly potentially semi-stable'' when the coefficient ring is non-reduced; when the coefficients are reduced, this is~\cite[Th\'eor\`eme 6.3.2]{bc}.

Our results about the behavior of $\D_{\cris}(V)$ and $\D_{\st}(V)$ under base-change are quite limited except in the case where $V$ is crystalline or semi-stable, when the formation of $\D_{\cris}(V)$ and $\D_{\st}(V)$ commutes with arbitrary base change.  This is because the continuous $\Gamma$-cohomology of a $(\varphi,\Gamma)$-module is not finite, nor is there an apparent subquotient which does have finite Galois cohomology.  In addition, individual de Rham periods are not necessarily potentially semi-stable, so we are unable to extend our present technique.

We remark further that all of our results are limited to the case where $K/\Q_p$ is a finite extension.  This is primarily because overconvergence of families of Galois representations is only known in this case.  However, our use of cohomology and base change in the study of $\D_{\HT}(V)$ and $\D_{\dR}(V)$ means we would be restricted to the case where $K$ is discretely valued in any case.  We similarly have no access to information about the behavior of $\D_{\B_\ast}(V)$ under general analytic field extension on the coefficients.

\subsection{Structure of this paper}

Throughout this paper, we consider representations of $\Gal_K$, where $K/\Q_p$ is finite, on vector bundles over $\Q_p$-rigid analytic spaces, where $E/\Q_p$ is finite.

In section~\ref{background-rigid}, we review some of rigid analytic geometry that we will need.  The rigid analytic geometry is primarily standard.  However, we prove that an affinoid algebra over a discretely valued field can be embedded in a finite product of artin rings which are module-finite over a complete discretely valued field with perfect residue field.  This generalizes the fact that a reduced affinoid algebra can be embedded in the product of the residue fields at the points of its Shilov boundary, and we expect it to be of independent interest.  We then briefly recall the theory of families of $(\varphi,\Gamma)$-modules attached to families of Galois representations, and the subsequent construction of $\D_{\Sen}(V)$ and $\D_{\dif}(V)$.  We give a criterion for a coherent sheaf over a quasi-Stein space to have finitely generated global sections, and we deduce that families of $(\varphi,\Gamma)$-modules over the Robba ring have finitely generated global sections.

In section~\ref{cohomology}, we review Pottharst's results on Galois cohomology with affinoid coefficients.  We generalize some of his results to modules $M$ which are finite flat over $A[\![t]\!]$ and equipped with a continuous $A[\![t]\!]$-semilinear action of a profinite group $G$, where the action of $G$ on $A[\![t]\!]$ is trivial on $A$ and preserves the $t$-adic filtration.  We then show that the inverse system $\{\H^0(G,M/t^k)\}_{k\geq 0}$ is eventually constant, so satisfies Mittag-Leffler.

In section~\ref{db-functors}, we prove Theorem~\ref{db-functors-thm}.  This generalizes results of Sen, Fontaine, and Berger.  We deduce that each $\D_{\B_\ast}(V)$ is $A$-finite, and that the formation of $\D_{\HT}(V)$ and $\D_{\dR}(V)$ commutes with flat base change on $\Sp(A)$.  We conjecture that the formation of $\D_{\st}(V)$ and $\D_{\cris}(V)$ also commutes with flat base change on $\Sp(A)$.

In section~\ref{b-adm-loci}, we prove Theorems~\ref{adm-locus-thm} and \ref{pointwise-adm-thm}.  We first study the behavior of $\D_{\HT}(V)$ and $\D_{\dR}(V)$ under non-flat base changes.  We use Pottharst's~\cite{pottharst} theory of cohomology and base change for Galois cohomology with affinoid coefficients to prove Theorems~\ref{dht-ddr-periods-thm} and \ref{pointwise-dht-ddr-periods-thm}, giving us functorial Hodge--Tate and de Rham loci.

We can then use the existence of a functorial de Rham locus to construct functorial semi-stable and crystalline loci, following the argument of~\cite{bc}.  More precisely, we prove that ``de Rham implies uniformly potentially semi-stable'', using Lemma~\ref{embed} to generalize a similar result of~\cite{bc} when the base is reduced.  The locus of points where the Galois representation is semi-stable with Hodge--Tate weights in the interval $[a,b]$ is a union of connected components of the locus ``de Rham with Hodge--Tate weights in the interval $[a,b]$''.  We can then cut out the crystalline locus as the subspace where the monodromy operator $N$ vanishes.

The appendix contains results on sheaves of period rings.  We need to work with sheaves of various rings of $p$-adic Hodge theory, which requires us to be careful about the topologies on these rings.  In Appendix~\ref{rings}, we describe some of these rings and indicate how to sheafify them.

\subsection*{Notation and conventions}

Throughout this paper, $K$ is a finite extensions of $\Q_p$.  The rings of $p$-adic Hodge theory are as defined in~\cite{berger}.  We let $\chi$ denote the $p$-adic cyclotomic character.  Our Hodge--Tate weights are normalized so that $\chi$ has Hodge--Tate weight $-1$.  All of our $p$-adic Hodge theoretic rings are as in~\cite{berger}; in particular, we use $\B_{\max}$ instead of $\B_{\cris}$ to define the functor $\D_{\cris}$, and we let $\B_{\st}^+$ and $\B_{\st}$ denote $\B_{\max}^+[\log([\overline\pi])]$ and $\B_{\max}[\log([\overline\pi])]$, respectively.  

\subsection*{Acknowledgments}

The results in this paper form a portion of my Ph.D. thesis, and I would first like to thank my advisor, Brian Conrad, for suggesting the direction of this research and for numerous conversations, as well as for helpful comments on earlier drafts of this paper.  I am indebted to Laurent Berger and Pierre Colmez for their mathematical contributions, and I am grateful to Kiran Kedlaya and Jay Pottharst for helpful conversations, as well as Toby Gee for helpful comments on earlier drafts.  This work was partially supported by an NSF Graduate Research Fellowship.

\section{Preliminaries}\label{prelim}

We will review some of the rigid geometry we will need, before recalling the theory of families of $(\varphi,\Gamma)$-modules attached to families of Galois representations.

\subsection{Rigid geometry}\label{background-rigid}

We will use the language of classical rigid spaces.  The standard reference for such spaces, and the rings of restricted power series which underlie them, is~\cite{bgr}.  The goal of the theory is to provide a robust theory of analytic spaces over non-archimedean fields, mirroring the theory of complex analytic spaces over $\C$.  We will also assume that the reader is familiar with Raynaud's theory of formal models of quasi-compact quasi-separated rigid analytic spaces, as treated in~\cite{bosch-lut}. 

\subsubsection{Affinoid algebras}

Let $k$ be a field complete with respect to a non-archimedean valuation $|\cdot|$, let $R$ denote its valuation ring $\{x\in k | |x|\leq 1\}$, and let $\mathfrak{m}$ denote the maximal ideal of $R$, which consists of elements with absolute value strictly less than $1$.  Let $T_n(k)$ (or $T_n$, if the ground field is clear) denote the $n$-variable Tate algebra over $k$.

If $k$ is discretely valued, the value group of the norm on a $k$-affinoid algebra is discrete, and it is often possible to reduce questions about affinoid algebras to questions about discretely valued fields.  For example, if $A$ is reduced, we have the following result due to Berkovich:
\begin{prop}[{\cite[Cor. 2.4.5]{berk}},{\cite[Cor. 2.1.4]{bc}}]
If $A$ is a reduced $k$-affinoid algebra with $k$ discretely valued, there exist complete discretely valued fields $B_1,\ldots,B_m$ such that there is a closed embedding $A\hookrightarrow \prod_{i=1}^mB_i$.
\end{prop}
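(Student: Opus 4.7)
My plan is to recover Berkovich's theorem by locating a finite ``Shilov boundary'' of $A$ inside the Berkovich spectrum and evaluating at its points. First, I would pass to the Berkovich spectrum $\mathcal{M}(A)$ of bounded multiplicative seminorms on $A$, and consider the spectral seminorm $\rho_A(f) := \sup_{x \in \mathcal{M}(A)} |f(x)|$. Since $A$ is reduced, a theorem of Bosch-G\"untzer-Remmert gives that $\rho_A$ is a Banach norm on $A$ equivalent to any residue norm coming from a surjection $T_n \twoheadrightarrow A$.

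Next, I would invoke Berkovich's finiteness of the Shilov boundary $\Gamma(A) \subset \mathcal{M}(A)$ for strictly $k$-affinoid algebras. The input is reduction theory: the unit ball $A^\circ := \{f \in A : \rho_A(f) \le 1\}$ modulo its ideal of topologically nilpotent elements $A^{\circ\circ}$ is a reduced finitely generated $\tilde k$-algebra, and the Shilov points of $\mathcal{M}(A)$ biject with the generic points of the irreducible components of $\Spec(A^\circ / A^{\circ\circ})$. In particular $\Gamma(A)$ is a finite set, and by the defining property of the Shilov boundary one has $\rho_A(f) = \max_{x \in \Gamma(A)} |f(x)|$ for every $f \in A$.

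The main obstacle will be showing that for each $x \in \Gamma(A)$ the completed residue field $\mathcal{H}(x)$ is a complete discretely valued field when $k$ is. For a general point of $\mathcal{M}(A)$ the value group of $\mathcal{H}(x)$ can be the divisible hull of $|k^\times|$, so the fact that $|\mathcal{H}(x)^\times| = |k^\times|$ uses precisely that $x$ comes from a generic point $\eta$ of the reduction: after a Noether normalization of $A^\circ$ by a $k^\circ$-finite map from a Tate algebra, one computes $|f(x)|$ as $|\pi|^{\mathrm{ord}_\eta(\bar f)/e}$ for a uniformizer $\pi$, which lies in $|k^\times|$ because $e = 1$ in the strictly affinoid setting. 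Completeness of $\mathcal{H}(x)$ then forces it to be a complete discretely valued field, necessarily with finite residue extension over $\tilde k$.

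Finally, I would assemble the evaluation maps into a continuous $k$-algebra map
\[
A \longrightarrow \prod_{x \in \Gamma(A)} \mathcal{H}(x),
\]
a finite product of complete discretely valued fields. By the Shilov property this map is isometric for the spectral norm on $A$ and the sup norm on the product, and since $A$ is reduced the spectral seminorm is a norm, so the map is injective with closed image; this is the desired closed embedding.
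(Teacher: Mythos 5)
Your approach is exactly the route taken in the cited references (Berkovich Cor.~2.4.5 and Berger--Colmez Cor.~2.1.4); the paper does not reprove this, so the right comparison is against Berkovich's argument, and your reconstruction of it is essentially correct: spectral seminorm is a norm since $A$ is reduced, Shilov boundary is finite via the reduction, evaluation gives an isometry onto a closed (complete) subspace of a finite product of completed residue fields.

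There is, however, one step that is genuinely wrong as stated: the claim that $|\mathcal{H}(x)^\times| = |k^\times|$ because ``$e=1$ in the strictly affinoid setting.'' This fails already for $A = k\langle X\rangle/(X^2-\pi) \cong k(\sqrt\pi)$ with $\pi$ a uniformizer: $A$ is strictly $k$-affinoid, its Shilov boundary is a single point with $\mathcal{H}(x)=k(\sqrt\pi)$, and $|\mathcal{H}(x)^\times|$ has index $2$ over $|k^\times|$. In general, for a Shilov point, $|\mathcal{H}(x)^\times|$ can be a proper finite-index overgroup of $|k^\times|$ in $\mathbf{R}_{>0}$. What you actually need --- and what is true --- is weaker: for a Shilov point $x$ of a strictly $k$-affinoid $A$ with $k$ discretely valued, the value group $|\mathcal{H}(x)^\times|$ is a finitely generated subgroup of $\mathbf{R}_{>0}$ containing the discrete group $|k^\times|$ with finite index, hence is itself discrete. (Equivalently: $A^\circ$ is Noetherian and topologically of finite type over $k^\circ$, so its localization at the height-one prime lying over $(\pi)$ that corresponds to the generic point $\eta$ is a Noetherian valuation ring, i.e.\ a DVR, and $\mathcal{H}(x)$ is its completed fraction field.) With that correction, the rest of your argument --- isometry for the spectral norm, hence closed image because the source is complete --- goes through.
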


We will need the following strengthening of this result, in which we drop the reducedness hypothesis:
\begin{lemma}\label{embed}
Let $A$ be a $k$-affinoid algebra, where $k$ is discretely valued.  Then there is a closed embedding $A\rightarrow \prod_i R_i$ into a product of a finite collection of artinian $k$-Banach algebras $R_i$, each module-finite over a complete discretely valued field $B_i$ over $k$ (as valued fields) with perfect residue field (and each artin ring $R_i$ is topologized as a finite-dimensional vector space over $B_i$).
\end{lemma}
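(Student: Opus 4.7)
My strategy is to reduce to the cited Berkovich embedding theorem for reduced affinoids via a primary decomposition argument (handling the nilpotents in $A$), use the Cohen structure theorem to exhibit the resulting local artinian pieces as finite-dimensional algebras over their residue fields, and finally enlarge these residue fields to complete DVFs with perfect residue fields.

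Since $A$ is Noetherian, the zero ideal has a minimal primary decomposition $(0) = \bigcap_i \mathfrak{q}_i$ with each $\mathfrak{q}_i$ being $\mathfrak{p}_i$-primary, producing a closed embedding $A \hookrightarrow \prod_i A/\mathfrak{q}_i$ of $k$-affinoid algebras. In each factor $A/\mathfrak{q}_i$ every zero-divisor lies in $\mathfrak{p}_i/\mathfrak{q}_i$, so the localization map $A/\mathfrak{q}_i \hookrightarrow R_i := (A/\mathfrak{q}_i)_{\mathfrak{p}_i/\mathfrak{q}_i}$ is injective, with target $R_i$ a Noetherian artinian local ring whose residue field $\kappa_i := \Frac(A/\mathfrak{p}_i)$ has characteristic $0$; by Noetherianness and nilpotence of the maximal ideal, $R_i$ is finite-dimensional over $\kappa_i$. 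Since $R_i$ is (trivially) complete and of equi-characteristic zero, Cohen's structure theorem provides a coefficient field $\kappa_i \hookrightarrow R_i$, which I may choose to extend the natural inclusion $k \hookrightarrow R_i$; this exhibits $R_i$ as a finite-dimensional $\kappa_i$-algebra compatibly with its $k$-algebra structure.

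Next, the cited Berkovich theorem applied to the reduced affinoid domain $A/\mathfrak{p}_i$ yields a closed embedding into a finite product of complete DVFs, at least one of whose projections $A/\mathfrak{p}_i \to B_i$ must be injective (since the intersection of finitely many nonzero primes in a domain is nonzero), giving $\kappa_i \hookrightarrow B_i$. If $B_i$ has imperfect residue field (only possible in mixed characteristic), enlarge $B_i$ to a complete DVF $B_i'$ with perfect residue field via a tower of finite DVF extensions adjoining $p$-th roots of lifts to $\mathcal{O}_{B_i}$ of residue-field elements lacking $p$-th roots (each such step is a degree-$p$ extension of ramification index one with purely inseparable residue field extension), followed by completion. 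Set $R_i' := R_i \otimes_{\kappa_i} B_i'$, a finite-dimensional $B_i'$-algebra—hence an artinian $k$-Banach algebra in its natural $B_i'$-module topology—and note $R_i \hookrightarrow R_i'$ by flatness of the field extension. Composing gives an injective $k$-algebra map $A \hookrightarrow \prod_i R_i'$ of the required algebraic form.

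The main obstacle is to upgrade this ring injection to a \emph{closed} embedding of $k$-Banach algebras. Continuity of each $A \to R_i'$ can be checked by choosing a $\kappa_i$-basis of $R_i$ adapted to the nilpotent filtration, extending to a $B_i'$-basis of $R_i'$, and verifying continuity of each coefficient functional $A \to B_i'$: the coefficient of $1$ is the continuous Berkovich projection $A \to B_i \hookrightarrow B_i'$, and the higher coefficients are controlled by the finite-length structure of $\mathfrak{p}_i/\mathfrak{q}_i$ as an $A/\mathfrak{p}_i$-module. For strictness of the combined embedding (equivalently, closedness of the image), I would induct on the nilpotency index $N$ of the nilradical of $A$: the base case $N=1$ (reduced $A$) is exactly the cited Berkovich theorem, and the inductive step exploits the short exact sequence $0 \to \sqrt{0}^{N-1}/\sqrt{0}^N \to A/\sqrt{0}^N \to A/\sqrt{0}^{N-1} \to 0$ together with the finite generation of $\sqrt{0}^{N-1}/\sqrt{0}^N$ as an $A_{\mathrm{red}}$-module to propagate strictness from the reduced quotient through the nilpotent filtration.
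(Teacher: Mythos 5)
Your proof has a genuine gap in the topological part, specifically in the reduction to the reduced case, which is already the "base case" of your proposed induction on nilpotency index.

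When you apply the Berkovich embedding to $A/\mathfrak{p}_i$, you obtain a \emph{closed} embedding $A/\mathfrak{p}_i \hookrightarrow \prod_j B_{ij}$ into the completed residue fields at all Shilov boundary points of $\Sp(A/\mathfrak{p}_i)$, and you then pick a single injective projection $A/\mathfrak{p}_i \to B_i$. This projection is continuous and injective, but it is not in general strict (a topological embedding), because the supremum norm on $A/\mathfrak{p}_i$ is the maximum of all the Shilov seminorms, whereas the projection only sees one of them. Concretely, for the annulus $A = \Q_p\langle X,Y\rangle/(XY-p)$ (a reduced domain, so your construction produces a single factor $B_1'$), the projection to one of the two Shilov fields sends $Y^n$ to an element of norm $|p|^n$ while $\|Y^n\|_{\sup}=1$; thus $A \to B_1'$ is not a closed embedding. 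Since your $\prod_i R_i'$ has only one factor per associated prime, even the reduced case fails, and the inductive step cannot repair a broken base case. Note also that your one-sentence inductive step ("propagate strictness through the nilpotent filtration") does not address how the Cohen coefficient field decomposition of $R_i$—which is chosen abstractly—interacts with the topology of $A$ through the filtration; this would itself require a careful argument even if the base case were correct. The paper avoids both issues by first using Noether normalization $T_n\hookrightarrow A/\mathfrak{q}_i$ and base-changing to the \emph{completion} $Q(T_n)^\wedge$: because $T_n$ has a unique Shilov point (the Gauss point), $A/\mathfrak{q}_i \otimes_{T_n} Q(T_n)^\wedge$ automatically splits as a product over all the Shilov points of $A/\mathfrak{q}_i$, and the entire closedness argument stays inside finite $T_n$-Banach modules, where topological control is available via the finite Banach module lemma (Lemma~\ref{finite-banach-modules}). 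To salvage your approach you would at minimum need to retain all Shilov fields $B_{ij}$ for each $\mathfrak{p}_i$ (forming $R_{ij}' := R_i\otimes_{\kappa_i}B_{ij}'$ for each $j$) and then prove strictness of $A/\mathfrak{q}_i \to \prod_j R_{ij}'$ directly, which is essentially what the paper does via a different route.
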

\begin{remark}
Kedlaya and Liu claim a similar result in~\cite[Lemma 6.4]{kl}, but we do not understand their argument.
\end{remark}
\begin{proof}
First of all, we note that this is true if we take $A$ to be a Tate algebra $T_n$.  For then we may embed $T_n$ into $Q(T_n)^\wedge$, the completion of its quotient field for the multiplicative Gauss norm.  This field will not have perfect residue field (unless $n=0$), but by \cite[Theorem 29.1]{matsumura} applied to $R\langle X_1,\ldots,X_n\rangle_{(\pi)}^\wedge$, where $\pi$ is a uniformizer of $R$, we can find a complete discretely valued extension $B/Q(T_n)^\wedge$ with perfect residue field and $e(B|Q(T_n)^\wedge)=1$.

Now let $A$ be a general $k$-affinoid algebra.  Since $A$ is noetherian, we can find a minimal primary decomposition $(0)=\mathfrak{q}_1\cap\cdots \cap \mathfrak{q}_r$.  This yields a module-finite injective (hence closed) map $A\rightarrow \prod_j A/\mathfrak{q}_j$, so if we can embed each $A/\mathfrak{q}_i$, we can embed $A$.

Since each $\mathfrak{q}_j$ is a primary ideal, every zero-divisor in $A/\mathfrak{q}_j$ is nilpotent.  Thus, we may replace $A$ with $A/\mathfrak{q}_j$ and assume that all zero-divisors are nilpotent.

By Noether normalization, we can find an integral (and finite) monomorphism $T_n\rightarrow A$ for some $n$.  Since $A\otimes_{T_n}Q(T_n)^\wedge$ is a module-finite algebra over $Q(T_n)^\wedge$, it decomposes as the product of finitely many artin local rings finite over $Q(T_n)^\wedge$.  

We claim that the natural map $A\rightarrow A\otimes_{T_n}Q(T_n)^\wedge$ is an injection.  Since the natural map
$$A\otimes_{T_n}Q(T_n)\rightarrow A\otimes_{T_n}Q(T_n)^\wedge$$ 
is injective, it is enough to show that $A\rightarrow A\otimes_{T_n}Q(T_n)$ is injective.  If a non-zero $r\in A$ dies in $A\otimes_{T_n}Q(T_n)$, there is some non-zero $\omega\in T_n$ such that $r\omega=0$ in $A$.  Then since all zero-divisors are nilpotent, $\omega$ lands in the nilradical of $A$, so some power of $\omega$ is zero in $A$.  But $T_n\rightarrow A$ is injective and $T_n$ is reduced, so we have a contradiction.

Moreover, we claim that the natural Banach topology on $A$ agrees with its subspace topology from the finite dimensional $Q(T_n)^\wedge$-vector space $A\otimes Q(T_n)^\wedge$.  To see this, we first note that the natural topology on $A$ as an affinoid algebra is the same as the topology on $A$ as a $T_n$-module.  When $A=T_n[x]/(f(x))$ for $f$ a monic polynomial over $T_n$, $A$ is free over $T_n$, and so clearly $A\rightarrow A\otimes_{T_n}Q(T_n)^\wedge$ is a closed embedding.

Now consider the general case.  For any $r\in A$, when viewing $r$ in $A\otimes_{T_n}Q(T_n)$, its minimal polynomial over $Q(T_n)$ is a monic polynomial $f(x)$ with coefficients in $T_n$, because $T_n$ is integrally closed in $Q(T_n)$. Moreover, $T_n[r]\cong T_n[x]/\ann_{T_n[x]}(r)$, where $\ann_{T_n[x]}(r)$ is the annhilator of $r$.  Because $A$ is torsion-free as a $T_n$-module (as we saw above), 
$$\ann_{T_n[x]}(r)=T_n[x]\cap f\cdot Q(T_n)[x]=f\cdot T_n[x]$$ since $f$ is monic.  Thus, $T_n[r]=T_n[x]/(f(x))$.  Therefore, the subring $T_n[r]\subset A$ is finite free as a $T_n$-module, so it is a closed $T_n$-submodule of 
$$(Q(T_n)^\wedge)[r]\subset A\otimes_{T_n}Q(T_n)^\wedge$$

We will show that $A\rightarrow A\otimes_{T_n}Q(T_n)^\wedge$ is a closed embedding by considering the collection of $T_n$-submodules $A'\subset A$ which are closed in $A\otimes_{T_n}Q(T_n)^\wedge$.  If $A'\subset A$ is a $T_n$-submodule, then $A'\otimes_{T_n}Q(T_n)^\wedge\rightarrow A\otimes_{T_n}Q(T_n)^\wedge$ is an injection, because $T_n\rightarrow Q(T_n)^\wedge$ is flat.  We begin with $A'=T_n$.  If $A'=A$, we are done.  If not, choose $s\in A-A'$, so $T_n[s]\subset A$ is a finite free $T_n$-submodule.  We claim that $A'+T_n[s]$ is closed in $A\otimes_{T_n}Q(T_n)^\wedge$.  Then we can replace $A'$ with $A'+T_n[s]$ and repeat the process.  Since $A$ is finite over $T_n$, this process terminates eventually at $A$, so we will be done.

Thus, it suffices to show that if we have a finite-dimensional $Q(T_n)^\wedge$-vector space $V$ (equipped with its natural topology) and two closed finite $T_n$-submodules $F$ and $F'$, with $F'$ free and $(F+F')\otimes_{T_n}Q(T_n)^\wedge\rightarrow V$ an injection, then $F+F'$ is also closed in $V$.  We may assume by induction on the rank of $F'$ that $F'$ is free of rank one.  We may also replace $V$ with $(F+F')\otimes_{T_n}Q(T_n)^\wedge$, since all subspaces of a finite-dimensional vector space over a non-archimedean field (such as $Q(T_n)^\wedge$) are closed.  If $F\cap F'=\{0\}$, it is clear that $F+F'\xleftarrow{\sim} F\oplus F'$ is closed, because $V\xleftarrow{\sim} (F\otimes_{T_n}Q(T_n)^\wedge)\oplus (F'\otimes_{T_n}Q(T_n)^\wedge)$ as $Q(T_n)^\wedge$-vector spaces, and the topology on the right side is the product topology (and $F$ is closed in $F\otimes_{T_n}Q(T_n)^\wedge$ due to the closedness of $F$ in $V$).  If the intersection is nonzero, there is some $\omega\in T_n-\{0\}$ such that $F'\subset \frac{1}{\omega}F$, so $F+F'\subset \frac{1}{\omega}F$.  Since $\frac{1}{\omega}F$ is closed in $V$, it only remains to show that $F+F'$ is closed in $\frac{1}{\omega}F$.  But submodules of finite modules over affinoid algebras (equipped with their natural topologies) are always closed, by~\cite[Prop. 3.7.3/1]{bgr}, so we are done.

The upshot of this is that we have a closed embedding $A\rightarrow \prod_i R_i$, where the $R_i$ are a finite collection of $Q(T_{n_i})^\wedge$-finite artin rings (equipped with the topologies of finite-dimensional $Q(T_{n_i})^\wedge$-vector spaces).  Finally, we replace $R_i$ with $R_i\otimes_{Q(T_n)^\wedge}B_i$.
\end{proof}

\subsubsection{Quasi-Stein spaces}

\begin{definition}
A rigid analytic space $Y$ over $k$ is said to be \emph{quasi-Stein} if it admits an admissible covering by a rising union of affinoid subdomains $Y_0\subset Y_1\subset\cdots$ such that the transition maps $\Gamma(Y_{n+1},\mathscr{O}_{Y_{n+1}})\rightarrow \Gamma(Y_n,\mathscr{O}_{Y_n})$ are flat with dense image.
\end{definition}
In particular, $A_\infty:=\Gamma(Y,\mathscr{O}_Y)=\varprojlim_n\Gamma(Y_n,\mathscr{O}_{Y_n})$ is a Fr\'echet-Stein algebra.  

By Kiehl's results on coherent sheaves on rigid analytic spaces, a coherent sheaf $\mathscr{F}$ on $Y$ is simply a compatible system of coherent sheaves $\{\mathscr{F}_n\}$ on $\{Y_n\}$.  Then $F_\infty:=\Gamma(Y,\mathscr{F})=\varprojlim_n\Gamma(Y_n,\mathscr{F}_n)$ is a coadmissible module over $A_\infty$, in the sense of~\cite{sch-teit}.

\begin{example}
Fix $s>0$, and let $Y$ be the coordinate on the closed unit disk.  Then the half-open annulus $0<v_p(X)\leq 1/s$ is a quasi-Stein space, as it is the rising union of the closed annuli $1/s'\leq v_p(X)\leq 1/s$ as $s'\rightarrow\infty$.
\end{example}

Quasi-Stein spaces behave much as affine schemes do in algebraic geometry.  In particular, Kiehl proved the following theorem on the cohomology of coherent sheaves on quasi-Stein spaces (which also follows from \cite[Theorem 3]{sch-teit}):
\begin{thm}[{\cite[Satz 2.4]{kiehl}}]\label{kiehl}
Let $Y$ be a quasi-Stein space, and let $\mathscr{F}$ be a coherent sheaf on $Y$.  Then 
\begin{enumerate}
\item	$\H^i(Y,\mathscr{F})=0$ for $i>0$,
\item	the image of $\mathscr{F}(Y)$ in $\mathscr{F}(Y_n)$ is dense for all $n$
\end{enumerate}
\end{thm}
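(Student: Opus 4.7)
My strategy is to reduce both statements to properties of the inverse system $M_n := \mathscr{F}(Y_n)$, and then verify those properties via topological Mittag-Leffler. For the reduction, each $Y_n$ is affinoid and $\mathscr{F}|_{Y_n}$ is coherent, so Tate's acyclicity gives $\H^i(Y_n,\mathscr{F})=0$ for $i>0$. For the admissible increasing cover $\{Y_n\}$, the Milnor-style short exact sequence
$$0\to\varprojlim{}^1\H^{i-1}(Y_n,\mathscr{F})\to\H^i(Y,\mathscr{F})\to\varprojlim\H^i(Y_n,\mathscr{F})\to 0$$
then collapses, yielding $\H^i(Y,\mathscr{F})=0$ for $i\geq 2$, $\H^0(Y,\mathscr{F})=\varprojlim M_n$, and $\H^1(Y,\mathscr{F})=\varprojlim^1 M_n$. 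Equivalently, one can use the telescope sheaf sequence
$$0\to\mathscr{F}\to\prod_n(j_n)_*(\mathscr{F}|_{Y_n})\xrightarrow{d}\prod_n(j_n)_*(\mathscr{F}|_{Y_n})\to 0$$
with $j_n:Y_n\hookrightarrow Y$ and $d((s_n)_n)=(s_{n+1}|_{Y_n}-s_n)_n$, using that each factor is globally $\mathscr{F}$-acyclic via Tate. In this way, (1) reduces to $\varprojlim^1 M_n=0$ and (2) reduces to the density of $\varprojlim M_n\to M_n$.

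The decisive input for the second phase is that each transition $M_{n+1}\to M_n$ has dense image. To verify this, I use coherence of $\mathscr{F}|_{Y_{n+1}}$ to fix finitely many generators $e_1,\ldots,e_k$ of $M_{n+1}$ over $A_{n+1}:=\mathscr{O}(Y_{n+1})$. For any $m\in M_n$, writing $m=\sum a_i\overline{e}_i$ with $a_i\in A_n:=\mathscr{O}(Y_n)$ and approximating each $a_i$ by $\tilde a_i\in A_{n+1}$ (using the quasi-Stein density of $A_{n+1}\to A_n$) produces $\sum\tilde a_i e_i\in M_{n+1}$ restricting close to $m$. Each $M_n$ is a Banach module over the affinoid $A_n$ (cf.\ Lemma~\ref{finite-banach-modules}), so with dense transitions in hand, iterated approximation with geometrically decaying errors yields: (i) any $m\in M_n$ lifts to an element of $\varprojlim_k M_k$ within any prescribed error in $M_n$, giving the density asserted in (2); and (ii) any $(t_n)\in\prod_n M_n$ can be modified by a $d$-boundary into a summable system whose formal antiderivative $s_n:=-\sum_{k\geq n}t_k|_{Y_n}$ converges and satisfies $d(s)=t$, showing $\varprojlim^1 M_n=0$ and giving (1).

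I expect the main obstacle to lie in the reduction step, namely justifying that the \v Cech machinery for the totally ordered ascending cover $\{Y_n\}$ computes derived functor cohomology and produces the asserted Milnor sequence. This rests on the fact that every $p$-fold intersection $Y_{n_0}\cap\cdots\cap Y_{n_p}=Y_{\min n_j}$ is again affinoid and hence $\mathscr{F}$-acyclic, so the \v Cech-to-derived spectral sequence degenerates; the Milnor description of $\H^\ast(Y,\mathscr{F})$ in terms of the inverse system $\{M_n\}$ then follows by identifying the \v Cech complex of this nested cover with the standard two-term resolution computing $R\varprojlim$.
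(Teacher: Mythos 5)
The paper does not prove this statement; it is quoted verbatim from Kiehl (Satz~2.4 of~\cite{kiehl}), with the remark that it also follows from~\cite[Theorem 3]{sch-teit}. Your argument is the standard one underlying both references: reduce to the inverse system $M_n=\mathscr{F}(Y_n)$ via Tate acyclicity on the affinoids $Y_n$ together with the telescope (or \v{C}ech--Cartan--Leray) identification of $R\Gamma(Y,\mathscr{F})$ with $R\varprojlim M_n$; use coherence plus density of $A_{n+1}\to A_n$ to see the transition maps $M_{n+1}\to M_n$ have dense image; and invoke topological Mittag--Leffler for complete metrizable modules to kill $\varprojlim^1$ and obtain density of $\H^0(Y,\mathscr{F})\to M_n$. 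This is correct, and the technical point you flag --- that the totally ordered admissible cover by acyclic affinoid subdomains indeed computes derived-functor cohomology and yields the two-term Milnor presentation --- is precisely the standard Cartan--Leray reduction, so nothing essential is missing.
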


There is no {\it a priori} reason for $F_\infty$ to be a finite module over $A_\infty$.  For example, let $Y_n=\Sp(\prod_{i=0}^n\Q_p(\zeta_{p^i}))$, and let $\mathscr{F}_n$ be the sheaf on $Y_n$ associated to $\prod_{i=0}^n\Q_p(\zeta_{p^i})^{\oplus i}$.  Then $F_\infty$ is not $A_\infty$-finite, because the fiber of $F_\infty$ at $\Sp(\Q_p(\zeta_{p^n}))$ is a $\Q_p(\zeta_{p^n})$-vector space of dimension $n$.  Happily, this is the only thing that can go wrong.

\begin{lemma}\label{fibral-ranks}
Let $\mathscr{F}$ be a coherent sheaf over a finite-dimensional quasi-Stein space $Y$ over $\Q_p$.  Then $\H^0(Y,\mathscr{F})$ is finitely generated as an $\H^0(Y,\mathscr{O}_Y)$-module if and only if there is some integer $d$ such that $\dim_{\kappa(y)}\mathscr{F}(y)\leq d$ for all $y\in Y$.
\end{lemma}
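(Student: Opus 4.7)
The plan is to treat the two directions separately, with ($\Rightarrow$) brief and the substance lying in ($\Leftarrow$). For ($\Rightarrow$), if $F_\infty := \H^0(Y, \mathscr{F})$ is generated over $A_\infty := \H^0(Y, \mathscr{O}_Y)$ by $N$ elements $f_1, \ldots, f_N$, then for each $n$ the $A_n$-submodule of $\mathscr{F}_n := \mathscr{F}(Y_n)$ spanned by their images is finitely generated, hence closed by Lemma~\ref{finite-banach-modules}, and also dense by Theorem~\ref{kiehl}, so it equals $\mathscr{F}_n$. Specializing at any $y \in Y_n$ then yields $\dim_{\kappa(y)} \mathscr{F}(y) \leq N$.

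For ($\Leftarrow$), assume $\dim_{\kappa(y)} \mathscr{F}(y) \leq d$ for all $y \in Y$, and set $N := d + \dim Y$, finite by hypothesis. Each $\mathscr{F}_n$ is a finite module over the Jacobson Noetherian affinoid $A_n$ of Krull dimension at most $\dim Y$ with fiber dimensions $\leq d$, so by Forster's theorem $\mathscr{F}_n$ is generated by $N$ elements over $A_n$. It suffices to produce $f_1, \ldots, f_N \in F_\infty$ whose images generate each $\mathscr{F}_n$, because the cokernel of the resulting map $A_\infty^N \to F_\infty$ is then a coadmissible module whose scalar extension to each $A_n$ vanishes, hence is zero. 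I would construct such generators as the Fr\'echet-limit of an inductively defined sequence $(f_i^{(n)})_n \subseteq F_\infty^N$ whose $n$-th term generates each $\mathscr{F}_j$ for $j \leq n$ and with $\|f_i^{(n+1)} - f_i^{(n)}\|_j$ decaying fast enough in $n$; openness of the generating locus in $\mathscr{F}_j^N$ (via open mapping for finite Banach modules) ensures the limit still generates each $\mathscr{F}_j$.

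The inductive step is the substantive point. Given $(f_i^{(n-1)})$ generating $\mathscr{F}_j$ for $j \leq n-1$, let $M_{n-1} \subseteq \mathscr{F}_n$ be the $A_n$-submodule generated by their images and let $Q_{n-1} := \mathscr{F}_n/M_{n-1}$. Since $(f_i^{(n-1)}|_{Y_{n-1}})$ already generate $\mathscr{F}_{n-1}$, we have $Q_{n-1} \otimes_{A_n} A_{n-1} = 0$, so $\mathrm{supp}(Q_{n-1})$ is disjoint from $Y_{n-1}$ and, by the rigid-analytic Nullstellensatz, $\ann_{A_n}(Q_{n-1}) \cdot A_{n-1} = A_{n-1}$. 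This permits $\iota \in \ann_{A_n}(Q_{n-1})$ with $\iota|_{Y_{n-1}}$ arbitrarily close to $1$; approximately lifting $1 - \iota$ to an element of $A_\infty$ via density of $A_\infty \to A_n$ and multiplying approximate lifts $\hat h_j \in F_\infty$ of a generating tuple of $Q_{n-1}$ by this lift produces elements $\tilde h_j \in F_\infty$ that are small on $Y_{n-1}$ (and thus on every $Y_m$ for $m < n$) while mapping to a generating tuple of $Q_{n-1}$. Combining the $f_i^{(n-1)}$ with suitable multiples $\pi^k \tilde h_{\sigma(i)}$ (for $k$ large and a surjection $\sigma$ onto the index set of $Q_{n-1}$-generators) then produces the required $(f_i^{(n)})$. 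I expect the main obstacle to be the combinatorial step of fitting the new $Q_{n-1}$-directions into the $N$ available slots while maintaining generation of $M_{n-1}$, which requires exploiting the relation module of $(f_i^{(n-1)}|_{Y_n})$ as generators of $M_{n-1}$ to absorb the perturbations without losing any component.
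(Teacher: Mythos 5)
Your forward direction is fine and proves what the paper only asserts. The backward direction, however, takes a genuinely different route from the paper — where the paper inducts on $\dim Y$, picking a point on each irreducible component, lifting a finite generating set of $i^\ast\mathscr{F}$ to a coherent subsheaf $\mathscr{F}'\subset\mathscr{F}$, and applying the inductive hypothesis to the quotient $\mathscr{G}=\mathscr{F}/\mathscr{F}'$ (which has strictly lower-dimensional support, and whose finiteness together with $\H^1(Y,\mathscr{F}')=0$ gives finiteness of $\mathscr{F}$) — you propose to apply the Forster--Swan bound to each $\mathscr{F}_n$ separately and then produce a single generating $N$-tuple in $F_\infty$ by a Mittag--Leffler construction.

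The Forster reduction is sound (the bound at primes follows from the bound at maximal ideals by upper semicontinuity of $\mu_\mathfrak{p}$ and the Jacobson property of affinoids), and the reformulation via vanishing of a coadmissible cokernel is correct. The real gap is exactly where you flag it, but it is bigger than a ``combinatorial'' finishing touch. In your inductive step you produce perturbations $\tilde h_j$ that are small on $Y_{n-1}$ and map onto generators of $Q_{n-1}$, and then set $f_i^{(n)} = f_i^{(n-1)} + \pi^k\tilde h_{\sigma(i)}$. There are two problems. First, at points $y\in Y_n\smallsetminus(Y_{n-1}\cup\mathrm{Supp}\,Q_{n-1})$ the perturbation $\tilde h_{\sigma(i)}$ need not be small in any useful sense, and the openness argument you invoke for preserving generation applies only on $Y_{n-1}$; you give no reason the perturbed tuple still generates $(\mathscr{F}_n)_y$ at these intermediate points, even though $(Q_{n-1})_y=0$ there. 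Second, even where $(Q_{n-1})_y\neq 0$, the fact that $(f_i^{(n-1)})$ generate $M_{n-1}$ and $(\tilde h_j)$ surject onto $Q_{n-1}$ shows only that the union $\{f_i^{(n-1)}\}\cup\{\tilde h_j\}$ generates $\mathscr{F}_n$ — up to $2N$ elements — not that the $N$ sums $f_i^{(n-1)}+\pi^k\tilde h_{\sigma(i)}$ do. Compressing a generating set of size $\leq 2N$ down to $N$ elements of the specific additive form you prescribe, while remaining close to the old tuple on $Y_{n-1}$, is a ``basic element / Eisenbud--Evans with prescribed restriction'' statement that would be the substantive theorem here and is nowhere proved (nor is it clear it holds at the Forster bound $N=d+\dim Y$ rather than some larger $N$). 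The paper's dimension induction is designed precisely to avoid ever having to solve this compatible-generators problem: it only ever needs a finitely generated coherent subsheaf surjecting onto fibers at finitely many chosen points, which Kiehl's density theorem supplies directly, and lets induction on the dimension of the support of the quotient do the rest.
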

\begin{proof}
Necessity is clear.  To prove sufficiency, we proceed by induction on the dimension of $Y$.  If $Y$ is a zero-dimensional Stein space, the result is clear.

Now suppose we have the desired result when $\dim Y<n$, which is to say when every irreducible component of $Y$ has dimension at most $n-1$ for some $n\geq 1$, and suppose $\dim Y=n$.  Choose $i:Y'\hookrightarrow Y$ consisting of a point on every irreducible component of $Y$.  By the settled zero-dimensional case, $i^\ast \mathscr{F}$ is finitely generated over $\mathscr{O}_{Y'}$, so by Theorem~\ref{kiehl} there is some coherent $\mathscr{O}_Y$-submodule $\mathscr{F}'\subset \mathscr{F}$ on $Y$ such that $i^\ast \mathscr{F}'\twoheadrightarrow i^\ast \mathscr{F}$.  Thus, we have an exact sequence of coherent $\mathscr{O}_Y$-modules
$$0\rightarrow \mathscr{F}'\rightarrow \mathscr{F}\rightarrow \mathscr{G}\rightarrow 0$$
such that $i^\ast \mathscr{G}=0$.  Then $\mathscr{G}$ vanishes on a Zariski-open subspace of $Y$ containing $Y'$; its complement is a quasi-Stein space $Z$ with structure sheaf $\mathscr{O}_Y/\ann_{\mathscr{O}_X}\mathscr{G}$, all of whose irreducible components have dimension at most $n-1$, since $Y'$ intersects each irreducible component of $Y$.  Then we may apply our inductive hypothesis to $\mathscr{G}$ ($\mathscr{F}(y)\twoheadrightarrow \mathscr{G}(y)$, so the fibral ranks of $\mathscr{G}$ are bounded).  Therefore, $\mathscr{G}(Y)$ is $\mathscr{O}_Y(Y)$-finite.  Since $\mathscr{F}'$ is $\mathscr{O}_Y$-finite by construction, $\mathscr{F}$ is as well, since $\H^1(Y,\mathscr{F}')=0$.
\end{proof}

\begin{cor}\label{flat-fibral-ranks}
Suppose that $\mathscr{F}$ is a flat coherent sheaf over $\mathscr{O}_Y$, where $Y$ is a finite-dimensional quasi-Stein space.  Then $\H^0(Y,\mathscr{F})$ is projective of rank $d$ over $\H^0(Y,\mathscr{O}_Y)$ if and only if $\dim_{\kappa(y)}\mathscr{F}(y)= d$ for all $y\in Y$.
\end{cor}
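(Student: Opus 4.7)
The plan is to deduce the corollary from Lemma~\ref{fibral-ranks} by promoting $\mathscr{F}$ to a rank-$d$ vector bundle and splitting a presentation at the sheaf level. Necessity is immediate: if $M := \H^0(Y,\mathscr{F})$ is projective of rank $d$ over $A_\infty := \H^0(Y,\mathscr{O}_Y)$, then for a closed point $y\in Y$ lying in some $Y_n$, the coadmissibility identification $\H^0(Y_n,\mathscr{F}) = M\otimes_{A_\infty} A_n$ from~\cite{sch-teit} combined with the standard fiber formula on the affinoid $Y_n$ gives $\mathscr{F}(y) = M\otimes_{A_\infty}\kappa(y)$, which has dimension $d$.

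For sufficiency, I would first apply Lemma~\ref{fibral-ranks} with the uniform bound $d$ to conclude that $M$ is finitely generated over $A_\infty$. Next I would upgrade $\mathscr{F}$ to a rank-$d$ vector bundle: at each $y\in Y$, the stalk $\mathscr{F}_y$ is finitely presented over the noetherian local ring $\mathscr{O}_{Y,y}$, flat over that ring, and has a residue-field fiber of dimension $d$, so $\mathscr{F}_y$ is free of rank $d$. Coherence then spreads this freeness to an admissible neighborhood of $y$, since the kernel and cokernel of a map $\mathscr{O}_{Y'}^d\to\mathscr{F}|_{Y'}$ induced by a stalk basis are coherent sheaves vanishing at $y$, hence vanishing on the Zariski-open complement of their supports.

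Then I would pick generators $m_1,\ldots,m_N$ of $M$ over $A_\infty$ and form the resulting morphism $\phi: \mathscr{O}_Y^N \to \mathscr{F}$. Because $M\otimes_{A_\infty} A_n \to \H^0(Y_n,\mathscr{F})$ is an isomorphism for coadmissible modules, $\phi$ is surjective on each affinoid $Y_n$; in particular it is surjective on every fiber $\mathscr{F}(y)$, and hence surjective as a morphism of coherent sheaves by Nakayama. The kernel $\mathscr{K}$ is then a rank-$(N-d)$ vector bundle, fitting into a locally split exact sequence $0 \to \mathscr{K}\to\mathscr{O}_Y^N\to\mathscr{F}\to 0$.

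Finally, applying $\calHom(\mathscr{F},-)$ to this sequence (which remains exact because $\mathscr{F}$ is locally free) produces an exact sequence of coherent sheaves $0\to\calHom(\mathscr{F},\mathscr{K})\to\calHom(\mathscr{F},\mathscr{O}_Y^N)\to\calHom(\mathscr{F},\mathscr{F})\to 0$ on $Y$. By Theorem~\ref{kiehl}, $\H^1(Y,\calHom(\mathscr{F},\mathscr{K}))=0$, so the identity endomorphism of $\mathscr{F}$ lifts to a global section of $\calHom(\mathscr{F},\mathscr{O}_Y^N)$, yielding a splitting $\mathscr{O}_Y^N \cong \mathscr{F}\oplus\mathscr{K}$. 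Taking $\H^0$ exhibits $M$ as a direct summand of $A_\infty^N$, hence projective, with rank $d$ forced by the fibral dimensions. The main obstacle is bridging the abstract module picture with the sheaf picture: specifically, promoting the abstract surjection $A_\infty^N\twoheadrightarrow M$ to a sheaf-level surjection $\phi$, which requires the coadmissibility identification $M\otimes_{A_\infty} A_n = \H^0(Y_n,\mathscr{F})$, together with the local-freeness upgrade that justifies exactness of $\calHom(\mathscr{F},-)$.
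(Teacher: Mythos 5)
Your proof is correct, but it takes a genuinely different route from the paper's. The paper applies Lemma~\ref{fibral-ranks} twice—once to $\mathscr{F}$ to get a surjection $\mathscr{O}_Y^{\oplus m}\twoheadrightarrow\mathscr{F}$, then to the kernel $\mathscr{G}$ (using flatness of $\mathscr{F}$ to keep the fibral sequences $0\to\mathscr{G}(y)\to\kappa(y)^{\oplus m}\to\mathscr{F}(y)\to 0$ exact, hence $\dim\mathscr{G}(y)\le m$ uniformly)—to conclude that $\mathscr{F}$ is finitely presented, and then (rather tersely) reduces projectivity to the general fact that a finitely presented flat module over any ring is projective. You instead upgrade $\mathscr{F}$ to a vector bundle of rank $d$ using flatness of the stalks over the noetherian local rings $\mathscr{O}_{Y,y}$, produce a sheaf-level surjection from a finite generating set of global sections via the coadmissibility identification $M\otimes_{A_\infty}A_n\cong\H^0(Y_n,\mathscr{F})$, and then construct an \emph{explicit global splitting} by lifting $\mathrm{id}_{\mathscr{F}}$ through $\calHom(\mathscr{F},\mathscr{O}_Y^N)\to\calHom(\mathscr{F},\mathscr{F})$ using Kiehl's vanishing $\H^1(Y,\calHom(\mathscr{F},\mathscr{K}))=0$. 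Your argument is longer but more explicit and self-contained: it sidesteps the translation between sheaf-level flatness and flatness of the global sections as an $A_\infty$-module (which the paper's compressed final step implicitly requires), and it produces the stronger conclusion that the presentation of $M$ globally splits, not merely that $M$ is abstractly projective. In this sense your approach is arguably better suited to the quasi-Stein setting, where Kiehl's $\H^1$-vanishing is precisely the tool designed to replace the affine-scheme ``no obstruction to splitting'' intuition.
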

\begin{proof}
As flat finitely presented modules are finite locally free, and hence projective, it is enough to prove that $\mathscr{F}$ is finitely presented over $\mathscr{O}_Y$.  Lemma~\ref{fibral-ranks} implies that there is a surjection $\mathscr{O}_Y^{\oplus m}\twoheadrightarrow \mathscr{F}$, and we will apply Lemma~\ref{fibral-ranks} to the kernel $\mathscr{G}$.  To do this, we need to know that $\dim_{\kappa(y)}\mathscr{G}(y)$ is bounded over all $y\in Y$.  But if we specialize the short exact sequence
$$0\rightarrow \mathscr{G}\rightarrow \mathscr{O}_X^{\oplus m}\rightarrow \mathscr{F}\rightarrow 0$$
at $y\in Y$, we get a short exact sequence
$$0\rightarrow \mathscr{G}(y)\rightarrow \kappa(y)^{\oplus m}\rightarrow \mathscr{F}(y)\rightarrow 0$$
because $\mathscr{F}$ was assumed flat.  Therefore, $\dim_{\kappa(y)}\mathscr{G}(y)\leq m$ and we are done.
\end{proof}

\subsection{Families of $(\varphi,\Gamma)$-modules}\label{tate-sen}

We briefly recall the construction of families of $(\varphi,\Gamma)$-modules associated to families of Galois representations.  Let $A$ be an $E$-affinoid algebra, and let $A\widehat\otimes\B_{(\rig),K}^{\dagger,(s)}$ denote one of the rings $A\widehat\otimes \B_K^\dagger$, $A\widehat\otimes \B_K^{\dagger,s}$, $A\widehat\otimes \B_{\rig,K}^{\dagger,s}$, or $A\widehat\otimes \B_{\rig, K}^{\dagger}$.  Similarly, let $A\widehat\otimes\widetilde{\B}^{\dagger,(s)}$ denote one of the rings $A\widehat\otimes\widetilde{\B}^{\dagger}$ or $A\widehat\otimes\widetilde{\B}^{\dagger,s}$.

Throughout this subsection, let $s_0=(p-1)/p$ and let $s_n=p^ns_0=p^{n-1}(p-1)$.  For $L$ a $p$-adic field, let $L_n=L(\zeta_{p^n})$.

\begin{definition}
A $\varphi$-module over $A\widehat\otimes\B_{(\rig),K}^{\dagger,(s)}$ is a finitely presented projective module $\D^{(s)}$  over $A\widehat\otimes\B_{(\rig),K}^{\dagger,(s)}$ together with a map $\varphi:\D^{(s)}\rightarrow \D^{(ps)}$ which is semilinear over $\varphi:\B_{(\rig),K}^{\dagger,(s)}\rightarrow\B_{(\rig),K}^{\dagger,(ps)}$, such that the linearization $\varphi':\B_{(\rig),K}^{\dagger,(ps)} {}_{\varphi}{\otimes}_{\B_{(\rig),K}^{\dagger,(s)}}\D^{(s)}\rightarrow \D^{(ps)}$ is an isomorphism.  A $(\varphi,\Gamma)$-module over $A$ is a $\varphi$-module over $A$ together with a continuous $A$-linear action of $\Gamma_K$ which is semilinear over the action of $\Gamma_K$ on $\B_{(\rig),K}^{\dagger,(s)}$ and commutes with $\varphi$.
\end{definition}


\begin{remark}
A $\varphi$-module $\D$ over $A\widehat\otimes\B_{(\rig),K}^{\dagger,s}$ is in particular a finite $A\widehat\otimes\B_{(\rig),K}^{\dagger,s}$-module.  It is therefore a finite module over either a Banach algebra or a Fr\'echet-Stein algebra.  It follows that $\D$ has a unique structure as a Fr\'echet $A\widehat\otimes\B_{(\rig),K}^{\dagger,s}$-module.  Thus, we may speak unambiguously of the continuity of any action of $\Gamma_K$.
\end{remark}

\begin{remark}
In~\cite{kl}, the authors define a family of $(\varphi,\Gamma)$-modules over $A\widehat\otimes \B_{\rig,K}^{\dagger,s}$, for $s\gg0$, to be a coherent locally free sheaf over the product of the half-open annulus $0<v_p(X)\leq 1/s$ with $\Sp(A)$ in the category of rigid analytic spaces.  By Lemma~\ref{fibral-ranks} and Corollary~\ref{flat-fibral-ranks}, this is equivalent to the definition we have given.  This equivalence is also proven in~\cite[Proposition 2.2.7]{kpx}, where the authors use the $\varphi$-module structure on a family of $(\varphi,\Gamma)$-modules to prove finite generation of its global sections.
\end{remark}

The main source of $(\varphi,\Gamma)$-modules is Galois representations; to any family of $p$-adic Galois representations, we can functorially associate a family of $(\varphi,\Gamma)$-modules, and this functor is fully faithful.

\begin{definition}
Let $X$ be a rigid analytic space over $E$.  A family of Galois representations over $X$ is a locally free $\mathscr{O}_X$-module $\mathscr{V}$ of rank $d$ together with an $\mathscr{O}_X$-linear action of $\Gal_K$ which acts continuously on $\Gamma(U,\mathscr{V})$ for every admissible affinoid open $U\subset X$.
\end{definition}

\begin{remark}
It is enough to check continuity on a single admissible affinoid cover $\{U_i\}$ of $X$.  For if $U_i=\Sp(A_i)$ is affinoid and $\Gal_K$ acts continuously on $\mathscr{V}(U_i)$, then $\Gal_K$ certainly acts continuously on $\mathscr{V}(W) = \mathscr{V}(U_i)\otimes_{A_i}\mathscr{O}_X(W)$ for any affinoid subdomain $W\subset U_i$.

On the other hand, suppose that $\{U_i=\Sp(A_i)\}$ is an admissible affinoid covering of $U=\Sp(A)$, and suppose that $\Gal_K$ acts continuously on $\mathscr{V}(U_i)$.  Since
$$0\rightarrow A\rightarrow \prod_i A_i\rightarrow \prod_{i,j}A_i\widehat\otimes_AA_j$$
is exact, $\mathscr{V}(U)$ inherits its topology from its embedding in $\prod_i\mathscr{V}(U_i)$, and $\GL(\mathscr{V}(U))$ inherits its topology from its embedding in $\prod_i\GL(\mathscr{V}(U_i))$.  Therefore, $\Gal_K$ acts continuously on $\mathscr{V}(U)$.
\end{remark}

Then we have the following theorem:
\begin{thm}[{\cite{bc}}]\label{phi-gamma-bc}
Let $\mathscr{A}$ be a formal $\mathscr{O}_E$-model for $A$, and let $V$ be a free $A$-module of rank $d$ equipped with a continuous $A$-linear action of $\Gal_K$.  Suppose that $V$ contains a free $\Gal_K$-stable $\mathscr{A}$-submodule $V_0$ of rank $d$.  Then for $s\gg0$, there is a $\varphi$- and $\Gal_K$-stable $A\widehat\otimes\B_K^{\dagger,s}$-submodule (compatible with change in $s$)
$$\D_K^{\dagger,s}(V)\subset \left((A\widehat\otimes\widetilde\B^{\dagger,s})\otimes_{\Q_p}V\right)^{H_K}$$
which is a locally free $A\widehat\otimes\B_K^{\dagger,s}$-module of constant rank $d$ such that the natural map
$$(A\widehat\otimes\widetilde\B^{\dagger,s})\otimes_{A\widehat\otimes\B_K^{\dagger,s}}\D_K^{\dagger,s}(V)\rightarrow (A\widehat\otimes\widetilde\B^{\dagger,s})\otimes_AV$$
is an isomorphism (by ``$\varphi$-stable'' we mean that $\varphi(\D_K^{\dagger,s}(V))\subset \D_K^{\dagger,ps}(V)$).  If $\Gal_K$ acts trivially on $V_0/12pV_0$, then $\D_K^{\dagger,s}(V)$ is $A\widehat\otimes\B_K^{\dagger,s}$-free of rank $d$.

The formation of $\D_K^{\dagger,s}(V)$ is compatible with base change in $\mathscr{A}$.
\end{thm}
The base change property is not stated in \cite{bc}, but follows easily from the construction.

\begin{remark}
The construction of families of $(\varphi,\Gamma)$-modules given in \cite[Proposition 4.2.8]{bc} and \cite[Th\'eor\`eme 4.2.9]{bc} in fact only requires the coefficients to be a Banach algebra, not an affinoid algebra.
\end{remark}

If $V$ admits a $\Gal_K$-stable locally free $\mathscr{A}$-submodule $V_0$ of rank $d$, we may construct $\D_K^{\dagger,s}(V)$ by working on a cover $\{\Spf\mathscr{A}_i\}$ of $\Spf\mathscr{A}$ trivializing $V_0$.  Since we know that the formation of $\D_K^{\dagger,s}(V)$ is functorial in maps $\mathscr{A}_i\rightarrow\mathscr{A}_i\widehat\otimes_{\mathscr{O}_E}\mathscr{A}_j$, we can glue the $\D_K^{\dagger,s}(V|_{\mathscr{A}_i[1/p]})$ to get a sheaf of $A\widehat\otimes\B_K^{\dagger,s}$-modules on $\Sp(A)$.  By~\cite[Proposition 3.10]{kl}, there is a finite locally free $A\widehat\otimes\B_K^{\dagger,s}$-module $\D_K^{\dagger,s}(V)$ which induces this sheaf.

By~\cite[Lemme 3.18]{chenevier}, for any family of Galois representations $\mathscr{V}$ over a quasi-compact quasi-separated rigid analytic space $X$, there is a formal model $\mathscr{X}$ of $X$ such that $\mathscr{V}$ admits a Galois-stable $\mathscr{O}_{\mathscr{X}}$-lattice.  In fact, $\D_K^{\dagger,s}(V)$ is independent of the formal model $\mathscr{A}$:
\begin{prop}
Let $A$ and $V$ be as above.  Then $\D_K^{\dagger,s}(V)$ is independent of $\mathscr{A}$.
\end{prop}
\begin{proof}
It suffices to check independence of the integral model for an admissible formal blowing up $\mathscr{X}'\rightarrow\Spf\mathscr{A}$ with center $\mathscr{I}=(f_0,\ldots f_m)$.  More precisely, if $V$ admits both a Galois-stable $\mathscr{A}$-lattice and a Galois-stable $\mathscr{A}'$-lattice, then $\Spf\mathscr{A}$ and $\Spf\mathscr{A}'$ have a common admissible blow-up $\mathscr{X}$, so it suffices to check that $\D_K^{\dagger,s}(V)$ yields the same result on the generic fibers of $\mathscr{X}$ and $\Spf\mathscr{A}$.  

Temporarily let $\D_{K,\mathscr{X}}^{\dagger,s}(V)$ denote the construction using the integral structure $\mathscr{X}$ and $\D_{K,\mathscr{A}}^{\dagger,s}(V)$ denote the construction using the integral structure $\mathscr{A}$.  Now $\mathscr{X}$ admits a covering by the formal schemes
$$\mathscr{X}_i:=\Spf\mathscr{A}\left\langle \frac{f_0}{f_i},\ldots,\frac{f_m}{f_i}\right\rangle$$
and the morphism $\mathscr{X}_i\rightarrow \Spf\mathscr{A}$ is induced by $\mathscr{A}\rightarrow \mathscr{A}\langle \frac{f_0}{f_i},\ldots,\frac{f_m}{f_i}\rangle$.  In other words,
$$\D_{K,\mathscr{X}}^{\dagger,s}(V)|_{\Sp(A\langle \frac{f_0}{f_i},\ldots,\frac{f_m}{f_i}\rangle)} = A\left\langle \frac{f_0}{f_i},\ldots,\frac{f_m}{f_i}\right\rangle\widehat\otimes_A\D_{K,\mathscr{A}}^{\dagger,s}(V)$$
It follows that $\D_{K,\mathscr{X}}^{\dagger,s}(V)=\D_{K,\mathscr{A}}^{\dagger,s}(V)$.
\end{proof}

\begin{cor}
The formation of $\D_{K}^{\dagger,s}(V)$ commutes with arbitrary base change on $A$.
\end{cor}
\begin{proof}
Let $A\rightarrow A'$ be a homomorphism of $E$-affinoid algebras, and let $X=\Sp(A)$ and $X'=\Sp(A')$.  We may choose an admissible formal $\mathscr{O}_E$-model $\mathscr{X}_1$ of $X$ such that the family of Galois representations on $X$ extends to a family of Galois representions $V_0$ over $\mathscr{X}_1$.  By Theorem~\cite[Theorem 4.1]{bosch-lut}, we can find a formal model $\mathscr{X}_2$ of $X'$ together with an admissible formal blow-up $\psi:\mathscr{X}_2\rightarrow\mathscr{X}'$ and a morphism $\psi:\mathscr{X}_2\rightarrow \mathscr{X}_1$ which induces $f$ on the generic fiber.  Thus, functoriality of $\D_{K}^{\dagger,s}(V)$ follows from functoriality in the integral model.
\end{proof}

Furthermore, it is straightforward to check the following functorial properties of the assignment $V\rightsquigarrow\D_K^{\dagger,s}(V)$:
\begin{prop}
Let $A$ be an $E$-affinoid algebra, and let $V$ and $V'$ be families of $\Gal_K$-representations over $A$ as above.  Then for $s\gg0$,
\begin{enumerate}
\item	$\D_{K}^{\dagger,s}(V\oplus V') =\D_{K}^{\dagger,s}(V)\oplus \D_{X,K}^{\dagger,s}(V')$
\item	$\D_{K}^{\dagger,s}(V\otimes_{A}V') =\D_{K}^{\dagger,s}(V)\otimes_{A\widehat\otimes\B_{K}^{\dagger,s}} \D_{K}^{\dagger,s}(V')$
\item\label{d-dag-hom}	$\D_{K}^{\dagger,s}(\Hom_{A}(V,V')) = \Hom_{A\widehat\otimes\B_{K}^{\dagger,s}}(\D_{K}^{\dagger,s}(V),\D_{K}^{\dagger,s}(V))$
\end{enumerate}
\end{prop}
In particular, the third part implies that the assignment $V\rightsquigarrow \D_K^{\dagger,s}(V)$ is a fully faithful functor.  We omit the details; they are written out in~\cite[\textsection 4.3]{bellovin}.

Combined with various refinements of \cite{kl} and \cite{liu}, we have the following corollary:
\begin{cor}
Let $X$ be a quasi-compact quasi-separated rigid analytic space over $E$, and let $\mathscr{V}$ be a rank-$d$ family of $\Gal_K$-representations over $X$.  
  Then for $s\gg0$, there is family of $(\varphi,\Gamma)$-modules $\mathscr{D}_{K,(\rig)}^{\dagger,(s)}(\mathscr{V})$ which has rank $d$ over $\mathscr{O}_X\widehat\otimes\B_{K,(\rig)}^{\dagger,(s)}$ such that the natural map
$$(\mathscr{O}_X\widehat\otimes\widetilde\B_{(\rig)}^{\dagger,(s)})\otimes_{\mathscr{O}_X\widehat\otimes\B_{K,(\rig)}^{\dagger,(s)}}\mathscr{D}_{K,(\rig)}^{\dagger,(s)}(\mathscr{V})\rightarrow (\mathscr{O}_X\widehat\otimes\widetilde\B_{(\rig)}^{\dagger,(s)})\otimes_A\mathscr{V}$$
is an isomorphism.

The formation of $\mathscr{D}_{K,(\rig)}^{\dagger,(s)}(\mathscr{V})$ is compatible with base change in $X$, and the assignment $\mathscr{V}\rightsquigarrow\mathscr{D}_{K,(\rig)}^{\dagger,(s)}(\mathscr{V})$ is a fully faithful functor compatible with direct sums, duals, and tensor products.
\end{cor}

\begin{remark}
We do not know whether there is an intrinsic characterization of $\D_K^{\dagger,s}(V)$ as a submodule of $(A\widehat\otimes\widetilde{\B}^{\dagger,s})\otimes_AV$, or an intrinsic characterization of $\D_{K,\rig}^{\dagger,s}(V)$ as a subsheaf of $\widetilde{\B}_{\rig}^{\dagger,s}\otimes_{A}V$.
\end{remark}

We pause to briefly discuss the objects we have constructed.  For simplicity, we temporarily assume that $A=\Q_p$.  Given a Galois representation $V$ of dimension $d$, we have constructed a module over $\B_{\rig,K}^{\dagger}$ of rank $d$, equipped with a semilinear Frobenius and a semilinear action of $\Gamma_K$.  There is some $s$ so that these structures descend to $\B_{\rig,K}^{\dagger,s}$, which is (non-canonically) the ring of analytic functions on the half-open annulus $0<v_p(X)\leq 1/e_Ks$; we think of $p^{-1/e_Ks(V)}$ as the minimal inner radius of an annulus to which everything descends.

Consider the analytic function $\log(1+X)\in\B_{\rig,K}^{\dagger,s}$.  It has infinitely many zeroes, at the points $X=1-\zeta_{p^n}$, which accumulate towards the boundary of the unit disk.  For a given $s$, we think of $n(s)$ as the minimal $n$ so that $X=1-\zeta_{p^n}$ lies in the annulus $0<v_p(X)\leq 1/e_Ks$.

Returning to our general setup, we use $(\varphi,\Gamma)$-modules to construct modules $\D_{\Sen}(V)$ and $\D_{\dif}(V)$, which we will use to study Hodge--Tate and de Rham representations.

Recall that there is a family of injections $i_n:\B_K^{\dagger,s}\rightarrow K_n[\![t]\!]$ for every $n\geq n(s)$, which extend to injections $i_n:\B_{\rig,K}^{\dagger,s}\rightarrow K_n[\![t]\!]$.  It is defined as the composition
$$\B_K^{\dagger,s_n}\subset \widetilde{\B}^{\dagger,s_n}\xrightarrow{\varphi^{-n}}\widetilde{\B}^{\dagger,s_0}\rightarrow\B_{\dR}^+$$
where the last map sends $\sum p^k[x_k]$ (viewed as an element of $\widetilde{\B}^+$) to its image in $\B_{\dR}^+$, and it factors through $K_n[\![t]\!]$.

\begin{definition}
Let $X$ be a quasi-compact quasi-separated rigid analytic space and let $\mathscr{V}$ be a locally free $\mathscr{O}_X$-module of rank $d$ equipped with a continuous $\mathscr{O}_X$-linear action of $\Gal_K$.  Then by the preceding discussion, there is a finite extension $L/K$ such that $\mathscr{D}_{\rig,L}^{\dagger,s}(\mathscr{V})$ is $X$-locally free.
\begin{enumerate}
\item	For any $n\geq n(s)$, we put $\mathscr{D}_{\Sen}^{L_n}(\mathscr{V}):=\mathscr{D}_{L}^{\dagger,s}(\mathscr{V})\otimes_{\mathscr{B}_{L}^{\dagger,s}}^{i_n}(\mathscr{O}_X\otimes_{\Q_p}L_n)$.  Then $\mathscr{D}_{\Sen}^{L_n}(\mathscr{V})$ is an $X$-locally free $\mathscr{O}_X\otimes L_n$-module of rank $d$ with a linear action of $\Gamma_{L_n}$.
\item	For any $n\geq n(s)$, we put $\mathscr{D}_{\dif}^{L_n,+}(\mathscr{V}):=\mathscr{D}_{L}^{\dagger,s}(\mathscr{V})\otimes_{\mathscr{B}_{L}^{\dagger,s}}^{i_n}(\mathscr{O}_X\widehat\otimes_{\Q_p}L_n[\![t]\!])$, and we define $\mathscr{D}_{\dif}^{L_n}(\mathscr{V}):=\mathscr{D}_{\dif}^{L_n,+}(\mathscr{V})[1/t]$.  Then $\mathscr{D}_{\dif}^{L_n,+}(\mathscr{V})$ is an $X$-locally free $\mathscr{O}_X\widehat\otimes_{\Q_p} L_n[\![t]\!]$-module of rank $d$ with a continuous semi-linear action of $\Gamma_{L_n}$, where $L_n[\![t]\!]$ is equipped with its natural Fr\'echet topology (i.e. as the inverse limit $\varprojlim_kL_n[t]/t^k$ of finite-dimensional $\Q_p$-vector spaces).  Here $\Gamma_{L_n}$ acts trivially on $L_n$, but acts on $t$ via $\gamma\cdot t=\chi(\gamma)t$.
\end{enumerate}
\end{definition}

\begin{remark}
Both $\mathscr{D}_{\Sen}^{L_n}(\mathscr{V})$ and $\mathscr{D}_{\dif}^{L_n,+}(\mathscr{V})$ actually have semi-linear actions of all of $\Gal_K$, ultimately by $\Gal_K$-stability of $\D_{L,n}^{\dagger,s_0}(V)$ inside $(A\widehat\otimes\widetilde{\B}^{\dagger,s_0})\otimes_AV$.  We define $\mathscr{D}_{\Sen}^{K_n}(\mathscr{V}):=\mathscr{D}_{\Sen}^{L_n}(\mathscr{V})^{H_K}$ and $\mathscr{D}_{\dif}^{K_n,+}(\mathscr{V}):=\mathscr{D}_{\dif}^{L_n,+}(\mathscr{V})^{H_K}$.
\end{remark}

\begin{remark}
If $A$ is a general $\Q_p$-Banach algebra with valuation ring $\mathscr{A}$, $V_0$ is a free $\mathscr{A}$-module of rank $d$ equipped with a continuous $\mathscr{A}$-linear action of $\Gal_K$, and $V:=V_0[1/p]$, then we may similarly define $\D_{\Sen}^{L_n}(V):=\D_{L}^{\dagger,s}(V)\otimes_{A\widehat\otimes\B_{L}^{\dagger,s}}^{i_n}(A\otimes_{\Q_p}L_n)$ and $\D_{\dif}^{L_n}(V):=\D_{L}^{\dagger,s}(V)\otimes_{A\widehat\otimes\B_{L}^{\dagger,s}}^{i_n}(A\widehat\otimes_{\Q_p}L_n[\![t]\!])$.
\end{remark}

\begin{remark}
It is also possible to construct $\D_{\Sen}^{L_n}(V)$ directly by means of Tate-Sen theory applied to semi-linear representations of $\Gal_K$ on finite $X$-locally free $\mathscr{O}_X\widehat\otimes\C_K$-modules.  In particular, there is a constant $c_3$ (fixed at the outset such that $1/(p-1)<c_3<\frac{1}{2}\ord_p(12p)$) such that $\D_{\Sen}^{L_n}(V)$ admits a \emph{$c_3$-fixed basis}, i.e., there is a basis $\{\ve{e}_1,\ldots,\ve{e}_d\}$ such that if $U_\gamma$ is the matrix of the action of a topological generator $\gamma$ of $\Gamma_n$, then every entry of $U_\gamma-\rm{Id}$ has $p$-adic valuation greater than $c_3$.
We exploit this point of view in the proof of Theorem~\ref{dsen-dht}.
\end{remark}

\begin{prop}
\begin{enumerate}
\item	$\mathscr{D}_{\Sen}^{L_n}(\mathscr{V})$ is an $X$-locally free $\mathscr{O}_X\otimes L_n$-module of rank $d$, and we have a Galois-equivariant isomorphism
$$\C_K\widehat\otimes_{L_n}\mathscr{D}_{\Sen}^{L_n}(\mathscr{V})\rightarrow \C_K\widehat\otimes_{\Q_p}V$$
\item	$\mathscr{D}_{\dif}^{L_n,+}(\mathscr{V})$ is an $X$-locally free $\mathscr{O}_X\widehat\otimes L_n[\![t]\!]$-module of rank $d$, and we have a Galois-equivariant isomorphism
$$(\mathscr{O}_X\widehat\otimes\B_{\dR}^+)\otimes_{\mathscr{O}_X\widehat\otimes L_n[\![t]\!]}\mathscr{D}_{\dif}^{L_n,+}(\mathscr{V})\rightarrow (\mathscr{O}_X\widehat\otimes\B_{\dR}^+)\otimes_{\mathscr{O}_X}\mathscr{V}$$
which respects the filtrations on each side.
\end{enumerate}
\end{prop}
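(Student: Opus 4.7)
The plan is to deduce both statements from the comparison isomorphism
$$(\mathscr{O}_X\widehat\otimes\widetilde{\B}^{\dagger,s_n})\otimes_{\mathscr{B}_{X,L}^{\dagger,s_n}}\mathscr{D}_L^{\dagger,s_n}(\mathscr{V})\xrightarrow{\sim} (\mathscr{O}_X\widehat\otimes\widetilde{\B}^{\dagger,s_n})\otimes_{\mathscr{O}_X}\mathscr{V}$$
established earlier for the $(\varphi,\Gamma)$-module $\mathscr{D}_L^{\dagger,s_n}(\mathscr{V})$, by base changing the coefficient ring along the Galois-equivariant composition $\widetilde{\B}^{\dagger,s_n}\xrightarrow{\varphi^{-n}}\widetilde{\B}^{\dagger,s_0}\to\B_{\dR}^+$ and, for the Sen statement, further reducing modulo $t$ using $\B_{\dR}^+/t\xrightarrow{\sim}\C_K$. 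First I would reduce to the affinoid case $X=\Sp(A)$ with $V:=\Gamma(X,\mathscr{V})$ admitting a Galois-stable free $\mathscr{A}$-lattice for some formal $\mathscr{O}_E$-model $\mathscr{A}$ of $A$, chosen together with $L/K$ and $n$ so that $\D_L^{\dagger,s_n}(V)$ is free of rank $d$ over $A\widehat\otimes\B_L^{\dagger,s_n}$.

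Once in this situation, the definitions $\D_{\dif}^{L_n,+}(V):=\D_L^{\dagger,s_n}(V)\otimes^{i_n}_{A\widehat\otimes\B_L^{\dagger,s_n}}(A\widehat\otimes L_n[\![t]\!])$ and $\D_{\Sen}^{L_n}(V):=\D_{\dif}^{L_n,+}(V)/t$ exhibit these modules as free of rank $d$ over their respective coefficient rings; passing to a cover of $X$ trivializing $\mathscr{D}_L^{\dagger,s_n}(\mathscr{V})$ then yields the local freeness assertions. For the comparison isomorphisms, I would observe that $i_n:\B_L^{\dagger,s_n}\hookrightarrow K_n[\![t]\!]\hookrightarrow\B_{\dR}^+$ factors through the Galois-equivariant composition on the perfect side displayed above, so that base changing the displayed isomorphism along the induced ring map $A\widehat\otimes\widetilde{\B}^{\dagger,s_n}\to A\widehat\otimes\B_{\dR}^+$ produces the desired Galois-equivariant isomorphism
$$(\mathscr{O}_X\widehat\otimes\B_{\dR}^+)\otimes_{\mathscr{O}_X\widehat\otimes L_n[\![t]\!]}\mathscr{D}_{\dif}^{L_n,+}(\mathscr{V})\xrightarrow{\sim}(\mathscr{O}_X\widehat\otimes\B_{\dR}^+)\otimes_{\mathscr{O}_X}\mathscr{V}$$
and reducing modulo $t$ produces the Sen comparison. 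Compatibility with the filtrations is automatic since $i_n$ sends $t$ to $t$ and the $t$-adic filtration on $L_n[\![t]\!]$ coincides with the subspace filtration induced from $\B_{\dR}^+$.

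The main technical obstacle will be verifying that these base-change operations commute with completed tensor products in the expected way, since $\B_{\dR}^+$ and $L_n[\![t]\!]$ are Fr\'echet (not Banach) in their natural topologies. I would handle this by fixing an $A\widehat\otimes\B_L^{\dagger,s_n}$-basis of $\D_L^{\dagger,s_n}(V)$ and interpreting each base change coordinatewise via Lemma~\ref{ortho-tensor}, using that $A$ is countable type (hence potentially orthonormalizable) over $\Q_p$ so that the standard results of Appendix~\ref{functional} ensure completed tensor products with $A$ preserve closed embeddings and strict exact sequences of Fr\'echet spaces. This reduces the required identifications to corresponding facts about the coefficient rings themselves, where they are immediate from the constructions of $i_n$ and the Fontaine map $\theta$.
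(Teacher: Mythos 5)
Your proposal follows essentially the same route as the paper's proof: both start from the comparison isomorphism $\widetilde{\mathscr{B}}^{\dagger,s}\otimes_{\mathscr{B}_L^{\dagger,s}}\mathscr{D}_L^{\dagger,s}(\mathscr{V})\xrightarrow{\sim}\widetilde{\mathscr{B}}^{\dagger,s}\otimes_{\mathscr{O}_X}\mathscr{V}$, observe that $i_n:\B_L^{\dagger,s}\to L_n[\![t]\!]\to\B_{\dR}^+$ agrees with the composition $\B_L^{\dagger,s}\subset\widetilde{\B}^{\dagger,s}\xrightarrow{\varphi^{-n}}\widetilde{\B}^{\dagger,p^{-n}s}\to\B_{\dR}^+$, and extend scalars accordingly (reducing mod $t$ for the Sen statement). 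You correctly fill in the local-freeness and filtration-compatibility details, and the use of potential orthonormalizability of $A$ to handle completed tensor products with Fr\'echet rings is the right technical ingredient.
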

\begin{proof}
For both of these, the starting point is the isomorphism
$$\widetilde{\mathscr{B}}^{\dagger,s}\otimes_{\mathscr{B}_{L}^{\dagger,s}}\mathscr{D}_{L}^{\dagger,s}(\mathscr{V})\rightarrow \widetilde{\mathscr{B}}^{\dagger,s}\otimes_{\mathscr{O}_X}\mathscr{V}$$
The composition 
$$\B_L^{\dagger,s}\xrightarrow{i_n}L_n[\![t]\!]\rightarrow \B_{\dR}^+$$ is the same as the composition
$$\B_L^{\dagger,s}\subset \widetilde{\B}^{\dagger,s}\xrightarrow{\varphi^{-n}}\widetilde{\B}^{\dagger,p^{-n}s}\rightarrow\B_{\dR}^+$$
by definition, so extending scalars on each side from $\widetilde{\mathscr{B}}^{\dagger,s}$ to $\mathscr{B}_{X,\dR}^+$ or $\mathscr{O}_X\widehat\otimes_{\Q_p}\C_K$ gives the desired result.
\end{proof}

\section{Cohomology of procyclic groups}\label{cohomology}

\subsection{Overview}

Let $G$ be a profinite group with finite $p$-cohomological dimension $e$, such that $\H^i(G,T)$ has finite cardinality for all finite $p$-torsion discrete $G$-modules $T$.  Let $M$ be a topological abelian group equipped with a continuous action of $G$.  We consider the continuous cochain complex $C^\bullet(G,M)$ and its cohomology groups $\H^\bullet(G,M)$.  Specifically, we define the $n$-cochains $C^n(G,M)$ to be the set of continuous functions 
\[	f:G^n\rightarrow M	\]
and we define the differential $d^n:C^n(G,M)\rightarrow C^{n+1}(G,M)$ by 
\begin{equation*}\begin{split}d^n(f)(g_1,\ldots,g_{n+1}):=g_1\cdot &f(g_2,\ldots,g_{n+1}) \\
&+\sum_{i=1}^n(-1)^if(g_1,\ldots,g_{i-1},g_ig_{i+1},g_{i+2},\ldots,g_{n+1}) \\
&+ (-1)^{n+1}f(g_1,\ldots,g_n)\end{split}\end{equation*}
Thus, we get a complex
\[	C^\bullet(G,M):0\rightarrow M=C^0(G,M)\rightarrow C^1(G,M)\rightarrow\cdots	\]
and we define $\H^n(G,M):=\ker d_n/\im d_{n-1}$.  If $M=\varinjlim_{i\in I}M_i$ is the filtered colimit of topological abelian groups equipped with continuous actions of $G$ (compatible with the transition maps), we define $\H^n(G,M):=\varinjlim_{i\in I}\H^n(G,M_i)$.

Suppose now that $M$ is actually a $\Q_p$-Banach space and the action of $G$ on $M$ is $\Q_p$-linear.  Exact sequences of $\Q_p$-Banach spaces of are $\Q_p$-linearly split, so a $G$-equivariant exact sequence of $\Q_p$-Banach spaces
\[	0\rightarrow M'\rightarrow M\rightarrow M''\rightarrow 0	\]
admits a continuous $\Q_p$-linear section $M''\rightarrow M$.  Therefore, there is a long exact sequence in cohomology
\[	0\rightarrow \H^0(G,M')\rightarrow \H^0(G,M)\rightarrow \H^0(G,M'')\rightarrow \H^1(G,M')\rightarrow\cdots	\]

If $A$ is an $E$-affinoid algebra and $M$ is a finite flat $A$-module and the action of $G$ is $A$-linear, Pottharst has shown that the cohomology groups $\H^i(G,M)$ satisfy a number of good properties.  In particular, $\H^i(G,M)$ is a finite $A$-module for all $i\geq 0$, by~\cite[Theorem 1.2]{pottharst}, and $\H^i(G,M)=0$ whenever $i>e$, by~\cite[Proposition 1.1]{pottharst}. 

Crucially, the finiteness of the cohomology groups $\H^i(G,M)$ makes it possible to deduce the following ``cohomology and base change'' result:
\begin{thm}[{\cite[Theorem 1.4]{pottharst}}]\label{base-change}
Let $M$ be a finite flat $A$-module.  Then if $A'$ is an $A$-affinoid algebra, there is a base change spectral sequence of $A'$-modules
\[	{\rm{E}}_2^{ij}=\Tor_{-i}^A(\H^j(G,M),A')\Rightarrow \H^{i+j}(G,M\otimes_AA')	\]
in which the edge map ${\rm{E}}_2^{0,j}=\H^j(G,M)\otimes_AA'\rightarrow \H^j(G,M\otimes_AA')$ is the natural map.
\end{thm}  
In particular, if $A'$ is flat over $A$, then the formation of continuous group cohomology commutes with base change.

\begin{remark}
The base change spectral sequence is a consequence of an isomorphism 
\[	C^{\bullet}(G,M)\otimes_A^{\mathbf{L}}A'\rightarrow C^{\bullet}(G,M\otimes_AA')	\]
in the bounded derived category $\D_{\rm{coh}}^b(A')$ of finite $A'$-modules.
\end{remark}

We will be primarily concerned with the Galois cohomology of groups $G$ of $p$-cohomological dimension $1$.  In that case, the base change theorem takes a particularly nice form.
\begin{cor}\label{coh-dim-1}
Suppose $G$ has $p$-cohomological dimension $1$.  Then
\begin{enumerate}
\item	The formation of $\H^1(\Gamma,M)$ commutes with affinoid base change on $A$.
\item	The spectral sequence degenerates at the $\rm{E}_3$ page
\item	There is a low-degree exact sequence
\begin{equation*}\begin{split}0\rightarrow \H^0(\Gamma,M)\otimes_AA'/\Tor_2^A&(\H^1(\Gamma,M),A')\rightarrow \H^0(\Gamma,M\otimes_AA')	\\
&\rightarrow \Tor_1^A(\H^1(\Gamma,M),A')\rightarrow 0\end{split}\end{equation*}
\end{enumerate}
\end{cor}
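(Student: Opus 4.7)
The plan is to apply the base change spectral sequence of Theorem~\ref{base-change} and to exploit the vanishing $\H^j(G, N) = 0$ for $j \geq 2$, which holds for every finite flat $A$-module $N$ by the cohomological dimension hypothesis together with \cite[Proposition 1.1]{pottharst}. This concentrates the $E_2$-page in the two rows $j = 0$ and $j = 1$ (with $i \leq 0$ throughout), so all three parts should drop out from a direct inspection of the spectral sequence.

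First I would determine which differentials can be nonzero. The differential $d_r$ has bidegree $(r, 1-r)$; since the $E_2$-page lives in rows $j \in \{0, 1\}$, any outgoing $d_r$ with $r \geq 3$ lands in row $j - r + 1 \leq -1$ and hence vanishes. For $r = 2$, outgoing differentials from row $j = 0$ land in row $-1$ and vanish, while outgoing differentials from row $j = 1$ land in row $0$ with target nonzero only when $i + 2 \leq 0$. Thus the only potentially nonzero differentials are
$$d_2: \Tor_n^A(\H^1(G, M), A') \longrightarrow \Tor_{n-2}^A(\H^0(G, M), A') \qquad (n \geq 2),$$
which gives part (2): degeneration at $E_3$.

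For part (1), the abutment $\H^1(G, M \otimes_A A')$ is filtered by graded pieces $E_\infty^{i, 1-i}$ with $i \leq 0$. Every piece with $i \leq -1$ lives in row $j \geq 2$ and already vanishes at $E_2$. The remaining piece $E_\infty^{0, 1}$ has no incoming $d_2$ (the source $(-2, 2)$ is zero, since $\H^2 = 0$) and no outgoing $d_2$ (the target $(2, 0)$ has $i > 0$), so $E_\infty^{0,1} = E_2^{0,1} = \H^1(G, M) \otimes_A A'$, and the edge map of Theorem~\ref{base-change} is the claimed isomorphism.

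For part (3), the only nonzero graded pieces of $\H^0(G, M \otimes_A A')$ are $E_\infty^{0, 0}$ and $E_\infty^{-1, 1}$; every other $E_\infty^{i, -i}$ with $i \leq -2$ has $j = -i \geq 2$ and vanishes. From the differential analysis, $E_\infty^{0, 0}$ is the cokernel of the incoming $d_2: \Tor_2^A(\H^1(G, M), A') \to \H^0(G, M) \otimes_A A'$ (its outgoing $d_2$ is zero), while $E_\infty^{-1, 1} = E_2^{-1, 1} = \Tor_1^A(\H^1(G, M), A')$, both of whose $d_2$ arrows vanish by the same token. Reading off the resulting two-step filtration yields exactly the required short exact sequence. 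The only conceivable obstacle is keeping the bidegree bookkeeping straight; once the $E_2$-page is visibly confined to two rows the whole argument is a mechanical consequence of Theorem~\ref{base-change}.
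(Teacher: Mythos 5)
Your argument is correct and follows essentially the same route as the paper's proof: both observe that the $E_2$-page is confined to the two rows $j=0,1$, identify the only possibly nonzero differential as the $d_2$ from $\Tor_n(\H^1,A')$ into $\Tor_{n-2}(\H^0,A')$, and read parts (1)--(3) off from the resulting degeneration at $E_3$. The paper simply displays the $E_2$-page pictorially rather than doing the bidegree bookkeeping in prose, but the content is identical.
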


\subsection{Cohomology of semi-linear $G$-modules}

We will need to extend some of Pottharst's results.  Throughout this subsection, let $M$ be a finite flat $A[\![t]\!]$-module, equipped with its natural Fr\'echet topology (i.e., as the inverse limit $\varprojlim_k M/t^k$ of finite Banach $A$-modules), and suppose $M$ is equipped with a continuous $A[\![t]\!]$-semi-linear $G$-action, where $G$ acts on $A[\![t]\!]$ so that the action on $A$ is trivial and the action preserves the $t$-adic filtration.
\begin{prop}\label{h1-frechet}
Let $M$ be as above.  Then 
\begin{enumerate}
\item	$\H^i(G,M)=0$ for $i>e$, and
\item	if $A\rightarrow A'$ is a quotient of affinoid algebras, the formation of $\H^e(G,M)$ commutes with base change to $A'$, i.e., the natural map $\H^e(G,M)\otimes_AA'\rightarrow \H^e(G,M\widehat\otimes_AA')$ is an isomorphism.
\end{enumerate}
\end{prop}
\begin{proof}
\begin{enumerate}
\item	For each quotient $M/t^k$, we have the continuous cochain complex $C^\bullet(G,M/t^k)$, and the the transition maps $C^\bullet(G,M/t^{k+n})\rightarrow C^\bullet(G,M/t^k)$ are surjective.  Therefore, by~\cite[Theorem 3.5.8]{weibel}, for each $i$ we have an exact sequence
\[	0\rightarrow{\varprojlim_k}^1\H^{i-1}(G,M/t^k)\rightarrow \H^i(G,M)\rightarrow \varprojlim_k \H^i(G,M/t^k)\rightarrow 0	\]
Then for $i>e+1$, we have $\H^i(G,M/t^k)=0$ and $\H^{i-1}(G,M/t^k)=0$, by~\cite[Theorem 1.1(4)]{pottharst}.  Therefore, $\H^i(G,M)=0$.  If $i=e+1$, $\H^i(G,M/t^k)=0$ by~\cite[Theorem 1.1(4)]{pottharst}.  Then we use the long exact sequence associated to 
\[	0\rightarrow C^\bullet(G,t^nM/t^{k+n})\rightarrow C^\bullet(G,M/t^{k+n})\rightarrow C^\bullet(G,M/t^k)\rightarrow 0	\]
and the vanishing of $\H^{i}(G,t^nM/t^{k+n})$ to see that $\{\H^i(G,M/t^k)\}_k$ has surjective transition maps.  Therefore, $\varprojlim_k^1\H^{i-1}(G,M/t^k)$ and $\H^i(G,M)$ vanish as well.
\item	Let $A'=A/J$ be a quotient of $A$.  Since $A'$ is a finitely presented $A$-module, $-\otimes_AA'$ commutes with taking inverse limits with surjective transition maps.  It follows that
\[	C^\bullet(G,M)\otimes_AA'\xrightarrow{\simeq}\varprojlim_k\left(C^\bullet(G,M/t^k)\otimes_AA'\right)	\]
But the natural map $C^\bullet(G,M/t^k)\otimes_AA'\rightarrow C^\bullet(G,(M/t^k)\otimes_AA')$ is a quasi-isomorphism, by~\cite[Lemma 1.5]{pottharst}, so we obtain a quasi-isomorphism
\[	C^\bullet(G,M)\otimes_AA'\rightarrow \varprojlim_k\left(C^\bullet(G,(M/t^k)\otimes_AA')\right) = C^\bullet(G,M\widehat\otimes_AA')	\]
As $A'$ is finitely presented as an $A$-module, we may find a projective resolution $D^\bullet\rightarrow A'$.  Because the terms of $C^\bullet(G,M)$ are $A$-flat, the induced map $C^\bullet(G,M)\otimes_AD^\bullet\rightarrow C^\bullet(G,M)\otimes_AA'$ is a quasi-isomorphism, and we obtain a second-quadrant spectral sequence abutting to the homology of $C^\bullet(G,M\widehat\otimes_AA')$.  Consideration of the $E_2$-page yields the desired result.
\end{enumerate}
\end{proof}

\begin{prop}\label{h0-stab}
Let the notation be as above, and suppose in addition that $G$ is procyclic with topological generator $\gamma$, and that $\H^0(G,\gr^\bullet M)$ is $A$-finite, where $\gr^\bullet M$ is the associated graded module to $M$.  Then there is some $N$ such that $\H^0(G,M)\xrightarrow{\sim}\H^0(G,M/t^kM)$ for any $k\geq N$.
\end{prop}

Before we prove Proposition~\ref{h0-stab}, we record a few useful consequences about $\H^1(G,M)$.
\begin{cor}\label{h0-stab-cor}
Let the notation be as in Proposition~\ref{h0-stab}.  Then
\begin{enumerate}
\item\label{h0-stab-cor-1}	$\H^1(G,M)=\varprojlim_{k}\H^1(G,M/t^k)$, 
\item\label{h0-stab-cor-2}	for any $k\geq N$, the sequence
\[	0\rightarrow \H^1(G,t^kM)\rightarrow \H^1(G,M)\rightarrow \H^1(G,M/t^k)\rightarrow 0	\]
is exact, 
\item\label{h0-stab-cor-3}	for any $k\in\Z$ and $k'\in \N$, the kernel of the natural map $\H^1(G,t^{k+k'}M)\rightarrow \H^1(G,t^{k}M)$ is a quotient of the $A$-finite module $\H^0(G,t^kM/t^{k+k'})$, the cokernel is the $A$-finite module $\H^1(G,t^kM/t^{k+k'})$, and for all but finitely many $k, k'$, it is an injection, and
\item\label{h0-stab-cor-4}	$\varprojlim_k\H^1(G,t^kM)=0$
\end{enumerate}
\end{cor}
\begin{proof}
\begin{enumerate}
\item	Proposition~\ref{h0-stab} implies that the projective system $\{\H^0(G,M/t^k)\}_{k\geq 0}$ satisfies the Mittag-Leffler condition, so $\varprojlim_k^1 \H^0(G,M/t^k)=0$.  Together with the exact sequence
\[	0\rightarrow{\varprojlim_k}^1\H^{0}(G,M/t^k)\rightarrow \H^1(G,M)\rightarrow \varprojlim_k \H^1(G,M/t^k)\rightarrow 0	\]
this yields the desired result.
\item	For each $n\geq 0$, the exact sequence of Banach $A$-modules
\[	0\rightarrow t^kM/t^{k+n}\rightarrow M/t^{k+n}\rightarrow M/t^k\rightarrow 0	\]
induces a long exact sequence in cohomology.  If $k\geq N$, then Proposition~\ref{h0-stab} implies that $\H^0(G,M/t^{k+n})\rightarrow \H^0(G,M/t^k)$ is a surjection.  Therefore, the connecting homomorphism $\delta:\H^0(G,M/t^k)\rightarrow \H^1(G,t^kM/t^{k+n})$ is zero and 
\[	0\rightarrow \H^1(G,t^kM/t^{k+n})\rightarrow \H^1(G,M/t^{k+n})\rightarrow \H^1(G,M/t^k)\rightarrow 0	\]
is exact.  The first part of this corollary (applied to $\H^1(G,t^kM)$) implies that $\H^1(G,t^kM)=\varprojlim_n\H^1(G,t^kM/t^{k+n})$, and since the inverse system $\{\H^1(G,t^kM/t^{k+n})\}_n$ has surjective transition maps, we obtain an exact sequence
\[	0\rightarrow \H^1(G,t^kM)\rightarrow \H^1(G,M)\rightarrow \H^1(G,M/t^k)\rightarrow 0	\]
as desired.
\item	For every $n\geq 0$, the exact sequence 
\[	0\rightarrow t^{k+k'}M/t^{k+k'+n}\rightarrow t^kM/t^{k+k'+n}\rightarrow t^kM/t^{k+k'}\rightarrow 0	\]
induces an exact sequence
\[	\H^0(G,t^kM/t^{k+k'})\rightarrow \H^1(G,t^{k+k'}M/t^{k+k'+n})\rightarrow \H^1(G,t^kM/t^{k+k'+n})\rightarrow \H^1(G,t^kM/t^{k+k'})\rightarrow 0	\]
If $k\gg0$ or $k\ll0$, and $k'\gg0$, $\H^0(G,t^kM/t^{k+k'})=0$ and we may take the projective limit as $n\rightarrow\infty$ to obtain an injection $\H^1(G,t^{k+1}M)\rightarrow \H^1(G,t^kM)$.

Otherwise, let
\[	K_{k,k',n}:=\ker\left(\H^1(G,t^{k+k'}M/t^{k+k'+n})\rightarrow \H^1(G,t^kM/t^{k+k'+n})\right) = \H^0(G,t^kM/t^{k+k'})/\H^0(G,t^kM/t^{k+k'+n})	\]
To show that $\varprojlim_n K_{k,k',n}$ is $A$-finite, it suffices to show that the natural map
\[	\H^0(G,t^kM/t^{k+k'})\rightarrow \varprojlim_n K_{k,k',n}	\]
is a surjection.  But Proposition~\ref{h0-stab} (applied to $t^kM$) implies that $\{\H^0(G,t^kM/t^{k+k'+n})\}_n$ is stationary for $n\gg0$, so $\{\H^0(G,t^kM/t^{k+k'+n})/\H^0(G,t^{k+k'}M/t^{k+k'+n})\}_n$ satisfies the Mittag-Leffler condition.  This implies that
\[	0\rightarrow \varprojlim_n\left(\H^0(G,t^kM/t^{k+k'+n})/\H^0(G,t^{k+k'}M/t^{k+k'+n})\right)\rightarrow \H^0(G,t^kM/t^{k+k'})\rightarrow \varprojlim_n K_{k,k',n} \rightarrow 0	\]
is exact.

To identify the cokernel of $\H^1(G,t^{k+k'}M)\rightarrow \H^1(G,t^kM)$, we again consider the exact sequences
\[	0\rightarrow K_{k,k',n}\rightarrow \H^1(G,t^{k+k'}M/t^{k+k'+n})\rightarrow \H^1(G,t^kM/t^{k+k'+n})\rightarrow \H^1(G,t^kM/t^{k+k'})\rightarrow 0	\]
for $n\geq 0$.  As $n$ varies, the natural transition maps are surjections, so we see that the cokernel $\coker\left(\H^1(G,t^{k+k'}M)\rightarrow \H^1(G,t^kM)\right)$ is identified with the $A$-finite module $\H^1(G,t^kM/t^{k+k'})$.
\item	By Proposition~\ref{h0-stab}, there is some $N$ such that for $k\geq N$ and any $n\geq 0$, the sequence
\[	0\rightarrow \H^1(G,t^kM/t^{k+n})\rightarrow \H^1(G,M/t^{k+n})\rightarrow \H^1(G,M/t^k)\rightarrow 0	\]
is exact.  Taking the projective limit as $n\rightarrow \infty$ and applying part~\ref{h0-stab-cor-1} to $M$ and $t^kM$, we obtain an exact sequence
\[	0\rightarrow \H^1(G,t^kM)\rightarrow \H^1(G,M)\rightarrow \H^1(G,M/t^k)	\]
Taking the projective limit as $k\rightarrow \infty$, we obtain an exact sequence
\[	0\rightarrow \varprojlim_k\H^1(G,t^kM)\rightarrow \H^1(G,M)\rightarrow \varprojlim_k\H^1(G,M/t^k)	\]
But the map $\H^1(G,M)\rightarrow \varprojlim_k\H^1(G,M/t^k)$ is an isomorphism by part~\ref{h0-stab-cor-1}, so $\varprojlim_k\H^1(G,t^kM)=0$.
\end{enumerate}
\end{proof}

\begin{remark}
If we knew that surjections of $\Q_p$-Fr\'echet spaces admit continuous sections (so that short exact sequences of $G$-representations would yield long exact sequences in cohomology), the proof of Corollary~\ref{h0-stab-cor} could be simplified in a number of places.  As we are not aware of a result to that effect, we are instead forced to use the Mittag-Leffler property proved in Proposition~\ref{h0-stab}.
\end{remark}

We now turn to the proof of Proposition~\ref{h0-stab}; we will require a number of preliminaries.  We observe at the outset that the cohomology group $\H^0(G,M)$ is computed by $\H^0$ of the complex
\[	C_{\rm{alg}}^\bullet:0\rightarrow M\xrightarrow{\gamma-1}M\rightarrow 0	\]
For the remainder of this section, we therefore take $\H^\bullet(G,M)$ to be the homology of $C_{\rm{alg}}^\bullet$.  It is purely algebraic, with no input from the topology of $G$ or $M$.

\begin{lemma}\label{h0-inj}
Let $M$ and $G$ be as above.  Then
\begin{enumerate}
\item	there is some $N_0$ such that $t^kM/t^{k+1}M$ has no non-zero $G$-invariants for any $k\geq N_0$,
\item	for any $k\geq N_0$, $(t^kM)^{G=1}=\{0\}$,
\item	for any $k\geq N_0$, the natural maps 
$$(M/t^{k+1}M)^{G=1}\rightarrow (M/t^kM)^{G=1}\text{ and }M^{G=1}\rightarrow (M/t^kM)^{G=1}$$
are injections, and
\item	$\H^0(G,M)$ is finite.
\end{enumerate}
\end{lemma}
\begin{proof}
\begin{enumerate}
\item	This follows from the finiteness of $\H^0(G,\gr^\bullet M)$.
\item	Since $t^kM = \varprojlim_h t^kM/t^{k+h}M$ and taking $G$-invariants is left-exact, it is enought to show that for all $h\geq 0$, $(t^kM/t^{k+h}M)^{G=1}=0$.  But this follows from repeated applications of the exact sequence
\[	0\rightarrow (t^{k'+1}M/t^{k+h}M)^{G=1}\rightarrow (t^{k'}M/t^{k+h}M)^{G=1}\rightarrow (t^{k'}M/t^{k'+1}M)^{G=1}=0	\]
for $k\leq k'\leq k+h$.
\item	We have an exact sequence
\[	0\rightarrow (t^kM/t^{k+1}M)^{G=1}\rightarrow (M/t^{k+1}M)^{G=1}\rightarrow (M/t^{k}M)^{G=1}	\]
By the choice of $k$, $(t^kM/t^{k+1}M)^{G=1}=0$.  Similarly, we have an exact sequence
\[	0\rightarrow (t^kM)^{G=1}\rightarrow (M)^{G=1}\rightarrow (M/t^{k}M)^{G=1}	\]
By the choice of $k$, $(t^kM)^{G=1}=\{0\}$.
\item	We have seen that $\H^0(G,M)$ injects into $(M/t^{k}M)^{G=1}$ for sufficiently large $k$.  But $M/t^kM$ is a finite $A$-module, so $\H^0(G,M)$ is $A$-finite as well.
\end{enumerate}
\end{proof}

\begin{lemma}\label{ht-local-bound}
Let $M$ and $G$ be as above, and let
$$0\rightarrow I\rightarrow B\rightarrow B'\rightarrow 0$$ 
be a small extension of artin local $A$-algebras, and let $\mathfrak{m}_B$ be the maximal ideal of $B$.  Let $M_B$, $M_{B/\mathfrak{m}_B}$, and $M_{B'}$ denote $M\otimes_AB$, $M\otimes_AB/\mathfrak{m}_B$, and $M\otimes_AB'$, respectively.
Suppose that the natural maps
$$\H^0(G,M_{B/\mathfrak{m}_B}/t^{k+1})\rightarrow \H^0(G,M_{B/\mathfrak{m}_B}/t^k)\text{ and }\H^0(G,M_{B'}/t^{k+1})\rightarrow \H^0(G,M_{B'}/t^{k})$$ 
are isomorphisms.  Then $\H^0(G,M_B/t^{k+1})\rightarrow \H^0(G,M_B/t^{k})$ is an isomorphism, as well.
\end{lemma}

\begin{remark}
It is crucial for our application of Lemma~\ref{ht-local-bound} in the proof of Proposition~\ref{h0-stab} that $B$ and $B'$ are not assumed to be $\Q_p$-finite.  This is why we are working with the ``algebraic'' cohomology groups computed by the complex $0\rightarrow M\xrightarrow{\gamma-1}M\rightarrow 0$.
\end{remark}

\begin{proof}
Since $0\rightarrow I\rightarrow B\rightarrow B'\rightarrow 0$ is a small extension, $I$ is a principal ideal killed by $\mathfrak{m}_B$.  It follows that the complex 
$$0\rightarrow IM_B/t^{k+1}\xrightarrow{\gamma-1}IM_B/t^{k+1}\rightarrow 0$$
 (resp. $0\rightarrow IM_B/t^{k}\xrightarrow{\gamma-1}IM_B/t^{k}\rightarrow 0$) is isomorphic as a complex of $B/\mathfrak{m}_B$-vector spaces to the complex 
$$0\rightarrow (M_{B/\mathfrak{m}_B})/t^{k+1}\xrightarrow{\gamma-1}(M_{B/\mathfrak{m}_B})/t^{k+1}\rightarrow 0$$ 
(resp. $0\rightarrow M_{B/\mathfrak{m}_B}/t^{k}\xrightarrow{\gamma-1}(M_{B/\mathfrak{m}_B})/t^{k}\rightarrow 0$).  Then the hypothesis that 
$$\H^0(G,M_{B/\mathfrak{m}_B}/t^{k+1})\rightarrow \H^0(G,M_{B/\mathfrak{m}_B}/t^k)$$ 
is an isomorphism implies that $\H^0(G,IM_B/t^{k+1})\rightarrow \H^0(G,IM_B/t^{k})$ is an isomorphism, as well.

Since $M$ is $A$-flat, we have a commutative diagram 
$$\begin{CD}
0	@>>>	IM_B/t^{k+1} @>>> M_B/t^{k+1}	@>>> M_{B'}/t^{k+1}	@>>>	0	\\
@.	@VVV	@VVV	@VVV	@.	\\
0	@>>>	IM_B/t^{k} @>>> M_B/t^{k}	@>>> M_{B'}/t^{k}	@>>>	0
\end{CD}$$
where the rows are exact.

Taking $G$-invariants, we get a commutative diagram
$$\begin{CD}
0	@>>>	\!\!(IM_B/t^{k+1})^G\!\! @>>> \!\!(M_B/t^{k+1})^G\!\!	@>>> \!\!(M_{B'}/t^{k+1})^G\!\!	@>>>	\!\!\H^1(G,IM_B/t^{k+1})	\\
@.	\!\!@VVV\!\!	\!\!@VVV\!\!	\!\!@VVV\!\!	\!\!@VVV	\\
0	@>>>	\!\!(IM_B/t^{k})^G\!\! @>>> \!\!(M_B/t^{k})^G\!\!	@>>> \!\!(M_{B'}/t^{k})^G\!\!	@>>>	\!\!\H^1(G,IM_B/t^k)
\end{CD}$$

To show that $\H^0(G,M_B/t^{k+1})\rightarrow \H^0(G,M_B/t^{k})$ is an isomorphism, it suffices by the five lemma to show that $\H^1(G,IM_B/t^{k+1})\rightarrow \H^1(G,IM_B/t^k)$ is an isomorphism.   But $IM_B/t^{k+1}$ and $IM_B/t^{k}$ are finite $B/\mathfrak{m}_B$-vector spaces, so to show this, it is enough to show that they have the same dimension as $B/\mathfrak{m}_B$-vector spaces (since the map is {\it a priori} a surjection).  But
\begin{eqnarray*}
\dim \H^1(G,IM_B/t^{k+1}) &=& \dim IM_B/t^{k+1}-\dim IM_B/t^{k+1}+\dim \H^0(G,IM_B/t^{k+1})	\\
&=& \dim \H^0(G,IM_B/t^{k+1}) = \dim \H^0(G,IM_B/t^{k})	\\
&=& \dim IM_B/t^{k}-\dim IM_B/t^{k}+\dim \H^1(G,IM_B/t^{k})	\\
&=& \dim \H^1(G,IM_B/t^{k})
\end{eqnarray*}
\end{proof}

Now we are in a position to prove Proposition~\ref{h0-stab}.
\begin{proof}[Proof of Proposition~\ref{h0-stab}]
We use Noetherian induction on $\Spec(A)$.  By Lemma~\ref{h0-inj}, we may first choose $N_0$ such that for $k\geq N_0$, 
$$M^{G=1}\hookrightarrow 
\cdots\hookrightarrow(M/t^{k+1})^{G=1}\hookrightarrow (M/t^{k})^{G=1}\hookrightarrow\cdots\hookrightarrow(M/t^{N_0})^{G=1}$$
It follows that for any $k\geq N_0$, the cokernel of 
$(M/t^{k+1})^{G=1}\rightarrow (M/t^{k})^{G=1}$
is supported on a Zariski-closed subspace of $\Spec(A)$, namely the support of the cokernel of $M^{G=1}\hookrightarrow (M/t^{N_0})^{G=1}$.  Let $\{\mathfrak{q}_j\}$ be the (finitely many) primes corresponding to the irreducible components of this subspace.  We will find some $N_1\gg0$ such that for $k\geq N_1$, the natural map
$(M/t^{k+1})^{G=1}\otimes_A(\prod_jA_{\mathfrak{q}_j}^\wedge)\rightarrow (M/t^{k})^{G=1}\otimes_A(\prod_jA_{\mathfrak{q}_j}^\wedge)$
is an isomorphism.

Since $(M/t^{k+1})^{G=1}\otimes_AA_{\mathfrak{q}_j}^\wedge= ((M/t^{k+1})\otimes_AA_{\mathfrak{q}_j}^\wedge)^{G=1}$ (by flatness of $A\rightarrow A_{\mathfrak{q}_i}^\wedge$), it is enough to produce some $N_{1,j}$ such that 
$$((M/t^{k+1})\otimes_AA_{\mathfrak{q}_j}/{\mathfrak{q}_j^m})^{G=1}\rightarrow ((M/t^{k})\otimes_AA_{\mathfrak{q}_j}/{\mathfrak{q}_j^m})^{G=1}$$
is an isomorphism for all $k\geq N_{1,j}$ and for all $m$.  But this follows from Lemma~\ref{ht-local-bound} and the fact that any surjection of artin local rings can be factored into a sequence of small extensions.

Let $N_1=\max\{N_{1,j}\}$.  Then the natural map
$M^{G=1}\rightarrow (M/t^{N_1}M)^{G=1}$ is an injection with cokernel supported on a strictly smaller Zariski closed subspace of $\Spec(A)$.  If it is actually an isomorphism, we are done; otherwise, we repeat the argument with primes of $A$ corresponding to irreducible components of the support of the cokernel of $M^{G=1}\hookrightarrow (M/t^{N_1}M)^{G=1}$.

This process terminates in finitely many steps, so we find that for $k$ large enough, the natural maps $(M/t^{k+1}M)^{G=1}\rightarrow (M/t^{k}M)^{G=1}$ are isomorphisms of $A$-modules.  It follows that $\H^0(G,M)=\H^0(G,M/t^kM)$ for sufficiently large $k$.
\end{proof}

\section{The functors $\D_{\B_\ast}(V)$}\label{db-functors}

\subsection{Overview}

In this section, we discuss the functors $\D_{\HT}(V)$ and $\D_{\dR}(V)$; we relate them to $(\varphi,\Gamma)$-modules, and we prove they are coherent sheaves on $\Sp(A)$.  We also relate $\D_{\st}(V)$ and $\D_{\cris}(V)$ to $(\varphi,\Gamma)$-modules, and conjecture that they are coherent sheaves on $\Sp(A)$.

Throughout this section, we let $E$ and $K$ be finite extensions of $\Q_p$, and we let $X$ be a quasi-separated rigid analytic space over $E$.  

\begin{definition}
A \emph{family of Galois representations} over $X$ is a locally free $\mathscr{O}_X$-module $\mathscr{V}$ of rank $d$ together with an $\mathscr{O}_X$-linear action of $\Gal_K$ which acts continuously on $\Gamma(U,V)$ for every affinoid subdomain $U\subset X$.
\end{definition}

\begin{definition}
Let $\B_\ast$ be one of the period rings $\B_{\HT}$, $\B_{\dR}$, $\B_{\max}$, or $\B_{\st}$.  Then for any family of Galois representations, we define the presheaf
\[	\mathscr{D}_\ast^K(\mathscr{V})(U):=\left(\mathscr{B}_{X,\ast}(U)\otimes_{\mathscr{O}_X(U)}\mathscr{V}(U)\right)^{\Gal_K}	\]
where $\mathscr{B}_{X,\ast}$ is one of the sheaves of period rings defined in section~\ref{sheafy-rings}.  
We say that $\mathscr{V}$ is \emph{$\B_\ast$-admissible} (or simply Hodge--Tate, de Rham, semi-stable, or crystalline) if $\mathscr{D}_\ast^K(\mathscr{V})$ is a projective $\mathscr{O}_X\otimes\B_\ast^{\Gal_K}$-module of rank $d$, and the natural morphism
\[	\alpha_{\mathscr{V}}:\mathscr{B}_{X,\ast}\otimes_{\mathscr{O}_X\otimes\B_\ast^{\Gal_K}}\mathscr{D}_\ast^K(\mathscr{V})\rightarrow \mathscr{B}_{X,\ast}\otimes_{\mathscr{O}_X}\mathscr{V}	\]
is an isomorphism.
\end{definition}
Let $\{U_i\}_{i\in I}$ be an admissible covering of $X$.  Then because $\mathscr{V}$ and $\mathscr{B}_{X,\ast}$ are both sheaves on $X$, we have an exact sequence
$$0\rightarrow \Gamma(X,\mathscr{B}_{X,\ast}\otimes_{\mathscr{O}_X}\mathscr{V})\rightarrow \prod_{i\in I}\Gamma(U_i,\mathscr{B}_{X,\ast}\otimes_{\mathscr{O}_X}\mathscr{V})\rightarrow\prod_{i,j\in I}\Gamma(U_i\cap U_j,\mathscr{B}_{X,\ast}\otimes_{\mathscr{O}_X}\mathscr{V})$$
Each of these terms has a continuous action of $\Gal_K$, by assumption, and since the formation of $\Gal_K$-invariants is left-exact, we have an exact sequence
$$0\rightarrow \mathscr{D}_\ast^K(\mathscr{V})(X)\rightarrow\prod_{i\in I} \mathscr{D}_\ast^K(\mathscr{V})(U_i)\rightarrow\prod_{i,j\in I}\mathscr{D}_\ast^K(\mathscr{V})(U_i\cap U_j)$$
It follows that $\mathscr{D}_\ast^K(\mathscr{V})$ is actually a sheaf of $\mathscr{O}_X\otimes_{\Q_p}\B_\ast^{\Gal_K}$-modules.  However, we do not know at this stage that $\mathscr{D}_\ast^K(\mathscr{V})(U)$ is finite, let alone that $\mathscr{D}_\ast^K(\mathscr{V})$ is a coherent sheaf of $\mathscr{O}_X\otimes_{\Q_p}\B_\ast^{\Gal_K}$-modules.

Suppose that $X=\Sp(A)$, where $A$ is a $\Q_p$-finite artin ring, and $V$ is a finite projective $A$-module equipped with a continuous $A$-linear action of $\Gal_K$.  Then $\left((A\otimes_{\Q_p}\B_\ast)\otimes_AV\right)^{\Gal_K} = (\B_\ast\otimes_{\Q_p}V)^{\Gal_K}$ as $\B_\ast^{\Gal_K}$-vector spaces.  In fact, $V$ is $\B_\ast$-admissible as an $A$-linear representation in the sense above if and only if the underlying $\Q_p$-linear representation is $\B_\ast$-admissible:
\begin{prop}\label{artin-adm}
Let $A$ be a $\Q_p$-finite artin local ring with maximal ideal $\mathfrak{m}$, let $n:=\dim_{\Q_p}A$ and let $V$ be a finite free $A$-module of rank $d$ equipped with a continuous $A$-linear $\Gal_K$-action.  Then $V$ is $\B_\ast$-admissible as an $A$-representation if and only if its underlying $\Q_p$-representation is $\B_\ast$-admissible.
\end{prop}
\begin{proof}
It is clear that $\B_\ast$-admissibility over $A$ implies $\B_\ast$-admissibility over $\Q_p$.  For the converse, assume $V$ is $\B_\ast$-admissible when viewed as a $\Q_p$-representation, so $((A\otimes_{\Q_p} \B_\ast)\otimes_{A}V)^{\Gal_K}=(\B_\ast\otimes_{\Q_p} V)^{\Gal_K}$ is an $nd$-dimensional $\B_\ast^{\Gal_K}$-vector space and the natural map $\B_\ast\otimes_{\B_\ast^{\Gal_K}} \D_{\B_\ast}(V)\rightarrow \B_\ast\otimes_{\Q_p}V$ is an isomorphism.

We first assume that $A=E$ is a field.  In that case, $A\otimes_{\Q_p} \B_\ast^{\Gal_K}$ is a product of fields, so $\D_{\B_\ast}(V)$ is certainly locally free and therefore projective.  In addition, the isomorphism $\alpha_V:\B_\ast\otimes_{\B_\ast^{\Gal_K}} \D_{\B_\ast}(V)\rightarrow \B_\ast\otimes_{\Q_p}V$ tells us that the natural map
$$(A\otimes_{\Q_p} \B_\ast)\otimes_{A\otimes_{\Q_p} \B_\ast^{\Gal_K}} \D_{\B_\ast}(V)\rightarrow (A\otimes_{\Q_p} \B)\otimes_AV$$ 
is an isomorphism.  In particular, $\D_{\B_\ast}(V)$ is locally free of rank $d$ over $A\otimes\B_\ast^{\Gal_K}$.  Since $A\otimes_{\Q_p} \B_\ast^{\Gal_K}$ is semi-local, $\D_{\B_\ast}(V)$ is free over $A\otimes_{\Q_p} \B_\ast^{\Gal_K}$.

Now consider the general case.  We will factor the extension of $\Q_p$-finite artin rings $A\twoheadrightarrow A/\mathfrak{m}V$ as a sequence of small extensions and proceed by induction.  So suppose we have a small extension $f:A\twoheadrightarrow A'$, so that $\mathfrak{m}\ker(f)=0$ and $\ker(f)=(t)\cong A/\mathfrak{m}$, and suppose the result holds for $A'$-representations.

We have a surjection of $\Q_p$-representations $V\twoheadrightarrow V\otimes_AA'$, with kernel $tV$.  By the formalism of admissible representations, $V\otimes_AA'$ is $\B_\ast$-admissible and we have a surjection $\D_{\B_\ast}(V)\twoheadrightarrow \D_{\B_\ast}(V\otimes_AA')$ with kernel $\D_{\B_\ast}(tV)$.

We claim that the kernel of this surjection is $t\D_{\B_\ast}(V)$.  Clearly, $t\D_{\B_\ast}(V)\subset \D_{\B_\ast}(tV)$, since there is no Galois action on the coefficients.  On the other hand, suppose that $mv\in \D_{\B_\ast}(tV)$ for some $m\in \ker(f)$, $v\in \B_\ast\otimes V$.  Then $mv=mg(v)$ for any $g\in \Gal_K$, so $v=g(v)$ in $\B_\ast\otimes (V\otimes_A A/I)$ for all $g\in \Gal_K$, where $I$ is the ideal of elements of $A$ killed by $m$.  But again by the formalism of admissible representations, we have a surjection $\D_{\B_\ast}(V)\twoheadrightarrow \D_{\B_\ast}(V/IV)$, so there is some $\tilde v\in \D_{\B_\ast}(V)$ such that $\tilde v\cong v\mod{I}$.  Since $v$ and $\tilde v$ differ by an element of $\D_{\B_\ast}(IV)$ and $m$ kills $\D_{\B_\ast}(IV)$, $mv=m\tilde v\in t\D_{\B_\ast}(V)$, as desired.

By the assumption on $A'$-representations, $\D_{\B_\ast}(V\otimes_AA')$ is a free module of rank $d$ over $A'\otimes_{\Q_p}\B_\ast^{\Gal_K}$.  Furthermore, $\D_{\B_\ast}(V)\otimes_AA'=\D_{\B_\ast}(V\otimes_AA')$.  If $A_i$ is a local factor of the semi-local ring $A\otimes_{\Q_p}\B_\ast^{\Gal_K}$, then $A_i':=A_i/t$ is a local factor of $A'\otimes_{\Q_p}\B_\ast^{\Gal_K}$.  Therefore, Nakayama's lemma implies that $\D_{\B_\ast}(V)\otimes_AA_i$ is generated by $d$ elements for all $i$, so we have a surjection $(A\otimes_{\Q_p} \B_\ast^{\Gal_K})^d\twoheadrightarrow \D_{\B_\ast}(V)$.  But we also have an isomorphism of $\B_\ast$-modules $\alpha_V:\B_\ast\otimes_{\B_\ast^{\Gal_K}} \D_{\B_\ast}(V)\rightarrow \B_\ast\otimes_{\Q_p}V$, so by comparing the $\B_\ast^{\Gal_K}$-dimensions of $(A\otimes \B_\ast^{\Gal_K})^d$ and $\D_{\B_\ast}(V)$, we see that $\D_{\B_\ast}(V)$ is a free $A\otimes \B_\ast^{\Gal_K}$-module of rank $d$.
\end{proof}

\subsection{$(\varphi,\Gamma)$-modules and $\D_{\B_\ast}(V)$}

When $X=\Sp(\Q_p)$, the overconvergence of Galois representations is important in part because it allows us to recover the $p$-adic Hodge theoretic invariants $D_{\B_\ast}(V)$ from the $(\varphi,\Gamma)$-module.  This allows us to convert questions about Galois groups with cohomological dimension $2$ into questions about profinite groups with cohomological dimension $1$, at the cost of making the coefficients more complicated.

Specifically, when $X=\Sp(\Q_p)$, we have the following results.
\begin{thm}[\cite{sen}]
Let $V$ be a finite-dimensional $\Q_p$-linear representation of $\Gal_K$.  Then $\D_{\HT}^K(V)=\oplus_{i\in\Z}\left(\D_{\Sen}^K(V)\cdot t^i\right)^{\Gamma_K}$.
\end{thm}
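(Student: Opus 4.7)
The plan is to exploit the $\Z$-grading on $\B_{\HT}$ and reduce the whole statement to Sen's descent theorem applied degree by degree. Since $\B_{\HT}=\bigoplus_{i\in\Z}\C_p\cdot t^i$ with $g\in\Gal_K$ acting on the $i$-th summand by $x\cdot t^i\mapsto\chi(g)^i g(x)\cdot t^i$, the grading is $\Gal_K$-stable, so taking invariants commutes with the direct sum and I begin from
$$\D_{\HT}^K(V)=\bigoplus_{i\in\Z}\bigl((\C_p\otimes_{\Q_p}V)\cdot t^i\bigr)^{\Gal_K}.$$

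Next I compute each graded piece by factoring $\Gal_K\to\Gamma_K$ with kernel $H_K$. Because $H_K$ lies in the kernel of $\chi$, it acts trivially on $t^i$, so $\bigl((\C_p\otimes V)\cdot t^i\bigr)^{H_K}=(\C_p\otimes V)^{H_K}\cdot t^i$. Sen's theorem then identifies $(\C_p\otimes V)^{H_K}$ with $\widehat{K}_\infty\otimes_{K_\infty}\D_{\Sen}^K(V)$ and, crucially, characterizes $\D_{\Sen}^K(V)$ as the canonical $K_\infty$-form consisting of the locally $\Gamma_K$-finite vectors.

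It remains to show, for each $i\in\Z$, the descent identity
$$\bigl((\widehat{K}_\infty\otimes_{K_\infty}\D_{\Sen}^K(V))\cdot t^i\bigr)^{\Gamma_K}=\bigl(\D_{\Sen}^K(V)\cdot t^i\bigr)^{\Gamma_K}.$$
The containment $\supseteq$ is automatic. For $\subseteq$, given a $\Gamma_K$-fixed vector $m$ on the left, the $K_\infty$-line $K_\infty\cdot m$ is a one-dimensional $\Gamma_K$-stable subspace (since $\gamma(\lambda m)=\gamma(\lambda)m$), so $m$ is locally $\Gamma_K$-finite; the Sen characterization above then forces $m$ to lie in the canonical $K_\infty$-form $\D_{\Sen}^K(V)\cdot t^i$. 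Summing over $i$ yields the claimed decomposition.

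There is no real obstacle once Sen's descent theorem is taken as input, since all the essential analytic content—the existence of a canonical $K_\infty$-form inside the $\widehat{K}_\infty$-scalar extension that coincides with the locally $\Gamma_K$-finite vectors—has been packaged into that theorem. The only points I would verify in a full write-up are the compatibility of the grading with the Galois action (immediate from the definition of $t$) and the one-dimensional $K_\infty$-finiteness argument for a single $\Gamma_K$-invariant vector, which is what allows the descent from $\widehat{K}_\infty$-coefficients back to $K_\infty$-coefficients.
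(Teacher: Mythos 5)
Your argument is correct, but note that the paper does not actually prove this statement: it cites \cite{sen} and reserves its own argument for the family analogue, Theorem~\ref{dsen-dht}, whose specialization to $A=\Q_p$ is the natural point of comparison. Your route is genuinely different. You take $H_K$-invariants first, identify $(\C_p\otimes V)^{H_K}$ with $\widehat K_\infty\otimes_{K_\infty}\D_{\Sen}^K(V)$ by Sen's theorem, and descend the $\Gamma_K$-invariants from $\widehat K_\infty$- to $K_\infty$-coefficients using the \emph{qualitative} characterization of $\D_{\Sen}^K(V)$ as the $K_\infty$-finite vectors: a $\Gamma_K$-invariant element of the $t^i$-twist spans a one-dimensional $\Gamma_K$-stable $K_\infty$-line, hence is $K_\infty$-finite. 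Two points deserve to be made explicit in a full write-up: the twist by $\chi^i$ takes values in $\Q_p^\times\subset K_\infty$, so it does not change which vectors are $K_\infty$-finite, which is what identifies the finite vectors of the twist with $\D_{\Sen}^K(V)\cdot t^i$; and ``locally $\Gamma_K$-finite'' should be read as $K_\infty$-finite (finite-dimensional $K_\infty$-span of the $\Gamma_K$-orbit), not as ``fixed by an open subgroup of $\Gamma_K$,'' since the latter is strictly stronger and already fails for $i\neq 0$. The paper descends \emph{quantitatively} instead: after choosing a $c_3$-fixed basis of $\D_{\Sen}^{L_n}(V)$ over $A\otimes_{\Q_p}L_n$, it invokes the Tate--Sen contraction estimate \cite[Lemme 3.2.5]{bc} (with $V_1=U_\gamma^{-1}$, $V_2=1$) to force a $\Gamma_{L_n}$-fixed vector with $\widehat L_\infty$-coordinates to have coordinates in $A\otimes_{\Q_p}L_n$. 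The quantitative version is what scales to affinoid coefficients, where a finiteness characterization is not automatic; the paper does prove an affinoid analogue of the finiteness characterization you use (Lemma~\ref{dsen-finite-orbit}), but deploys it only for the de Rham result (Theorem~\ref{ddif-ddr}). For $A=\Q_p$ your argument is shorter once Sen's finiteness theorem is granted as input; the paper's is the one that carries over to families.
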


\begin{thm}[\cite{fontaine}]
Let $V$ be a finite-dimensional $\Q_p$-linear representation of $\Gal_K$.  Then $\D_{\dR}^K(V)=\left(\D_{\dif}^K(V)\right)^{\Gamma_K}$.
\end{thm}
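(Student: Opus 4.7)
The plan is to perform a two-stage Galois descent using the short exact sequence $1 \to H_K \to \Gal_K \to \Gamma_K \to 1$, where $H_K := \Gal(\overline{K}/K_\infty)$ and $K_\infty := K(\zeta_{p^\infty})$. Since $\Gal_K$-invariants can be computed by taking $H_K$-invariants followed by $\Gamma_K$-invariants, it suffices to establish the ring-level identification
$$(\B_{\dR} \otimes_{\Q_p} V)^{H_K} = \D_{\dif}^K(V),$$
for then applying $(-)^{\Gamma_K}$ to both sides immediately gives $\D_{\dR}^K(V) = (\D_{\dif}^K(V))^{\Gamma_K}$.

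Recall from Appendix~\ref{tate-sen} that $\D_{\dif}^K(V) = \varinjlim_n \D_{\dif,n}^K(V)$ is built from Berger's overconvergent $(\varphi,\Gamma)$-module $\D_{\rig,K}^\dagger(V)$ by base-changing along the natural localization map $\B_{K,\rig}^\dagger \to K_n((t))$, which embeds compatibly into $\B_{\dR}$. The inclusion $\D_{\dif}^K(V) \subset (\B_{\dR} \otimes V)^{H_K}$ is then immediate from $K_n((t)) \subset \B_{\dR}^{H_{K_n}}$ together with the functoriality of the overconvergent construction. For the reverse inclusion, the key input is the Tate--Sen formalism: Tate's theorem gives $\C_p^{H_K} = \widehat{K}_\infty$, which lifts $t$-adically to identify the $H_K$-invariants of the successive quotients $\B_{\dR}^+ / t^k$ with the base change of $\widehat{K}_\infty[[t]]/t^k$ (tensored with $V$); Sen's decompletion theorem then descends these invariants from $\widehat{K}_\infty$ down to finite cyclotomic levels $K_n$, and the Cherbonnier--Colmez overconvergence theorem guarantees that the resulting object is precisely the base change of $\D_{\rig,K}^\dagger(V)$ along $\B_{K,\rig}^\dagger \to K_n((t))$.

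The main obstacle is propagating these finite-level identifications through the inverse system $\{\B_{\dR}^+/t^k\}$ to recover the full $\B_{\dR}^+$-module of $H_K$-invariants, since $\B_{\dR}^+$ is $t$-adically complete rather than just an inverse limit of finite-level pieces in the naive sense. One must verify that the Mittag-Leffler condition holds for the relevant inverse system of invariants, which in the classical setting here reduces to finiteness considerations for the successive graded pieces $t^k \B_{\dR}^+/t^{k+1} \cong \C_p(k)$ tensored with $V$. Once this patching is accomplished, inverting $t$ to pass from $\B_{\dR}^+$ to $\B_{\dR}$ and taking $\Gamma_K$-invariants on both sides are both formal, and the theorem follows.
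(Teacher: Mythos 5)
Your proposal contains a fundamental error at the very first reduction. You claim that it suffices to establish the ring-level identification $(\B_{\dR}\otimes_{\Q_p}V)^{H_K}=\D_{\dif}^K(V)$, after which taking $\Gamma_K$-invariants would be formal. But this equality is false: $\D_{\dif}^K(V)$ is a free module of rank $d$ over $K_n(\!(t)\!)$ (for suitable $n$), whereas $(\B_{\dR}\otimes_{\Q_p}V)^{H_K}$ is a free module of rank $d$ over $\L_{\dR}:=\B_{\dR}^{H_K}$, which has $\gr^0=\widehat{K}_\infty$ --- a vastly larger ring. The correct relation is of the form $(\B_{\dR}^+\otimes V)^{H_K}\cong\L_{\dR}^+\widehat\otimes_{K_n[\![t]\!]}\D_{\dif}^{K_n,+}(V)$, so the lattice $\D_{\dif}$ is a proper submodule, a ``decompleted model'' in the sense of Tate--Sen. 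The actual content of the theorem is precisely that passing to $\Gamma_K$-invariants \emph{cuts back down} from the $\L_{\dR}^+$-coefficients to the $K_n[\![t]\!]$-coefficients; in other words, the step you dismiss as formal is where the entire proof lives. Your invocation of Sen's decompletion theorem is therefore misplaced --- it is exactly the tool needed for the $\Gamma_K$-descent, not the $H_K$-descent, and it cannot yield an equality of $H_K$-invariants.

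For comparison: the paper does not prove this classical statement (it cites it to Fontaine, with the remark that the equivalence of Fontaine's construction and the $(\varphi,\Gamma)$-module construction of $\D_{\dif}$ is in Berger's thesis), but it does prove the family analogue as Theorem~\ref{ddif-ddr}, and that proof shows what has to happen. After enlarging $K$ to $L$ and twisting by the cyclotomic character, the argument reduces to showing $\bigl((A\widehat\otimes\L_{\dR}^+)\otimes_{A\widehat\otimes L_n[\![t]\!]}\D_{\dif}^{L_n,+}(V)\bigr)^{\Gamma_{L_n}}=\bigl(\D_{\dif}^{L_n,+}(V)\bigr)^{\Gamma_{L_n}}$, which is verified modulo each $t^m$ by induction: the case $m=1$ is a Sen-theoretic decompletion statement (Lemma~\ref{dsen-finite-orbit}), and the inductive step peels off one graded piece at a time using the filtration. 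The passage to the full $\B_{\dR}^+$ is then handled by the observation that invariants commute with the inverse limit over $m$ --- that part, the one you identify as the ``main obstacle,'' really is the soft part. If you want to salvage your outline, you must replace the false $H_K$-equality with the correct completed-tensor description, and then carry out the genuine $\Gamma_K$-decompletion, which is a Tate--Sen argument and not an automatic consequence of anything already established.
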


\begin{thm}[\cite{berger}]
Let $V$ be a finite-dimensional $\Q_p$-linear representation of $\Gal_K$.  Then
$\D_{\st}^K(V)=\left(\D_{\log}^\dagger(V)[1/t]\right)^{\Gamma_K}$ and $\D_{\cris}^K(V)=\left(\D_{\rig}^\dagger(V)[1/t]\right)^{\Gamma_K}$.
\end{thm}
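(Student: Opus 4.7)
The plan is to follow Berger's original strategy, bridging the $(\varphi,\Gamma)$-module world and Fontaine's period rings via the Cherbonnier--Colmez overconvergence theorem combined with ring-theoretic comparisons between $\widetilde{\B}_{\rig}^\dagger[1/t]$ and $\B_{\max}$ (with a log-analogue for the semi-stable case).

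First, I would invoke Cherbonnier--Colmez to produce the overconvergent $(\varphi,\Gamma)$-module $\D^\dagger(V) := (V \otimes_{\Q_p} \widetilde{\B}^\dagger)^{H_K}$ over $\B_K^\dagger$ of rank $\dim_{\Q_p} V$, where $H_K := \Gal(\overline{K}/K_\infty)$ and $K_\infty$ is the cyclotomic tower. This gives a natural isomorphism
\[
\D^\dagger(V) \otimes_{\B_K^\dagger} \widetilde{\B}^\dagger \xrightarrow{\sim} V \otimes_{\Q_p} \widetilde{\B}^\dagger.
\]
Base changing to $\widetilde{\B}_{\rig}^\dagger$ (resp.\ $\widetilde{\B}_{\log}^\dagger$), inverting $t$, and taking $H_K$-invariants (using $(\widetilde{\B}_{\rig}^\dagger[1/t])^{H_K} = \B_{\rig,K}^\dagger[1/t]$, and its log-analogue, from Sen and field-of-norms theory) yields
\[
(V \otimes_{\Q_p} \widetilde{\B}_{\rig}^\dagger[1/t])^{\Gal_K} = (\D_{\rig}^\dagger(V)[1/t])^{\Gamma_K},
\]
and analogously for $\widetilde{\B}_{\log}^\dagger$.

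Second, I would identify these $\Gal_K$-invariants with $\D_{\cris}^K(V)$ and $\D_{\st}^K(V)$. The key ring-theoretic input is a canonical $\Gal_K$-equivariant embedding $\B_{\max} \hookrightarrow \widetilde{\B}_{\rig}^\dagger[1/t]$ (resp.\ $\B_{\st} \hookrightarrow \widetilde{\B}_{\log}^\dagger[1/t]$), obtained from the characterization $\widetilde{\B}_{\rig}^+ = \bigcap_n \varphi^n(\widetilde{\B}_{\max}^+)$ combined with $\varphi(t) = pt$, which places $\B_{\max}^+$ inside $\widetilde{\B}_{\rig}^+[1/t] \subseteq \widetilde{\B}_{\rig}^\dagger[1/t]$. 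Tensoring with $V$ and taking $\Gal_K$-invariants produces a natural injection $\D_{\cris}^K(V) \hookrightarrow (V \otimes_{\Q_p} \widetilde{\B}_{\rig}^\dagger[1/t])^{\Gal_K}$, and similarly in the semi-stable setting.

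The main obstacle is surjectivity: showing every $\Gal_K$-invariant $x \in V \otimes_{\Q_p} \widetilde{\B}_{\rig}^\dagger[1/t]$ actually lies in $V \otimes_{\Q_p} \B_{\max}$. The strategy is to exploit $\varphi$-equivariance, since $\varphi^n(x)$ remains $\Gal_K$-invariant for each $n$: using the intersection description of $\widetilde{\B}_{\rig}^+$ together with clearing $t$-denominators, one descends $x$ through the nested $\varphi^n(\widetilde{\B}_{\max}^+)$ until it lies in $V \otimes_{\Q_p} \B_{\max}$. The semi-stable case is handled identically, with the extra monodromy operator $N$ tracked via the presentation $\B_{\st} = \B_{\max}[\log([\tilde{\pi}])]$ and the corresponding structure on $\widetilde{\B}_{\log}^\dagger$.
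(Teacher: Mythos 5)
Your first stage is on the right track: exploiting $\varphi$-invariance of a $\Gal_K$-invariant period together with the intersection description $\widetilde{\B}_{\rig}^+ = \bigcap_n \varphi^n(\B_{\max}^+)$ (after clearing $t$-denominators) to land the matrix coefficients in $\B_{\st}^+$ is exactly the Frobenius-regularization idea of Proposition~\ref{frob-reg} and Proposition~\ref{dlog-dst-field}, and your reduction of the crystalline case to the semi-stable case via $N=0$ also matches the text. But there is a genuine gap at the descent step. You assert $(\widetilde{\B}_{\rig}^\dagger[1/t])^{H_K} = \B_{\rig,K}^\dagger[1/t]$, and this is false: $(\widetilde{\B}_{\rig}^\dagger)^{H_K} = \widetilde{\B}_{\rig,K}^\dagger$, the "perfect" Robba ring, which strictly contains the imperfect $\B_{\rig,K}^\dagger$ (it is its Fr\'echet completion along the rising union of the $\varphi^{-k}(\B_{\rig,K}^{\dagger,p^k s})$). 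Field-of-norms theory and Sen theory do not supply this identification. Consequently, after your two steps you have only established that $\D_{\st}^K(V) = \left(\widetilde{\D}_{\log,K}^\dagger(V)[1/t]\right)^{\Gamma_K}$, not the claimed $\left(\D_{\log}^\dagger(V)[1/t]\right)^{\Gamma_K}$.

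The passage from the tilde $(\varphi,\Gamma)$-module to the imperfect one is a separate decompletion theorem (Proposition~\ref{dlog-dlog} in the present notation). Its proof requires the $\Gamma_K$-equivariant normalized-trace sections $R_k : \widetilde{\B}_{\log,K}^{\dagger,s} \rightarrow \varphi^{-k}(\B_{\log,K}^{\dagger,p^k s})$, which are $\varphi^{-k}(\B_{\log,K}^{\dagger,p^ks})$-linear and satisfy $R_k(x)\to x$. One shows that $\bigcup_k \left(\varphi^{-k}(\D_{\log,K}^{\dagger,p^k s}(V))\right)^{\Gamma_K}$ is both dense and closed in the finite module $\left(\widetilde{\D}_{\log,K}^{\dagger,s}(V)\right)^{\Gamma_K}$, hence equal to it, and then applies $\varphi^k$ to replace $\varphi^{-k}(\D_{\log,K}^{\dagger,p^ks}(V))$ by $\D_{\log,K}^{\dagger,p^ks}(V)$. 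Without this step your argument proves a weaker statement, and the claimed $H_K$-invariance shortcut cannot be repaired as stated.
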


\begin{remark}
Sen and Fontaine used different constructions of $\D_{\Sen}^K(V)$ and $\D_{\dif}^K(V)$ than the one we have given.  The equivalence of the two constructions is shown in~\cite{berger}.
\end{remark}

We will prove analogues of these results for families of Galois representations over an $E$-rigid analytic space $X$.

\begin{thm}\label{dsen-dht}
Let $\mathscr{V}$ be a family of representations of $\Gal_K$ of rank $d$.  Then 
$$\mathscr{D}_{\HT}^K(\mathscr{V})=\oplus_{i\in\Z}\left(\mathscr{D}_{\Sen}^K(\mathscr{V})\cdot t^i\right)^{\Gamma_K}$$
as subsheaves of $\mathscr{B}_{\HT}\otimes_{\mathscr{O}_X}\mathscr{V}$.
\end{thm}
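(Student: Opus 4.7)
The plan is to check the claim locally on an affinoid subdomain $\Sp(A) \subset X$ with sections $V := \mathscr{V}(\Sp(A))$; since both sides of the asserted identity are subsheaves of $\mathscr{B}_{\HT} \otimes_{\mathscr{O}_X} \mathscr{V}$, the equality can be verified there. The key structural input is the $\Z$-grading $\B_{\HT} = \bigoplus_{i \in \Z} \C_p \cdot t^i$, which is $\Gal_K$-stable because $\H_K := \Gal(\overline{K}/K_\infty)$ fixes $t$ while $\Gamma_K$ scales $t^i$ by the $i$-th power of the cyclotomic character. After completedly tensoring with $A$ and then tensoring with $V$, we obtain a $\Gal_K$-stable direct-sum decomposition of $(A \widehat\otimes \B_{\HT}) \otimes_A V$. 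Taking $\Gal_K$-invariants distributes over this sum (any invariant vector has only finitely many nonzero graded components, each separately invariant), reducing the theorem to establishing
$$\bigl( (A \widehat\otimes \C_p) \cdot t^i \otimes_A V \bigr)^{\Gal_K} = (\D_{\Sen}^K(V) \cdot t^i)^{\Gamma_K}$$
for every $i \in \Z$.

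For each $i$, I would compute $\Gal_K$-invariants in two stages, first under $\H_K$ and then under $\Gamma_K = \Gal_K/\H_K$. Since $\H_K$ fixes $t$, the first stage commutes with multiplication by $t^i$, and the family Tate-Sen machinery recalled in Appendix~\ref{tate-sen} identifies $\bigl( (A \widehat\otimes \C_p) \otimes_A V \bigr)^{\H_K}$ with the scalar extension $(A \widehat\otimes \hat K_\infty) \otimes_{A \otimes_{\Q_p} K_\infty} \D_{\Sen}^K(V)$. The problem then reduces to the descent statement
$$\bigl( (A \widehat\otimes \hat K_\infty) \otimes_{A \otimes_{\Q_p} K_\infty} \D_{\Sen}^K(V) \cdot t^i \bigr)^{\Gamma_K} = (\D_{\Sen}^K(V) \cdot t^i)^{\Gamma_K}.$$

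The main obstacle is precisely this last step: passing $\Gamma_K$-invariants from the completion $A \widehat\otimes \hat K_\infty$ back down to $A \otimes_{\Q_p} K_\infty$. My plan is to pass to a sufficiently small open subgroup $\Gamma_{K_n} \subset \Gamma_K$ such that the action of $\Gamma_{K_n}$ on $\D_{\Sen}^K(V)$ is governed by the Sen operator and can be simultaneously triangularized in a basis over $A \otimes_{\Q_p} K_n$. This reduces the descent to rank-one assertions of the form
$$\bigl\{ x \in A \widehat\otimes \hat K_n \;\bigm|\; \gamma(x) = \chi_{\rm cyc}(\gamma)^j x \text{ for all } \gamma \in \Gamma_{K_n} \bigr\} = \bigl\{ x \in A \otimes_{\Q_p} K_n \;\bigm|\; \gamma(x) = \chi_{\rm cyc}(\gamma)^j x \bigr\}$$
indexed by integers $j$ (the exponents coming both from $t^i$ and from the Hodge-Tate weights of $\D_{\Sen}^K(V)$). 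For $j = 0$ this is the Ax-Sen-Tate theorem with $A$-affinoid coefficients, and for $j \neq 0$ both sides vanish by the classical Tate-Sen computation $\H^0(\Gamma_{K_n}, \C_p(j)) = 0$, which is functorial in the coefficient ring $A$ so extends to the affinoid setting (the Tate traces $A$-linearly extend to kill the non-invariant directions in the completion). Reassembling the rank-one identities and summing over $i$ then yields the theorem.
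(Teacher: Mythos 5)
Your reduction to the local affinoid case and the two-stage passage to invariants (first $\H$, then $\Gamma$) matches the structure of the paper's proof. The problem is the descent step. You propose to choose $n$ large and then \emph{simultaneously triangularize} the $\Gamma_{K_n}$-action on $\D_{\Sen}^{K_n}(V)$ via the Sen operator, reducing to rank-one eigenlines on which $\gamma$ acts by $\chi_{\rm cyc}(\gamma)^{j}$ for integers $j$. This fails on two independent counts. First, the Sen operator is an $A\otimes_{\Q_p}K_n$-linear endomorphism of a finite free module over an arbitrary affinoid algebra $A$; there is no reason for its characteristic polynomial to split over $A$, let alone for a full flag of $\Gamma_{K_n}$-stable free sub-bundles to exist (already over $A=\Q_p\langle x\rangle$, an operator with characteristic polynomial $T^2-x$ has no eigenvector). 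Second, and more basically, the eigenvalues of the Sen operator are the generalized Hodge--Tate(--Sen) weights of $V$, and the theorem is \emph{not} restricted to Hodge--Tate families: these weights are arbitrary elements (of $\overline{\Q}_p$, or of a finite $A$-algebra, fiberwise), not integers, so your reduction to characters $\chi_{\rm cyc}^{j}$ with $j\in\Z$ is incorrect. Assuming integer weights would beg a large part of the question.

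The paper's proof sidesteps spectral analysis entirely. Working over an extension $L/K$ such that $\Gal_L$ acts trivially on $V_0/12pV_0$ (so that Theorem~\ref{tate-sen-phi-gamma} applies), it fixes a $c_3$-fixed basis $(\ve{e}_1,\dots,\ve{e}_d)$ of $\D_{\Sen}^{L_n}(V)$ --- a \emph{quantitative} smallness condition $v_p(U_\gamma-1)>c_3$ on the matrix of $\gamma$, not a diagonalizability or integrality hypothesis --- and applies the Tate--Sen descent lemma \cite[Lemme 3.2.5]{bc} with $V_1=U_\gamma^{-1}$, $V_2=1$ to force any $\Gamma_{L_n}$-invariant vector in $(A\widehat\otimes\C_K)^{H_L}\otimes_{A\otimes L_n}\D_{\Sen}^{L_n}(V)$ to lie in $\D_{\Sen}^{L_n}(V)$ itself. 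The $t^i$-twisted pieces are handled by the same argument applied to the Tate twists $V(i)$ (which still admit a lattice with trivial $\Gal_L$-action mod $12p$), rather than by computing $\H^0(\Gamma_{K_n},\C_p(j))$ for integer $j$. If you want to repair your argument, replace the triangularization step with this perturbative fixed-basis argument; the $\H_K$-descent and the graded-piece bookkeeping in your write-up can stay as is.
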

\begin{proof}
Since both $\mathscr{D}_{\HT}^K(\mathscr{V})$ and $\oplus_{i\in\Z}\left(\mathscr{D}_{\Sen}^K(\mathscr{V})\cdot t^i\right)^{\Gamma_K}$ are subsheaves of $\mathscr{B}_{\HT}\otimes_{\mathscr{O}_X}\mathscr{V}$, we may work locally on $X$.  Therefore, we may assume that $X=\Sp(A)$ for some $E$-affinoid algebra $A$ and $V:=\Gamma(X,\mathscr{V})$ admits a free $\Gal_K$-stable $\mathscr{A}$-lattice $V_0$ of rank $d$, where $\mathscr{A}$ is some formal $\mathscr{O}_E$-model for $A$.

Since $\D_{\HT}^{K'}(V)=K'\otimes_K\D_{\HT}^K(V)$ for any finite extension $K'/K$, we may replace $K$ with any finite extension.  Let $L/K$ be a finite extension such that $\Gal_L$ acts trivially on $V_0/12pV_0$.  Then $\D_{\Sen}^{L_n}(V)$ is a finite free $A\otimes_{\Q_p}L_n$-module of rank $d$, and we have a natural Galois-equivariant isomorphism
$$(A\widehat\otimes\C_K)\otimes_{A\otimes_{\Q_p} L_n} \D_{\Sen}^{L_n}(V)\xrightarrow{\sim} (A\widehat\otimes\C_K)\otimes_AV$$
Taking $H_L$-invariants, we get
$$(A\widehat\otimes\C_K^{H_L})\otimes_{A\otimes L_n}\D_{\Sen}^{L_n}(V)\xrightarrow{\sim} \left((A\widehat\otimes\C_K)\otimes_AV\right)^{H_L}$$
since $H_L$ acts trivially on $\D_{\Sen}^{L_n}(V)$ by construction.  We need to take $\Gamma_{L_n}$-invariants of both sides.

It suffices to show that 
\[	\left((A\widehat\otimes\C_K)^{H_{L}}\otimes_{A\otimes L_n} \D_{\Sen}^{L_n}(V)\right)^{\Gamma_n}= \D_{\Sen}^{L_n}(V)^{\Gamma_n}	\]
To see this, we fix an $(A\otimes_{\Q_p} L_n)$-basis $(\ve{e}_1,\ldots ,\ve{e}_d)$ of $\D_{\Sen}^{L_n}(V)$ which is $c_3$-fixed by $\Gamma_{L_n}$ and choose some $\ve{x}\in \left((A\widehat\otimes\C_K)^{H_{L}}\otimes_{A\otimes_{\Q_p} L_n}\D_{\Sen}^{L_n}(V)\right)^{\Gamma_n}$.  Then $\ve{x}=\sum x_i\ve{e}_i$ for some $x_i\in (A\widehat\otimes\C_K)^{H_{L}}$.  By the semi-linearity of the Galois action, this means that for any $\gamma\in\Gamma_n$, $U_\gamma\cdot\gamma(\ve{x})=\ve{x}$, where $\ve{x}$ is the column vector of the $x_i$.  But then we may invoke~\cite[Lemme 3.2.5]{bc} with $V_1=U_\gamma^{-1}$ and $V_2=1$ to get that $\ve{x}\in A\otimes_{\Q_p} L_n$.

Since $\B_{\HT}=\C_K[t,t^{-1}]$, it follows that
$\D_{\HT}^K(V)=\oplus_{i\in\Z}\left(\D_{\Sen}^K(V)\cdot t^i\right)^{\Gamma_K}$,
as desired.
\end{proof}

\begin{lemma}\label{finite-banach-bound}
Let $M$ be a finite $A$-module, where $A$ is a Banach algebra whose value group is discrete, and let $m_1,\ldots,m_r$ generate $M$ over $A$.  Equip $M$ with the norm $|\cdot|_M$ induced by the natural quotient $A^{\oplus r}\twoheadrightarrow M$, where $A^{\oplus r}$ has the norm $|(a_1,\ldots,a_r)|=\max_i\{|a_i|\}$.
Let $T:M\rightarrow M$ be an $A$-linear map such that $|T(m_i)|\leq C|m_i|$ for all $m_i$.  Then the operator norm of $T$ on $M$ is at most $C$.
\end{lemma}
\begin{proof}
Let $m\in M$.  We wish to show that $|T(m)|_M\leq C|m|_M$.  Because the value group of $A$ is discrete, we can write $m=a_1m_1+\cdots+a_rm_r$ such that $|m|_M=\max_i \{|a_i|\}$.  Then 
$$|T(m)|_M\leq \max_i \{|a_i|\cdot |T(m_i)|_M\}\leq C\max_i \{|a_i| \cdot |m_i|_M\}\leq C\max_i\{|a_i|\}=C|m|$$
\end{proof}

\begin{lemma}\label{dsen-finite-orbit}
Let $V$ be a finite free $A$-module of rank $d$, equipped with a continuous $A$-linear action of $\Gal_K$.  Then the module generated by the $\Gamma_{L_n}$-orbit of $x\in \widehat L_{\infty}\widehat\otimes_{L_n}\D_{\Sen}^{L_n}(V)$ is $A$-finite if and only if $x\in \cup_{n'\geq n}L_{n'}\otimes_{L_n}\D_{\Sen}^{L_n}(V)$.
\end{lemma}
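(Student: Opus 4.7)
\emph{Proof plan.} The $(\Leftarrow)$ direction is immediate: $L_{n'}\otimes_{L_n}\D_{\Sen}^{L_n}(V)$ is a $\Gamma_{L_n}$-stable submodule that is $A\otimes_{\Q_p}L_{n'}$-finite (hence $A$-finite, since $L_{n'}/\Q_p$ is a finite extension), so the $A$-module generated by any $\Gamma_{L_n}$-orbit it contains is $A$-finite by Noetherianity of $A$.

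For $(\Rightarrow)$, the plan is to reduce the semilinear statement to a scalar Sen theorem by extracting coordinates in a $c_3$-fixed basis. After enlarging $n$ if necessary, fix a $c_3$-fixed $A\otimes_{\Q_p}L_n$-basis $(\ve{e}_1,\ldots,\ve{e}_d)$ of $\D_{\Sen}^{L_n}(V)$ as in the proof of Theorem~\ref{dsen-dht}, so that $\gamma(\ve{e}_j)=\sum_k U_{\gamma,kj}\ve{e}_k$ with $U_\gamma\in \GL_d(A\otimes_{\Q_p}L_n)$ (note $\Gamma_{L_n}$ fixes both $A$ and $L_n$, so the entries really lie in $A\otimes L_n$) and $v_p(U_\gamma-1)>c_3$. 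Identifying $\widehat{L}_\infty\widehat\otimes_{L_n}\D_{\Sen}^{L_n}(V)\cong(A\widehat\otimes\widehat{L}_\infty)^{\oplus d}$ via $(\ve{e}_j)$ and writing $x=\sum_j x_j\ve{e}_j$, semilinearity gives $\pi_k(\gamma(x))=\sum_j U_{\gamma,kj}\,\gamma(x_j)$, where $\pi_k$ is projection onto the $k$-th coordinate.

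Now let $W$ be the $A$-module generated by the $\Gamma_{L_n}$-orbit of $x$, $A$-finite by hypothesis, and let $W'=(A\otimes_{\Q_p}L_n)\cdot W$; this is still $A$-finite (as $L_n/\Q_p$ is finite) and is $\Gamma_{L_n}$-stable. The coordinate projections $W'_k:=\pi_k(W')\subset A\widehat\otimes\widehat{L}_\infty$ are then $A\otimes L_n$-finite and stable under multiplication by $A\otimes L_n$. Inverting $U_\gamma$ (whose inverse also has entries in $A\otimes L_n$) yields
$$\gamma(x_\ell)\;=\;\sum_k(U_\gamma^{-1})_{\ell k}\,\pi_k(\gamma(x))\;\in\;\sum_k W'_k\;=:\;W''$$
for every $\gamma\in\Gamma_{L_n}$ and every $\ell$. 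The submodule $W''\subset A\widehat\otimes\widehat{L}_\infty$ is $A$-finite and independent of $\gamma$, so each scalar coordinate $x_\ell$ has $A$-finite $\Gamma_{L_n}$-orbit inside $A\widehat\otimes\widehat{L}_\infty$.

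It then remains to invoke the scalar statement: if $y\in A\widehat\otimes\widehat{L}_\infty$ has $A$-finite $\Gamma_{L_n}$-orbit, then $y\in A\otimes_{\Q_p}L_{n'}$ for some $n'\geq n$. This affinoid analog of Sen's theorem is the main obstacle; it comes from the Tate--Sen axiom machinery of Appendix~\ref{tate-sen}, using that the Sen operator is a well-defined $A$-linear endomorphism on the finite orbit module, that normalized trace maps exhibit $y$ as fixed by a sufficiently small open subgroup $\Gamma_{L_{n'}}\subset\Gamma_{L_n}$, and that $(A\widehat\otimes\widehat{L}_\infty)^{\Gamma_{L_{n'}}}=A\otimes_{\Q_p}L_{n'}$. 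Applying this to each $x_\ell$ and setting $n'=\max_\ell n'_\ell$ yields $x\in L_{n'}\otimes_{L_n}\D_{\Sen}^{L_n}(V)$, completing the reduction.
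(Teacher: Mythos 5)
Your $(\Leftarrow)$ direction and your reduction of $(\Rightarrow)$ to the rank-one statement via a $c_3$-fixed basis are correct and essentially identical to the paper's argument (writing $x=\sum_j x_j\ve{e}_j$, using semilinearity and invertibility of $U_\gamma$ over $A\otimes_{\Q_p}L_n$ to show each coordinate's $\Gamma_{L_n}$-orbit lies in an $A\otimes_{\Q_p}L_n$-finite submodule of $A\widehat\otimes\widehat{L}_\infty$).

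The gap is exactly where you flag it. You reduce to the scalar claim --- if $y\in A\widehat\otimes\widehat{L}_\infty$ has $A$-finite $\Gamma_{L_n}$-orbit then $y\in A\otimes_{\Q_p}L_{n'}$ for some $n'$ --- and then invoke ``the Sen operator is a well-defined $A$-linear endomorphism'' and ``normalized trace maps exhibit $y$ as fixed by a sufficiently small open subgroup.'' The second phrase is the \emph{conclusion} one wants, not an argument for it, and the Sen operator plays no role in the paper's proof and doesn't obviously give you the result without additional work. The paper's actual mechanism for the scalar case is a norm contraction argument: by Lemma~\ref{finite-closed} the orbit module $M$ is closed in $A\widehat\otimes\widehat{L}_\infty$, so it carries both the quotient Banach norm from a generating surjection and the subspace $p$-adic norm, and these are equivalent ($C_1|\cdot|_p\le|\cdot|_M\le C_2|\cdot|_p$). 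Continuity of the $\Gamma_{L_n}$-action and Lemma~\ref{finite-banach-bound} give a bound on the operator norm of $\gamma^{p^m}-1$ on $M$ for $m\gg0$, with a judicious choice of threshold $\varepsilon=\tfrac12 C_1^2/(C_2^2 p^{c_3})$. One then uses TS3: for $n'\gg 0$ there is a $\Gamma_{L_{n'}}$-equivariant splitting $A\widehat\otimes\widehat{L}_\infty=(A\otimes_{\Q_p}L_{n'})\oplus X_{H,n'}$ with $\gamma^{p^{m_\varepsilon}}-1$ invertible on $X_{H,n'}$ and $\|(\gamma^{p^{m_\varepsilon}}-1)^{-1}\|\le p^{c_3}$. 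Since $\gamma^{p^{n'-n}}-1$ kills $A\otimes_{\Q_p}L_{n'}$, we have $(\gamma^{p^{n'-n}}-1)(M)\subset X_{H,n'}\cap M$, and combining the two operator-norm bounds yields $|(\gamma^{p^{n'-n}}-1)(x)|\le\tfrac12|(\gamma^{p^{n'-n}}-1)(x)|$ for all $x\in M$, forcing $(\gamma^{p^{n'-n}}-1)(M)=0$ and hence $M\subset A\otimes_{\Q_p}L_{n'}$. Without this estimate, or something equivalent supplying the quantitative input to TS3, your plan does not close; the scalar statement is genuinely the content of the lemma, not a black box.
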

\begin{proof}
Certainly the $\Gamma_{L_n}$-orbit of any element of $L_{n'}\otimes_{L_n}\D_{\Sen}^{L_n}(V)$ generates an $A$-finite module.  Conversely, suppose that the $\Gamma_{L_n}$-orbit of $x\in \widehat L_{\infty}\widehat\otimes_{L_n}\D_{\Sen}^{L_n}(V)$ generates a finite $A$-module $M$. Let $\{\ve{e}_1,\ldots,\ve{e}_d\}$ be an $A\otimes_{\Q_p}L_n$-basis of $\D_{\Sen}^{L_n}(V)$, so that the action of $\gamma\in\Gamma_{L_n}$ with respect to $\ve{e}_1,\ldots,\ve{e}_d$ is given by a matrix $(a_{ij})$ with $a_{ij}\in A\otimes_{\Q_p} L_n$.  Write $x=\sum_i c_i\ve{e}_i$.

By assumption, $M$ is finite over $A\otimes_{\Q_p} L_n$, so it is generated by a finite collection $f_1,\ldots, f_r$ of elements of $(A\widehat\otimes \widehat L_\infty)\otimes_{A\otimes_{\Q_p} L_n}\D_{\Sen}^{L_n}(V)$.  Then the coefficients (with respect to $\{\ve{e}_i\}$) of elements of $M$ are contained in the $A\otimes_{\Q_p} L_n$-submodule of $A\widehat\otimes\widehat L_\infty$ generated by the coefficients of $f_1,\ldots,f_r$, which is finite.  But 
$$\gamma(x)=\sum \gamma(c_i)\gamma(\ve{e}_i) = \sum_j (\sum_i a_{ji}\cdot \gamma(c_i))\ve{e}_j$$
Since $\gamma$ is invertible, this shows that the $\Gamma_{L_n}$-orbit of the $c_i$ is in the $A\otimes_{\Q_p} L_n$-span of the coefficients with respect to $\{\ve{e}_i\}$ of $\Gamma_{L_n}\cdot x$.

Thus, we are reduced to the rank $1$ case.  That is, we need to show that if the $\Gamma_{L_n}$-orbit of $c\in A\widehat\otimes_{\Q_p}\widehat L_\infty$ generates an $A\otimes_{\Q_p} L_n$-finite module $M\subset A\widehat\otimes_{\Q_p}\widehat L_\infty$, then $c\in \cup_{n'\geq n}L_{n'}\otimes_{L_n}(A\otimes_{\Q_p} L_n)$.  

Choose a finite set $x_1,\ldots, x_r\in M$ which generates $M$, and give $M$ the quotient norm $|\cdot|_M$ coming from the natural surjection $A^{\oplus r}\twoheadrightarrow M$.  Since $A\widehat\otimes_{\Q_p}\widehat L_\infty$ is a potentially orthonormalizable $A$-module, $M$ is closed in $A\widehat\otimes_{\Q_p}\widehat L_\infty$ by~\cite[Lemma 2.3]{buzzard}, and therefore also acquires a $p$-adic norm $|\cdot|_p$.  All norms on a finite Banach module are equivalent by~\cite[Prop. 3.7.3/3]{bgr}, so $|\cdot|_M$ and $|\cdot|_p$ are equivalent, meaning that there are positive constants $C_1,C_2$ such that $C_1|x|_p\leq |x|_M\leq C_2|x|_p$ for all $x\in M$.  

Then for any $\varepsilon >0$, there is some $m_\varepsilon$ such that $|(\gamma^{p^m}-1)(x_i)|_p<\varepsilon |x_i|_p$ for all $i$ and any $m\geq m_\varepsilon$.  We choose $\varepsilon=\frac{1}{2}\frac{C_1^2}{C_2^2p^{c_3}}$.  This implies that
$$|(\gamma^{p^m}-1)(x_i)|_M\leq C_2|(\gamma^{p^m}-1)(x_i)|_p < C_2\varepsilon |x_i|_p\leq \frac{C_2\varepsilon}{C_1}\cdot |x_i|_M$$
By Lemma~\ref{finite-banach-bound}, $\gamma^{p^m}-1$ has operator norm at most $\frac{C_2\varepsilon}{C_1}$ with respect to $|\cdot|_M$.  But then 
$$|(\gamma^{p^m}-1)(x)|_p\leq \frac{1}{C_1}|(\gamma^{p^m}-1)(x)|_M < \frac{C_2\varepsilon}{C_1^2}|x|_M \leq \frac{C_2^2\varepsilon}{C_1^2}|x|_p$$
so $\gamma^{p^m}-1$ has operator norm at most $\frac{C_2^2\varepsilon}{C_1^2}$ with respect to $|\cdot|_p$.  

Next, we observe that for any integer $m\geq 1$, the kernel of $\gamma^{p^m}-1$ on $A\widehat\otimes_{\Q_p}\widehat L_\infty$ is $A\otimes_{\Q_p}L_{m+n}$.  Therefore, if $(\gamma^{p^m}-1)(M)=0$ for some $m\gg0$, we are done.  Now recall that by the third Tate-Sen axiom, for any $n'\geq n$, there is a $\Gamma_{L_{n'}}$-equivariant topological splitting 
$$A\widehat\otimes_{\Q_p}\widehat L_\infty=(A\otimes_{\Q_p}L_{n'})\oplus X_{H,n'}$$ 
and for $n'\gg_{m_\varepsilon} n$, $\gamma^{p^{m_\varepsilon}}-1$ acts invertibly on $X_{H,n'}$, with the norm of $(\gamma^{p^{m_\varepsilon}}-1)^{-1}$ bounded above by the constant $p^{c_3}$.  Since $\gamma^{p^{n'-n}}-1$ kills $A\otimes_{\Q_p}L_{n'}$, it follows that $(\gamma^{p^{n'-n}}-1)(M)\subset X_{H,n'}$.  But $(\gamma^{p^{n'-n}}-1)(M)\subset M$, so $\gamma^{p^{m_\varepsilon}}-1$ has $p$-adic operator norm at most $\frac{C_2^2\varepsilon}{C_1^2}$ on $(\gamma^{p^{n'-n}}-1)(M)$.  Then for any $x\in M$,
\begin{eqnarray*}
|(\gamma^{p^{n'-n}}-1)(x)|_p&=&|(\gamma^{p^{m_\varepsilon}}-1)^{-1}(\gamma^{p^{m_\varepsilon}}-1)(\gamma^{p^{n'-n}}-1)(x)|_p	\\
&\leq& p^{c_3}|(\gamma^{p^{m_\varepsilon}}-1)(\gamma^{p^{n'-n}}-1)(x)(x)|_p	\\
&\leq& p^{c_3}\frac{C_2^2\varepsilon}{C_1^2}|(\gamma^{p^{n'-n}}-1)(x)|_p = \frac{1}{2}|(\gamma^{p^{n'-n}}-1)(x)|_p
\end{eqnarray*}
This forces $|(\gamma^{p^{n'-n}}-1)(x)|_p=0$, so $(\gamma^{p^{n'-n}}-1)(x)=0$.  Therefore, 
$(\gamma^{p^{n'-n}}-1)(M)=0$
and we are done.
\end{proof}

We can bootstrap this result to relate $\mathscr{D}_{\dR}(\mathscr{V})$ and $\mathscr{D}_{\dif}(\mathscr{V})$, just as in the case when $X=\Sp(\Q_p)$.

\begin{thm}\label{ddif-ddr}
Let $\mathscr{V}$ be a family of representations of $\Gal_K$ of rank $d$.  Then $\mathscr{D}_{\dR}^K(\mathscr{V})=\left(\mathscr{D}_{\dif}^K(\mathscr{V})\right)^{\Gamma_K}$, as subsheaves of $\mathscr{B}_{\dR}\otimes_{\mathscr{O}_X}\mathscr{V}$.
\end{thm}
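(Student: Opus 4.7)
The plan is to imitate the strategy of Theorem~\ref{dsen-dht}, replacing $\C_K$ by $\B_{\dR}$ and $\D_{\Sen}$ by $\D_{\dif}$, and to reduce the essential step to the already-proved Hodge--Tate statement via the $t$-adic filtration on $\B_{\dR}^+$. Since both sides are subsheaves of $\mathscr{B}_{\dR}\otimes_{\mathscr{O}_X}\mathscr{V}$, I may work locally and assume $X=\Sp(A)$ with $V=\Gamma(X,\mathscr{V})$ admitting a $\Gal_K$-stable finite free lattice $V_0$ over a formal model $\mathscr{A}$. As $\D_{\dR}^{K'}(V)=K'\otimes_K\D_{\dR}^K(V)$ for finite $K'/K$, I may replace $K$ by a finite extension $L$ such that $\Gal_L$ acts trivially on $V_0/12pV_0$. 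For such $L$ and $n\gg0$ the module $\D_{\dif}^{+,L_n}(V)$ is finite free of rank $d$ over $A\otimes_{\Q_p}L_n[\![t]\!]$, and after inverting $t$ one has a natural $\Gal_L$-equivariant isomorphism
$$(A\widehat\otimes\B_{\dR})\otimes_{A\otimes_{\Q_p}L_n(\!(t)\!)}\D_{\dif}^{L_n}(V)\xrightarrow{\sim}(A\widehat\otimes\B_{\dR})\otimes_AV.$$

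Taking $H_L$-invariants (using that $H_L$ fixes $\D_{\dif}^{L_n}(V)$), the theorem reduces, after a Galois descent along the finite group $\Gal_{L/K}$, to the identity
$$\Bigl((A\widehat\otimes\B_{\dR})^{H_L}\otimes_{A\otimes_{\Q_p}L_n(\!(t)\!)}\D_{\dif}^{L_n}(V)\Bigr)^{\Gamma_{L_n}}=\D_{\dif}^{L_n}(V)^{\Gamma_{L_n}}.$$
To prove this, I would pick a $c_3$-fixed $(A\otimes_{\Q_p}L_n)[\![t]\!]$-basis $(\ve{e}_1,\dots,\ve{e}_d)$ of $\D_{\dif}^{+,L_n}(V)$, multiply any $\Gamma_{L_n}$-fixed element of the left-hand side by a suitable power of $t$ to place it inside $(A\widehat\otimes\B_{\dR}^+)^{H_L}\otimes_{A\otimes_{\Q_p}L_n[\![t]\!]}\D_{\dif}^{+,L_n}(V)$, and then proceed by $t$-adic approximation. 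The successive graded quotients $t^k\D_{\dif}^{+,L_n}(V)/t^{k+1}\D_{\dif}^{+,L_n}(V)$ are twists of $\D_{\Sen}^{L_n}(V)$ by $t^k$, so Theorem~\ref{dsen-dht} combined with Lemma~\ref{dsen-finite-orbit} forces the image of the coefficients on each graded piece to lie in $A\otimes_{\Q_p}L_n$; iterating with the short exact sequences
$$0\to t^{k+1}\D_{\dif}^{+,L_n}(V)/t^{k+2}\to t^k\D_{\dif}^{+,L_n}(V)/t^{k+2}\to t^k\D_{\dif}^{+,L_n}(V)/t^{k+1}\to 0$$
lets me lift mod $t^{k+2}$, and the inverse limit places the coefficients in $A\otimes_{\Q_p}L_n[\![t]\!]$, whence in $A\otimes_{\Q_p}L_n(\!(t)\!)$ after reversing the initial multiplication by a power of $t$.

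The main obstacle is controlling this $t$-adic lifting rigorously: one needs to know that the inverse system $\{\H^0(\Gamma_{L_n},(A\widehat\otimes\B_{\dR}^+)^{H_L}\otimes\D_{\dif}^{+,L_n}(V)/t^k)\}_k$ has sufficiently well-behaved transition maps (surjectivity, or at least Mittag--Leffler) to ensure that a coherent sequence of truncated lifts assembles to an element of the inverse limit. This is exactly the kind of statement provided by Proposition~\ref{h0-stab} and the Tate--Sen-type calculations on the Hodge--Tate graded pieces (the $t^k$-twist being $c_3$-compatible with the chosen basis). Once this lifting is in hand, the identity above is immediate, and the passage from $L_n$ to $K$ by Galois descent and from the affinoid case to the general rigid space $X$ by sheafification (as in the remark following the definition of $\mathscr{D}_\ast^K$) complete the proof.
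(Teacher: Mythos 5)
Your overall route matches the paper's: reduce to the affinoid case with a Galois-stable lattice, extend $K$ to a suitable $L$, pass to $\D_{\dif}^{L_n,+}$, filter $t$-adically, and induct on the truncation using the Hodge--Tate case for the graded pieces. The two real soft spots in your plan are in the formulation of the induction and in your worry about Mittag--Leffler.

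On the induction: the statement to be proved by induction cannot be ``a $\Gamma_{L_n}$-fixed element of $(A\widehat\otimes\L_{\dR}^+/t^m)\otimes\D_{\dif}^{L_n,+}(V)$ lies in $\D_{\dif}^{L_n,+}(V)/t^m$.'' The reason is exactly the lift-and-subtract step you propose: if $c$ is fixed mod $t^{m+1}$, its reduction $\bar c$ mod $t^m$ is fixed and (by the inductive hypothesis) lies in the target, but a lift $\hat c$ of $\bar c$ to $\D_{\dif}^{L_n,+}(V)/t^{m+1}$ is \emph{not} fixed, so $c-\hat c$ is not fixed, and you cannot invoke the fixed-element version of the graded-piece statement on it. The correct inductive hypothesis, as in the paper, is a ``finite-orbit'' statement: if the $\Gamma_{L_n}$-orbit of $x$ generates a finite $A\otimes_{\Q_p}L_n[\![t]\!]/t^m$-module, then $x\in\bigcup_{n'\geq n}L_{n'}\otimes_{L_n}\D_{\dif}^{L_n,+}(V)/t^m$. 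The base case is then precisely Lemma~\ref{dsen-finite-orbit} (which you correctly cite, but whose output is $\bigcup_{n'}L_{n'}\otimes\D_{\Sen}^{L_n}(V)$, not $A\otimes_{\Q_p}L_n$). The fixed-element statement is recovered only at the end, by observing that a $\Gamma_{L_n}$-fixed element of $L_{n'}\otimes_{L_n}\D_{\dif}^{L_n,+}(V)/t^m$ lies in $\left(L_{n'}\otimes_{L_n}\D_{\dif}^{L_n,+}(V)/t^m\right)^{\Gamma_{L_n}}=\D_{\dif}^{L_n,+}(V)/t^m$. Your phrase ``forces the image of the coefficients on each graded piece to lie in $A\otimes_{\Q_p}L_n$'' skips this intermediate $\bigcup L_{n'}$ step, and without it the lifting does not close up.

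On Mittag--Leffler: the worry is misplaced, and Proposition~\ref{h0-stab} is not used in the paper's proof here. Since $\H^0(\Gamma_{L_n},-)$ is a right adjoint, it commutes with arbitrary inverse limits; once you have, for each $m$, the equality
$\left((A\widehat\otimes\L_{\dR}^+/t^m)\otimes\D_{\dif}^{L_n,+}(V)\right)^{\Gamma_{L_n}}=\left(\D_{\dif}^{L_n,+}(V)/t^m\right)^{\Gamma_{L_n}}$,
you simply take $\varprojlim_m$ of both sides of the equality and identify the limits with the uncompleted $\Gamma$-invariants. No surjectivity of transition maps is required because you are not trying to lift a single compatible system of invariants up the tower; you are comparing two inverse systems of $\Gamma$-invariants that you have already shown to agree termwise. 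The Proposition~\ref{h0-stab} style of stabilization argument is needed elsewhere (for finiteness and base-change questions about $\D_{\dR}$), not in this theorem. Also, the $c_3$-fixed basis argument from Theorem~\ref{dsen-dht} does not port directly to $\D_{\dif}^{L_n,+}(V)$ since $\B_{\dR}^+$ is Fr\'echet rather than Banach, which is exactly why the paper reduces mod $t^m$ and invokes Lemma~\ref{dsen-finite-orbit} on graded pieces rather than applying a Tate--Sen-type argument to $\D_{\dif}^+$ directly.
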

\begin{proof}
As before, we reduce to the case when $X=\Sp(A)$ for some $E$-affinoid algebra $A$ and $V:=\Gamma(X,\mathscr{V})$ admits a free $\Gal_K$-stable $\mathscr{A}$-lattice $V_0$ of rank $d$, where $\mathscr{A}$ is some formal $\mathscr{O}_E$-model for $A$.

Since $\D_{\dR}^{K'}(V)=K'\otimes_K\D_{\dR}^K(V)$ for any finite extension $K'/K$, we may again replace $K$ with any finite extension; we choose $L/K$ such that $\Gal_{L/K}$ acts trivially on $V_0/12pV_0$.  Then $\D_{\dif}^{L_n,+}(V)$ is a free $A\widehat\otimes L_n[\![t]\!]$-module of rank $d$, and we have a Galois-equivariant isomorphism
$$(A\widehat\otimes\B_{\dR}^+)\otimes_{A\widehat\otimes L_n[\![t]\!]}\D_{\dif}^{L_n,+}\xrightarrow{\sim} (A\widehat\otimes\B_{\dR}^+)\widehat\otimes_A V$$
which respects the $t$-adic filtration on both sides.

After twisting $V$ by some power of the cyclotomic character, it therefore suffices to show that
$$\left((A\widehat\otimes\L_{\dR}^+)\otimes_{A\widehat\otimes L_n[\![t]\!]} \D_{\dif}^{L_n,+}(V)\right)^{\Gamma_{L_n}}= \D_{\dif}^{L_n,+}(V)^{\Gamma_{L_n}}$$
where $\L_{\dR}^+:=(\B_{\dR}^+)^{H_K}$. 
In fact, it suffices to show that 
$$\left((A\widehat\otimes\L_{\dR}^+/t^m)\otimes_{A\otimes_{\Q_p} L_n[\![t]\!]}\D_{\dif}^{L_n,+}(V)\right)^{\Gamma_{L_n}}=\left(\D_{\dif}^{L_n,+}(V)/t^m\right)^{\Gamma_{L_n}}$$
for all $m$, because taking inverse limits commutes with taking $\Gamma_{L_n}$-invariants.  

We will do this by showing that if $x\in(A\widehat\otimes\L_{\dR}^+/t^m)\otimes_{A\otimes_{\Q_p} L_n[\![t]\!]}\D_{\dif}^{L_n,+}(V)$ and the $\Gamma_{L_n}$-orbit of $x$ generates a finite $A\otimes_{\Q_p} L_n[\![t]\!]/t^m$-module, then $x$ actually lives in $\bigcup_{n'\geq n}L_{n'}\otimes_{L_n}\D_{\dif}^{L_n,+}(V)/t^m$.  For then if $x$ is $\Gamma_{L_n}$-fixed, it lives in $L_{n'}\otimes_{L_n}\D_{\dif}^{L_n,+}(V)/t^m$ for some $n'\geq n$.  Since $\left(L_{n'}\otimes_{L_n}\D_{\dif}^{L_n,+}(V)/t^m\right)^{\Gamma_{L_n}}=\D_{\dif}^{L_n,+}(V)/t^m$, we conclude that $x\in\D_{\dif}^{L_n,+}(V)/t^m$.

We proceed by induction on $m$.  We first consider $m=1$.  Then we considering elements of $(A\widehat\otimes \widehat L_\infty)\otimes\D_{\Sen}^{L_n}(V)$ whose $\Gamma_{L_n}$-orbits generate finite $A\otimes_{\Q_p}L_n$-modules.  But such elements actually live in $\bigcup_{n'\geq n}L_{n'}\otimes_{L_n}\D_{\Sen}^{L_n}(V)$, by Lemma~\ref{dsen-finite-orbit}.

Now we assume the result for $m$, and we consider the exact sequence
\begin{equation*}
\begin{split}0\rightarrow t^m (\L_{\dR}^+/t^{m+1})\widehat\otimes_{L_n[\![t]\!]}\D_{\dif}^{L_n,+}(V)\rightarrow (\L_{\dR}^+/t^{m+1})&\widehat\otimes_{L_n[\![t]\!]}\D_{\dif}^{L_n,+}(V)\\
&\rightarrow (\L_{\dR}^+/t^{m})\widehat\otimes_{L_n[\![t]\!]}\D_{\dif}^{L_n,+}(V)\rightarrow 0\end{split}
\end{equation*}
If the $\Gamma_{L_n}$-orbit of $c\in (\L_{\dR}^+/t^{m+1})\widehat\otimes_{L_n[\![t]\!]}\D_{\dif}^{L_n,+}(V)$ generates a finite $A\otimes_{\Q_p}L_n$-module, then its image $\overline{c}$ in $(\L_{\dR}^+/t^{m})\widehat\otimes_{L_n[\![t]\!]}\D_{\dif}^{L_n,+}(V)$ does as well.  By the inductive hypothesis, $\overline{c}\in \bigcup_{n'\geq n}L_{n'}\otimes_{L_n}(\D_{\dif}^{L_n,+}(V)/t^m)$.  We may choose $\widehat{c}\in \bigcup_{n'\geq n}L_{n'}\otimes_{L_n}(\D_{\dif}^{L_n,+}(V)/t^{m+1})$ lifting $\overline{c}$, so that the $\Gamma_{L_n}$-orbit of $c-\hat{c}$ still generates a finite $A\otimes_{\Q_p}L_n$-module.  Then $c-\widehat{c}$ is an element of $t^m (\L_{\dR}^+/t^{m+1})\widehat\otimes_{L_n[\![t]\!]}\D_{\dif}^{L_n,+}(V)$, which is isomorphic to $t^m\cdot(\L_{\dR}^+/t)\widehat\otimes_{L_n[\![t]\!]}\D_{\dif}^{L_n,+}(V)$ as a $\Gamma_{L_n}$-representation.  But the $m=1$ case applies to this latter space, so we are done.
\end{proof}

We can similarly relate $\mathscr{D}_{\cris}^K(\mathscr{V})$ and $\mathscr{D}_{\st}^K(\mathscr{V})$ to the family of $(\varphi,\Gamma)$-modules $\mathscr{D}_{\rig,K}^\dagger(\mathscr{V})$, following~\cite{berger}.
\begin{thm}\label{dcris-phi-gamma}
Let $\mathscr{V}$ be a family of representations of $\Gal_K$.  Then $\mathscr{D}_{\cris}^K(\mathscr{V})=\left(\mathscr{D}_{\rig,K}^\dagger(\mathscr{V})[1/t]\right)^{\Gamma_K}$, and $\mathscr{D}_{\st}^K(\mathscr{V})=\left(\mathscr{D}_{\log,K}^\dagger(\mathscr{V})[1/t]\right)^{\Gamma_K}$.  The first equality is as subsheaves of $\widetilde{\mathscr{B}}_{\rig}^\dagger\otimes_{\mathscr{O}_X}\mathscr{V}$, and the second is as subsheaves of $\widetilde{\mathscr{B}}_{\log}^\dagger\otimes_{\mathscr{O}_X}\mathscr{V}$.
\end{thm}

We will need a number of preparatory results.  Throughout the proofs of these results, we will use freely the fact that if $A$ is a $\Q_p$-Banach algebra, then $A$ is potentially orthonormalizable in the sense of~\cite{buzzard}.  This follows from~\cite[Proposition 10.1]{schneider}, since $\Q_p$ is discretely valued.  This has the consequence that injections of Fr\'echet spaces are preserved under completed tensor products with $A$ over $\Q_p$.

\begin{lemma}
Let $A$ be an orthonormalizable $\Q_p$-Banach algebra, and let $\mathscr{A}$ be its unit ball.  Let $h$ be a positive integer.  Then 
$$\cap_{k=0}^\infty p^{-hk}(\mathscr{A}\widehat\otimes\widetilde{\A}^{\dagger,p^{-k}s})=\mathscr{A}\widehat\otimes\widetilde{\A}^+\text{ and }\cap_{k=0}^\infty p^{-hk}(\mathscr{A}\widehat\otimes\widetilde{\A}_{\rig}^{\dagger,p^{-k}s})\subset A\widehat\otimes \widetilde{\B}_{\rig}^+$$
\end{lemma}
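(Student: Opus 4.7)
The plan is to unpack membership in the rings $\widetilde{\A}^{\dagger,r}$ and $\widetilde{\A}_{\rig}^{\dagger,r}$ via the Teichm\"uller expansion $x=\sum_{n\geq 0}p^n[x_n]$ together with the Gauss-type gauges $W_r$ whose finiteness (plus, in the overconvergent case, a growth-to-infinity condition on $v_{\widetilde{\E}}(x_n)+\text{const}/r\cdot n$) cuts out the various subrings of $\widetilde{\A}$. Membership in $\widetilde{\A}^+$ is the unconditional bound $v_{\widetilde{\E}}(x_n)\geq 0$ for every $n$. Because $\mathscr{A}$ is orthonormalizable, the gauges $W_r$ extend to $\mathscr{A}\widehat\otimes\widetilde{\A}^{\dagger,r}$, and the $p$-adic/Teichm\"uller filtrations are respected by $\mathscr{A}\widehat\otimes(-)$; this is the technical key that lets the Teichm\"uller-coefficient estimates be tested after extension of scalars.

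For the first equality, the inclusion $\supset$ is immediate from $\widetilde{\A}^+\subset\widetilde{\A}^{\dagger,p^{-k}s}\subset p^{-hk}\widetilde{\A}^{\dagger,p^{-k}s}$ for every $k$. For the reverse, let $x$ lie in the intersection. For each $k$, the hypothesis $p^{hk}x\in\mathscr{A}\widehat\otimes\widetilde{\A}^{\dagger,p^{-k}s}$, combined with the shifting of Teichm\"uller indices under multiplication by $p^{hk}$, yields lower bounds on $v_{\widetilde{\E}}(x_n)$ of the form $v_{\widetilde{\E}}(x_n)\geq -\alpha(k)\,n-hk+o(n)$ with $\alpha(k)\to 0$ as $k$ grows (since the ``radius'' $p^{-k}s$ is shrinking and hence the associated growth factor tightens). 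For any fixed $n$, letting $k$ vary and comparing the linear $-hk$ loss against the improvement in $\alpha(k)$ forces $v_{\widetilde{\E}}(x_n)\geq 0$; induction on $n$, starting from the base term $[x_0]$, then places $x$ in $\mathscr{A}\widehat\otimes\widetilde{\A}^+$.

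For the second inclusion, I would use the description $\widetilde{\B}_{\rig}^+=\varprojlim_{r}\widetilde{\B}_{\rig}^{\dagger,r}$ as a Fr\'echet inverse limit. Given $x$ in the intersection, for each $r>0$ one picks $k$ with $p^{-k}s\leq r$, so that $p^{hk}x\in\mathscr{A}\widehat\otimes\widetilde{\A}_{\rig}^{\dagger,p^{-k}s}\subset\mathscr{A}\widehat\otimes\widetilde{\A}_{\rig}^{\dagger,r}$, and consequently $x\in\mathscr{A}\widehat\otimes\widetilde{\B}_{\rig}^{\dagger,r}$ (inverting $p$ kills the $p^{-hk}$ factor). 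Compatibility of these representatives across $r$ is automatic because they all arise from the single element $x$, and the compatibility of $\mathscr{A}\widehat\otimes(-)$ with the Fr\'echet inverse limit (again using orthonormalizability) assembles them into an element of $\mathscr{A}\widehat\otimes\widetilde{\B}_{\rig}^+$.

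The main obstacle is the interplay between the completed tensor product $\mathscr{A}\widehat\otimes(-)$ and the Teichm\"uller-coefficient gauges: one must know that bounds on scalar coefficients in $\widetilde{\E}$ carry over to control membership after extending to $\mathscr{A}\widehat\otimes\widetilde{\E}$, and that they do so uniformly enough that the $k\to\infty$ limit can be taken coefficient-by-coefficient. Once the gauge-extension step is in hand, the remaining arithmetic — balancing the $p^{-hk}$ scaling against the shrinking ``radius'' $p^{-k}s$ — reduces to a direct comparison of linear versus exponential growth in $k$.
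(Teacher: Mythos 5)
Your argument for the first equality is essentially the same reduction the paper makes: use an orthonormal basis of $A$ to identify $\mathscr{A}\widehat\otimes(-)$ with $c_I(-)$ and work coordinate by coordinate, then run the Teichm\"uller-coefficient estimate. Your intermediate formula ``$v_{\widetilde\E}(x_n)\geq -\alpha(k)\,n-hk+o(n)$'' is off — the $hk$ should also carry the factor $\alpha(k)$, since the condition $p^{hk}x\in\widetilde\A^{\dagger,p^{-k}s}$ gives $v_\E(x_n)\geq -\alpha(k)(n+hk)$ with $\alpha(k)=p^{1-k}s/(p-1)$; as written the bound tends to $-\infty$. But your surrounding prose shows you see that the exponential decay of $\alpha(k)$ dominates the linear growth of $hk$, so this is an exposition glitch, not a real error. (The paper skips the re-derivation and simply cites Berger's Lemme 3.1 coordinate-by-coordinate.)

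The argument for the second inclusion has a genuine gap. You invoke $\widetilde\B_{\rig}^+=\varprojlim_r\widetilde\B_{\rig}^{\dagger,r}$, but with the inclusion transition maps this inverse limit is just $\cap_{r>0}\widetilde\B_{\rig}^{\dagger,r}$, which is \emph{strictly larger} than $\widetilde\B_{\rig}^+$. Concretely, in the Robba-ring picture $\widetilde\B_{\rig}^{\dagger,r}$ is the ring of analytic functions on the half-open annulus $p^{-1/r}\leq |X|<1$, so $\cap_{r>0}\widetilde\B_{\rig}^{\dagger,r}$ consists of functions analytic on the whole punctured open disk $0<|X|<1$; a function like $\sum_{n\geq0}a_nX^{-n}$ with $a_n\to0$ rapidly lies there but does not extend to $X=0$, hence is not in $\widetilde\B_{\rig}^+$. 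Your chain ``pick $k$ with $p^{-k}s\leq r$, then invert $p$ to kill the $p^{-hk}$'' only establishes $x\in\cap_r\,\mathscr{A}\widehat\otimes\widetilde\B_{\rig}^{\dagger,r}$, which is not enough. The whole content of Berger's second assertion is precisely that the \emph{integrality} $p^{hk}x\in\mathscr{A}\widehat\otimes\widetilde\A_{\rig}^{\dagger,p^{-k}s}$ — not merely $x\in\mathscr{A}\widehat\otimes\widetilde\B_{\rig}^{\dagger,p^{-k}s}$ — combined with the shrinking radius forces the ``principal part'' at the inner boundary to vanish. Your argument throws away the $p^{-hk}$ bookkeeping at exactly the moment it becomes load-bearing. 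The paper sidesteps this by proving only the first (Banach-level) equality and then noting that Berger's argument for the Robba version carries over componentwise once the orthonormalizability reduction is in place; if you want to reprove the second inclusion from scratch, you must carry the gauge estimates for $p^{hk}x$ all the way through and use them to kill the negative-degree tail, rather than inverting $p$ prematurely.
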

\begin{proof}
This is an $A$-linear analogue of~\cite[Lemme 3.1]{berger}.  We prove the first assertion here; with this in place, the proof of the second carries over verbatim from~\cite{berger}.  Note that the first assertion is an equality of topological $\Z_p$-modules inside $A\widehat\otimes\widetilde{\B}^+$, not algebras, because we do not know that there is an algebra norm on $A$ making it into an orthonormalizable $\Q_p$-Banach space.



Choose an orthonormal basis $\{e_i\}_{i\in I}$ of $A$.  Then we may compute $\cap_{k=0}^\infty p^{-hk}(\mathscr{A}\widehat\otimes\widetilde{\A}^{\dagger,p^{-k}s})$ inside $A\widehat\otimes\widetilde\B\xrightarrow{\sim} c_I(\B)$.
But if 
$$x=\sum_{i\in I}a_ie_i\in p^{-hk}(\mathscr{A}\widehat\otimes\widetilde{\A}^{\dagger,p^{-k}s})$$ 
for all $k$, then $a_i\in p^{-hk}\widetilde{\A}^{\dagger,p^{-k}s}$ for all $k$, implying that $x\in \mathscr{A}\widehat\otimes\widetilde{\A}^+$, as desired.
\end{proof}

\begin{remark}
The completed tensor product $\mathscr{A}\widehat\otimes\widetilde{\A}^+$ appearing in the first assertion of Lemma~\ref{arig-brigplus} is with respect to the weak topology on $\widetilde{\A}^+$, not with respect to the $p$-adic topology.
\end{remark}

\begin{cor}\label{arig-brigplus}
Let $A$ be a $\Q_p$-Banach algebra, equipped with an algebra norm $|\cdot|$, and let $\mathscr{A}$ be its valuation ring.  Then $\cap_{k=0}^\infty p^{-hk}(\mathscr{A}\widehat\otimes\widetilde{\A}_{\rig}^{\dagger,p^{-k}s})\subset A\widehat\otimes \widetilde{\B}_{\rig}^+$.
\end{cor}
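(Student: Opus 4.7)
The plan is to reduce the statement to the preceding lemma by replacing the given algebra norm $|\cdot|$ with an equivalent orthonormalizable norm on $A$. Since $\Q_p$ is discretely valued, Proposition~\ref{discrete-ortho} guarantees that the underlying $\Q_p$-Banach space of $A$ carries an equivalent orthonormalizable norm $|\cdot|'$; write $\mathscr{A}'$ for its unit ball. The norm $|\cdot|'$ need not be submultiplicative, but inspection of the proof of the preceding lemma shows that only the orthonormalizable Banach space structure of $A$ is used (to invoke Lemma~\ref{ortho-tensor} and identify $A\widehat\otimes\widetilde{\B}$ with $c_I(\widetilde{\B})$), so the lemma applies to $A$ with the norm $|\cdot|'$ and yields
$$\bigcap_{k=0}^\infty p^{-hk}(\mathscr{A}'\widehat\otimes\widetilde{\A}_{\rig}^{\dagger,p^{-k}s})\subset A\widehat\otimes\widetilde{\B}_{\rig}^+.$$

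Equivalence of norms provides a non-negative integer $N$ with $\mathscr{A}\subset p^{-N}\mathscr{A}'$. Because $A$ is potentially orthonormalizable, the relevant completed tensor products over $\Q_p$ all sit injectively inside the common ambient space $A\widehat\otimes\widetilde{\B}_{\rig}^\dagger$ (using the exactness properties of completed tensor products collected in Appendix~\ref{functional}), so the lattice containment extends to
$$p^{-hk}(\mathscr{A}\widehat\otimes\widetilde{\A}_{\rig}^{\dagger,p^{-k}s})\subset p^{-hk-N}(\mathscr{A}'\widehat\otimes\widetilde{\A}_{\rig}^{\dagger,p^{-k}s})=p^{-N}\cdot p^{-hk}(\mathscr{A}'\widehat\otimes\widetilde{\A}_{\rig}^{\dagger,p^{-k}s})$$
for every $k\geq 0$. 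Intersecting over $k$ and invoking the preceding lemma gives
$$\bigcap_{k=0}^\infty p^{-hk}(\mathscr{A}\widehat\otimes\widetilde{\A}_{\rig}^{\dagger,p^{-k}s})\subset p^{-N}\bigcap_{k=0}^\infty p^{-hk}(\mathscr{A}'\widehat\otimes\widetilde{\A}_{\rig}^{\dagger,p^{-k}s})\subset p^{-N}(A\widehat\otimes\widetilde{\B}_{\rig}^+)=A\widehat\otimes\widetilde{\B}_{\rig}^+,$$
the final equality holding because $\widetilde{\B}_{\rig}^+$ is a $\Q_p$-algebra, so scaling by $p^{-N}$ is a bijection on $A\widehat\otimes\widetilde{\B}_{\rig}^+$.

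The main point requiring attention is ensuring that the inclusions of unit balls really do pass to the completed tensor products in a compatible way, so that the intersection can be computed unambiguously in a single ambient space. This is precisely what potential orthonormalizability of $A$ over the discretely valued field $\Q_p$ buys us, via the exactness results of Appendix~\ref{functional}; once it is established, the corollary follows immediately from the preceding lemma, which does the genuine work.
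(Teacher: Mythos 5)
Your proof is correct and follows essentially the same route as the paper: pass to an equivalent orthonormalizable norm via Proposition~\ref{discrete-ortho}, use the comparison of unit balls to reduce to the preceding lemma (whose proof only uses the orthonormalizable Banach space structure, as you correctly note), and absorb the bounded scaling factor using the fact that $p$ is invertible in $\widetilde{\B}_{\rig}^+$. Your exposition is somewhat more detailed about why the various completed tensor products sit compatibly inside a common ambient space, but the argument is the same.
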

\begin{proof}
By~\cite[Proposition 10.1]{schneider}, there is an equivalent norm $|\cdot|'$ on $A$ with respect to which $A$ is orthonormalizable; let $\mathscr{A}'$ be the unit ball with respect to $|\cdot|'$.  Then there exists a constant $c\geq0$ such that $p^c\mathscr{A}\subset \mathscr{A}'$, so that $\cap_{k=0}^\infty p^cp^{-hk}(\mathscr{A}\widehat\otimes\widetilde{\A}_{\rig}^{\dagger,p^{-k}s})\subset A\widehat\otimes \widetilde{\B}_{\rig}^+$.  But $p$ is invertible in $\widetilde{\B}_{\rig}^+$, so $\cap_{k=0}^\infty p^{-hk}(\mathscr{A}\widehat\otimes\widetilde{\A}_{\rig}^{\dagger,p^{-k}s})\subset A\widehat\otimes \widetilde{\B}_{\rig}^+$.
\end{proof}


As in~\cite{berger}, ``Frobenius regularization'' follows immediately from Corollary~\ref{arig-brigplus}.
\begin{prop}[{\cite[Proposition 3.2]{berger}}]\label{frob-reg}
Let $d_1$, $d_2$, and $h$ be three positive integers, and let $M\in\Mat_{d_2\times d_1}(A\widehat\otimes\widetilde{\B}_{\log}^\dagger)$ be a matrix.  Suppose there exists $P\in\GL_{d_1}(A\otimes_{\Q_p}F)$ such that $M=\varphi^{-h}(M)P$.  Then $M\in \Mat_{d_2\times d_1}(A\widehat\otimes\widetilde{\B}_{\log}^+)$.
\end{prop}

\begin{cor}\label{gl-frob-reg}
Let $d$ be a positive integer, and let $M\in\GL_{d}(A\widehat\otimes\widetilde{\B}_{\log}^\dagger[t])$ be an invertible matrix.  Suppose there exists $P\in\GL_{d}(A\otimes_{\Q_p}F)$ such that $M=\varphi(M)P$.  Then $M\in \GL_{d}(A\widehat\otimes\widetilde{\B}_{\log}^+[1/t])$.
\end{cor}

\begin{remark}
Proposition~\ref{frob-reg} is stated and proved in~\cite{berger} for $h=1$.  However, the proof carries over verbatim for $h>1$.
\end{remark}

\begin{prop}\label{dlog-dst-field}
Let $A$ be a discretely valued $\Q_p$-Banach field with perfect residue field, and let $V$ be a an $A$-vector space of dimension $d$, equipped with a continuous $A$-linear action of $\Gal_K$.  Then the natural map
\[	\left((A\widehat\otimes\B_{\st}^+)\otimes_AV\right)^{\Gal_K}\rightarrow \left((A\widehat\otimes\widetilde{\B}_{\log}^\dagger)\otimes_AV\right)^{\Gal_K}	\]
is an isomorphism.
\end{prop}
\begin{proof}


Recall that for $n\gg0$, there is an injection $i_n:\widetilde{\B}_{\log}^{\dagger,s_n}\rightarrow \B_{\dR}^+$, where $s_n=p^ns_0=p^{n-1}(p-1)$.  Then $i_n$ yields an injection $\left((A\widehat\otimes\widetilde{\B}_{\log}^{\dagger,s_n})\otimes_AV\right)^{\Gal_K}\rightarrow \D_{\dR}^K(V)$.  Note that $V$ admits a free $\Gal_K$-stable $\mathscr{A}$-submodule of rank $d$; under these hypotheses, we will show below (in a non-circular way) in Corollary~\ref{dht-ddr-finite} that $\D_{\dR}^K(V)$ is a finite $A\otimes_{\Q_p}K$-module. 
Therefore, $\left((A\widehat\otimes\widetilde{\B}_{\log}^{\dagger,s_n})\otimes_AV\right)^{\Gal_K}$ is a finite $A\otimes_{\Q_p}K_0$-module.  

Further, we claim that there is some $s_n$ such that 
\[	\left((A\widehat\otimes\widetilde{\B}_{\log}^{\dagger,s_n})\otimes_AV\right)^{\Gal_K}=\left((A\widehat\otimes\widetilde{\B}_{\log}^{\dagger})\otimes_AV\right)^{\Gal_K}	\]
Indeed, $A\otimes_{\Q_p}K_0\cong \prod_i A_i$, where the $A_i$ are a finite collection of $\Q_p$-Banach fields which are finite extensions of $A$ (and isomorphic to each other, because $K_0/\Q_p$ is Galois), so that $\D_{\dR}^K(V)\cong \oplus_i (A_i\otimes_{K_0}K)^{\oplus d_i}$ for some integers $d_i\geq 0$.  It follows that $\left((A\widehat\otimes\widetilde{\B}_{\log}^{\dagger,s_n})\otimes_AV\right)^{\Gal_K}$ is an $A$-vector space of dimension at most $\sum_i d_i[K:K_0]\dim_AA_i$ for any $n$, so the same is true of $\left((A\widehat\otimes\widetilde{\B}_{\log}^{\dagger})\otimes_AV\right)^{\Gal_K}$.  Therefore, 
\[	\left((A\widehat\otimes\widetilde{\B}_{\log}^{\dagger})\otimes_AV\right)^{\Gal_K}:=\bigcup_n \left((A\widehat\otimes\widetilde{\B}_{\log}^{\dagger,s_n})\otimes_AV\right)^{\Gal_K}	\] 
is a finite module over the Noetherian ring $A$, so the conclusion follows.

Now let $D:=\left((A\widehat\otimes\widetilde{\B}_{\log}^{\dagger})\otimes_AV\right)^{\Gal_K}$, let $D_i:=D\otimes_AA_i$ be the factor of $D$ over $A_i$, let $v_1,\ldots, v_d$ be an $A$-basis of $V$, and let $w_1,\ldots,w_{d'}$ be an $A_i$-basis of $D_i$.  Then $v_1,\ldots,v_d$ is an $A\widehat\otimes\widetilde{\B}^{\dagger}$-basis of $(A\widehat\otimes\widetilde{\B}_{\log}^{\dagger})\otimes_AV$ and $w_j\in (A\widehat\otimes\widetilde{\B}_{\log}^{\dagger})\otimes_AV$, so  there is a matrix $M\in \Mat_{d\times d'}(A\widehat\otimes\widetilde{\B}_{\log}^\dagger)$ whose $j$th column is the coordinates of $w_j$ with respect to $v_1,\ldots,v_d$.  Let $P\in\GL_{d'}(A_i)$ be the matrix of $\varphi^{[K_0:\Q_p]}$ with respect to $w_1,\ldots,w_{d'}$.  To justify this, recall that $\varphi:\widetilde{\B}_{\log}^\dagger\rightarrow\widetilde{\B}_{\log}^\dagger$ is a bijection, and note that $\varphi$ cyclically permutes the $D_i$ so that $\varphi^{[K_0:\Q_p]}$ carries $D_i$ to itself.  Then $MP=\varphi^{[K_0:\Q_p]}(M)$, since $\varphi$ acts trivially on $v_1,\ldots,v_d$, so that $M=\varphi^{-[K_0:\Q_p]}(M)\varphi^{-[K_0:\Q_p]}(P)$.  Then by Frobenius regularization, $M$ has coefficients in $A\widehat\otimes\widetilde{\B}_{\log}^+\subset A\widehat\otimes\widetilde{\B}_{\st}^+$, so we are done.
\end{proof}

\begin{remark}
The conclusion of Proposition~\ref{dlog-dst-field} is used in the proof of \cite[Proposition 6.2.4]{bc}.  Since the proof requires some minor adjustments when $A$ is not $\Q_p$-finite, we have written out the details here.
\end{remark}

We can deduce the same result for Galois representations with affinoid coefficients, generalizing~\cite{berger}.
\begin{cor}\label{dlog-dst}
Let $A$ be an $E$-affinoid algebra and let $V$ be a finite free $A$-module of rank $d$ equipped with a continuous action of $\Gal_K$.  Then the natural map
$$\left((A\widehat\otimes\B_{\st}^+)\otimes_AV\right)^{\Gal_K}\rightarrow \left((A\widehat\otimes\widetilde{\B}_{\log}^\dagger)\otimes_AV\right)^{\Gal_K}$$
is an isomorphism.
\end{cor}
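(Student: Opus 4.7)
The plan is to reduce Corollary~\ref{dlog-dst} to the field case (Proposition~\ref{dlog-dst-field}) via the closed embedding provided by Lemma~\ref{embed}. The natural map is injective because $\B_{\st}^+\hookrightarrow \widetilde{\B}_{\log}^\dagger$ is a closed embedding of $\Q_p$-Banach spaces, so tensoring with the orthonormalizable $\Q_p$-Banach algebra $A$ and then with the finite free $A$-module $V$ preserves injectivity by the functional analysis in Appendix~\ref{functional}; taking $\Gal_K$-invariants preserves this injectivity as well. All the real work is in surjectivity.

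I would first establish the statement when $A$ is a $\Q_p$-Banach algebra $R$ that is module-finite over a complete discretely valued $\Q_p$-Banach field $B_0$ with perfect residue field and carries the topology of a finite-dimensional $B_0$-vector space. Since $R$ is finite over $B_0$, for any $\Q_p$-Banach space $Y$ one has $R\widehat\otimes_{\Q_p} Y = R\otimes_{B_0}(B_0\widehat\otimes_{\Q_p} Y)$. Setting $V_R := V\otimes_A R$ and viewing $V_R$ as a finite-dimensional $B_0$-vector space equipped with its continuous $B_0$-linear $\Gal_K$-action, this identification converts both sides of the corollary for $R$ into the corresponding $B_0$-linear invariants, to which Proposition~\ref{dlog-dst-field} applies directly.

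For a general $E$-affinoid algebra $A$, I apply Lemma~\ref{embed} to obtain a closed embedding $A\hookrightarrow B:=\prod_i R_i$ into a finite product of such $R_i$; taking products, the previous step gives the corollary for $B$. Given $x\in\left((A\widehat\otimes\widetilde{\B}_{\log}^\dagger)\otimes_AV\right)^{\Gal_K}$, its image $x_B$ in $\left((B\widehat\otimes\widetilde{\B}_{\log}^\dagger)\otimes_BV_B\right)^{\Gal_K}$ lies in $(B\widehat\otimes\B_{\st}^+)\otimes_BV_B$ by the case just settled. Since $V$ is finite free over $A$, to conclude that $x$ itself lies in $(A\widehat\otimes\B_{\st}^+)\otimes_AV$ it suffices to verify the ring-level intersection identity
\[
A\widehat\otimes\B_{\st}^+\;=\;\bigl(A\widehat\otimes\widetilde{\B}_{\log}^\dagger\bigr)\cap\bigl(B\widehat\otimes\B_{\st}^+\bigr)
\]
inside $B\widehat\otimes\widetilde{\B}_{\log}^\dagger$.

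The main obstacle is this intersection identity, which reduces the corollary to pure functional analysis. I would establish it by writing $B$ as an orthonormal direct sum $A\oplus C$ of $\Q_p$-Banach spaces, possible after adjusting the norm on $B$ to one for which $A$ embeds as an orthonormally complemented closed subspace (using that over the discretely valued field $\Q_p$ the spaces at hand are orthonormalizable, by a result in Appendix~\ref{functional}). Under such a splitting, each of the three completed tensor products in question decomposes as a direct sum along $A\oplus C$ via the compatibility of $\widehat\otimes$ with orthonormal direct sums, and the intersection can be checked termwise to give $A\widehat\otimes\B_{\st}^+$ exactly.
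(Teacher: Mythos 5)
Your plan mirrors the paper's proof: both reduce to Proposition~\ref{dlog-dst-field} via the closed embedding of Lemma~\ref{embed} (with each $V_{R_i}$ reinterpreted as a representation over the discretely valued field $B_i$), and both then descend from $B=\prod_i R_i$ back to $A$ by exploiting that the closed embedding $A\hookrightarrow B$ admits a continuous $\Q_p$-linear complement because $\Q_p$ is discretely valued. The only difference is packaging — the paper runs a diagram chase on the two split short exact sequences obtained by tensoring $0\to V\to V_B\to V_B/V\to 0$ with $\widetilde{\B}_{\st}^+$ and $\widetilde{\B}_{\log}^\dagger$, while you recast the same step as the intersection identity $A\widehat\otimes\B_{\st}^+=(A\widehat\otimes\widetilde{\B}_{\log}^\dagger)\cap(B\widehat\otimes\B_{\st}^+)$ checked termwise along $B\cong A\oplus C$ — together with a minor imprecision worth noting: $\B_{\st}^+$ and $\widetilde{\B}_{\log}^\dagger$ are not Banach spaces (they are a polynomial extension of a Banach space and a rising union of Fr\'echet spaces, respectively), so your appeals to a ``closed embedding of $\Q_p$-Banach spaces'' and an ``orthonormal direct sum'' should be read on the exhausting filtered Banach/Fr\'echet pieces, and any topological complement furnished by Proposition~\ref{quotient-section} suffices in place of an orthonormal one.
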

\begin{proof}
Let $A\rightarrow R=\prod_iR_i$ be a closed embedding into a finite product of artin rings, with $R_i$ a finite dimensional vector space over a complete discretely valued field $B_i$ with perfect residue field; this is possible by Lemma~\ref{embed}.  Then we have an exact sequence of $\Q_p$-Banach spaces $0\rightarrow V\rightarrow V_R\rightarrow V_R/V\rightarrow 0$.  Since $\Q_p$ is discretely valued, this exact sequence admits a continuous $\Q_p$-linear splitting, and we have a commutative diagram of Fr\'echet spaces
$$\begin{CD}
0 @>>>(A\widehat\otimes_{\Q_p}\widetilde{\B}_{\log}^\dagger)\otimes_AV @>>> (R\widehat\otimes_{\Q_p}\widetilde{\B}_{\log}^\dagger)\otimes_RV_R @>>> \widetilde{\B}_{\log}^\dagger\widehat\otimes_{\Q_p}(V_R/V) @>>> 0	\\
@.	@AAA	@AAA	@AAA	@.	\\
0 @>>>(A\widehat\otimes_{\Q_p}\widetilde{\B}_{\st}^+)\otimes_AV @>>> (R\widehat\otimes_{\Q_p}\widetilde{\B}_{\st}^+)\otimes_RV_R @>>> \widetilde{\B}_{\st}^+\widehat\otimes_{\Q_p}(V_R/V) @>>> 0
\end{CD}$$
where the rows are exact and the vertical maps are injections.  Moreover, the maps are $\Gal_K$-equivariant, so we have a commutative diagram of Banach spaces
$$\begin{CD}
0 @>>>\left(\widetilde{\B}_{\log}^\dagger\widehat\otimes_{\Q_p}V\right)^{\Gal_K} @>>> \left(\widetilde{\B}_{\log}^\dagger\widehat\otimes_{\Q_p}V_R\right)^{\Gal_K} @>>> \left(\widetilde{\B}_{\log}^\dagger\widehat\otimes_{\Q_p}(V_R/V)\right)^{\Gal_K}	\\
@.	@AAA	@AAA	@AAA	@.	\\
0 @>>>\left(\widetilde{\B}_{\st}^+\widehat\otimes_{\Q_p}V\right)^{\Gal_K} @>>> \left(\widetilde{\B}_{\st}^+\widehat\otimes_{\Q_p}V_R\right)^{\Gal_K} @>>> \left(\widetilde{\B}_{\st}^+\widehat\otimes_{\Q_p}(V_R/V)\right)^{\Gal_K}
\end{CD}$$
where the rows are still exact and the vertical maps are still injections.  For each idempotent factor $R_i$ of $R$, we can view $V_{R_i}$ as a finite-dimensional $B_i$-vector space and apply Proposition~\ref{dlog-dst-field}; we see that the inclusion $\left((R\widehat\otimes\widetilde{\B}_{\st}^+)\otimes_RV_R\right)^{\Gal_K}\subset\left((R\widehat\otimes\widetilde{\B}_{\log}^\dagger)\otimes_RV_R\right)^{\Gal_K}$ is an equality.  Then a diagram chase shows that the inclusion 
$\left((A\widehat\otimes\widetilde{\B}_{\st}^+)\otimes_AV\right)^{\Gal_K}\subset\left((A\widehat\otimes\widetilde{\B}_{\log}^\dagger)\otimes_AV\right)^{\Gal_K}$
is an equality, as well.
\end{proof}

We now define 
\[	\D_{\log,K}^{\dagger,s}(V):=(A\widehat\otimes\B_{\log,K}^{\dagger,s})\otimes_{A\widehat\otimes\B_{\rig,K}^{\dagger,s}}\D_{\rig,K}^{\dagger,s}(V)\text{ and }\D_{\log,K}^{\dagger}(V):=\cup_s\D_{\rig,K}^{\dagger,s}(V)	\]
as well as 
\[	\widetilde{\D}_{\log,K}^{\dagger,s}(V):=(A\widehat\otimes\widetilde{\B}_{\log,K}^{\dagger,s})\otimes_{A\widehat\otimes\B_{\log,K}^{\dagger,s}}\D_{\log,K}^{\dagger,s}(V)\text{ and }\widetilde{\D}_{\log,K}^{\dagger}(V):=\cup_s\widetilde{\D}_{\log,K}^{\dagger,s}(V)	\]
\begin{prop}\label{dlog-dlog}
Let $A$ be a Noetherian $\Q_p$-Banach algebra with valuation ring $\mathscr{A}$, and let $V$ be a finite free $A$-module of rank $d$ equipped with a continuous $A$-linear action of $\Gal_K$ such that $V$ admits a free $\Gal_K$-stable $\mathscr{A}$-submodule of rank $d$.  Then the natural map 
\[	\left(\D_{\log,K}^{\dagger}(V)[1/t]\right)^{\Gamma_K}\rightarrow\left((A\widehat\otimes\widetilde{\B}_{\log}^\dagger)[1/t]\otimes_AV\right)^{\Gal_K}	\]
is an isomorphism.
\end{prop}
\begin{proof}
It suffices to prove this with $K$ replaced by a finite extension, so we may assume $\D_{\log,K}^\dagger(V)$ is free.  After twisting $V$ by some power of the cyclotomic character, we may assume that $\left(\D_{\log,K}^{\dagger}(V)[1/t]\right)^{\Gamma_K}=\left(\D_{\log,K}^{\dagger}(V)\right)^{\Gamma_K}$ and consider only the map 
$$\left(\D_{\log,K}^{\dagger}(V)\right)^{\Gamma_K}\rightarrow\left((A\widehat\otimes\widetilde{\B}_{\log,K}^\dagger)\otimes_AV\right)^{\Gal_K}$$
Furthermore, we observe that 
\begin{align*}
\left((A\widehat\otimes\widetilde{\B}_{\log}^\dagger)\otimes_AV\right)^{H_K} &= (A\widehat\otimes\widetilde{\B}_{\log,K}^\dagger)\otimes_{A\widehat\otimes\B_{\log,K}^\dagger}\D_{\log,K}^{\dagger}(V)	\\
&= \widetilde\D_{\log,K}^\dagger(V)
\end{align*}
Since $\left(\widetilde{\D}_{\log,K}^{\dagger}(V)\right)^{\Gamma_K}$ and $\left(\D_{\log,K}^{\dagger}(V)\right)^{\Gamma_K}$ are finite modules over the Noetherian Banach algebra $A\otimes_{\Q_p}K_0$, we see that $\left(\D_{\log,K}^{\dagger}(V)\right)^{\Gamma_K}$ is a closed submodule of $\left(\widetilde{\D}_{\log,K}^{\dagger}(V)\right)^{\Gamma_K}$, by~\cite[Prop. 3.7.3/1]{bgr}.  Thus, it suffices to show that $\left(\D_{\log,K}^{\dagger}(V)\right)^{\Gamma_K}$ is dense in $\left(\widetilde{\D}_{\log,K}^{\dagger}(V)\right)^{\Gamma_K}$.  

We will actually do something slightly different.  For $s\gg0$ and any integer $k\geq 0$, we consider the $A\widehat\otimes\varphi^{-k}(\B_{\log,K}^{\dagger,p^ks})$-submodule $\varphi^{-k}(\D_{\log,K}^{\dagger,p^ks}(V))\subset \widetilde{\D}_{\log,K}^{\dagger,s}(V)$.  Since $\left(\widetilde{\D}_{\log,K}^{\dagger,s}(V)\right)^{\Gamma_K}$ is a finite $A$-module, it follows that $\left(\varphi^{-k}(\D_{\log,K}^{\dagger,p^ks}(V))\right)^{\Gamma_K}$ is a closed submodule of $\left(\widetilde{\D}_{\log,K}^{\dagger,s}(V)\right)^{\Gamma_K}$.  

We claim that $\bigcup_k \left(\varphi^{-k}(\D_{\log,K}^{\dagger,p^ks}(V))\right)^{\Gamma_K}= \left(\widetilde{\D}_{\log,K}^{\dagger,s}(V)\right)^{\Gamma_K}$.  If we choose a basis $\ve{v}_1,\ldots,\ve{v}_d$ of $\D_{\log,K}^{\dagger,s}(V)$, then for any $\Gamma_K$-fixed element $m\in\widetilde{\D}_{\log,K}^{\dagger,s}(V)$, we may write $m=a_1\ve{v}_1+\cdots+a_d\ve{v}_d$.  Recall that there are $\Gamma_K$-equivariant maps 
$$R_k:\widetilde{\B}_{\rig,K}^{\dagger,s}\rightarrow \varphi^{-k}(\B_{\rig,K}^{\dagger,p^ks})$$ 
which are sections to the inclusions $\varphi^{-k}(\B_{\rig,K}^{\dagger,p^ks})\subset \widetilde{\B}_{\rig,K}^{\dagger,s}$, and extend to maps 
$$R_k:\widetilde{\B}_{\log,K}^{\dagger,s}\rightarrow \varphi^{-k}(\B_{\log,K}^{\dagger,p^ks})$$
For each $k$, let $m_k=R_k(m)=R_k(a_1)\ve{v}_1+\cdots+R_k(a_d)\ve{v}_d$.  Then $m_k$ is a $\Gamma_K$-fixed element of $\varphi^{-k}(\D_{\log,K}^{\dagger,p^ks}(V))$, because $R_k$ is $\Gamma_K$-equivariant.  Since $\lim_{k\rightarrow\infty}R_k(a)=a$ for any $a\in\widetilde{\B}_{\log,K}^{\dagger,s}$, it follows that $\lim_{k\rightarrow \infty}m_k=m$.
Thus, $\bigcup_k \left(\varphi^{-k}(\D_{\log,K}^{\dagger,p^ks}(V))\right)^{\Gamma_K}$ is dense in $\left(\widetilde{\D}_{\log,K}^{\dagger,s}(V)\right)^{\Gamma_K}$.  Since it is also closed in $\left(\widetilde{\D}_{\log,K}^{\dagger,s}(V)\right)^{\Gamma_K}$ (as it is a submodule of a finite module over a Noetherian Banach algebra), equality follows.  

Next, we note that 
\[	\varphi^{k+1}\left(\varphi^{-k}(\D_{\log,K}^{\dagger,p^ks}(V))\right)\subset \varphi(\D_{\log,K}^{\dagger,p^ks}(V))\subset \D_{\log,K}^{\dagger,p^{k+1}s}(V)	\]
This implies that $\left(\varphi^{-k}(\D_{\log,K}^{\dagger,p^ks}(V))\right)^{\Gamma_K}\subset \left(\varphi^{-(k+1)}(\D_{\log,K}^{\dagger,p^{k+1}s}(V))\right)^{\Gamma_K}$, and therefore that
$\left(\widetilde{\D}_{\log,K}^{\dagger,s}(V)\right)^{\Gamma_K} = \bigcup_k \left(\varphi^{-k}(\D_{\log,K}^{\dagger,p^ks}(V))\right)^{\Gamma_K}$
is a rising union.  Since $\left(\widetilde{\D}_{\log,K}^{\dagger,s}(V)\right)^{\Gamma_K}$ is $A$-finite, there is some $k$ such that $\left(\varphi^{-k}(\D_{\log,K}^{\dagger,p^ks}(V))\right)^{\Gamma_K}= \left(\widetilde{\D}_{\log,K}^{\dagger,s}(V)\right)^{\Gamma_K}$.  

But we have $A$-linear isomorphisms
$\varphi^k:\left(\varphi^{-k}(\D_{\log,K}^{\dagger,p^ks}(V))\right)^{\Gamma_K}\rightarrow \left(\D_{\log,K}^{\dagger,p^ks}(V)\right)^{\Gamma_K}$ and
$\varphi^k\!:\!\widetilde{\D}_{\log,K}^{\dagger,s}(V)\xrightarrow{\sim}\!\widetilde{\D}_{\log,K}^{\dagger,p^ks}(V)$, so we conclude that $\left(\D_{\log,K}^{\dagger,p^ks}(V)\right)^{\Gamma_K}\!\!\!=\! \left(\widetilde{\D}_{\log,K}^{\dagger,p^ks}(V)\right)^{\Gamma_K}$, as desired.
\end{proof}

Now we can prove Theorem~\ref{dcris-phi-gamma}.
\begin{proof}[Proof of Theorem~\ref{dcris-phi-gamma}]
We may assume that $X=\Sp(A)$ for some $E$-affinoid algebra $A$, and that $V:=\mathscr{V}(A)$ is $A$-free of rank $d$ and admits a $\Gal_K$-stable integral lattice.  Then $\D_{\st}^K(V)=\left(\D_{\log,K}(V)[1/t]\right)^{\Gamma_K}$ by Corollary~\ref{dlog-dst} and Proposition~\ref{dlog-dlog}.  Since 
$$\D_{\cris}(V)=\D_{\st}(V)^{N=0}\text{ and }\D_{\rig,K}(V)=\D_{\log,K}(V)^{N=0}$$
it follows that $\D_{\cris}^K(V)=\left(\D_{\rig,K}(V)[1/t]\right)^{\Gamma_K}$.
\end{proof}

\subsection{Properties of $\D_{\B_\ast}(V)$}\label{properties}

Now we can combine Theorems~\ref{dsen-dht} and~\ref{ddif-ddr} with ``cohomology and base change'' to deduce various useful properties of the functors $V\mapsto \D_{\HT}(V)$ and $V\mapsto \D_{\dR}(V)$.
\begin{thm}
Let $X$ and $\mathscr{V}$ be as above.  Then 
\begin{enumerate}
\item	$\mathscr{D}_{\HT}^K(\mathscr{V})$ and $\mathscr{D}_{\dR}^K(\mathscr{V})$ are coherent sheaves of $\mathscr{O}_X\otimes_{\Q_p}K$-modules.  More generally, their formation commutes with flat base change on $X$.
\item	$\mathscr{D}_{\HT}^K(\mathscr{V})$ and $\mathscr{D}_{\dR}^K(\mathscr{V})$ take values in the categories of graded coherent sheaves and filtered coherent sheaves, respectively.  If $\mathscr{V}$ is $\B_{\HT}$-admissible, then $\mathscr{D}_{\HT}^K(\mathscr{V})$ is a graded vector bundle over $\mathscr{O}_X\otimes_{\Q_p}K$, and if $\mathscr{V}$ is $\B_{\dR}$-admissible, then $\mathscr{D}_{\HT}^K(\mathscr{V})$ is a filtered vector bundle over $\mathscr{O}_X\otimes_{\Q_p}K$.
\end{enumerate}
\end{thm}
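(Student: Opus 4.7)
My approach is to reduce first to the affinoid case $X = \Sp(A)$ by working locally, then to leverage Theorems~\ref{dsen-dht} and~\ref{ddif-ddr} in order to express $\mathscr{D}_{\HT}^K(\mathscr{V})$ and $\mathscr{D}_{\dR}^K(\mathscr{V})$ as $\Gamma_K$-invariants of modules associated to the family of $(\varphi,\Gamma)$-modules of $V$. By the construction recalled in Appendix~\ref{tate-sen}, after passing to a finite extension $K'/K$ making the Galois action sufficiently close to trivial, $\D_{\Sen}^{K'}(V)$ is a finite free $A \otimes_{\Q_p} K'_n$-module for some large $n$, and $\D_{\dif}^{K',+}(V)$ is finite free over $A \widehat\otimes_{\Q_p} K'_n[\![t]\!]$; Galois descent through $\Gal(K'/K)$ then transports everything back to $K$-structures. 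The two cases then diverge: coherence of $\D_{\HT}^K$ reduces to a sum over twists of $\D_{\Sen}^K$, while coherence of $\D_{\dR}^K$ reduces to a filtered inverse limit of finite quotients of $\D_{\dif}^K$.

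For $\mathscr{D}_{\HT}^K(\mathscr{V}) = \bigoplus_{i\in\Z} (\D_{\Sen}^K(V) \cdot t^i)^{\Gamma_K}$, each individual summand is finite over $A \otimes_{\Q_p} K$ by Pottharst's finiteness theorem applied to the procyclic group $\Gamma_K$ acting semi-linearly on the finite $A$-Banach module $\D_{\Sen}^K(V)\cdot t^i$. To conclude coherence it remains to show that only finitely many $i$ contribute; this I would deduce by invoking the $A$-linear Sen operator $\Theta$ on $\D_{\Sen}^K(V)$ and observing that if $(\D_{\Sen}^K(V)\cdot t^i)^{\Gamma_K}\neq 0$, then $i$ must be an integer root of $\det(X - \Theta)$, of which there are only finitely many.

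For $\mathscr{D}_{\dR}^K(\mathscr{V}) = \D_{\dif}^K(V)^{\Gamma_K}$, I would first twist by a power of the cyclotomic character so as to reduce to non-negative Hodge--Tate weights, allowing work with $\D_{\dif}^{K,+}(V)$. The associated graded $\gr^\bullet \D_{\dif}^{K,+}(V) = \bigoplus_i \D_{\Sen}^K(V)\cdot t^i$ has finite $A$-module of $\Gamma_K$-invariants by the preceding paragraph, so Proposition~\ref{h0-stab} applies to give $\H^0(\Gamma_K, \D_{\dif}^{K,+}(V)) = \H^0(\Gamma_K, \D_{\dif}^{K,+}(V)/t^N)$ for some $N \gg 0$; finiteness of the target follows from Pottharst's theorem applied to the finite $A$-module $\D_{\dif}^{K,+}(V)/t^N$. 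Flat base change for both functors then drops out by combining the flat base change property of the underlying modules $\D_{\Sen}^K$ and $\D_{\dif}^K$ (built into their construction as suitable completed tensor products) with the base change property for $\H^0$ of a procyclic group on a finite flat $A$-module, namely Corollary~\ref{coh-dim-1}; the edge terms involving $\Tor$ vanish by flatness.

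For the second assertion, the grading on $\mathscr{D}_{\HT}^K(\mathscr{V})$ is manifest from the direct sum decomposition, and the decreasing filtration on $\mathscr{D}_{\dR}^K(\mathscr{V})$ is induced by $\Fil^i = (\D_{\dif}^{K,+}(V)\cdot t^i)^{\Gamma_K}$ coming from the $t$-adic filtration on $\D_{\dif}^K(V)$ (a repeat of the argument above shows each $\Fil^i$ is coherent). If $\mathscr{V}$ is $\B_\ast$-admissible, projectivity of rank $d$ is the defining condition, hence $\mathscr{D}_{\B_\ast}^K(\mathscr{V})$ is a vector bundle over $\mathscr{O}_X \otimes_{\Q_p} K$ with its graded, resp.\ filtered, structure. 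I expect the main obstacle to be the coherence of $\mathscr{D}_{\dR}^K(\mathscr{V})$, since the extraction of a finite module from the $t$-adic inverse limit genuinely requires Proposition~\ref{h0-stab}, which in turn required an auxiliary Noetherian-induction argument on $\Spec(A)$; by contrast, coherence of $\mathscr{D}_{\HT}^K(\mathscr{V})$ is essentially Pottharst's theorem plus integrality of the Sen weights.
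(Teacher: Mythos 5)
Your proposal is correct and follows the same overall route as the paper: reduce to $X=\Sp(A)$, invoke Theorems~\ref{dsen-dht} and~\ref{ddif-ddr}, establish $A\otimes_{\Q_p}K$-finiteness from the $(\varphi,\Gamma)$-module side, and get flat base change from Corollary~\ref{coh-dim-1} together with Proposition~\ref{h0-stab}. There are, however, two tactical differences worth flagging. First, to see that only finitely many summands $(\D_{\Sen}^K(V)\cdot t^i)^{\Gamma_K}$ are nonzero, you invoke the Sen operator $\Theta$ and the integer roots of $\det(X-\Theta)$; the paper instead identifies each summand with the eigenspace $(\D_{\Sen}^K(V))^{\Gamma_K=\chi^{-i}}$ and observes that these sit in \emph{internal} direct sum inside the noetherian $A\otimes_{\Q_p}K$-module $\D_{\Sen}^K(V)$, so noetherianity alone forces all but finitely many to vanish, avoiding the need to construct $\Theta$ in families at all (and your root-counting step quietly reduces to the same noetherianity argument anyway, since over a nonreduced affinoid one must sort zero-divisors through the finitely many associated primes). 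Second, for coherence of $\D_{\dR}^K(V)$ you deploy the heavier Proposition~\ref{h0-stab} plus Pottharst's finiteness to pass to a finite $t$-truncation; the paper gets coherence by the lighter observation that $\gr^\bullet\D_{\dR}^K(V)$ injects into $\D_{\HT}^K(V)$, so the filtration has only finitely many jumps, and reserves Proposition~\ref{h0-stab} solely for the flat base-change statement. Both of your variants are valid — and your twist to nonnegative weights is legitimate once one notes it is exactly the $\gr^\bullet\hookrightarrow\D_{\HT}$ injection that justifies $\D_{\dR}^K=\D_{\dR}^{K,+}$ after the twist — but the paper's phrasing economizes on the machinery deployed in each step.
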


As before, we reduce immediately to the case when $X=\Sp(A)$ and $V:=\Gamma(X,\mathscr{V})$ is a free $A$-linear representation of $\Gal_K$ which admits a free $\Gal_K$-stable $\mathscr{A}$-lattice $V_0$ of rank $d$.

\begin{prop}\label{dht-ddr-finite}
Let $A$ be a Noetherian $\Q_p$-Banach algebra with valuation ring $\mathscr{A}$, let $V_0$ be a free $\mathscr{A}$-module of rank $d$ equipped with a continuous $\mathscr{A}$-linear action of $\Gal_K$, and let $V:=V_0[1/p]$.  Then $\D_{\HT}^K(V)$ and $\D_{\dR}^K(V)$ are finite $A\otimes_{\Q_p}K$-modules.
\end{prop}
\begin{proof}
Recall that 
\[	\D_{\HT}^K(V) = \oplus_{i\in \Z}\left(\D_{\Sen}^K(V)\cdot t^i\right)^{\Gamma_K}	\]
Now $\left(\D_{\Sen}^K(V)\cdot t^i\right)^{\Gamma_K} = \left(\D_{\Sen}^K(V)\right)^{\Gamma_K=\chi^{-i}}$ for every $i\in\Z$, so $\D_{\HT}^K(V)\subset \D_{\Sen}^K(V)$.  But $\D_{\Sen}^K(V)$ is a finite module over the Noetherian ring $A\otimes_{\Q_p} K$, so $\D_{\HT}^K(V)$ is $A$-finite as well.

Moreover, we observe that the summands of $\D_{\HT}^K(V)$ have pairwise trivial intersection.  Therefore, only finitely many of them are non-zero.

To see that $\D_{\dR}^K(V)$ is finite over $A\otimes_{\Q_p}K$, we observe that 
\[	\gr^\bullet\D_{\dR}^K(V)\hookrightarrow \left(\gr^\bullet((A\widehat\otimes\B_{\dR})\otimes_AV)\right)^{\Gal_K} = \D_{\HT}^K(V)	\]
In fact, we claim that there exist integers $i_0, i_1$ such that $\Fil^i\D_{\dR}^K(V)=\Fil^{i_0}\D_{\dR}^K(V)$ for all $i\leq i_0$ and $\Fil^i\D_{\dR}^K(V)=\Fil^{i_1}\D_{\dR}^K(V)=0$ for all $i\geq i_1$.  Indeed, 
$$\gr^i\D_{\dR}^K(V)\hookrightarrow \left(\gr^i((A\widehat\otimes\B_{\dR})\otimes_AV)\right)^{\Gal_K} = \left(\D_{\Sen}^K(V)\cdot t^i\right)^{\Gal_K}$$
But the right-most term is one of the summands of $\D_{\HT}^K(V)$, and only finitely many such summands such summands are non-zero.  Therefore, $\gr^i\D_{\dR}^K(V)=0$ for $i\ll0$ and $i\gg0$, and $\D_{\dR}^K(V)$ is $A\otimes_{\Q_p}K$-finite.
\end{proof}

\begin{definition}
The \emph{Hodge--Tate weights} of $V$ are those integers $i$ such that 
$$\left(\D_{\Sen}^K(V)\cdot t^i\right)^{\Gal_K}\neq0$$
\end{definition}

\begin{remark}
This is a slight departure from the traditional definition of Hodge--Tate weights, which are usually only defined for representations which are Hodge--Tate.  However, we will find this abuse of terminology convenient.
\end{remark}

\begin{prop}\label{h1-finite}
Suppose that $V$ is a free $A$-module of rank $d$, equipped with a continuous, $A$-linear action of $G_K$.  Then $\H^1(\Gamma_K,\oplus_{k\in\Z}t^k\D_{\Sen}(V))$ and $\H^1(\Gamma_K,\D_{\dif}(V))$ are $A$-finite if and only if there is some interval $[a,b]$ such that the Hodge--Tate weights of the fibral representations all lie in the interval $[a,b]$.
\end{prop}
\begin{proof}
We first reduce to the case where $\Gamma_K$ is procyclic.  In general, $\Gamma_K\cong \Delta\times\Gamma_K'$, where $\Delta$ is a finite abelian group and $\Gamma_K'$ is procyclic.  The statement about the fibral Hodge--Tate weights can be checked after restriction to a finite index subgroup of $\Gamma_K$, and in particular, after restriction to $\Gamma_K'$.  On the other hand, taking $\Delta$-invariants on $\Q_p$-vector spaces is an exact functor, so $\H^1(\Gamma_K',\oplus_{k\in\Z}t^k\D_{\Sen}(V))=\H^1(\Gamma_K,\oplus_{k\in\Z}t^k\D_{\Sen}(V))^{\Delta}$ and $\H^1(\Gamma_K',\D_{\dif}(V))=\H^1(\Gamma_K,\D_{\dif}(V))^{\Delta}$.  It follows that we can also check the finiteness of $\H^1(\Gamma_K,\oplus_{k\in\Z}t^k\D_{\Sen}(V))$ and $\H^1(\Gamma_K,\D_{\dif}(V))$ can after restriction to $\Gamma_K'$.  We may therefore assume that $\Gamma_K$ is torsion-free and apply the results of section~\ref{cohomology}.

The statement is clear for $\H^1(\Gamma_K,\oplus_{k\in\Z}t^k\D_{\Sen}(V))$.  

Suppose first that the fibral Hodge--Tate weights are bounded in an interval $[a,b]$.  The natural map 
\[	\H^1(\Gamma_K,\D_{\dif}^{+}(V)/t^{k+1})\rightarrow \H^1(\Gamma_K,\D_{\dif}^{+}(V)/t^k)	\]
is a surjection for all $k\geq 0$, and its kernel is surjected onto by $\H^1(\Gamma_K,t^k\D_{\Sen}(V))$.  But the formation of $\H^1(\Gamma_K,t^k\D_{\Sen}(V))$ commutes with arbitrary base-change on $A$, so the hypothesis on the fibral Hodge--Tate weights implies that if $k> b$, $\H^1(\Gamma_K,t^k\D_{\Sen}(V))$ is trivial when reduced modulo any power of any maximal ideal of $A$.  Therefore, $\H^1(\Gamma_K,t^k\D_{\Sen}(V))$ is itself trivial, and $\H^1(\Gamma_K,\D_{\dif}^{+}(V))\cong \H^1(\Gamma_K,\D_{\dif}^{+}(V)/t^{\max{\{0,b\}}+1})$, which is $A$-finite.  This implies that for any $k\geq 0$, $\H^1(\Gamma_K,t^{-k}\D_{\dif}^+(V))\cong \H^1(\Gamma_K,t^{-k}\D_{\dif}^+(V)/t^{\max{\{0,b\}}+1})$ is $A$-finite, as well.  Further, the proof of Corollary~\ref{h0-stab-cor}(3) shows that for any $k\in\Z$, the cokernel of the natural map 
\[	\H^1(\Gamma_K,t^{-k}\D_{\dif}^{+}(V))\rightarrow \H^1(\Gamma_K,t^{-(k+1)}\D_{\dif}^{+}(V))	\]
is $\H^1(\Gamma_K,t^{-(k+1)}\D_{\Sen}(V))$.  But the hypothesis on the fibral Hodge--Tate weights implies that this is $0$ for $k\geq -a$, so $\H^1(\Gamma_K,\D_{\dif}(V))$ is $A$-finite.

Conversely, suppose $\H^1(\Gamma_K,\D_{\dif}(V))$ is $A$-finite.  We need to show that $\H^1(\Gamma_K,t^k\cdot \D_{\Sen}(V))=0$ for $k\gg0$ and $k\ll0$.   By Corollary~\ref{h0-stab-cor}(3), there exist $N_0, N_0'\in\Z$ such that for $k\geq N_0$ or $k\leq N_0'$, the transition maps $\H^1(\Gamma_K,t^{k+1}\D_{\dif}^+(V))\rightarrow\H^1(\Gamma_K,t^{k}\D_{\dif}^+(V))$ are injective.  These transition maps moreover always have $A$-finite kernels and cokernels.  Since $A$ is Noetherian, this implies that $\H^1(\Gamma_K,t^{k}\D_{\dif}^+(V))$ is finite for all $k\in\Z$.   
%

Let $x\in\Sp(A)$, and let $\kappa(x)$ denote the residue field of $A$ at $x$.  By Proposition~\ref{h1-frechet}, 
\[	\H^1(\Gamma_K,t^k\D_{\dif}^+(V))\otimes_A\kappa(x)\cong \H^1(\Gamma_K,t^k\D_{\dif}^+(V)\widehat\otimes_A\kappa(x))	\]
for all $k\geq 0$; it follows that there is some $N_{1,x}\geq 0$ such that $\H^1(\Gamma_K,t^k\D_{\dif}^+(V))\otimes_A\kappa(x)=0$ for all $k\geq 0$.  Since $\H^1(\Gamma_K,t^k\D_{\dif}^+(V))$ is a finite $A$-module for all $k$, there is some Zariski open $U_x\subset \Sp(A)$ such that $\H^1(\Gamma_K,t^k\D_{\dif}^+(V))|_{U_x}=0$ for all $k\geq N_{1,x}$.  Since $\Spec A$ is quasi-compact, it follows that there is some $N_1\gg0$ such that $\H^1(\Gamma_K,t^k\D_{\dif}^+(V))=0$ for all $k\geq N_1$.  Thus, $\H^1(\Gamma_K,t^k\D_{\Sen}(V))=0$ for $k\geq N_1$.

Finally, the finiteness of $\H^1(\Gamma_K,\D_{\dif}(V))$ implies that there exists $N_1'\in \Z$ such that for $k\leq N_0'$, the transition map $\H^1(\Gamma_K,t^{k+1}\D_{\dif}^+(V))\rightarrow\H^1(\Gamma_K,t^{k}\D_{\dif}^+(V))$ has vanishing cokernel.  But this cokernel is $\H^1(\Gamma_K,t^k\D_{\Sen}(V))$, so it follows that the fibral Hodge--Tate weights are bounded.
\end{proof}

Now we can deduce that $\D_{\cris}^K(V)$ and $\D_{\st}^K(V)$ are finite modules, as well.

\begin{cor}\label{dcris-dst-finite}
$\D_{\cris}^K(V)$ and $\D_{\st}^K(V)$ are finite $A\otimes_{\Q_p}K_0$-modules.
\end{cor}
\begin{proof}
Recall that there is an injection $\B_{\max}\rightarrow \B_{\dR}$.  Since $A$ is a Banach space over the discretely valued field $\Q_p$ (and therefore potentially orthonormalizable), this extends to an injection $A\widehat\otimes\B_{\max}\hookrightarrow A\widehat\otimes\B_{\dR}$.  It follows that $\D_{\cris}^{K}(V)\hookrightarrow \D_{\dR}^{K}(V)$ and $\D_{\cris}^{K}(V)$ is $A$-finite.

Similarly, $\B_{\st}$ can be injected into $\B_{\dR}$ (although this depends on a choice of $p$-adic logarithm), so $\D_{\st}^K(V)\hookrightarrow \D_{\dR}^K(V)$.  Thus, $\D_{\st}^K(V)$ is $A$-finite.
\end{proof}

Note that $\D_{\cris}^K(V)$ and $\D_{\st}^K(V)$ are equipped with semilinear actions of Frobenius $\varphi$ (over $1\otimes\varphi$ on $A\otimes_{\Q_p}K_0$) coming from the coefficients, and $\D_{\st}^K(V)$ has a monodromy operator $N$ coming from the coefficients and satisfying $N\circ\varphi=p\varphi\circ N$.

We turn to base change properties of the functors $\D_{\B_\ast}(V)$.

\begin{prop}
Let $f:A\rightarrow A'$ be a flat morphism of $E$-affinoid algebras.  Then 
\begin{enumerate}
\item	$A'\otimes_A\D_{\HT}^K(V)\xrightarrow{\sim}\D_{\HT}^K(V\otimes_AA')$
\item	$A'\otimes_A\D_{\dR}^K(V)\xrightarrow{\sim}\D_{\dR}^K(V\otimes_AA')$.
\end{enumerate}
It follows that $U\mapsto\D_{\HT}^K(V_U)$ and $U\mapsto\D_{\dR}^K(V_U)$ are coherent sheaves on $\Sp(A)$.
\end{prop}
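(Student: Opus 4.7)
My plan is to express both $\D_{\HT}^K(V)$ and $\D_{\dR}^K(V)$ as $\Gamma_K$-invariants of finite flat $A$-modules and then apply Pottharst's base change machinery from Corollary~\ref{coh-dim-1}. First I would reduce to the case $X=\Sp(A)$ with $V$ a finite free $A$-module carrying a continuous $A$-linear action of $\Gal_K$; this is harmless since the claim is local on $X$ and the tensor products involved are right-exact.

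For part (1), I would use Theorem~\ref{db-functors-thm}(1) to write
$$\D_{\HT}^K(V)=\oplus_{i\in\Z}\H^0\bigl(\Gamma_K,\,t^i\D_{\Sen}^K(V)\bigr),$$
a finite direct sum (only finitely many summands are non-zero, as established in the earlier finiteness argument for $\D_{\HT}^K(V)$). For $n\gg0$ the module $\D_{\Sen}^K(V)=\D_{\Sen}^{K_n}(V)$ is finite free over $A\otimes_{\Q_p}K_n$, and hence finite flat over $A$. By the functoriality of the Tate--Sen construction (Appendix~\ref{tate-sen}), the natural $\Gamma_K$-equivariant map $\D_{\Sen}^K(V)\otimes_AA'\to\D_{\Sen}^K(V\otimes_AA')$ is an isomorphism. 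Since $\Gamma_K$ has $p$-cohomological dimension $1$, I would then invoke Corollary~\ref{coh-dim-1}: the flatness of $A\to A'$ kills every $\Tor$-term there, so $\H^0$ commutes with $-\otimes_AA'$. Summing over the finitely many non-zero indices gives (1).

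For part (2), Theorem~\ref{db-functors-thm}(2) gives $\D_{\dR}^K(V)=\H^0(\Gamma_K,\D_{\dif}^K(V))$, and I would decompose $\D_{\dif}^K(V)=\varinjlim_k t^{-k}\D_{\dif}^{K,+}(V)$ as a rising union. Because $\Gamma_K$ stabilizes each term of the filtration, taking invariants commutes with this filtered union, giving $\D_{\dR}^K(V)=\varinjlim_k\H^0(\Gamma_K,t^{-k}\D_{\dif}^{K,+}(V))$. The $A$-finiteness of $\D_{\dR}^K(V)$ together with the Hodge--Tate weight bound of Proposition~\ref{h1-finite} supplies an index $k_0$ past which the colimit is constant; for each such $k\le k_0$, Proposition~\ref{h0-stab} furnishes an $N$ with $\H^0(\Gamma_K,t^{-k}\D_{\dif}^{K,+}(V))\xrightarrow{\sim}\H^0(\Gamma_K,t^{-k}\D_{\dif}^{K,+}(V)/t^N\D_{\dif}^{K,+}(V))$. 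The finite-level quotient is finite flat over $A$ (by induction on $N$, the graded pieces being Tate twists of $\D_{\Sen}^K(V)$). A second application of Corollary~\ref{coh-dim-1} then produces the required base change isomorphism. The concluding coherent sheaf statement follows at once, since flat restriction to affinoid subdomains is a special case and the resulting $A$-modules are finite.

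The main obstacle will be establishing \emph{uniform} stabilization parameters $k_0$ and $N$ which work simultaneously for $V$ and for every flat base change $V\otimes_AA'$; without this, the argument cannot be reduced to a single finite-level truncation. I plan to handle it by observing that both bounds in Propositions~\ref{h1-finite} and~\ref{h0-stab} are controlled by the fibral Hodge--Tate weights of $V$, and these weights do not grow under flat base change. Should the uniformity argument prove finicky, a fallback is to invoke Corollary~\ref{coh-dim-1} separately at each level $k$ and pass the resulting isomorphisms through the filtered colimit, trading one delicate uniformity check for a slightly longer chain of isomorphisms.
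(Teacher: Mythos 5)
Your overall strategy matches the paper's: reduce via Theorem~\ref{db-functors-thm} to $\Gamma_K$-invariants of $\D_{\Sen}$ and $\D_{\dif}$, replace the infinite-level module by a finite flat truncation, and then use Pottharst's base change formalism (Theorem~\ref{base-change}/Corollary~\ref{coh-dim-1}), with flatness killing all the higher $\Tor$ terms. Part (1) is essentially identical to the paper's argument.

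For part (2), the ``main obstacle'' you flag — producing \emph{uniform} stabilization parameters $N$, $k_0$ valid for both $V$ and $V\otimes_AA'$ — is not actually an obstacle, and the resolution you propose is not quite right. Applying Proposition~\ref{h0-stab} separately to $V$ and to $V\otimes_AA'$ gives two possibly different indices $N$ and $N'$; one simply takes $\max\{N,N'\}$. Then the chain
$$\D_{\dR}^{L_n,+}(V\otimes_AA')=\H^0\bigl(\Gamma_{L_n},\D_{\dif}^{L_n,+}(V\otimes_AA')/t^{\max\{N,N'\}}\bigr)=\H^0\bigl(\Gamma_{L_n},\D_{\dif}^{L_n,+}(V)/t^{\max\{N,N'\}}\bigr)\otimes_AA'=\D_{\dR}^{L_n,+}(V)\otimes_AA'$$
closes the argument: the first equality holds because $\max\{N,N'\}\ge N'$, the last because $\max\{N,N'\}\ge N$, and the middle one is flat base change for the finite flat $A$-module $\D_{\dif}^{L_n,+}(V)/t^{\max\{N,N'\}}$. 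No uniformity in $A'$ is ever needed. Your claim that the index $N$ of Proposition~\ref{h0-stab} is ``controlled by the fibral Hodge--Tate weights'' is not supported by the proof of that proposition, which proceeds by Noetherian induction on $\Spec(A)$ and does not yield a bound expressible in terms of fibral weights alone; and your fallback of ``invoking Corollary~\ref{coh-dim-1} separately at each level $k$'' is imprecise, since the individual modules $t^{-k}\D_{\dif}^{K,+}(V)$ are not $A$-finite and Corollary~\ref{coh-dim-1} cannot be applied to them directly. With the max trick substituted in, the rest of your argument is sound.
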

\begin{proof}
\begin{enumerate}
\item	This follows by noting that $\D_{\HT}^{L_n}(V)=\varinjlim_{h\rightarrow\infty} \left(\oplus_{k=-h}^ht^k\D_{\Sen}^{L_n}(V)\right)^{\Gamma_{L_n}}$, and the formation of $\left(\oplus_{k=-h}^ht^k\D_{\Sen}^{L_n}(V)\right)^{\Gamma_{L_n}}$ commutes with flat base change, by Theorem~\ref{base-change}.  Since $\D_{\HT}^{L_n}(V)=L_n\otimes_K\D_{\HT}^K(V)$, we are done.
\item	We apply Proposition~\ref{h0-stab} to $M=\D_{\dif}^{L_n,+}$ to see that 
$$\D_{\dR}^{L_n,+}(V)=\left(\D_{\dif}^{L_n,+}(V)/t^N\right)^{\Gamma_{L_n}=1}$$ for some $N\geq 0$.  Similarly, $\D_{\dR}^{L_n,+}(V\otimes_AA')=\left(\D_{\dif}^{L_n,+}(V\otimes_AA')/t^{N'}\right)^{\Gamma_{L_n}=1}$.  Since the modules $\D_{\dif}^{L_n,+}(V)/t^k$ are finite $A$-modules, 
\begin{eqnarray*}
\D_{\dR}^{L_n,+}(V\otimes_AA')&=&\H^0(\Gamma_{L_n},\D_{\dif}^{L_n,+}(V\otimes_AA')/t^{\max\{N,N'\}})	\\
&=&\H^0(\Gamma_{L_n},\D_{\dif}^{L_n,+}(V)/t^{\max\{N,N'\}})\otimes_AA'=\D_{\dR}^{L_n,+}(V)\otimes_AA'
\end{eqnarray*}
where the second equality again follows from Theorem~\ref{base-change}. 
\end{enumerate}
\end{proof}

\begin{conjecture}\label{dcris-sheaf}
The formation of $\D_{\cris}^K(V)$ and $\D_{\st}^K(V)$ commutes with flat base change on $A$.
\end{conjecture}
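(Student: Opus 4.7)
Since $\D_{\cris}^K(V) = \D_{\st}^K(V)^{N=0}$ and the monodromy operator $N$ is $A$-linear, the formation of $N=0$-kernels commutes with any (in particular flat) base change. Therefore it suffices to prove the conjecture for $\D_{\st}^K$. I would use Theorem~\ref{dcris-phi-gamma}, which identifies $\D_{\st}^K(V) = (\D_{\log,K}^\dagger(V)[1/t])^{\Gamma_K}$, and try to imitate the structure of the proof for $\D_{\dR}^K$, which reduced to Pottharst's cohomology-and-base-change (Theorem~\ref{base-change}) applied to the $A$-finite quotients $\D_{\dif}^{L_n,+}(V)/t^N$.

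The first substantive step is to pin down the "right" finite truncation. Since $\D_{\st}^K(V)$ is $A$-finite (Corollary~\ref{dht-ddr-finite}) and $\D_{\log,K}^\dagger(V) = \bigcup_s \D_{\log,K}^{\dagger,s}(V)$, using the argument with Frobenius regularization (Proposition~\ref{frob-reg}) in the proof of Proposition~\ref{dlog-dlog} I would try to show there exist integers $s$ and $k$ such that, already at finite level,
$$\D_{\st}^K(V) \;=\; \bigl(t^{-k}\D_{\log,K}^{\dagger,s}(V)\bigr)^{\Gamma_K},$$
and that the same identification holds for $V\otimes_A A'$ with the same $s$ and $k$ (perhaps after enlarging them in a way controlled by constants coming only from the Tate--Sen axioms, independent of $A'$). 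A key intermediate claim is that the $\Gamma_K$-orbit of any invariant element of $\widetilde\D_{\log,K}^{\dagger,s}(V)[1/t]$ that generates a finite $A$-module already lies in a Frobenius-bounded subobject, so the passage from $s$ to $\infty$ and from any $k$ to a uniform one does not happen.

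Once this reduction is in place, the module $M_{s,k} := t^{-k}\D_{\log,K}^{\dagger,s}(V)$ is a finite projective module over the $A$-algebra $R_s := A\widehat\otimes \B_{\log,K}^{\dagger,s}$, which is not $A$-finite but is a flat $A$-algebra (indeed, quasi-Stein over $A$ when viewed as functions on a product of annuli with a logarithm adjoined). The main obstacle is that Pottharst's cohomology-and-base-change is formulated for finite $A$-modules, while $M_{s,k}$ is only finite over $R_s$. I would try two complementary approaches. The direct one is to generalize the derived base-change isomorphism $C^\bullet(\Gamma_K, M)\otimes_A^{\mathbf L} A' \xrightarrow{\sim} C^\bullet(\Gamma_K, M\otimes_A A')$ to finite flat modules $M$ over a flat $A$-algebra $R$ satisfying the relevant orthonormalizability hypotheses (so that completed tensor products stay exact), using $\Gamma_K$'s cohomological dimension $1$ to cut down what needs to be shown. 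The backup approach is to exploit the inclusion $\D_{\cris}^K(V)\hookrightarrow \D_{\dR}^K(V)$ of Corollary~\ref{dht-ddr-finite}: combined with flat base change for $\D_{\dR}$, this gives injectivity of the natural map $\D_{\st}^K(V)\otimes_A A' \hookrightarrow \D_{\st}^K(V\otimes_A A')$ for free, reducing the problem to surjectivity, i.e.\ to showing that an element of $\D_{\dR}^K(V\otimes_A A')$ coming from a "potentially semi-stable" element on the $A$-side is already the image of an element of $\D_{\st}^K(V)\otimes_A A'$.

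The genuinely hard step is the surjectivity: characterizing the image of $\D_{\st}^K(V)\otimes_A A'$ inside $\D_{\dR}^K(V\otimes_A A')$ amounts to saying that the condition of extending to a $\varphi,N$-compatible element of the Robba-ring world can be checked after flat base change. This is precisely the Frobenius regularization step, and it is subtle because Frobenius regularization (Proposition~\ref{frob-reg}) was established at the level of single Banach algebras and does not obviously play well with tensoring by a non-finite flat $A$-algebra $A'$. Resolving this either requires a base-change version of Proposition~\ref{frob-reg} (which seems to need an $A'$-linear control on the constants appearing in the proof), or an argument showing that the discrepancy between the two sides is simultaneously an $A$-finite $\Gamma_K$-invariant submodule and a Frobenius-nilpotent one, hence zero. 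This is where I expect the proof to stall, and presumably why the statement is left as a conjecture.
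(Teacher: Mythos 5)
The statement you set out to prove is labeled a \emph{conjecture} in the paper, and the paper does not prove it. The remark immediately following it records that the conjecture does not follow from the cohomology-and-base-change theorem precisely because $\D_{\rig,K}^\dagger(V)$ (and likewise $\D_{\log,K}^\dagger(V)$) is not finite over $A$, so there is no paper proof against which to compare your plan. What you have written is a diagnostic of the open problem, and your diagnosis agrees with the paper's own remark: the truncations $t^{-k}\D_{\log,K}^{\dagger,s}(V)$ are only finite over the non-$A$-finite algebras $A\widehat\otimes\B_{\log,K}^{\dagger,s}$, so Theorem~\ref{base-change} does not apply to them; the finite level $k$ produced in the proof of Proposition~\ref{dlog-dlog} is extracted from Noetherianity of $A$ and has no a priori reason to remain valid after $(-)\widehat\otimes_A A'$; and Frobenius regularization (Proposition~\ref{frob-reg}) is established one Banach coefficient ring at a time, with no evident compatibility under completed base change. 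Both of your proposed attack strategies (extending Pottharst's derived base-change isomorphism to finite flat modules over a flat Banach $A$-algebra, or leveraging the closed embedding of $\D_{\st}^K$ into $\D_{\dR}^K$ together with the already-established flat base change for $\D_{\dR}^K$ to reduce to a surjectivity statement) are reasonable directions, and you are right that they both stall at the Frobenius regularization step.

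One correction. You assert that, because $N$ is $A$-linear, ``the formation of $N=0$-kernels commutes with any (in particular flat) base change.'' That overclaims: kernels of $A$-linear maps do not commute with arbitrary base change, only with flat base change. Since flat base change is all that the conjecture concerns, your reduction from $\D_{\cris}^K$ to $\D_{\st}^K$ is valid, but state the kernel claim for flat $A\to A'$ only. As an aside, the paper's remark after the conjecture cites the eigencurve example of~\cite{kpx} as evidence that $\mathscr{D}_{\cris}(\mathscr{V})$ can be a coherent sheaf in a natural non-finite family, but that rests on a global triangulation of the associated $(\varphi,\Gamma)$-module and does not constitute a proof of the general statement.
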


\begin{remark}
Conjecture~\ref{dcris-sheaf} does not follow automatically from Theorem~\ref{base-change}, because the $(\varphi,\Gamma)$-module $\D_{\rig,K}^\dagger(V)$ is not $A$-finite.  It is also difficult, in general, to verify this conjecture for particular examples of families of Galois representations.

However, life is considerably better when considering a \emph{trianguline} family, because one can compute with rank-$1$ families.  
The papers \cite{hellmann2}, \cite{kpx}, and \cite{liu} prove triangulation results for certain families of Galois representations arising from eigenvarieties.  In particular, in~\cite[Theorem 6.3.9]{kpx}, the authors show that if $\mathscr{V}$ is the family of Galois representations on the normalization $X$ of the eigencurve, then away from the image of the $\theta^{k-1}$-map, the associated family $\mathscr{D}_{\rig}^\dagger(\mathscr{V})$ has a global triangulation
\[	0\rightarrow \mathscr{D}_1\rightarrow \mathscr{D}_{\rig}^\dagger(\mathscr{V})\rightarrow \mathscr{D}_2\rightarrow 0	\]
Here the $\mathscr{D}_i=\mathcal{R}_X(\delta_i)\otimes_X\mathscr{L}_i$ are rank-$1$ families of $(\varphi,\Gamma)$-modules over $X$.  We have used the notation of~\cite{kpx}: $\delta_i:\Q_p^\times\rightarrow \Gamma(X,\mathscr{O}_X^\times)$ are continuous characters, $\mathcal{R}_X(\delta_i)$ is the free rank-$1$ $(\varphi,\Gamma_{\Q_p})$-module with basis $\mathbf{e}$ such that $\varphi(\mathbf{e})=\delta(p)\mathbf{e}$ and $\gamma(\mathbf{e})=\delta(\chi(\gamma))\mathbf{e}$ for $\gamma\in\Gamma_{\Q_p}$, and $\mathscr{L}_i$ are line bundles on $X$ with no action of $\varphi$ or $\Gamma_{\Q_p}$.  

In this case, $\delta_1|_{\Z_p^\times}$ is trivial, while $\delta_2|_{\Z_p^\times}$ is the weight-nebentypus character.  As a result, for any affinoid $U\subset X$ which trivializes $\mathscr{L}_2$, $\mathscr{D}_2(U)[1/t]^{\Gamma_{\Q_p}}=0$ and so by Theorem~\ref{dcris-phi-gamma}
\[	\mathscr{D}_{\cris}(\mathscr{V})(U)=\mathscr{D}_1(U)[1/t]^{\Gamma_{\Q_p}}	\]
Moreover, by construction 
\[	\mathscr{D}_1(U)[1/t]^{\Gamma_{\Q_p}} = \mathscr{D}_1(U)[1/t]^{\varphi=\delta(p), \Gamma_{\Q_p}=1} = \mathscr{D}_{\cris}(\mathscr{V})(U)^{\varphi=\delta(p)}	\]
It follows from \cite{kpx} or \cite{liu} that $\mathscr{D}_{\cris}(\mathscr{V})^{\varphi=\delta(p)}$ is a coherent $\mathscr{O}_X$-module, and hence so is $\mathscr{D}_{\cris}(\mathscr{V})$.  This is a very natural example of a family $\mathscr{V}$ of Galois representations such that $\mathscr{D}_{\cris}(\mathscr{V})$ is a coherent sheaf.
\end{remark}


\section{$\B_\ast$-admissible loci}\label{b-adm-loci}

\subsection{Overview}

In this section, we fix a family $\mathscr{V}$ of rank-$d$ representations of $\Gal_K$ over an $E$-analytic space $X$, and we study the loci on $X$ where $\mathscr{V}$ is $\B_\ast$-admissible for various period rings $\B_\ast$.  We have the following theorem.

\begin{thm}\label{adm-locus}
Let $X$ and $\mathscr{V}$ be as above, and let $\ast\in\{\HT,\dR,\st,\cris\}$.  Then there is a closed subspace $X_{\B_\ast}^{[a,b]}\hookrightarrow X$ such that for any $E$-finite artin local ring $B$, a map $x:\Sp(B)\rightarrow X$ factors through $X_{\B_\ast}^{[a,b]}$ if and only if the induced $B$-linear Galois representation $\mathscr{V}_x$ is $\B_\ast$-admissible with Hodge--Tate weights in the interval $[a,b]$.
\end{thm}

If the family $\mathscr{V}$ is $\B_\ast$-admissible, then certainly for every morphism $f:X'\rightarrow X$, the base change $f^\ast\mathscr{V}$ is $\B_\ast$-admissible.  We prove a converse theorem in two parts.

\begin{thm}\label{pointwise-adm}
Let $X$ and $\mathscr{V}$ be as above, and let $\ast\in\{\HT,\dR,\st,\cris\}$.  Suppose that $\mathscr{V}_x$ is $\B_\ast$-admissible with Hodge--Tate weights in the interval $[a,b]$ for every morphism $x:\Sp(B)\rightarrow X$, where $B$ is an $E$-finite artin local ring.  Then
\begin{enumerate}
\item	the sheaf $\mathscr{D}_{\B_\ast}^K(\mathscr{V})$ is a sheaf of projective $\mathscr{O}_X\otimes_{\Q_p}\B_\ast^{\Gal_K}$-modules of rank $d$, and
\item	the formation of $\mathscr{D}_{\B_\ast}^K(\mathscr{V})$ commutes with arbitrary base change on $X$.
\end{enumerate}
\end{thm}

In each case, we then use the base change property to finish proving that $\mathscr{V}$ is a $\B_\ast$-admissible family of Galois representations:
\begin{thm}\label{comp-isom}
Let $X$ and $\mathscr{V}$ be as above, and let $\ast\in\{\HT,\dR,\st,\cris\}$.  Suppose that $\mathscr{V}_x$ is $\B_\ast$-admissible with Hodge--Tate weights in the interval $[a,b]$ for every morphism $x:\Sp(B)\rightarrow X$, where $B$ is an $E$-finite artin local ring.  Then the natural map $\mathscr{B}_{X,\ast}\otimes_{\mathscr{O}_X\otimes\B_\ast^{\Gal_K}}\mathscr{D}_{\B_\ast}^K(\mathscr{V})\rightarrow \mathscr{B}_{X,\ast}\otimes_{\mathscr{O}_X}\mathscr{V}$ is an isomorphism.
\end{thm}

\begin{remark}
We do not know whether assuming that $\mathscr{D}_{\B_\ast}^K(\mathscr{V})$ is a sheaf of projective $\mathscr{O}_X\otimes_{\Q_p}\B_\ast^{\Gal_K}$-modules of rank $d$ implies that the formation of $\mathscr{D}_{\B_\ast}^K(\mathscr{V})$ commutes with base change.  This is why our definition of $\B_\ast$-admissibility of a family includes the condition that $\mathscr{B}_{X,\ast}\otimes_{\mathscr{O}_X\otimes\B_\ast^{\Gal_K}}\mathscr{D}_{\B_\ast}^K(\mathscr{V})\rightarrow \mathscr{B}_{X,\ast}\otimes_{\mathscr{O}_X}\mathscr{V}$ is an isomorphism.

If the natural base change morphism $B\otimes_{\mathscr{O}_X}\mathscr{D}_{\B_\ast}^K(\mathscr{V})\rightarrow \mathscr{D}_{\B_\ast}^K(\mathscr{V}\otimes_{\mathscr{O}_X}B)$ were injective for all morphisms $\Sp(B)\rightarrow X$, with $B$ an $E$-finite artin local ring, we could deduce that $\mathscr{V}$ is $\B_\ast$-admissible.  However, the low-degree exact sequence in \ref{coh-dim-1} shows that there is an obstruction to such injectivity when $\ast\in\{\HT,\dR\}$, at least \emph{a priori}.
\end{remark}

When $\B_{\ast}=\B_{\HT}$ or $\B_{\dR}$, we can prove finer results.  Fix an interval $[a,b]$, and define 
\[	\mathscr{D}_{\HT}^{[a,b]}(\mathscr{V}):=\left((\mathscr{O}_X\otimes_{\Q_p}\oplus_{i=a}^b\C_K\cdot t^i)\otimes_{\mathscr{O}_X}\mathscr{V}\right)^{\Gal_K}	\]
\[	\mathscr{D}_{\dR}^{[a,b]}(\mathscr{V}):=\left((\mathscr{O}_X\otimes_{\Q_p}t^a\B_{\dR}/t^b)\otimes_{\mathscr{O}_X}\mathscr{V}\right)^{\Gal_K}	\]
We think of these coherent sheaves as sheaves of ``periods in Hodge--Tate weight $[a,b]$''.  Fix an integer $0\leq d'\leq d$.

\begin{definition}
A morphism $f:X\rightarrow X'$ is a \emph{Zariski-locally closed immersion} if there is a Zariski-open subspace $U\subset X'$ such that $f$ factors through a Zariski-closed immersion $X\hookrightarrow U$.
\end{definition}

\begin{thm}\label{dht-ddr-periods}
Let $X$ and $\mathscr{V}$ be as above, and let $\ast\in\{\HT,\dR\}$.  There is a Zariski-locally closed immersion $X_{\B_\ast,d'}^{[a,b]}\hookrightarrow X$ such that $x:\Sp(B)\rightarrow X$ factors through $X_{\B_\ast,d'}^{[a,b]}$ if and only if $\mathscr{D}_{\B_\ast}^{[a,b]}(\mathscr{V}_x)$ is a free $B\otimes_{\Q_p}K$-module of rank $d'$, where $B$ is an $E$-finite artin local ring and $\mathscr{V}_B:=\mathscr{V}\otimes_AB$.  In fact, $X_{\B_\ast,d'}^{[a,b]} = X_{\B_\ast,\leq d'}^{[a,b]}\cap X_{\B_\ast,\geq d'}^{[a,b]}$, where $X_{\B_\ast,\leq d'}^{[a,b]}\subset X$ is Zariski-open and $X_{\B_\ast,\geq d'}^{[a,b]}\hookrightarrow X$ is Zariski-closed.
\end{thm}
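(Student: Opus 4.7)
The plan is to work Zariski-locally on $X$, realize $\D_{\B_\ast}^{[a,b]}(V)$ as continuous Galois cohomology of a concrete $A$-finite module, stratify $\Sp(A)$ by the fibral ranks of the resulting coherent sheaf via Fitting ideals, and verify the artin-local characterization using Pottharst's cohomology-and-base-change machinery.

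After localizing to $X=\Sp(A)$ with $V$ free of rank $d$ and enlarging $K$ to some $L_n$ over which $\D_{\Sen}^{L_n}(V)$ and $\D_{\dif}^{L_n,+}(V)$ admit the standard Tate--Sen description as free modules of rank $d$ (both sides of the desired characterization are Zariski-local, and the finite Galois descent from $L_n$ to $K$ is harmless), Theorems~\ref{dsen-dht} and~\ref{ddif-ddr} identify $\D_{\B_\ast}^{[a,b]}(V)$ with the $\Gal(L_n/K)$-invariants of $\H^0(\Gamma_{L_n},M)$, where $M_{\HT}:=\bigoplus_{i=a}^{b}\D_{\Sen}^{L_n}(V)\cdot t^i$ in the Hodge--Tate case and $M_{\dR}:=t^a\D_{\dif}^{L_n,+}(V)/t^b\D_{\dif}^{L_n,+}(V)$ in the de Rham case. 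Each $M$ is $A$-finite with continuous $A$-linear $\Gamma_{L_n}$-action, so Corollary~\ref{coh-dim-1} applies: $\H^1(\Gamma_{L_n},M)$ is $A$-finite, commutes with arbitrary base change $A\to B$, and the low-degree exact sequence controls the discrepancy between $N\otimes_A B$ and $\D_{\B_\ast}^{[a,b]}(V_B)$ (where $N:=\D_{\B_\ast}^{[a,b]}(V)$) in terms of $\Tor_i^A(\H^1(\Gamma_{L_n},M),B)$ for $i=1,2$.

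I would then stratify $\Sp(A)$ using Fitting ideals of $N$, viewed as a finite $A\otimes_{\Q_p}K$-module: take $X_{\B_\ast,\leq d'}^{[a,b]}$ to be the Zariski-open complement of the vanishing locus of $\Fitt_{d'}(N)$ and $X_{\B_\ast,\geq d'}^{[a,b]}$ to be the Zariski-closed zero-locus of $\Fitt_{d'-1}(N)$. The stratification property $X_{\leq d'-1}^{[a,b]}=X_{\leq d'}^{[a,b]}\smallsetminus X_{d'}^{[a,b]}$ is automatic from the nesting of Fitting ideals, and by the standard Fitting-ideal characterization a morphism $x:A\to B$ with $B$ artin local factors through $X_{d'}^{[a,b]}$ precisely when the fibral $(B/\mathfrak{m}_B)\otimes K$-dimension of $N\otimes_A(B/\mathfrak{m}_B)$ equals $d'$.

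The main obstacle is translating this fibral rank condition at the residue field $\kappa=B/\mathfrak{m}_B$ into the global freeness assertion $\D_{\B_\ast}^{[a,b]}(V_B)\simeq(B\otimes_{\Q_p}K)^{d'}$. The residue field case is $E$-finite and should be handled by combining Appendix~\ref{admissible-formalism} with the low-degree exact sequence and an Euler characteristic computation for $\Gamma_{L_n}$-cohomology over the field $\kappa$, which pins down the Tor corrections. To extend to general artin local $B$ I would induct on $\mathrm{length}_B(B)$ along small extensions $0\to J\to B\to B'\to 0$ with $J$ a $\kappa$-line: applying $\H^0(\Gamma_{L_n},-)$ to $0\to M\otimes_A J\to M\otimes_A B\to M\otimes_A B'\to 0$ yields a six-term sequence whose connecting map lands in $\H^1(\Gamma_{L_n},M)\otimes_A J$ (using cohomology and base change for $\H^1$), and Nakayama over $B$ combined with vanishing of this connecting map (dictated by the same Tor analysis as in the base case) lifts freeness from $B'$ to $B$ while preserving the $B\otimes_{\Q_p}K$-structure. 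Making the Tor analysis compatible with the finer $B\otimes_{\Q_p}K$-module structure — rather than merely the underlying $B$-module structure — is the most delicate step of the argument.
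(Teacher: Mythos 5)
Your proposal correctly identifies the setup (localize to $X=\Sp(A)$ with $V$ free, enlarge $K$ to $L_n$, identify $\D_{\B_\ast}^{[a,b]}(V)$ via Theorems~\ref{dsen-dht} and~\ref{ddif-ddr} with $\H^0(\Gamma_{L_n},M)$ for the appropriate $A$-finite module $M$), and correctly brings in Pottharst's cohomology-and-base-change machinery. But the Fitting-ideal step is applied to the wrong module, and this is a gap that does not admit a local fix along the lines you sketch.

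You take Fitting ideals of $N=\D_{\B_\ast}^{[a,b]}(V)=\H^0(\Gamma_{L_n},M)^{\Gal(L_n/K)}$. The stratification this produces detects the fibral ranks $\dim_{\kappa(x)}N\otimes_A\kappa(x)$, whereas the statement of the theorem requires detecting $\dim_{\kappa(x)}\H^0(\Gamma_{L_n},M\otimes_A\kappa(x))=\dim_{\kappa(x)}\D_{\B_\ast}^{[a,b]}(V_{\kappa(x)})$. You correctly observe that these differ by $\Tor$ terms against $\H^1(\Gamma_{L_n},M)$, but then treat this discrepancy as something to be ``translated away'' in the induction on length. It cannot be: the two stratifications are genuinely different closed/open subspaces. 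A toy example: take $M=A$ with $\gamma$ acting by multiplication by a nonunit nonzerodivisor $f\in A$. Then $\H^0(\Gamma,M)=\ker(f\cdot)=0$, so all Fitting ideals of $N=0$ are the unit ideal and your construction declares $d'=0$ everywhere; yet on the zero locus of $f$, $\H^0(\Gamma,M\otimes\kappa(x))=\kappa(x)$ has dimension $1$, so the theorem's stratification must single out $V(f)$. The inductive scheme along small extensions cannot repair this, because the base case at the residue field is already wrong.

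The fix is what the paper does: take Fitting ideals of $\H^1(\Gamma_{L_n},M)$ rather than of $N$. Since $\Gamma_{L_n}$ has $p$-cohomological dimension $1$, Corollary~\ref{coh-dim-1} shows $\H^1$ commutes with \emph{arbitrary} affinoid base change on $A$, so $\H^1(\Gamma_{L_n},M)\otimes_A\kappa(x)\cong\H^1(\Gamma_{L_n},M\otimes_A\kappa(x))$, and its Fitting ideals encode precisely the fibral $\H^1$-ranks. Then Lemma~\ref{artin-local-free}, applied to the two-term complex $0\to M\otimes_AB\xrightarrow{\gamma-1}M\otimes_AB\to 0$ over an $E$-finite artin local ring $B$, converts ``$\H^1(\Gamma_{L_n},M\otimes_AB)$ is free of rank $d'$ over $B\otimes_{\Q_p}L_n$'' back into the desired condition ``$\H^0(\Gamma_{L_n},M\otimes_AB)$ is free of rank $d'$ over $B\otimes_{\Q_p}L_n$''. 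With that substitution your Fitting-ideal construction (open locus where $\Fitt_{d'}=(1)$, closed locus where $\Fitt_{d'-1}=0$, applied to $\H^1$ rather than $N$) becomes the proof.
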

In fact, the $X_{\B_\ast,d'}^{[a,b]}$ give a stratification of $X$, in the sense that $X_{\B_\ast,\leq d'-1}^{[a,b]}=X_{\B_\ast,\leq d'}^{[a,b]}\smallsetminus X_{\B_\ast,d'}^{[a,b]}$ and $X=X_{\B_\ast,\leq d}^{[a,b]}$.

\begin{thm}\label{pointwise-dht-ddr-periods}
Let $X$ and $\mathscr{V}$ be as above, and let $\ast\in\{\HT,\dR\}$. Suppose that for every $E$-finite artinian point $x:A\rightarrow B$, the $B\otimes_{\Q_p}K$-module $\mathscr{D}_{\B_\ast}^{[a,b]}(\mathscr{V}_x)$ is free of rank $d'$, where $0\leq d'\leq d$.  Then
\begin{enumerate}
\item	$\mathscr{D}_{\B_\ast}^{[a,b]}(\mathscr{V})$ is a locally free $\mathscr{O}_X\otimes_{\Q_p}K$-module of rank $d'$, and each $\left(t^k\cdot\mathscr{D}_{\Sen}^{K_n}(\mathscr{V})\right)^{\Gamma_K}$ is a locally free $\mathscr{O}_X\otimes_{\Q_p} K$-module, 
\item	the formation of $\mathscr{D}_{\B_\ast}^{[a,b]}(\mathscr{V})$ commutes with arbitrary base change $f:X'\rightarrow X$
\end{enumerate}
If $d'=d$, then
\begin{enumerate}
\item[(3)]	$\mathscr{D}_{\B_\ast}(\mathscr{V})=\mathscr{D}_{\B_\ast}^{[a,b]}(\mathscr{V})$,
\item[(4)]	the natural morphism 
$$\alpha_{\mathscr{V}}:\mathscr{B}_\ast\otimes_{\mathscr{O}_X\otimes_{\Q_p}K}\mathscr{D}_{\B_\ast}^K(\mathscr{V})\rightarrow \mathscr{B}_\ast\otimes_{\mathscr{O}_X}\mathscr{V}$$
is an isomorphism.
\end{enumerate}
\end{thm}

Before we begin proving Theorems~\ref{adm-locus}, \ref{pointwise-adm}, and \ref{comp-isom} and their refinements, we discuss some consequences.

First of all, the subspaces $X_{\B_\ast}^{[a,b]}\hookrightarrow X$ have strong functorial properties.
\begin{cor}\label{b-adm-functorial}
Let $X$ and $\mathscr{V}$ be as above, and let $f:X'\rightarrow X$ be a morphism of rigid analytic spaces.  Then 
\begin{enumerate}
\item	$f$ factors through $X_{\B_\ast}^{[a,b]}$ if and only if $f^\ast\mathscr{V}$ is $\B_\ast$-admissible with Hodge--Tate weights in the interval $[a,b]$,
\item	the subspace ${X'}_{\B_\ast}^{[a,b]}\hookrightarrow X'$ is induced via base change from $X_{\B_\ast}^{[a,b]}\hookrightarrow X$.
\end{enumerate}
\end{cor}
\begin{proof}
\begin{enumerate}
\item	By Theorems~\ref{adm-locus} and \ref{pointwise-adm}, $f^\ast\mathscr{V}$ is $\B_\ast$-admissible with Hodge--Tate weights in the interval $[a,b]$ if and only if $x^\ast f^\ast\mathscr{V}$ is $\B_\ast$-admissible with Hodge--Tate weights in the interval $[a,b]$ for every $E$-finite artin local point $x:\Sp(B)\rightarrow X'$.  We may assume that $X$ and $X'$ are affinoid, with $X=\Sp(A)$ and $X'=\Sp(A')$ for $E$-affinoid algebras $A$ and $A'$.

Suppose that $V_{A'}$ is $\B_\ast$-admissible with Hodge--Tate weights in $[a,b]$.  Then for every maximal ideal $\mathfrak{m}'\subset A'$ and every integer $n\geq 0$, the composition $A\rightarrow A'\rightarrow A'/\mathfrak{m}'^n$ factors through $A_{\B_\ast}^{[a,b]}$.  In other words, if $x\in\ker(A\twoheadrightarrow A_{\B_\ast}^{[a,b]})$, then $f(x)\in\mathfrak{m}'^n$ for all $\mathfrak{m}'$ and all $n$.  Since $\cap_{n,\mathfrak{m}'} \mathfrak{m}'^n=0$, we see that $f(x)=0$.

Suppose conversely that $f$ factors through $A\twoheadrightarrow A_{\B_\ast}^{[a,b]}$, and consider an $E$-finite artinian point $x:A'\rightarrow B'$.  By assumption, the composition $A\rightarrow A'\rightarrow B'$ factors through $A\twoheadrightarrow A_{\B_\ast}^{[a,b]}$, so the induced representation $V_{B'}$ is $\B_\ast$-admissible with Hodge--Tate weights in $[a,b]$.  Thus, $V_{A'}$ is Hodge--Tate.
\item	This follows from the first part, and from the universal property of fiber products of rigid analytic spaces.
\end{enumerate}
\end{proof}

Similarly, if $\ast\in\{\HT,\dR\}$, the subspaces $X_{\B_\ast,d'}^{[a,b]}$ are functorial for any $0\leq d'\leq d$.
\begin{cor}
Let $X$ and $\mathscr{V}$ be as above, and let $f:X'\rightarrow X$ be a morphism of rigid analytic spaces.  Then 
\begin{enumerate}
\item	$f$ factors through $X_{\B_\ast,d'}^{[a,b]}$ if and only if $\mathscr{D}_{\B_\ast}^{[a,b]}(f^\ast\mathscr{V})$ is a locally free $\mathscr{O}_X\otimes_{\Q_p}K$-module of rank $d'$,
\item	the subspace ${X'}_{\B_\ast,d'}^{[a,b]}\hookrightarrow X'$ is induced via base change from $X_{\B_\ast,d'}^{[a,b]}\hookrightarrow X$.
\end{enumerate}
\end{cor}
The proof proceeds identically to the proof of Corollary~\ref{b-adm-functorial}.

We can also refine our conclusions about the structure of $\mathscr{D}_{\B_\ast}^K(\mathscr{V})$.
\begin{cor}
Let $\mathscr{V}$ be a family of Galois representations such that for any $E$-finite artinian point $x:\Sp(B)\rightarrow X$, the specialization $\mathscr{V}_x$ is $\B_{\ast}$-admissible.  Then the vector bundle $\mathscr{D}_{\B_\ast}^K(\mathscr{V})$ is $X$-locally free.  
\end{cor}
\begin{remark}
The hypothesis on $\mathscr{V}$ is phrased in terms of a pointwise condition because this corollary is used in the proof of Theorem~\ref{comp-isom} when $\ast\in\{\st,\cris\}$.
\end{remark}
\begin{proof}
It suffices to show that for any point $x\in X$, the completed stalk $\mathscr{D}_{\B_\ast}^K(\mathscr{V})_x^\wedge$ is a free $\mathscr{O}_{X,x}^\wedge\otimes_{\Q_p}\B_\ast^{\Gal_K}$-module.  Since $\mathscr{V}$ is assumed $\B_\ast$-admissible, the formation of $\mathscr{D}_{\B_\ast}^K(\mathscr{V})$ commutes with arbitrary base change on $X$, and hence
\[	\mathscr{D}_{\B_\ast}^K(\mathscr{V})_x^\wedge = \varprojlim_n\mathscr{D}_{\B_\ast}^K(\mathscr{V}\otimes_{\mathscr{O}_X}\mathscr{O}_{X,x}/\mathfrak{m}_x^n)	\]
But again by the $\B_\ast$-admissibility of $\mathscr{V}$, $\mathscr{D}_{\B_\ast}^K(\mathscr{V}\otimes_{\mathscr{O}_X}\mathscr{O}_{X,x}/\mathfrak{m}_x^n)$ is a free $\mathscr{O}_{X,x}/\mathfrak{m}_x^n\otimes_{\Q_p}\B_\ast^{\Gal_K}$-module of rank $d$, and the transition maps are simply reduction modulo $\mathfrak{m}_x^n$.  Therefore, $\mathscr{D}_{\B_\ast}^K(\mathscr{V})_x^\wedge$ is $\mathscr{O}_{X,x}^\wedge\otimes_{\Q_p}\B_\ast^{\Gal_K}$-free of rank $d$, as desired.
\end{proof}

If $\mathscr{V}$ is Hodge--Tate (resp. de Rham), the graded pieces $\gr^i\mathscr{D}_{\HT}^K(\mathscr{V})$ (resp. the submodules $\Fil^i\mathscr{D}_{\dR}^K(\mathscr{V})$) need not be $X$-locally free as $\mathscr{O}_X\otimes_{\Q_p}K$-modules; an example is given in~\cite[Remarque 3.1.1.4]{bm}.  However, we can use Theorem~\ref{adm-locus} to cut out the locus where $\mathscr{D}_{\dR}(\mathscr{V})$ is filtered by subbundles and has specified Hodge polygon.

\begin{definition}
Let $A$ be an $E$-affinoid algebra, and let $D$ be an $A$-locally free $A\otimes_{\Q_p}K$-module equipped with a separated exhaustive decreasing filtration $\Fil^\bullet D$ by $A$-locally free sub-bundles.  Let $\{i_0<i_2<\cdots <i_k\}$ be the distinct $i$ such that $\gr^i(D)\neq 0$.  The \emph{Hodge polygon} $\Delta_D$ of $D$ is the convex polygon in the plane with left-most endpoint $(0,0)$ and $\rk_{A\otimes_{\Q_p}K}\gr^{i_j}(D)$ segments of horizontal distance $1$ and slope $i_j$ for $0\leq j\leq k$.  The \emph{Hodge number} $t_H$ of $D$ is the $y$-coordinate of the right-most point of $\Delta_D$, i.e., $\sum_j i_j\cdot\rk_{A\otimes_{\Q_p}K}\gr^{i_j}(D)$.
\end{definition}

\begin{cor}
Let $\mathscr{V}$ be a rank $d$ family of $\Gal_K$-representations, and let $\Delta$ be a convex polygon in the plane with left-most endpoint $(0,0)$ and right-most endpoint $(d,t_H)$ for some $t_H\in\Z$.  Then there is a closed immersion $X_{\B_\ast}^\Delta\hookrightarrow X$ such that for any $E$-finite artin local ring $B$, a map $\Sp(B)\rightarrow X$ factors through $X_{\B_\ast}^{\Delta}$ if and only if the induced $B$-linear Galois representation $\mathscr{V}_B$ is $\B_\ast$-admissible and the Hodge polygon of $\D_{\dR}(\mathscr{V}_B)$ is $\Delta$.  In fact, $X_{\B_\ast}^\Delta$ is a union of connected components of $X_{\B_\ast}^{[a,b]}$, where $a=i_0$ and $b=i_k$.  If $X=X_{\B_\ast}^\Delta$, then the Hodge polygon of $\mathscr{D}_{\dR}(\mathscr{V})$ is $\Delta$.
\end{cor}
\begin{proof}
We may assume that $X=X_{\B_\ast}^{[a,b]}$.  For each $i\in[a,b]$, we have an exact sequence of vector bundles
$$0\rightarrow \Fil^{i+1}\D_{\dR}(\mathscr{V})\rightarrow \Fil^i\D_{\dR}(\mathscr{V})\rightarrow \gr^i\D_{\dR}(\mathscr{V})\rightarrow 0$$
These vector bundles have locally constant rank on $X$ (though not necessarily globally constant rank), and their formations commute with base change on $X$, by Theorem~\ref{pointwise-adm}, so we take the union of the connected components where $\gr^i\D_{\dR}(\mathscr{V})$ has the correct dimension.
\end{proof}

We can also stratify the spaces $X_{\B_{\dR},d'}^{[a,b]}$ by Hodge polygon, although we do not get a decomposition into connected components, because we have no {\it a priori} interpretation of the graded pieces of $\mathscr{D}_{\dR}(\mathscr{V})$ when $\mathscr{V}$ is not de Rham.

\begin{cor}
Let $\mathscr{V}$ be a rank $d$ family of $\Gal_K$-representations, let $d'$ be an integer $0\leq d'\leq d$, and let $\Delta$ be a convex polygon in the plane with left-most endpoint $(0,0)$ and right-most endpoint $(d',t_H)$ for some $t_H\in\Z$.  Then there is a Zariski locally-closed immersion $X_{\dR}^\Delta\hookrightarrow X$ such that for any $E$-finite artin local ring $B$, a map $\Sp(B)\rightarrow X$ factors through $X_{\dR}^\Delta$ if and only if $\Fil^{i}\D_{\dR}^{[a,b]}(V_B)$ is projective for all $i$ and $\D_{\dR}^{[a,b]}(V_B)$ has Hodge polygon $\Delta$, where $a=i_0$ and $b=i_k$.  If $X=X_{\dR}^\Delta$, then the Hodge polygon of $\mathscr{D}_{\dR}^{[a,b]}(\mathscr{V})$ is $\Delta$. 
\end{cor}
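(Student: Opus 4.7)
The plan is to reduce to the Zariski-locally closed subspace $X_{\B_{\dR},d'}^{[a,b]} \hookrightarrow X$ supplied by Theorem~\ref{dht-ddr-periods}, and then to stratify further by the ranks of the individual filtration pieces $\Fil^i\mathscr{D}_{\dR}^{[a,b]}(\mathscr{V})$. The key observation is that for each $i\in[a,b]$, the $\Gal_K$-equivariant inclusion $t^i\B_{\dR}/t^b \hookrightarrow t^a\B_{\dR}/t^b$ identifies
$$\Fil^i\mathscr{D}_{\dR}^{[a,b]}(\mathscr{V}) = \mathscr{D}_{\dR}^{[i,b]}(\mathscr{V})$$
as $\mathscr{O}_X\otimes_{\Q_p}K$-submodules of $\mathscr{D}_{\dR}^{[a,b]}(\mathscr{V})$. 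Consequently, the rank of each filtration piece is controlled by Theorem~\ref{dht-ddr-periods} applied to the subinterval $[i,b]$.

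The Hodge polygon $\Delta$, with slopes $\{i_0 < \cdots < i_k\}$ of horizontal lengths $r_j$, is equivalent data to the collection of partial sums $s_i := \sum_{j:\, i_j\ge i} r_j$, which should be the ranks of $\Fil^i\mathscr{D}_{\dR}^{[a,b]}$; note $s_a = d'$ and $s_{b+1} = 0$. First I would apply Theorem~\ref{dht-ddr-periods} to each of the finitely many subintervals $[i,b]$ with $a \le i \le b+1$, obtaining Zariski-locally closed subspaces $X_{\B_{\dR},s_i}^{[i,b]}\hookrightarrow X$, and define
$$X_{\dR}^\Delta := \bigcap_{i=a}^{b+1} X_{\B_{\dR},s_i}^{[i,b]}.$$
As a finite intersection of Zariski-locally closed subspaces, this is itself Zariski-locally closed in $X$, and the constraint $s_a = d'$ ensures $X_{\dR}^\Delta \subset X_{\B_{\dR},d'}^{[a,b]}$.

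To verify the artinian-point characterization, for $x\colon\Sp(B)\to X$ with $B$ an $E$-finite artin local ring, I would exploit the short exact sequences
$$0 \to \Fil^{i+1}\mathscr{D}_{\dR}^{[a,b]}(V_B) \to \Fil^i\mathscr{D}_{\dR}^{[a,b]}(V_B) \to \gr^i\mathscr{D}_{\dR}^{[a,b]}(V_B) \to 0$$
coming from the $t$-adic filtration. Over the artin local ring $B\otimes_{\Q_p}K$, projective is equivalent to free, so factoring through each $X_{\B_{\dR},s_i}^{[i,b]}$ is exactly the assertion that $\Fil^i\mathscr{D}_{\dR}^{[a,b]}(V_B)$ is projective of rank $s_i$. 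Additivity of length across these short exact sequences then forces $\gr^i\mathscr{D}_{\dR}^{[a,b]}(V_B)$ to be free of rank $s_i-s_{i+1}$, which equals $r_j$ when $i = i_j$ and $0$ otherwise; this is precisely the condition that the Hodge polygon is $\Delta$. The delicate step is exactness of these sequences after specialization to the artinian point — I expect to deduce this by identifying the cokernel with a piece of $\mathscr{D}_{\HT}^{[a,b]}(V_B)$ (via the $\gr^i$ of the filtration on $\B_{\dR}/t^b$) and invoking cohomology-and-base-change (Corollary~\ref{coh-dim-1}) together with the freeness of $\Fil^{i+1}$ to kill the relevant $\Tor$ obstruction.

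For the final assertion, if $X = X_{\dR}^\Delta$ then Theorem~\ref{pointwise-dht-ddr-periods} applied to each subinterval $[i,b]$ yields that $\mathscr{D}_{\dR}^{[i,b]}(\mathscr{V}) = \Fil^i\mathscr{D}_{\dR}^{[a,b]}(\mathscr{V})$ is a locally free $\mathscr{O}_X\otimes_{\Q_p}K$-module of rank $s_i$ globally, and the associated graded pieces are then forced to be locally free of the prescribed ranks $r_j$, giving the Hodge polygon $\Delta$ over all of $X$. I anticipate that the main technical obstacle throughout is the exactness of the filtration short exact sequences after non-flat base change; once this is established, the combinatorial bookkeeping with the partial sums $s_i$ and the assembly from the subinterval strata is routine.
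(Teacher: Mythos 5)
Your proposal is essentially the paper's own argument: identify $\Fil^i\mathscr{D}_{\dR}^{[a,b]}(\mathscr{V}) = \mathscr{D}_{\dR}^{[i,b]}(\mathscr{V})$ and take the finite intersection of the Zariski-locally closed loci $X_{\B_{\dR},s_i}^{[i,b]}$ supplied by Theorem~\ref{dht-ddr-periods}, which is exactly what the paper does. The only thing to simplify is the step you flag as delicate: no cohomology-and-base-change is needed, since $0\to\Fil^{i+1}\to\Fil^i\to\gr^i\to 0$ is tautologically exact, and over the artin local ring $B\otimes_{\Q_p}K$ the freeness of $\gr^i$ (given that $\Fil^{i+1}\subset\Fil^i$ are both free) is precisely the elementary socle argument recorded in Lemma~\ref{artin-local-free}, which then determines the Hodge polygon from the ranks $s_i$.
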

\begin{proof}
Let $c_j$ be the $x$-coordinate of the right endpoint of the $j$th segment of $\Delta$ (where we count segments starting with $0$), and for $i_j< i\leq i_{j+1}$, let $d_i=\sum_{\ell=j}^k c_\ell$.  Then we are looking for the locus of $E$-finite artin points $x:\Sp(B)\rightarrow X$ where for all $a\leq i\leq b$, if $i_j<i\leq i_{j+1}$ then $\D_{\dR}^{[i,b]}(\mathscr{V}_B)$ is a free $B\otimes_{\Q_p}K$-module of rank $i$.

We use Theorem~\ref{dht-ddr-periods} to construct the desired locus.  Indeed, for every $i\in [a,b]$, Theorem~\ref{dht-ddr-periods} gives us a Zariski-open subspace $X_{\B_{\dR},\leq d_i}^{[i,b]}$ and a Zariski-closed subspace $X_{\B_{\dR},\geq d_i}^{[i,b]}$ such that $X_{\B_{\dR},d_i}^{[i,b]}:=X_{\B_{\dR},\leq d_i}^{[i,b]}\cap X_{\B_{\dR},\geq d_i}^{[i,b]}$ represents the condition ``$\D_{\dR}^{[i,b]}(\mathscr{V}_B)$ is free of rank $d_i$''.  Then we put
$$X_{\dR}^\Delta:=\left(\cap_{i\in[a,b]}X_{\B_{\dR},\leq d_i}^{[i,b]}\right)\cap\left(\cap_{i\in[a,b]}X_{\B_{\dR},\geq d_i}^{[i,b]}\right)$$

The second assertion follows similarly, by repeated application of Theorem~\ref{pointwise-dht-ddr-periods}.
\end{proof}

\begin{remark}
It is possible to define an ordering on Hodge polygons so that the spaces $X_{\dR}^\Delta$ yield a Zariski-stratification of $X_{\dR}^{[a,b]}$.  This is done in~\cite[\textsection3]{shrenik}.
\end{remark}

We can also show that when restricted to the category of $\B_\ast$-admissible representations, the functor $\mathscr{D}_{\B_\ast}^K$ is well-behaved with respect to exact sequences, tensors, and duals.
\begin{cor}
\begin{enumerate}
\item	Let $\Rep_{X}^{\B_\ast}(\Gal_K)$ be the category of $\B_\ast$-admissible families of representations of $\Gal_K$ over $X$.  Then $\mathscr{D}_{\B_\ast}:\Rep_{X}^{\B_\ast}(\Gal_K)\rightarrow \Proj_{\mathscr{O}_X\otimes_{\Q_p}\B_\ast^{\Gal_K}}$ is exact and faithful, where $\Proj_{\mathscr{O}_X\otimes_{\Q_p}\B_\ast^{\Gal_K}}$ is the category of sheaves of projective $\mathscr{O}_X\otimes_{\Q_p}\B_\ast^{\Gal_K}$-modules, and any subrepresentation or quotient of a $\B$-admissible family of representations is itself $\B$-admissible.
\item	The sub-category $\Rep_{X}^{\B_\ast}(\Gal_K)$ is stable under formation of tensor products and duals, and the functor $\mathscr{V}\mapsto\mathscr{D}_{\B_\ast}^K(\mathscr{V})$ commutes with these operations when restricted to $\Rep_{X}^{\B_\ast}(\Gal_K)$.
\item	If $\B_\ast=\B_{\HT}$ (resp. $\B_{\dR}$), then the grading (resp. filtration) on $\mathscr{D}_{\B_\ast}^K(\mathscr{V})$ is also exact and tensor compatible.
\end{enumerate}
\end{cor}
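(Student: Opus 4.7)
The plan is to reduce each assertion to the case of Artin local coefficients, where classical $p$-adic Hodge theory applies directly, and then globalize via Theorem~\ref{pointwise-adm}, which identifies admissibility of a family with pointwise admissibility at Artin local points and guarantees base-change compatibility of $\mathscr{D}_{\B_\ast}$ on $\B_\ast$-admissible families.

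For part (1), consider an exact sequence $0 \to \mathscr{V}' \to \mathscr{V} \to \mathscr{V}'' \to 0$ with $\mathscr{V}$ admissible. At any Artin local point $x:\Sp(B) \to X$, Theorem~\ref{pointwise-adm} gives admissibility of $\mathscr{V}_x$, and Appendix~\ref{admissible-formalism} reduces this to $\Q_p$-admissibility of the underlying representation. The classical dimension comparison---using left-exactness of $\mathscr{D}_{\B_\ast}$ (obtained by taking $\Gal_K$-invariants after tensoring with the $\Q_p$-flat $\B_\ast$) together with the universal bound $\dim_{\B_\ast^{\Gal_K}} \D_{\B_\ast}(W) \leq \dim_{\Q_p} W$---forces equality throughout, so $\mathscr{V}'_x$ and $\mathscr{V}''_x$ are admissible and the induced three-term sequence of $\D_{\B_\ast}$'s is exact. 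Reapplying Theorem~\ref{pointwise-adm} gives admissibility of $\mathscr{V}'$ and $\mathscr{V}''$ as families; global exactness of $\mathscr{D}_{\B_\ast}$ follows from local freeness combined with pointwise exactness at every Artin local fiber. Faithfulness is immediate from $\rk \mathscr{D}_{\B_\ast}(\mathscr{V}) = \rk \mathscr{V}$ in the admissible case.

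For part (2), the natural multiplication map
$$\mathscr{D}_{\B_\ast}(\mathscr{V}_1) \otimes_{\mathscr{O}_X \otimes \B_\ast^{\Gal_K}} \mathscr{D}_{\B_\ast}(\mathscr{V}_2) \longrightarrow \mathscr{D}_{\B_\ast}(\mathscr{V}_1 \otimes_{\mathscr{O}_X} \mathscr{V}_2)$$
and the evaluation-pairing map $\mathscr{D}_{\B_\ast}(\mathscr{V}) \otimes \mathscr{D}_{\B_\ast}(\mathscr{V}^\vee) \to \mathscr{O}_X \otimes \B_\ast^{\Gal_K}$ are defined by functoriality of $\mathscr{D}_{\B_\ast}$ together with multiplication inside $\mathscr{B}_\ast$. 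I would first verify admissibility of $\mathscr{V}_1 \otimes \mathscr{V}_2$ and $\mathscr{V}^\vee$ at Artin local points using the classical field case, and then promote this to family admissibility by Theorem~\ref{pointwise-adm}. Both sides of each comparison map are then locally free of the same rank, so by base change compatibility one reduces isomorphy of the maps to the classical Fontaine isomorphisms at each Artin local fiber, where they are standard.

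Part (3) is essentially formal. For $\B_\ast = \B_{\HT}$, the decomposition $\mathscr{D}_{\HT}^K(\mathscr{V}) = \bigoplus_i \gr^i \mathscr{D}_{\HT}^K(\mathscr{V})$ is functorial, so exactness and tensor compatibility in the graded category reduce directly to parts (1) and (2). For $\B_\ast = \B_{\dR}$, the filtration inherited from $\B_{\dR} \otimes \mathscr{V}$ is functorial, and strictness of morphisms of de Rham families for this filtration reduces by passing to graded pieces to the Hodge-Tate case, which is covered by (1). Tensor compatibility of the filtration then follows from the formula $\Fil^i(M \otimes N) = \sum_{j+k=i} \Fil^j M \otimes \Fil^k N$ on the period-ring side via the isomorphism of (2). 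The main technical obstacle throughout is globalizing the classical field-theoretic results, which is handled cleanly by the pointwise criterion and base change compatibility of Theorem~\ref{pointwise-adm}.
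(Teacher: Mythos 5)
Your proposal is correct and takes essentially the same approach as the paper: the paper's proof is the one-line observation that everything reduces to the corresponding statements with Artin local coefficients, via the base-change compatibility of $\mathscr{D}_{\B_\ast}^K(\mathscr{V})$ on admissible families and the fact that isomorphisms between coherent modules can be checked on infinitesimal thickenings of closed points. You have simply spelled out the details the paper leaves implicit (the dimension counting for exactness, the construction of the tensor and duality comparison maps, the graded/filtered bookkeeping), all of which agree with the intended argument.
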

\begin{proof}
These statements all follow from the corresponding statements with artinian coefficients, because for $\mathscr{V}$ a $\B_\ast$-admissible family of representations of $\Gal_K$, the formation of $\mathscr{D}_{\B_\ast}^K(\mathscr{V})$ commutes with arbitrary base change on $X$.  Since $\mathscr{D}_{\B_\ast}^K(\mathscr{V})$ is a finite $\mathscr{O}_X\otimes_{\Q_p}\B_\ast^{\Gal_K}$-module, we can check isomorphisms on thickenings of closed points of $X$.
\end{proof}

Finally, we can use the existance of the closed subspace $X_{\B_\ast}^{[a,b]}\hookrightarrow X$ to deduce $\B_\ast$-admissibility on a Zariski-open neighborhood of $x\in X$ from $\B_\ast$-admissibility on infinitesimal neighborhoods of $x$.  
\begin{cor}
Let $V$ be a continuous $\mathscr{O}_X$-linear representation of $\Gal_K$ as above, and suppose $x\in X$ is a point such that $V\otimes_{\mathscr{O}_X}\mathscr{O}_{X,x}/\mathfrak{m}_x^n$ is $\B_\ast$-admissible with Hodge--Tate weights in $[a,b]$ for all $n\geq 0$.  Then there is a Zariski-open neighborhood $U$ of $x$ such that $V|_{U}$ is $\B_\ast$-admissible with Hodge--Tate weights in $[a,b]$.
\end{cor}
\begin{proof}
We may assume that $X=\Sp(A)$ for some $E$-affinoid algebra $A$, so that $X_{\B_\ast}^{[a,b]}=\Sp(A_{\B_\ast}^{[a,b]})$ for some quotient $A\twoheadrightarrow A_{\B_\ast}^{[a,b]}$.  The assumption on the infinitesimal neighborhoods of $x$ implies that the natural map $A\rightarrow A_{\mathfrak{m}_x}^\wedge$ factors through $A_{\B_\ast}^{[a,b]}$.  This implies that the complete local rings of $A$ and $A_{\B_\ast}^{[a,b]}$ at $\mathfrak{m}_x$ are the same, which in turn implies that $\Sp(A_{\B_\ast}^{[a,b]})$ contains an Zariski-open neighborhood of $x$.
\end{proof}
\begin{remark}
In fact, since $X_{\B_\ast}^{[a,b]}$ is a closed subspace containing an admissible open neighborhood of $x$, if $X=\Sp(A)$ for some $E$-affinoid algebra $A$, then $X_{\B_\ast}^{[a,b]}$ contains all irreducible components of $\Sp(A)$ passing through $x$.
\end{remark}

\subsection{Hodge--Tate and de Rham loci}

As above, we let $X$ be a quasi-separated $E$-analytic space and we let $\mathscr{V}$ be a finite locally free $\mathscr{O}_X$-module of rank $d$, equipped with a continuous $\mathscr{O}_X$-linear action of $\Gal_K$.  

In order to construct quotients $\mathscr{O}_X\twoheadrightarrow \mathscr{O}_{X_{\HT}^{[a,b]}}$ and $\mathscr{O}_X\twoheadrightarrow \mathscr{O}_{X_{\dR}^{[a,b]}}$ for Theorem~\ref{adm-locus} for $\B_\ast=\B_{\HT}$ or $\B_{\dR}$, we work locally on $X$ and construct a suitable coherent ideal sheaf.  Similarly, we work locally on $X$ to construct the Zariski-locally closed immersions $X_{\HT,d'}^{[a,b]}\hookrightarrow X$ and $X_{\dR,d'}^{[a,b]}\hookrightarrow X$ required by Theorem~\ref{dht-ddr-periods}.

In order to prove Theorems~\ref{pointwise-adm} and~\ref{pointwise-dht-ddr-periods} for $\B_\ast=\B_{\HT}$ and $\B_\ast=\B_{\dR}$, it likewise suffices to work locally on $X$.  Thus, we may reduce to the case where $X=\Sp(A)$ for $A$ an $E$-affinoid algebra and $V:=\Gamma(X,\mathscr{V})$ is a finite free $A$-module of rank $d$ equipped with a continuous $A$-linear action of $\Gal_K$ which admits a free $\Gal_K$-stable $\mathscr{A}$-lattice $V_0$ for some formal $\mathscr{O}_E$-model $\mathscr{A}$ of $A$.

Before we begin, we prove a useful lemma.
\begin{lemma}\label{artin-local-free}
Let $R$ be an artin ring, let $M$ be a free $R$-module of rank $r$ equipped with an endomorphism $T:M\rightarrow M$, and suppose that
\[	0\rightarrow M'\rightarrow M\xrightarrow{T}M\rightarrow M''\rightarrow 0	\]
is exact.  Then $M'$ is free of rank $d$ over $R$ if and only if $M''$ is.
\end{lemma}
\begin{proof}
Since $R$ is an artin ring, it is semi-local.  The assertions ``$M'$ is free of rank $d$'' and ``$M''$ is free of rank $d$'' can each be checked by passing to local factors of $R$, so we may assume that $R$ is a local ring with maximal ideal $\mathfrak{m}$. 

It suffices to show that for an exact sequence $0\rightarrow M'\rightarrow M\rightarrow N\rightarrow 0$ of $R$-modules (with $M$ free of rank $r$), $M'$ is free of rank $d$ if and only if $N$ is free of rank $r-d$.  

If $N$ is free of rank $r-d$, it is projective, so the exact sequence splits and $M'$ is free of rank $d$ (as $R$ is local).  

Conversely, suppose that $M'$ is free of rank $d$.  We will prove that $M/M'$ is free of rank $r-d$ by induction on $d$.  If $d=0$, there is nothing to prove.  So suppose that $M'$ is free of rank $d$, and suppose we know the result for submodules of rank $d-1$.  Choose bases $\{e_1,\ldots,e_d\}$ and $\{f_1,\ldots,f_r\}$ for $M'$ and $M$, respectively, and consider the image $a_1f_1+\cdots a_rf_r$ of $e_1$.  At least one of the $a_i$ is a unit in $R$, because otherwise by injectivity of $M'\rightarrow M$, the element $e_1\in M'$ would be killed by $\ann_R(\mathfrak{m})\neq 0$ (which is impossible, as $e_1$ is part of a basis of $M'$).  Thus, without loss of generality, we assume that $a_1$ is a unit.  Then $\{e_1,f_2,\ldots,f_r\}$ is a basis of $M$, and 
$$0\rightarrow M'/\langle e_1\rangle\rightarrow M/\langle e_1\rangle\rightarrow N\rightarrow 0$$
is exact, so by the inductive hypothesis, $N$ is free of rank $(r-1)-(d-1)=r-d$.
\end{proof}

\subsubsection{Hodge--Tate locus}

Let $L/K$ be a finite extension such that $\Gal_L$ acts trivially on $V_0/12pV_0$.  Then for $n\gg0$, $\D_{\Sen}^{L_n}(V)$ is finite free of rank $d$ over $A\otimes_{\Q_p}L_n$ and carries a linear action of $\Gamma_{L_n}$.  If necessary, we increase $n$ so that $\Gamma_{L_n}$ is procyclic, with topological generator $\gamma$.  Moreover, the formation of $\D_{\Sen}^{L_n}(V)$ commutes with arbitrary $E$-affinoid base change on $A$ and $\left(\D_{\Sen}^{L_n}(V)\right)^{\Gamma_{L_n}} = \left((\C_K\otimes A)\otimes_A V\right)^{\Gal_{L_n}}$.  As a consequence,
$$\D_{\HT}^{L_n}(V) = \left(\oplus_k t^k\cdot\D_{\Sen}^{L_n}(V)\right)^{\Gamma_{L_n}=1}$$
Recall that we have defined 
$\D_{\HT}^{K,[a,b]}(V) = \left(\oplus_{i\in[a,b]}\D_{\Sen}^{L_n}(V)\cdot t^i\right)^{\Gamma_K}$.

\begin{thm}\label{ht-adm-locus}
Let $V$ be as above.  Then for every $0\leq d'\leq d$, there is a Zariski-locally closed immersion $X_{\HT,d'}^{[a,b]}\hookrightarrow X$ such that for any $E$-finite artin local ring $B$ and $V_B:=V\otimes_AB$, $x:\Sp(B)\rightarrow X$ factors through $X_{\HT,d'}^{[a,b]}$ if and only if $\D_{\HT}^{K,[a,b]}(V_B)$ is a free $B\otimes_{\Q_p}K$-module of rank $d'$.  In fact, $X_{\HT,d'}^{[a,b]} = X_{\HT,\leq d'}^{[a,b]}\cap X_{\HT,\geq d'}^{[a,b]}$, where $X_{\HT,\leq d'}^{[a,b]}\subset X$ is Zariski-open and $X_{\HT,\geq d'}^{[a,b]}\hookrightarrow X$ is Zariski-closed.  If $d'=d$, there is a quotient $A\twoheadrightarrow A_{\HT}^{[a,b]}$ such that an $E$-finite artinian point $x:A\rightarrow B$ factors through $A_{\HT}^{[a,b]}$ if and only if $V_B$ is Hodge--Tate with Hodge--Tate weights in the interval $[a,b]$.
\end{thm}
\begin{proof}
First we note that because $\D_{\HT}^{L_n}(V)=L_n\otimes \D_{\HT}^K(V)$, it is enough consider $V_B$ as a representation of $\Gal_{L_n}$.

Next let $M=\oplus_{i=a}^b \D_{\Sen}^{L_n}(V)\cdot t^i$.  We note that $\H^0(\Gamma_{L_n},M\otimes_A B)$ is free over $B\otimes_{\Q_p}L_n$ of rank $d'$ if and only if $\H^1(\Gamma_{L_n},M\otimes_A B)$ is, because the continuous $\Gamma_{L_n}$-cohomology of $M\otimes_A B$ is computed by the complex
$$0\rightarrow M\otimes_A B \xrightarrow{\gamma-1} M\otimes_A B\rightarrow 0$$
and we can apply Lemma~\ref{artin-local-free}.  Further, the formation of $\H^1$ commutes with arbitrary base change on $A$, by Corollary~\ref{coh-dim-1}.  Since $\H^1(\Gamma_{L_n},M)$ is a coherent $A\otimes_{\Q_p}L_n$-module and 
\[	M\otimes_AB\xrightarrow{\gamma-1}M\otimes_AB\rightarrow \H^1(\Gamma_{L_n},M\otimes_A B)\rightarrow 0	\]
is a finite presentation of $\H^1(\Gamma_{L_n},M\otimes_A B)$, it follows that 
\[	M\xrightarrow{T=\gamma-1}M\rightarrow \H^1(\Gamma_{L_n},M)\rightarrow 0	\]
is a finite presentation of $\H^1(\Gamma_{L_n},M)$.

We use the theory of Fitting ideals to cut out the locus where $\H^1(\Gamma_{L_n},M\otimes_A B)$ is free of rank $d'$ over $B\otimes_{\Q_p}L_n$.  By~\cite[Prop. 20.8]{eisenbud}, $\H^1(\Gamma_{L_n},M\otimes_A B)$ is projective of rank $d'$ over $B\otimes_{\Q_p}L_n$ if and only if $\Fitt_{d'}(\H^1(\Gamma_{L_n},M\otimes_A B))=B\otimes_{\Q_p}L_n$ and $\Fitt_{d'-1}(\H^1(\Gamma_{L_n},M\otimes_A B))=0$.  The latter is a closed condition defined by the $(d'-1)\times (d'-1)$-minors of $T$.  The former is a Zariski-open condition defined by inverting the $d'\times d'$-minors of $T$.

If $d'=d$, we claim the open condition can be ignored on the complement of the zero locus of ideal generated by the $(d-1)\times (d-1)$-minors of $T$.  First, we note that $\Fitt_d(\H^1(\Gamma_{L_n},M\otimes_A B))=B\otimes_{\Q_p}L_n$ if and only if $\H^1(\Gamma_{L_n},M\otimes_A B)$ can be generated by $d$ elements, by~\cite[Prop. 20.6]{eisenbud}.  More precisely, $B\otimes_{\Q_p}L_n$ is a semi-local ring, while \cite[Prop. 20.6]{eisenbud} applies to modules over local rings.  But $\H^1(\Gamma_{L_n},M\otimes_A B)$ can be generated by $d$ elements over $B\otimes_{\Q_p}L_n$ if and only if the same is true after passing to idempotent factors of $B\otimes_{\Q_p}L_n$.  Similarly, since the formation of Fitting ideals commutes with base change, we can check that $\Fitt_d(\H^1(\Gamma_{L_n},M\otimes_A B))=(1)$ after passing to idempotent factors of $B\otimes_{\Q_p}L_n$.

Moreover, the formation of $\H^1(\Gamma_{L_n},M\otimes_A B)$ commutes with base change on $B$, so by Nakayama's lemma, $\H^1(\Gamma_{L_n},M\otimes_A B)$ can be generated by lifts of generators of $\H^1(\Gamma_{L_n},M\otimes_A B/\mathfrak{m}_B)$.  But if the Fitting ideal $\Fitt_{d-1}(\H^1(\Gamma_{L_n},M\otimes_A B/\mathfrak{m}_B))$ vanishes, then $\H^1(\Gamma_{L_n},M\otimes_A B/\mathfrak{m}_B)$ cannot be generated by $d-1$ elements at any point of $B/\mathfrak{m}_B\otimes_{\Q_p}L_n$.  Therefore, the $\Q_p$-dimension of $\H^0(\Gamma_{L_n},M\otimes_A B/\mathfrak{m}_B)$ (which is the same as the $\Q_p$-dimension of $\H^1(\Gamma_{L_n},M\otimes_A B/\mathfrak{m}_B)$) is at least $d\cdot\dim_{\Q_p}B\cdot\dim_{\Q_p}L_n$.  But then the formalism of admissible representations implies that the $\Q_p$-dimension of $\H^0(\Gamma_{L_n},M\otimes_A B/\mathfrak{m}_B)$ is exactly $d\cdot\dim_{\Q_p}B\cdot\dim_{\Q_p}L_n$ and Proposition~\ref{artin-adm} implies that $\H^0(\Gamma_{L_n},M\otimes_A B/\mathfrak{m}_B)$ is a free $B/\mathfrak{m}_B\otimes_{\Q_p}L_n$-module of rank $d$, so $\H^1(\Gamma_{L_n},M\otimes_A B/\mathfrak{m}_B)$ is as well, by Lemma~\ref{artin-local-free}.  Clearly this implies that $\H^1(\Gamma_{L_n},M\otimes_A B/\mathfrak{m}_B)$ can be generated by $d$ elements.

Thus, the condition that $V_B$ be Hodge--Tate with Hodge--Tate weights in the appropriate range is cut out by the $(d-1)\times(d-1)$-minors of $T$.  Since $T$ has coefficients in $A\otimes_{\Q_p}L_n$ and $E\otimes_{\Q_p}L_n$ is finite free over $E$, we obtain the quotient of $A$ we sought.
\end{proof}


We turn to the converse question.

\begin{thm}\label{ht-adm}
Let $V$ be as above.  Suppose that for every $E$-finite artinian point $x:A\rightarrow B$, the $B\otimes_{\Q_p}K$-module $\D_{\HT}^{K,[a,b]}(V_x)$ is free of rank $d'$, where $0\leq d'\leq d$.  Then
\begin{enumerate}
\item	$\D_{\HT}^{K,[a,b]}(V)$ is a locally free $A\otimes_{\Q_p}K$-module of rank $d'$, and each $\left(t^k\cdot\D_{\Sen}^{K_n}(V)\right)^{\Gamma_K}$ is a locally free $A$-module,
\item	the formation of $\D_{\HT}^{K,[a,b]}(V)$ commutes with arbitrary base change $f:A\rightarrow A'$,
\end{enumerate}
If $d'=d$, then
\begin{enumerate}
\item[(3)]	$\D_{\HT}^{K}(V)=\left(\oplus_{k=a}^bt^k\cdot\D_{\Sen}^{K_n}(V)\right)^{\Gamma_K}$, and the formation of $\D_{\HT}^K(V)$ commutes with arbitrary base change $f:A\rightarrow A'$,
\item[(4)]	the natural map $\alpha_V:(A\widehat\otimes\B_{\HT})\otimes_{A\otimes_{\Q_p}K}\D_{\HT}^K(V)\rightarrow (A\widehat\otimes\B_{\HT})\otimes_AV$ is a Galois-equivariant isomorphism of graded $A\widehat\otimes\B_{\HT}$-modules, and so $V$ is Hodge--Tate.
\end{enumerate}
\end{thm}
\begin{proof}
It is enough to consider $V$ as a representation of $\Gal_{L_n}$, since $\D_{\HT}^{L_n}(V)=L_n\otimes \D_{\HT}^K(V)$.  Since $\D_{\Sen}^{K_n}(V) = \left(\D_{\Sen}^{L_n}(V)\right)^{H_K}$, the decomposition of $\D_{\HT}^K(V)$ follows from the decomposition of $\D_{\HT}^{L_n}(V)$.

We will use the base change spectral sequence of Theorem~\ref{base-change} for the continuous $\Gamma_{L_n}$-cohomology of $M:=\oplus_{i=a}^b \D_{\Sen}^{L_n}(V)\cdot t^i$.

By assumption, $\H^1(\Gamma_{L_n},M\otimes_A B)$ is a free $B\otimes_{\Q_p}L_n$-module of the same rank as $\H^0(\Gamma_{L_n},M\otimes_A B)$.  Since $M$ is $A$-finite, Corollary~\ref{coh-dim-1} implies that the formation of $\H^1(\Gamma_{L_n},M)$ commutes with base change $A\rightarrow B$.  It follows that $\H^1(\Gamma_{L_n},M)$ is a projective $A$-module and $\Tor_{-p}^A(\H^1(\Gamma_{L_n},M),A')$ vanishes for all homomorphisms $f:A\rightarrow A'$ and for all $p<0$.

But if we consider the low-degree exact sequence of Corollary~\ref{coh-dim-1}, the vanishing of the $\Tor$ terms shows that formation of $\H^0(\Gamma_{L_n},M)$ commutes with arbitrary base change $A\rightarrow A'$, and in particular with the base change $x:A\rightarrow B$.  Thus, 
\[	M^{\Gamma_{L_n}}\subset \D_{\HT}^{L_n}(V)=\left(\oplus_{i\in\Z} \D_{\Sen}^{L_n}(V)\cdot t^i\right)^{\Gamma_{L_n}}	\] 
is a locally free $A\otimes_{\Q_p}L_n$-module of rank $d'$.  This proves the first two parts.

Now we assume that $d'=d$.  We claim that $M^{\Gamma_{L_n}}=\D_{\HT}^{L_n}(V)$.  Suppose to the contrary that there is some non-zero $y\in\left(\D_{\Sen}^{L_n}(V)\cdot t^i\right)^{\Gamma_{L_n}}$ for some $i\not\in [0,h]$.  Then there is some $E$-finite artinian point $x:A\rightarrow B$ such that $y$ is non-zero in $\D_{\Sen}^{L_n}(V_x)\cdot t^i$.  But $M_x^{\Gamma_{L_n}}=\D_{\HT}^{L_n}(V_x)$ because $M_x^{\Gamma_{L_n}}\subset\D_{\HT}^{L_n}(V_x)$ and the $L_n$-dimensions of the two sides agree, contradicting the assumed $\Gamma_{L_n}$-invariance of $y$.

Let $f:A\rightarrow A'$ be a morphism of $E$-affinoid algebras.  We have already seen that the formation of $\H^0(\Gamma_{L_n},M)$ commutes with arbitrary affinoid base change on $A$, so $\H^0(\Gamma_{L_n},M\otimes_AA')=\H^0(\Gamma_{L_n},M)\otimes_AA'$.  If $\D_{\HT}^{L_n}(V\otimes_AA')=\H^0(\Gamma_{L_n},M\otimes_AA')$, we are done.  But for any $E$-finite artinian point $x:A'\rightarrow B'$, the induced $B'$-linear representation $(x\circ f)^\ast V$ is Hodge--Tate with Hodge--Tate weights in $[a,b]$.  Then we have just seen that
$\D_{\HT}^{L_n}(V\otimes_AA')=\left(\oplus_{k=a}^bt^k\cdot\D_{\Sen}^{K_n}(V\otimes_AA')\right)^{\Gamma_{L_n}}$,
as desired.

Finally, we show that 
$(A\widehat\otimes\B_{\HT})\otimes_{A\otimes_{\Q_p}K}\D_{\HT}^{K}(V)\rightarrow (A\widehat\otimes\B_{\HT})\otimes_AV$
is a Galois-equivariant isomorphism.  Since the natural map
\[	(A\widehat\otimes\C_K)\otimes_{A\otimes_{\Q_p}L_n}\D_{\Sen}^{L_n}(V)\rightarrow (A\widehat\otimes\C_K)\otimes_AV	\]
is a Galois-equivariant isomorphism, it suffices to show that the natural map
\[	(A\otimes_{\Q_p}{L_n}[t,t^{-1}])\otimes_{A\otimes_{\Q_p}{L_n}}\D_{\HT}^{L_n}(V)\rightarrow \oplus_{i\in\Z}\D_{\Sen}^{L_n}(V)\cdot t^i	\]
is a Galois-equivariant isomorphism of graded $A\otimes_{\Q_p}L_n$-modules.  We may further reduce to checking that the natural map
\[	\gr^i\left((A\otimes_{\Q_p}{L_n}[t,t^{-1}])\otimes_{A\otimes_{\Q_p}{L_n}}\D_{\HT}^{L_n}(V)\right)\rightarrow \D_{\Sen}^{L_n}(V)\cdot t^i	\]
is a Galois-equivariant isomorphism of $A\otimes_{\Q_p}L_n$-modules for all $i$.

Now we have a map of $A$-finite modules, so it suffices to check our desired isomorphism modulo powers of maximal ideals of $A$.  But when $A$ is an $E$-finite artin ring, this follows from the formalism of admissible representations, so we are done.
\end{proof}

\subsubsection{de Rham locus}

To treat the de Rham case, we work with $\D_{\dif}^{L_n}(V)$ instead of $\D_{\Sen}^{L_n}(V)$.  
Recall that we have defined
$$\D_{\dR}^{K,[a,b]}(V) = \left((A\otimes_{\Q_p}t^a\B_{\dR}/t^b)\otimes_{A}V\right)^{\Gamma_K} = \left(t^a\D_{\dif}^{K,+}(V)/t^b\D_{\dif}^{K,+}(V)\right)^{\Gamma_K}$$

\begin{thm}
Let $V$ be as above.  Then for every $0\leq d'\leq d$, there is a Zariski-locally closed immersion $X_{\dR,d'}^{[a,b]}\hookrightarrow X$ such that for any $E$-finite artin local ring $B$ and $V_B:=V\otimes_AB$, $x:\Sp(B)\rightarrow X$ factors through $X_{\dR,d'}^{[a,b]}$ if and only if $\D_{\dR}^{K,[a,b]}(V)$ is a free $B\otimes_{\Q_p}K$-module of rank $d'$.  In fact, $X_{\dR,d'}^{[a,b]} = X_{\dR,\leq d'}^{[a,b]}\cap X_{\dR,\geq d'}^{[a,b]}$, where $X_{\dR,\leq d'}^{[a,b]}\subset X$ is Zariski-open and $X_{\dR,\geq d'}^{[a,b]}\hookrightarrow X$ is Zariski-closed.  If $d'=d$, there is a quotient $A\twoheadrightarrow A_{\dR}^{[a,b]}$ such that an $E$-finite artinian point $x:A\rightarrow B$ factors through $A_{\dR}^{[a,b]}$ if and only if $V_B$ is de Rham with Hodge--Tate weights in the interval $[a,b]$.
\end{thm}
\begin{proof}
Because $\D_{\dR}^{L_n}(V)=L_n\otimes_K \D_{\dR}^K(V)$, it is enough to cut out the locus where $V_B$ is de Rham as a representation of $\Gal_{L_n}$ (with weights in the appropriate range).  Then the proof of Theorem~\ref{ht-adm-locus} carries over verbatim with $M$ re-defined as $M=t^a\D_{\dif}^{L_n}(V)/t^b$, and we obtain the desired result.
\end{proof}

Now we treat the converse question.

\begin{thm}\label{dr-adm}
Let $V$ be as above.  Suppose that for every $E$-finite artinian point $x:A\rightarrow B$, the $B\otimes_{\Q_p}K$-module $\D_{\dR}^{K,[a,b]}(V_x)$ is free of rank $d'$, where $0\leq d'\leq d$.  Then
\begin{enumerate}
\item	$\D_{\dR}^{K,[a,b]}(V)$ is a locally free $A\otimes_{\Q_p}K$-module of rank $d'$,
\item	the formation of $\D_{\dR}^{K,[a,b]}(V)$ commutes with arbitrary base change $f:A\rightarrow A'$,
\end{enumerate}
If $d'=d$, then
\begin{enumerate}
\item[(3)]	$\D_{\dR}^{K}(V)=\D_{\dR}^{K,[a,b]}$, and the formation of $\D_{\dR}^K(V)$ commutes with arbitrary base change $f:A\rightarrow A'$,
\item[(4)]	the natural map $\alpha_V:(A\widehat\otimes\B_{\dR})\otimes_{A\otimes_{\Q_p}K}\D_{\dR}^K(V)\rightarrow (A\widehat\otimes\B_{\dR})\otimes_AV$ is a Galois-equivariant isomorphism of graded $A\widehat\otimes\B_{\dR}$-modules, and so $V$ is de Rham.
\end{enumerate}
\end{thm}
\begin{proof}
It is enough to consider $V$ is as a representation of $\Gal_{L_n}$, since $\D_{\dR}^{L_n}(V)=L_n\otimes_K \D_{\dR}^K(V)$.  Then by the same arguments as in the proof of Theorem~\ref{ht-adm} applied with $M=t^a\D_{\dif}^{L_n,+}(V)/t^b$, we see that $M^{\Gamma_{L_n}}$ is a locally free $A\otimes_{\Q_p}L_n$-module of rank $d'$.

Now we assume $d'=d$.  We claim that $M^{\Gamma_{L_n}}=\D_{dR}^{L_n,+}(V)=\D_{\dR}^{L_n}(V)$.  Since $V$ is Hodge--Tate with Hodge--Tate weights in $[0,h]$, $\left(t^k\D_{\Sen}^{L_n}(V)\right)^{\Gamma_{L_n}}=0$ for $k\not\in [0,h]$.  Moreover, $\H^1(\Gamma_{L_n},t^k\cdot \D_{\Sen}^{L_n}(V\otimes_AB))=0$ for any artinian point $A\rightarrow B$ and $k\not\in [0,h]$, so $\H^1(\Gamma_{L_n},t^k\cdot \D_{\Sen}^{L_n}(V))=0$ for $k\not\in[0,h]$.  Then for any $k>h$, the long exact sequence associated to the short exact sequence of $\Gamma_{L_n}$-modules
$$0\rightarrow t^k\cdot \D_{\Sen}^{L_n}(V)\rightarrow \D_{\dif}^{L_n,+}(V)/t^{k+1}\rightarrow \D_{\dif}^{L_n,+}(V)/t^k\rightarrow 0$$
shows that $\left(\D_{\dif}^{L_n,+}(V)/t^{k+1}\right)^{\Gamma_{L_n}}\rightarrow \left(\D_{\dif}^{L_n,+}(V)/t^k\right)^{\Gamma_{L_n}}$ is an isomorphism.  It follows that 
\[	\left(\D_{\dif}^{L_n,+}(V)\right)^{\Gamma_{L_n}}=\varprojlim_k\left(\D_{\dif}^{L_n,+}(V)/t^k\right)^{\Gamma_{L_n}} = \left(\D_{\dif}^{L_n,+}(V)/t^{h+1}\right)^{\Gamma_{L_n}}	\]

Let $f:A\rightarrow A'$ be a morphism of $E$-affinoid algebras.  Then we have seen that the formation of $\H^0(\Gamma_{L_n},M)$ commutes with arbitrary base change on $A$, so $\H^0(\Gamma_{L_n},M\otimes_AA')=\H^0(\Gamma_{L_n},M)\otimes_AA'$.  If $\D_{\dR}^{L_n}(V\otimes_AA')=\H^0(\Gamma_{L_n},M\otimes_AA')$, we are done.

But for any $E$-finite artinian point $x:A'\rightarrow B'$, the induced $B'$-linear representation $(x\circ f)^\ast V$ is de Rham with Hodge--Tate weights in $[a,b]$.  Then we have just seen that 
\[	\D_{\dR}^{L_n}(V\otimes_AA')=\left(t^a\D_{\dif}^{K_n,+}(V\otimes_AA')/t^b\right)^{\Gamma_{L_n}}	\]
as desired.

Finally, we show that $V$ is de Rham.  Recall that the natural map
\[	(A\widehat\otimes\B_{\dR}^+)\otimes_{A\widehat\otimes L_n[\![t]\!]}\D_{\dif}^{L_n,+}(V)\rightarrow (A\widehat\otimes\B_{\dR}^+)\otimes_AV	\]
is a Galois-equivariant isomorphism respecting the filtration on each side.  Therefore, it suffices to show that the natural map
\[	\Fil^0\left((A\widehat\otimes L_n(\!(t)\!))\otimes_{A\otimes_{\Q_p}L_n}\D_{\dR}^{L_n,+}(V)\right)\rightarrow \D_{\dif}^{L_n,+}(V)	\]
is a Galois-equivariant isomorphism respecting the filtrations.  For this, it further suffices to show that for every $k\geq 0$, the natural map
\[	\Fil^0\left((A\widehat\otimes L_n(\!(t)\!))\otimes_{A\otimes_{\Q_p}L_n}\D_{\dR}^{L_n,+}(V)\right)/t^k\rightarrow \D_{\dif}^{L_n,+}(V)/t^k	\]
is a Galois-equivariant isomorphism respecting the filtrations.  

Now we have a morphism of $A$-finite modules, so it suffices to check this modulo every power of every maximal ideal of $A$.  But when $A$ is an $E$-finite artin ring, this follows from the formalism of admissible representations, so we are done.
\end{proof}

\subsection{Semi-stable and crystalline loci}

We will produce similar quotients of $A$ parametrizing the semi-stable and crystalline loci.  However, our method of proof, which follows the argument in~\cite{bc}, is quite different.  Instead of using cohomology and base change arguments, we will prove that ``de Rham implies uniformly potentially semi-stable'' and then deduce our desired results.

\subsubsection{Uniform potential semi-stability}

Suppose that $A=\Q_p$ and $\rho:\Gal_K\rightarrow \GL(V)$ is a de Rham representation of $\Gal_K$.  Then there is a finite extension $L/K$ such that $\rho|_{\Gal_L}$ is a semi-stable representation.  This is known as the $p$-adic local monodromy theorem, and it follows from a theorem of Berger, combined with Crew's conjecture, which was proved separately by Andr\'e~\cite{andre}, Mebkhout~\cite{mebkhout}, and Kedlaya~\cite{kedlaya-monodromy}.  Berger then associated to any de Rham $p$-adic Galois representation a $p$-adic differential equation, and characterized semi-stability of the representation in terms of unipotence of the associated differential equation~\cite{berger}.  Crew's conjecture states that any $p$-adic differential equation becomes semi-stable over a finite extension.

Neither Berger's construction nor Crew's conjecture work naively when $A$ is an affinoid algebra, so we cannot proceed directly.  However, both pieces are known when the coefficients are a general complete discretely valued field with perfect residue field of characteristic $p>0$.

The version of the $p$-adic monodromy theorem we need is the following:
\begin{thm}[{\cite[Corollaire 6.2.5]{bc}}]\label{monodromy}
Let $B$ be a complete discretely valued field with perfect residue field of characteristic $p>0$, and let $V$ be a $B$-representation of $\Gal_K$ of dimension $d$ which is de Rham.  Then there is a finite extension $L/K$ such that the $B\widehat\otimes \hat\Q_p^{\rm{nr}}$-module $((B\widehat\otimes\B_{\st})\otimes_BV)^{I_L}$ is free of rank $d$ and the map 
$$L\otimes_{L_0}((B\widehat\otimes\B_{\st})\otimes_BV)^{I_L}\rightarrow((B\widehat\otimes\B_{\dR})\otimes_BV)^{I_L}$$
is an isomorphism.
\end{thm}

We return to our general set-up.  Let $A$ be an $E$-affinoid algebra, and let $V$ be a finite free $A$-module equipped with a continuous $A$-linear action of $\Gal_K$.  We will embed $A$ isometrically into a finite product $B=\prod_iB_i$ of artin local rings $B_i$ and apply Theorem~\ref{monodromy} to $V\otimes_AB$.  

However, we need to be able to compare the semi-stability of $V$ (an $A$-linear representation of $\Gal_K$) and the semi-stability of $V_B=\prod_{i=1}^rV_{B_i}$ (as a $B$-linear representation of $\Gal_K$).

Recall that we have defined $\B_{\st}^+$ to be $\B_{\max}^+[\log([\overline\pi])]$, rather than the usual semi-stable period ring.  We further define $\B_{\st}^{+,h}:=\oplus_{i=0}^h\B_{\max}^+\log([\overline\pi])^i$, so that $\B_{\st}^{+,h}$ is the kernel of $N^{h+1}$ on $\B_{\st}^+$ (here $N$ is the monodromy operator).

Further, by Remark~\ref{bdr-frechet} that there is an isomorphism of $K$-Fr\'echet spaces $\B_{\dR}^+\xrightarrow{\sim}\C_K[\![T]\!]$ which defines compatible isomorphisms 
$$\B_{\max}^+\xrightarrow{\sim}\C_K\langle T\rangle\text{ and }\B_{\st}^+\xrightarrow{\sim}\C_K\langle T\rangle[\log(1+T)]$$ 
(as well as $\B_{\st}^{+,h}\xrightarrow{\sim}\oplus_{i=0}^h\C_K\langle T\rangle\log(1+T)^i$).

\begin{prop}\label{bdr-bst}
Let $A$ be an $E$-affinoid algebra, and let $x:A\rightarrow B$ be a closed embedding of Banach algebras.  Then if $a\in A\widehat\otimes\B_{\dR}^+$ and $x(a)\in B\widehat\otimes(L\otimes_{L_0}\B_{\st}^{+,h})$, $a$ is actually in $A\widehat\otimes(L\otimes_{L_0}\B_{\st}^{+,h})$.
\end{prop}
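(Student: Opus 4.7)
The plan is to work concretely via the Fr\'echet realization from Remark~\ref{bdr-frechet}: under $\B_{\dR}^+\cong\C_K[\![T]\!]$ with $\B_{\max}^+\cong\C_K\langle T\rangle$, the submodule $W:=L\otimes_{L_0}\B_{\st}^{+,h}$ is a free $\B_{\max}^+$-module of rank $r=(h+1)[L:L_0]$ with basis $\{v_j:=\lambda_\ell u^i\}$, where $\{\lambda_\ell\}$ is an $L_0$-basis of $L$, $0\le i\le h$, and $u=\log([\overline\pi])$ has $T$-adic valuation exactly $1$. I would first dispose of the base case $L=L_0$, $h=0$, namely the assertion: if $c\in A\widehat\otimes\B_{\dR}^+$ and $x(c)\in B\widehat\otimes\B_{\max}^+$, then $c\in A\widehat\otimes\B_{\max}^+$. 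Writing $c=\sum_{n\ge 0}c_nT^n$ with $c_n\in A\widehat\otimes\C_K$, the hypothesis reads $\|x(c_n)\|_{B\widehat\otimes\C_K}\to 0$. Since $\C_K$ is potentially orthonormalizable over $\Q_p$ (as $\Q_p$ is discretely valued), the closed embedding $A\hookrightarrow B$ of $\Q_p$-Banach algebras induces a closed embedding $A\widehat\otimes\C_K\hookrightarrow B\widehat\otimes\C_K$, and the open mapping theorem makes the two Banach norms on this closed subspace equivalent; hence $\|c_n\|_{A\widehat\otimes\C_K}\to 0$ and $c\in A\widehat\otimes\B_{\max}^+$.

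For the general case, transcendence of $u$ over $\Frac(L\otimes_{L_0}\B_{\max}^+)$ together with potential orthonormalizability makes the expansion $x(a)=\sum_j\beta_jv_j$ with $\beta_j\in B\widehat\otimes\B_{\max}^+$ unique, so any candidate decomposition $a=\sum_j\alpha_jv_j$ with $\alpha_j\in A\widehat\otimes\B_{\max}^+$ must satisfy $x(\alpha_j)=\beta_j$. It therefore suffices to show each $\beta_j$, viewed inside $B\widehat\otimes\B_{\dR}^+$, comes from $A\widehat\otimes\B_{\dR}^+$: the base case will then upgrade $\beta_j$ to $A\widehat\otimes\B_{\max}^+$, and injectivity of $A\widehat\otimes W\hookrightarrow A\widehat\otimes\B_{\dR}^+$ (itself a consequence of potential orthonormalizability, since $W$ is a Banach $\Q_p$-subspace of the Fr\'echet space $\B_{\dR}^+$) together with uniqueness yields $a=\sum_j\beta_jv_j\in A\widehat\otimes W$. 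To extract $\beta_j$, I would exploit the distinct $T$-adic leading orders of the $v_j$ (since $\lambda_\ell u^i$ has $T$-order exactly $i$): reducing modulo $T^N$ for $N>h$, the expansion becomes a triangular $\C_K$-linear system over $\C_K[T]/T^N$, solvable by descending induction on $i$, and at each truncation the closed embedding $A\hookrightarrow B$ keeps $\beta_j\bmod T^N$ in the image of $A\widehat\otimes(\B_{\dR}^+/T^N)$. Passing to the inverse limit using $T$-adic completeness of $\B_{\dR}^+$, and using the compatibility of the truncated extractions (guaranteed by the uniqueness of the $\beta_j$-expansion modulo each $T^N$), realizes $\beta_j$ in $A\widehat\otimes\B_{\dR}^+$.

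The main obstacle is this inverse-limit step: solving the triangular system modulo each $T^N$ is straightforward finite-dimensional linear algebra, but one must verify that the truncated pullbacks from $B$ to $A$ assemble into a genuine element of $A\widehat\otimes\B_{\dR}^+$ rather than only of $B\widehat\otimes\B_{\dR}^+$, which requires careful bookkeeping of how the norms on the finite-dimensional $A\otimes(\B_{\dR}^+/T^N)$ are inherited from $A\widehat\otimes\B_{\dR}^+$. Once this coherence is established, the base case finishes the proof.
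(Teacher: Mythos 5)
Your base case (the assertion that $c\in A\widehat\otimes\B_{\dR}^+$ with $x(c)\in B\widehat\otimes\B_{\max}^+$ forces $c\in A\widehat\otimes\B_{\max}^+$) is correct: the open mapping theorem on the closed embedding $A\widehat\otimes\C_K\hookrightarrow B\widehat\otimes\C_K$ gives equivalence of norms, and $\|c_n\|_B\to 0$ forces $\|c_n\|_A\to 0$. This is a valid argument and in fact a shade cleaner than the paper's treatment of the $h=0$ case (the paper runs its single inductive mechanism even there).

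The inductive step, however, has a genuine gap, and what you flagged as the ``main obstacle'' is not bookkeeping --- it is where the argument fails. The $T$-adic truncation $x(a)\equiv\sum_j\beta_jv_j\pmod{T^N}$ does \emph{not} determine $\beta_j\bmod T^N$. To see this concretely, use the Colmez presentation from Remark~\ref{bdr-frechet}: under $\B_{\dR}^+\cong\C_K[\![T]\!]$, the subspace $L\otimes_{L_0}\B_{\st}^{+,h}$ becomes $\oplus_{i=0}^h\C_K\langle T\rangle\log(1+T)^i$, a free $\C_K\langle T\rangle$-module of rank $h+1$, not $(h+1)[L:L_0]$; the $[L:L_0]$ is absorbed into the $\C_K$-coefficients. (Your basis $\{\lambda_\ell u^i\}$ only muddies the picture, since for fixed $i$ all the $\lambda_\ell u^i$ share the same $T$-order and cannot be separated by $T$-order considerations at all.) Now write $v_i=\log(1+T)^i=T^i(1+\cdots)$ and expand $\beta_i=\sum_m\beta_{i,m}T^m$. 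The coefficient of $T^0$ in $x(a)$ is $\beta_{0,0}$ --- fine. But the coefficient of $T^1$ is $\beta_{0,1}+\beta_{1,0}$: two unknowns, one equation. In general the coefficient of $T^m$ involves $\beta_{i,m'}$ for every $i\le m$ and various $m'$, so every truncation beyond $T^0$ is underdetermined. The triangular pattern of leading orders means the full power series $\{\beta_j\}$ determine $x(a)$ injectively, but it does \emph{not} let you invert the relation finite-level-by-finite-level. Your ``descending induction on $i$'' would require, at the step extracting $\beta_i$, knowledge of \emph{all} coefficients of $\beta_j$ for $j<i$, which is precisely what you do not yet have; and the ``uniqueness of the $\beta_j$-expansion modulo each $T^N$'' you invoke to assemble the inverse limit is false.

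The paper circumvents this with a different device. It evaluates the identity $g(X)=\sum_if^{(i)}(X)\log(1+X)^i$ at $X=\zeta_{p^n}-1$, where $\log(1+X)$ vanishes, so that $g(\zeta_{p^n}-1)=f^{(0)}(\zeta_{p^n}-1)$ lands in $A\otimes_{\Q_p}\Q_p(\zeta_{p^n})$ for all $n$. Lemma~\ref{zeta-converge} (the statement that a closed embedding $\Q_p\langle X\rangle\hookrightarrow\prod'_n\Q_p(\zeta_{p^n})$ admits a continuous section, stable under tensoring with $B/A$) then upgrades $f^{(0)}$ from $B\langle X\rangle$ to $A\langle X\rangle$. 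Dividing $g-f^{(0)}$ by $\log(1+X)$, which is legal since $X/\log(1+X)$ is a unit in $\Q_p[\![X]\!]$ and $(g-f^{(0)})(0)=0$, stays inside $A[\![X]\!]$ and drops $h$ by one. That evaluation trick --- or some functional-analytic substitute such as a continuous $\Q_p$-linear retraction onto the image of $\oplus_i\C_K\langle T\rangle\log(1+T)^i$ in $\C_K[\![T]\!]$, which is not obviously available --- is the missing ingredient in your argument.
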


This follows as in~\cite[Lemme 6.3.1]{bc}.

Combined with the $p$-adic local monodromy theorem, we have the following:

\begin{thm}\label{pst}
Let $A$ be an $E$-affinoid algebra, $V$ an $A$-linear representation of $\Gal_K$ on a finite free $A$-module of rank $d$, and $[a,b]$ an interval such that $V_x$ is a de Rham representation of $\Gal_K$ with Hodge--Tate weights in $[a,b]$ for every $E$-finite artinian point $x$ of $A$. Then $V$ is potentially semi-stable: there is a finite Galois extension $L/K$ such that the $A\otimes_{\Q_p} L_0$-module $\D_{\st}^L(V)$ is locally free of rank $d$ and 
$$(A\otimes_{\Q_p} L)\otimes_{A\otimes_{\Q_p} L_0}\D_{\st}^L(V)=\D_{\dR}^L(V)$$
In addition, for any homomorphism of $E$-affinoid algebras $A\rightarrow A'$, the natural map $A'\otimes_A\D_{\st}^L(V)\rightarrow \D_{\st}^L(V\otimes_AA')$ is an isomorphism.
\end{thm}
When $A$ is reduced, this is~\cite[Th\'eor\`eme 6.3.2]{bc}.
\begin{proof}
We first apply Lemma~\ref{embed} to find a closed embedding $A\rightarrow \prod_i B_i$ into a finite product of artin rings.  Here $B_i$ is an $E_i$-finite algebra, where $E_i$ is a complete discretely valued field with perfect residue field of characteristic $p$ (and $B_i$ is topologized as a finite-dimensional $E_i$-vector space).

Now we can apply the $p$-adic monodromy theorem to the representations $V_{B_i}$, because each of them can be viewed as a finite-dimensional $E_i$-representation.  In other words, there is a finite extension $L/K$ such that for each $i$ the natural map
$$L\otimes_{L_0}((B_i\widehat\otimes\B_{\st})\otimes_{B_i}V_{B_i})^{I_L}\rightarrow ((B_i\widehat\otimes\B_{\dR})\otimes_{B_i}V_{R_i})^{I_L}$$
is an isomorphism.  Theorem~\ref{monodromy} only produces an isomorphism of $E_i\otimes_{\Q_p}L$-modules, but the natural map respects the $B_i$-linear structure on each side, and there is no kernel or cokernel (because it is an isomorphism of $E_i\otimes_{\Q_p}L$-modules), so it is actually an isomorphism of underlying $B_i\otimes_{\Q_p}L$-modules.

But we know by Theorem~\ref{pointwise-adm} applied to $\B_\ast=\B_{\dR}$ that $\D_{\dR}^L(V)$ is a locally free $A\otimes_{\Q_p} L$-module of rank $d$.  We have an injective map 
$$\D_{\dR}^L(V)\rightarrow ((B\widehat\otimes\B_{\dR})\otimes_AV)^{I_L}$$ 
with $B=\prod B_i$, and for $y\in \D_{\dR}^L(V)$, we can write $y=\sum_{j=1}^dy_j\otimes v_j$ with $y_j\in A\widehat\otimes\B_{\dR}$ and $\{v_j\}$ an $A$-basis for $V$.  By the isomorphism above, each of the $y_j$ lies in $B\widehat\otimes(L\otimes_{L_0}\B_{\st})$.  Then by Proposition~\ref{bdr-bst}, each lies in $A\widehat\otimes(L\otimes_{L_0}\B_{\st})$, so
$$\D_{\dR}^L(V)=(A\widehat\otimes(L\otimes_{L_0}\B_{\st})\otimes_AV)^{\Gal_L}=L\otimes_{L_0}\D_{\st}^L(V)$$
and hence $\D_{\st}^L(V)$ is locally free of rank $d$ over $A\otimes_{\Q_p} L_0$.  

For the last part, consider the natural map $A'\otimes_A\D_{\st}^L(V)\rightarrow \D_{\st}^L(V\otimes_A A')$.  We can extend scalars from $A'\otimes_{\Q_p}L_0$ to $A'\otimes_{\Q_p} L$ to get a map 
$$\begin{CD}
((A'\otimes_{\Q_p} L)\otimes_{A'\otimes_{\Q_p} L_0}\left(A'\otimes_A\D_{\st}^L(V)\right) 
@>>>
(A'\otimes_{\Q_p} L)\otimes_{A'\otimes_{\Q_p}L_0}\left(\D_{\st}^L(V\otimes_AA')\right)	\\
@|		@VVV	\\
A'\otimes_A\D_{\dR}^L(V)	@.	\D_{\dR}^L(V\otimes_AA')
\end{CD}$$
Since $A'\otimes_{\Q_p}L_0\rightarrow A'\otimes_{\Q_p}L$ is a faithfully flat extension and 
$$A'\otimes_A\D_{\dR}^L(V)\xrightarrow{\sim} \D_{\dR}^L(V\otimes_AA')$$
is an isomorphism, our map must have been an isomorphism to begin with.
\end{proof}

\subsubsection{Semi-stability and crystallinity}

We will use uniform potential semi-stability of families of de Rham representations to define quotients $A_{\dR}^{[a,b]}\twoheadrightarrow A_{\st}^{[a,b]}$ and $A_{\dR}^{[a,b]}\twoheadrightarrow A_{\cris}^{[a,b]}$ cutting out the semi-stable and crystalline loci in $\Sp(A)$, respectively.

Note that if $V$ becomes semi-stable over $L$, then we get a representation $\rho$ of the inertia group $I_{L/K}\subset \Gal_{L/K}$ on $\D_{\st}^L(V)$, a locally free $A\otimes_{\Q_p}L_0$-module of rank $d$.  If $A=\Q_p$, $\rho$ is trivial precisely when $V$ is semi-stable as a representation of $\Gal_K$.  This is because $L_0K/K$ is an unramified extension, and $\D_{\st}^{L_0K}(V)=L_0\otimes_{K_0}\D_{\st}^K(V)$.  Thus, although the Galois group $\Gal_{L/K}$ acts semi-linearly on $\D_{\st}^L(V)$ over $L_0$, only the $L_0$-linear action of $I_{L/K}$ matters for checking semi-stability.

\begin{lemma}\label{trace-artin}
Let $B$ be an $E$-finite artin local ring, with maximal ideal $\mathfrak{m}$ and residue field $k$, so that we may view $B$ as a $k$-algebra.  Let $V$ be a free $B$-module of rank $d$, equipped with a $B$-linear action of a finite group $G$.  Then $V$ is isomorphic as a representation of $G$ to $(V\otimes_Bk)\otimes_k B$.  In particular, $\Tr V=\Tr (V\otimes_Bk)$.
\end{lemma}
\begin{proof}
This follows from the fact that $\H^i(G,\mathfrak{m}\otimes_k\ad(V\otimes_Ek))=0$ for $i>0$, since $G$ is finite and the coefficients are a characteristic $0$ vector space.
\end{proof}

\begin{prop}\label{trace-constant}
Let $A$ be an $E$-affinoid algebra such that $\Sp(A)$ is connected.  Let $V$ be a free $A$-module of rank $d$, and let $G$ be a finite group.  Let $\rho:G\rightarrow \GL(V)$ be a representation of $G$ on $V$.  Then for any closed points $x_1$ and $x_2$ of $X$, there are finite \'etale extensions $i_1:k(x_1)\hookrightarrow E'$, $i_2:k(x_2)\hookrightarrow E'$ such that $\rho_{x_1}\otimes_{i_1}E'$ and $\rho_{x_2}\otimes_{i_2}E'$ are isomorphic as $E'$-valued representations of $G$.
\end{prop}
\begin{proof}
Fix an algebraic closure $\overline{E}/E$.  There are only finitely many isomorphism classes of $d$-dimensional representations of $G$ over $\overline{E}$, call them $\rho_1,\ldots,\rho_k$, so there is some subfield $F\subset \overline{\Q_p}$, finite over $\Q_p$, such that they are all defined over $F$.

Now consider $A':=A\otimes_{E}F$ and the representation $\rho_{A'}:G\rightarrow \GL(V_{A'})$.  The conditions $\Tr(\rho)=\Tr(\rho_i)$ each define pairwise disjoint closed subspaces of $X_F$ whose set-theoretic union is all of $X_F$.  Therefore they are all open, as well, and the function $\Tr(\rho)$ is constant on connected components of $X_F$.

Now let $X'$ be any connected component of $X_F$.  It is finite \'etale over $X$, and in particular, the map $X'\rightarrow X$ is surjective.  This gives the desired result.
\end{proof}

In the situation of interest to us, Lemma~\ref{trace-artin} and Proposition~\ref{trace-constant} have the following consequence:
\begin{thm}\label{pst-galois-type}
Let $V$ be a de Rham representation of $\Gal_K$ on a projective $A$-module of rank $d$, and let $L/K$ be the finite Galois extension provided by Theorem~\ref{pst}.  Let $\tau:I_{L/K}\rightarrow \GL_d(\overline{E})$ be a representation of the inertia group of $L/K$.  There is a quotient $A\twoheadrightarrow A_{\dR,\tau}$ such that an $E$-finite artinian point $x:A\rightarrow B$ factors through $A_{\dR,\tau}$ if and only if the representation of $I_{L/K}$ on $\D_{\st}^L(V_x)$ is equivalent to $\tau$.  In particular, there is a quotient $A\twoheadrightarrow A_{\st}$ corresponding to $\tau$ being the trivial representation.
\end{thm}
\begin{remark}
Since there are only finitely many isomorphism classes of representations $\tau:I_{L/K}\rightarrow \GL_d(\overline{E})$, $\Sp(A_{\dR,\tau})$ is a union of connected components of $\Sp(A)$.
\end{remark}

\begin{cor}
Let $A$ and $V$ be as above.  If for every $E$-finite artinian point $x:A\rightarrow B$, $V_x$ is semi-stable with Hodge--Tate weights in a fixed interval $[a,b]$, then $\D_{\st}^K(V)$ is a locally free $A\otimes_{\Q_p}K_0$-module of rank $d$.
\end{cor}
\begin{proof}
The assumptions imply that for every $E$-finite artinian point $x:A\rightarrow B$, $V_x$ is de Rham with Hodge--Tate weights in the interval $[a,b]$.  Then by Theorem~\ref{pointwise-adm} applied to $\B_\ast=\B_{\dR}$, $V$ is a de Rham representation.  

Let $L$ be the finite Galois extension provided by Theorem~\ref{pst}.  Then the assumption on artinian points implies that $A\twoheadrightarrow A_{\st}$ is the identity map.  For every closed point $x\in X$ and every $n\geq 0$, 
\[	\D_{\st}^L(V\otimes_AA/\mathfrak{m}_x^n)=L_0\otimes_{K_0}\D_{\st}^K(V\otimes_AA/\mathfrak{m}_x^n)	\]
and in particular, every element $v\in \D_{\st}^L(V)$ is fixed by $g\in I_{L/K}$ modulo all $\mathfrak{m}_x^n$.  But then $v$ is actually fixed by all $g\in I_{L/K}$, so $I_{L/K}$ acts trivially on $\D_{\st}^L(V)$.  We can then write $\D_{\st}^L(V)=(A\otimes_{\Q_p}L_0)\otimes_{A\otimes_{\Q_p}K_0}\D_{\st}^K(V)$, so $\D_{\st}^K(V)$ is a locally free $A\otimes_{\Q_p}K_0$-module of rank $d$.
\end{proof}

\begin{cor}\label{b-log-isom}
Let $A$ and $V$ be as above.  If for every $E$-finite artinian point $x:A\rightarrow B$, $V_x$ is semi-stable with Hodge--Tate weights in a fixed interval $[a,b]$, then the natural map $(A\widehat\otimes {\B}_{\log}^\dagger)[1/t]\otimes_{A\otimes K_0}\D_{\st}(V)\rightarrow (A\widehat\otimes\D_{\log}^\dagger)(V)[1/t]$ is an isomorphism.
\end{cor}
\begin{proof}
Since there is some $s_n$ such that $\D_{\st}(V)=\left(\D_{\log,K}^{\dagger,s_n}(V)[1/t]\right)^{\Gamma}$, we are reduced to showing that 
\[	(A\widehat\otimes {\B}_{\log}^{\dagger,s_n})[1/t]\otimes_{A\otimes K_0}\D_{\st}(V)\rightarrow \D_{\log,K}^{\dagger,s_n}(V)[1/t]	\]
is an isomorphism.

The left- and right-sides are coherent modules over $A\widehat\otimes\B_{\log,K}^{\dagger,s_n}[1/t]=A\widehat\otimes\B_{\rig,K}^{\dagger,s_n}[\log([\overline\pi]),1/t]$, and the homomorphism respects the grading given by powers of $\log([\overline\pi])$.  We are thus further reduced to considering homomorphisms of coherent modules over $A\widehat\otimes\B_{\rig,K}^{\dagger,s_n}[1/t]$, which is the ring of global sections of a quasi-Stein space.  The quasi-Stein space in question is the product of $\Sp(A)$ with a half-open annulus (associated to $\B_{\rig,K}^{\dagger,s_n}$) minus the divisor of $t$, which we denote $Y$.  

Since every point of $\Sp(A)\times Y$ sits over a point of $\Sp(A)$, to prove surjectivity of the desired map, it suffices to check on artinian thickenings of closed points of $\Sp(A)$.  But this holds by~\cite[Proposition 3.7]{berger}.  Furthermore, since $\D_{\log,K}^{\dagger,s_n}(V)[1/t]$ is finite projective, we may check injectivity on points, as well, and this again follows from~\cite[Proposition 3.7]{berger}.
\end{proof}

\begin{cor}\label{tilde-b-log-isom}
Let $A$ and $V$ be as above.  If for every $E$-finite artinian point $x:A\rightarrow B$, $V_x$ is semi-stable with Hodge--Tate weights in a fixed interval $[a,b]$, then the natural map $(A\widehat\otimes \widetilde{\B}_{\log}^\dagger)[1/t]\otimes_{A\otimes K_0}\D_{\st}(V)\rightarrow (A\widehat\otimes\widetilde{\B}_{\log}^\dagger)[1/t]\otimes_AV$ is an isomorphism.
\end{cor}
\begin{proof}
Since $(A\widehat\otimes\widetilde{\B}_{\log}^\dagger)[1/t]\otimes_{A\widehat\otimes\B_{\rig,K}^\dagger}\D_{\rig,K}^\dagger(V)\cong (A\widehat\otimes\widetilde{\B}_{\log}^\dagger)[1/t]\otimes_AV$, it suffices to show that the natural map
\[	(A\widehat\otimes {\B}_{\log}^\dagger)[1/t]\otimes_{A\otimes K_0}\D_{\st}(V)\rightarrow \D_{\log,K}^\dagger(V)[1/t]	\]
is an isomorphism, and follows from Corollary~\ref{b-log-isom}
\end{proof}

\begin{cor}
Let $A$ and $V$ be as above.  If for every $E$-finite artinian point $x:A\rightarrow B$, $V_x$ is semi-stable with Hodge--Tate weights in a fixed interval $[a,b]$, then the natural map $(A\widehat\otimes\B_{\st})\otimes_{A\otimes K_0}\D_{\st}(V)\rightarrow (A\widehat\otimes\B_{\st})\otimes_AV$ is an isomorphism.
\end{cor}
\begin{proof}
We first show that the natural morphism
\[	(A\widehat\otimes\B_{\log}^+)[1/t]\otimes_{A\otimes K_0}\D_{\st}(V)\rightarrow (A\widehat\otimes\B_{\log}^+)[1/t]\otimes_AV	\]
is an isomorphism.
Injectivity follows from Corollary~\ref{tilde-b-log-isom}, since $A\widehat\otimes\B_{\log}^+\rightarrow A\widehat\otimes\widetilde{\B}_{\log}^\dagger$ is injective and $\D_{\st}(V)$ is flat.  For surjectivity, we choose bases $\{v_i\}$ and $\{w_j\}$ of $V$ and $\D_{\st}(V)$, respectively (since $\D_{\st}(V)$ is $A$-locally free, we may assume that both $V$ and $\D_{\st}(V)$ are free).  Let $M\in\Mat_{d\times d}(A\widehat\otimes\widetilde{\B}_{\log}^\dagger)[1/t])$ be the matrix whose $j$th column is the coordinates of $w_j$ with respect to $\{v_i\}$, and let $P\in \GL(A\otimes_{\Q_p}K_0)$ be the matrix of Frobenius on $\D_{\st}(V)$ with respect to $\{w_j\}$.  As in the proof of Proposition~\ref{dlog-dst-field}, $M$ and $P$ satisfy the relation $MP=\varphi(M)$, so by Corollary~\ref{gl-frob-reg}, $M\in\GL_d(\A\widehat\otimes\widetilde\B_{\log}^+[1/t])$.  

Finally, since $\B_{\log}^+[1/t]\subset\B_{\st}$, we may extend scalars to see that that
\[	(A\widehat\otimes\B_{\st})\otimes_{A\otimes K_0}\D_{\st}(V)\rightarrow (A\widehat\otimes\B_{\st})\otimes_AV	\]
is an isomorphism.
\end{proof}

\begin{remark}
A similar isomorphism is proved in~\cite{hartl-hellmann}, using a different sheaf of semi-stable period rings.
\end{remark}

\begin{cor}
Suppose $V$ is semi-stable with Hodge--Tate weights in the interval $[a,b]$, and let $f:A\rightarrow A'$ be a homomorphism of $E$-affinoid algebras.  Then $V\otimes_AA'$ is semi-stable with Hodge--Tate weights in the interval $[a,b]$.
\end{cor}

To handle crystalline representations, we note that $\D_{\cris}(V)=\D_{\st}(V)^{N=0}$.  Then the results below follow easily.

\begin{thm}
Let $A$ be an $E$-affinoid algebra and let $V$ be a free $A$-module of rank $d$ equipped with a continuous $A$-linear action of $\Gal_K$.  Let $[a,b]$ be a finite interval.  Then there is a quotient $A\twoheadrightarrow A_{\cris}^{[a,b]}$ such that an $E$-finite artin point $x:A\rightarrow B$ factors through $A_{\cris}^{[a,b]}$ if and only if $V_x$ is crystalline, with Hodge--Tate weights in the interval $[a,b]$.
\end{thm}

\begin{thm}
Let $A$ be an $E$-affinoid algebra and let $V$ be a free $A$-module of rank $d$ equipped with a continuous $A$-linear action of $\Gal_K$.  Suppose that for every $E$-finite artinian point $x:A\rightarrow B$ the representation $V_x$ is crystalline with Hodge--Tate weights in an interval $[a,b]$.  Then $\D_{\cris}(V)$ is a locally free $A\otimes_{\Q_p}K_0$-module of rank $d$, the formation of $\D_{\cris}(V)$ commutes with base change on $A$, and the natural homomorphism
\[	(A\widehat\otimes\B_{\max})\otimes_{A\otimes K_0}\D_{\st}(V)\rightarrow (A\widehat\otimes\B_{\max})\otimes_AV	\]
is an isomorphism.
\end{thm}

\appendix

\section{Rings of $p$-adic Hodge theory}\label{rings}

Most of the definitions and properties of the rings we use are given in~\cite{berger}, and we refer to it freely.  However, we describe the construction and topologies of the period rings $\B_{\HT}$, $\B_{\dR}$, $\B_{\st}$, and $\B_{\max}$ with some care so that we can define sheaves of period rings in~\ref{sheafy-rings}.

\subsection{Period rings}

Let $V$ be a finite-dimensional $\Q_p$-vector space equipped with a continuous $\Q_p$-linear action of $\Gal_K$, for some finite extension $K/\Q_p$.  Then for any period ring $\B_\ast$ listed above, we define 
$\D_{\B_\ast}^K(V):=\left(\B_\ast\otimes_{\Q_p}V\right)^{\Gal_K}$.  
For every choice of $\B_\ast$, $\B_\ast^{\Gal_K}$ is a field; we say that $V$ is \emph{$\B_\ast$-admissible} if $\dim_{\B_\ast^{\Gal_K}}= \dim_{\Q_p}V$.

\begin{remark}
There is a general formalism of period rings developed in~\cite[\textsection 1]{fontaine2}.  However, because of issues related to the topologies on various rings, it is not clear to us that this formalism generalizes in any meaningful way to the study of arithmetic families of Galois representations.  Thus, we content ourselves with giving a list of period rings of interest to us.
\end{remark}

\subsubsection*{$\B_{\HT}$}

We define $\B_{\HT}$ to be the polynomial ring $\C_p[t,t^{-1}]$.  This ring is graded by powers of $t$.  For any $K/\Q_p$, the Galois group $\Gal_K$ acts on $\B_{\HT}$ via the natural action on $\C_p$ and via $g\cdot t=\chi(g)t$.  


\subsubsection*{$\B_{\dR}$}

The construction of $\B_{\dR}$ is more complicated.  Recall the existence of a Galois-equivariant map $\theta:\widetilde\B^+\rightarrow \C_K$ characterized by $\theta(\sum[c_n]p^n) = \sum c_n^{(0)}p^n$.  It is continuous with respect to the weak topology on $\widetilde\B^+$ and the $p$-adic topology on $\C_K$, and its kernel is the principal ideal generated by $[\widetilde p]-p$.  Then $\B_{\dR}^+$ is by definition $\varprojlim \widetilde\B^+/\ker(\theta)^h$.  

We are grateful to Laurent Berger for providing the following definition of the topology on $\B_{\dR}^+$.  Since $\theta$ is Galois-equivariant, the Galois action on $\widetilde\B^+$ induces a Galois action on $\B_{\dR}^+$.  We want to topologize the quotients $\widetilde\B_h:=\widetilde\B^+/\ker(\theta)^h$ so that this action is continuous.  We could make $\widetilde\B_h$ into a $p$-adic Banach space with unit ball $\A_h:=\widetilde\A^+/\ker(\theta)^h\cap\widetilde\A^+$, but then the action of Galois would not be obviously continuous, so instead we try to use the weak topology, i.e.,  the topology on the image of $\widetilde\A^+$ generated by the images of the $U_{k,n}$.  For $n\geq h$, though,
$$[\widetilde p]^n = \left(([\widetilde p]-p)+p\right)^n = \left(([\widetilde p]-p)+p\right)^h\left(([\widetilde p]-p)+p\right)^{n-h}\in p\A_h$$
In particular, this shows that $p\widetilde\A_h$ is an open ideal.

A priori the $p$-adic topology on $\widetilde\A_h$ has more open sets than the weak topology does.  But we have just shown that every open set of the $p$-adic topology is actually open in the weak topology, so the two topologies must be the same.  In particular, the weak topology on $\widetilde\A_h$ is Hausdorff and complete.

The upshot is that $\widetilde\B_h=\cup_{i\geq0}p^{-i}\widetilde\A_h$ has a natural structure of a $p$-adic Banach space with unit ball $\widetilde\A_h$, so $\B_{\dR}^+$ has a natural structure of a $p$-adic Fr\'echet space.

\subsubsection*{$\B_{\max}$}

We will use the ring $\B_{\max}$ to study crystalline representations, rather than $\B_{\cris}$, because the topology on $\B_{\max}$ is much nicer.  



\begin{remark}
There is a closely related ring $\widetilde\B_{\rig}^+:=\cap_n\varphi^n(\B_{\max}^+)=\cap_n\varphi^n(\B_{\cris})$; $\B_{\max}$ and $\widetilde\B_{\rig}^+[1/t]$ define the same functor from the category of $\Q_p$-representations to the category of filtered $K_0$-isocrystals.  However, we prefer to work with $\B_{\max}^+$ because it is a Banach space, while $\widetilde\B_{\rig}^+$ is a Fr\'echet space.
\end{remark}

\subsubsection*{$\B_{\st}$}

After choosing a value for $\log(p)$, the power series defining $\log(\overline\pi^{(0)})+\log([\overline\pi]/\overline\pi^{(0)})$ converges in $\B_{\dR}^+$.
We let $\B_{\st}:=\B_{\max}[\log[\overline\pi]]$.  We let $\log(p)=0$.


\begin{remark}
This is not the standard definition of $\B_{\st}$, but it is the usage of~\cite{berger} and~\cite{bc}.  It defines the same functor on representations of $\Gal_K$ as the usual $\B_{\st}$, because $\B_{\cris}$ and $\B_{\max}$ define the same functor.  If we define $\widetilde{\B}_{\log}:=\widetilde{\B}_{\rig}[\log[\overline\pi]]$, then $\widetilde{\B}_{\log}$ defines the same functor, as well.
\end{remark}

\begin{remark}
We have defined $\B_{\st}$ as a subring of $\B_{\dR}$ in terms of a choice of a branch of the $p$-adic logarithm.  A different choice would lead to a different subring.  It is also possible to define $\B_{\st}$ intrinsically as an abstract ring and use the choice of a $p$-adic logarithm to define an embedding of $\B_{\st}$ in $\B_{\dR}$.  This approach makes clear that $\B_{\st}$-admissibility of a representation does not depend on any choices.  We have not taken this approach here; for details about the construction of the usual $\B_{\st}$ as an extension of $\B_{\cris}$, see~\cite[\textsection 9.2]{brinon-conrad}.
\end{remark}

\begin{remark}\label{bdr-frechet}
Let $L/K$ be a finite extension.  Then we obtain a map 
$$L\otimes_{L_0}\B_{\max}^+\rightarrow \B_{\dR}^+$$ 
by extending the inclusion $\B_{\max}^+\hookrightarrow \B_{\dR}^+$ by $L$-linearity.  Then Colmez has shown in~\cite[Proposition 7.14]{colmez2} that this map is an injection.  In the course of the proof, he showed that it is possible to write down an isomorphism of $K$-Fr\'echet spaces $\B_{\dR}^+\cong \C_K[\![t]\!]$ so that $L\otimes_{L_0}\B_{\max}^+$ is carried isomorphically to the Banach space $\C_K\langle T\rangle$ and $L\otimes_{L_0}\B_{\st}^+$ is carried isomorphically to $\C_K\langle T\rangle[\log(1+T)]$.    Note that these are isomorphisms as vector spaces, not as rings!
\end{remark}

\subsection{Sheaves of period rings}\label{sheafy-rings}

As we wish to study $p$-adic families of Galois representations, we need to define versions of these rings with ``coefficients'' in Banach algebras, rather than simply $\Q_p$.

Let $(X,\mathscr{O}_X)$ be a quasi-compact quasi-separated rigid space over a finite extension $E/\Q_p$.

\begin{definition}
Let $B$ be a $\Q_p$-Banach algebra.  Then we define the presheaf $\mathscr{B}_X$ on $X$ by setting
$$\mathscr{B}_X(U):=\mathscr{O}_X(U)\widehat\otimes_{\Q_p}B$$
where $U$ is an admissible affinoid open of $X$.
\end{definition}

By~\cite[Lemma 3.3]{kl}, $\mathscr{B}_X$ is actually a sheaf on $U$ when $U$ is affinoid.  Therefore, it extends to a sheaf on $X$.  We wish to extend this to Fr\'echet algebras in the role of $B$.

We first record some basic functional analysis results, which will be useful for checking that exactness properties are preserved under completed tensor products.

Let $I$ be a set (not necessarily countable), and let $N$ be a Fr\'echet space equipped with a countable family of seminorms $\{q_j\}$ (in particular, $N$ could be a Banach space).  We define the space $c_I(N)$ to be the set of functions $f:I\rightarrow N$ such that $\lim_{i\in I} q_j(f(i))=0$ for each seminorm $q_j$.  That is, for each $j$ and each $\varepsilon>0$, the set $\{i\in I| q_j(f(i))>\varepsilon\}$ is finite.  We equip $c_I(N)$ with the seminorms $q_{j,\infty}$ defined by $q_{j,\infty}(f):=\sup_{i\in I}q_j(f(i))$, making $c_I(N)$ into a Fr\'echet space.  Following \cite{buzzard}, we say that a Banach module $N$ over a Banach algebra $A$ is \emph{potentially orthonormalizable} if there is some Banach norm on $N$ making it is isomorphic to $c_I(A)$ for some index set $I$.  For example, all Banach spaces over discretely valued fields are potentially orthonormalizable~\cite[Proposition 10.1]{schneider}.

\begin{lemma}
Let $k$ be a non-archimedean field, let $M$ be a potentially orthonormalizable $k$-Banach space, and let $N$ be a $k$-Fr\'echet space, with countable family of seminorms $\{q_j\}$.  Write $M\cong c_I(k)$.  Then the natural $\Q_p$-linear map $M\widehat\otimes_k N\rightarrow c_I(N)$ is an isomorphism, functorially in $N$.
\end{lemma}
\begin{proof}
The natural map $M\widehat\otimes N\rightarrow c_I(N)$ is induced by the bilinear map $M\times N\rightarrow c_I(N)$ sending $(f:I\rightarrow k,b)$ to $\sum_{i\in I}f(i)b$.  The sum converges because $q_j(f(i)b)\leq |f(i)|\cdot q_j(b)$ and $\lim_{i\in I}a_i=0$.  

To construct a map in the other direction, we observe that any element $f\in c_I(N)$ can be written as the limit of elements of the form $f|_S$, where $S\subset I$ is a finite subset.  More precisely, the set of finite subsets $S\subset I$ is a directed set, and $S\mapsto f|_S$ is a net converging to $f$.  For any finite set $S\subset I$, we write $\mathbf(1)_S\in c_I(k)$ for the characteristic function of $S$.  Now consider $\sum_{i\in S}\mathbf{1}_{\{i\}}\otimes f(i)\in c_I(k)\otimes_k N$.  We have 
\[	q_j\left(\sum_{i\in S}\mathbf{1}_{\{i\}}\otimes f(i)\right) \leq \max_{i\in S}q_j(f(i))	\]
so the net $S\mapsto\sum_{i\in S}\mathbf{1}_{\{i\}}\otimes f(i)$ converges in $c_I(k)\widehat\otimes N$.  The map $f\mapsto \lim_S\sum_{i\in S}\mathbf{1}_{\{i\}}\otimes f(i)$ provides an inverse to the map $M\widehat\otimes_k N\rightarrow c_I(N)$.
\end{proof}

\begin{cor}
Let $k$ and $M$ be as above, and let $N\rightarrow N'$ be a continuous injection of $k$-Fr\'echet spaces.  Then the natural map $M\widehat\otimes N\rightarrow M\widehat\otimes N'$ is injective.
\end{cor}

\begin{definition}
Let $B=\varprojlim_nB_n$ be a $\Q_p$-Fr\'echet algebra, where the $B_n$ are $\Q_p$-Banach algebras.  Then we define the presheaf $\mathscr{B}_X$ on $X$ by setting
$$\mathscr{B}_X(U):=\mathscr{O}_X(U)\widehat\otimes V = \varprojlim_n\mathscr{O}_X(U)\widehat\otimes_{\Q_p}B_n$$
when $U$ is an admissible affinoid open of $X$.
\end{definition}

\begin{lemma}
$\mathscr{B}_X$ is a sheaf on $X$.
\end{lemma}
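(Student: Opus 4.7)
The plan is to reduce the Fréchet case to the Banach case (already known by Kedlaya–Liu, Lemma 3.3 of \cite{kl}) by commuting an inverse limit past the Čech complex. Concretely, for any admissible affinoid open $U \subset X$ and any admissible covering $\{U_i\}_{i \in I}$ of $U$, I want to show exactness of
\begin{equation*}
0 \to \mathscr{B}_X(U) \to \prod_{i \in I} \mathscr{B}_X(U_i) \to \prod_{i,j \in I} \mathscr{B}_X(U_i \cap U_j).
\end{equation*}
By quasi-compactness of $U$, we may (and will) assume $I$ is finite, so all products involved are finite products.

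First I would observe that the definition $\mathscr{B}_X(V) := \varprojlim_n \mathscr{O}_X(V) \widehat\otimes_{\Q_p} B_n$ is genuinely functorial in the affinoid $V$: the transition maps in the Fréchet inverse system $B = \varprojlim B_n$ induce compatible transition maps on the Banach sheaves $\mathscr{O}_X \widehat\otimes_{\Q_p} B_n$ via functoriality of the projective completed tensor product, and restriction in $V$ commutes with these by the same functoriality. Hence $\mathscr{B}_X$ is a well-defined presheaf, and for fixed $n$ the Čech sequence
\begin{equation*}
0 \to \mathscr{O}_X(U)\widehat\otimes_{\Q_p} B_n \to \prod_{i \in I}\mathscr{O}_X(U_i)\widehat\otimes_{\Q_p} B_n \to \prod_{i,j \in I}\mathscr{O}_X(U_i \cap U_j)\widehat\otimes_{\Q_p} B_n
\end{equation*}
is exact by the Banach case.

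Next I would apply $\varprojlim_n$ to this system of exact sequences. The projective limit functor is left exact and commutes with kernels and with (finite) products, so the limit sequence
\begin{equation*}
0 \to \varprojlim_n \mathscr{O}_X(U)\widehat\otimes B_n \to \prod_{i}\varprojlim_n \mathscr{O}_X(U_i)\widehat\otimes B_n \to \prod_{i,j}\varprojlim_n \mathscr{O}_X(U_i \cap U_j)\widehat\otimes B_n
\end{equation*}
is still exact, which is exactly the required sheaf-axiom sequence for $\mathscr{B}_X$ on $U$. Applying this to every admissible affinoid $U \subset X$ gives the sheaf property on the $G$-topology of $X$.

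There is no real obstacle here; the only subtle point is that we must know the Banach case before taking the inverse limit, and that exactness is preserved only in the left-exact direction (we do not need surjectivity on the right, which is the part that could fail under inverse limits in general). Since the sheaf condition is just left exactness of the Čech sequence in degrees $0$ and $1$, this suffices and the lemma follows.
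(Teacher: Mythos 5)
Your proof is correct and follows essentially the same route as the paper's: exhibit the sheaf axiom for $\mathscr{B}_X$ as the inverse limit over $n$ of the Čech sequences for the Banach sheaves $\mathscr{O}_X \widehat\otimes_{\Q_p} B_n$, and then conclude by left-exactness of $\varprojlim$, which is exactly what the kernel condition requires. The only cosmetic difference is that the paper reduces to Laurent coverings of an affinoid and re-derives the finite-level Banach exactness directly from its own completed-tensor-product lemma (Proposition~\ref{banach-exact}, using that affinoid algebras are of countable type), whereas you invoke the already-cited \cite[Lemma 3.3]{kl} for arbitrary finite admissible coverings; both give the same input to the limit argument.
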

\begin{proof}
It suffices to prove this when $X=\Sp(A)$ is affinoid, for $A$ some $E$-affinoid algebra.  Further, it suffices to check the sheaf property on Laurent coverings of $\Sp(A)$.  That is, we need to check that the sequence
\[	0\rightarrow \varprojlim_n A\widehat\otimes_{\Q_p}B_n\rightarrow \varprojlim_n A\langle f\rangle\widehat\otimes_{\Q_p}B_n\times \varprojlim_n A\langle f^{-1}\rangle\widehat\otimes_{\Q_p}B_n\rightarrow \varprojlim_n A\langle f,f^{-1}\rangle\widehat\otimes_{\Q_p}B_n	\]
is exact.  But 
\[	0\rightarrow A\rightarrow A\langle f\rangle\times A\langle f^{-1}\rangle\rightarrow A\langle f,f^{-1}\rangle\rightarrow 0	\]
is exact, and the quotient admits a section by~\cite[Proposition 10.5]{schneider}, since $\Q_p$-affinoid algebras are countable type over $\Q_p$.  It follows that 
$$0\rightarrow A\widehat\otimes_{\Q_p}B_n\rightarrow A\langle f\rangle\widehat\otimes_{\Q_p}B_n\times A\langle f^{-1}\rangle\widehat\otimes_{\Q_p}B_n\rightarrow A\langle f,f^{-1}\rangle\widehat\otimes_{\Q_p}B_n\rightarrow 0$$
is exact for each $n$, and inverse limits are left-exact.
\end{proof}

Thus, taking $B$ to be $\widetilde\B_K^{\dagger,s}$, $\B_K^{\dagger,s}$, $\widetilde\B_{\rig,K}^{\dagger,s}$, $\B_{\rig,K}^{\dagger,s}$, $\C_K$, $\B_{\dR}^+$, or $\B_{\max}^+$, we get a sheaf of rings.  Furthermore, since taking rising unions is exact, we see that if $U\subset X=\Sp(A)$ is an affinoid subdomain with coordinate ring $A_U$,
\[	U\mapsto \cup_s A_U\widehat\otimes \B_{\rig,K}^{\dagger,s}	\]
is a sheaf on $X$.  Similarly, we also get sheaves associated to $\B_{\HT} = \C_K[t,t^{-1}]$, $\B_{\dR} = \cup_it^{-i}\B_{\dR}^+$, $\B_{\max} = \cup_it^{-i}\B_{\max}^+$, and $\B_{\st} = \B_{\max}[\log[\overline\pi]]$.  Each of these sheaves carries the additional structures, such as Galois action, Frobenius action, grading, filtration, or monodromy action, of the absolute ring.

\begin{prop}
\begin{enumerate}
\item	$\mathscr{B}_{X,\HT}$ is a graded sheaf of rings over $X$, equipped with an action of $\Gal_K$, and $\mathscr{B}_{X,HT}^{\Gal_K} = \mathscr{O}_X\otimes_{\Q_p}K$
\item	$\mathscr{B}_{X,\dR}$ is a filtered sheaf of rings over $X$, equipped with an action of $\Gal_K$, and $\mathscr{B}_{X,\dR}^{\Gal_K} = \mathscr{O}_X\otimes_{\Q_p}K$
\item	$\mathscr{B}_{X,\max}$ is a sheaf of rings over $X$, equipped with an action of $\Gal_K$ and an action of $\varphi$, and $\mathscr{B}_{X,\max}^{\Gal_K} = \mathscr{O}_X\otimes_{\Q_p}K_0$
\item	$\mathscr{B}_{X,\st}$ is a sheaf of rings over $X$, equipped with an action of $\Gal_K$ and an action of $\varphi$ and $N$, and $\mathscr{B}_{X,\st}^{\Gal_K} = \mathscr{O}_X\otimes_{\Q_p}K_0$
\end{enumerate}
\end{prop}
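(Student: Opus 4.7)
The proposition packages together two distinct types of assertion: the transport of algebraic structure (grading, filtration, Frobenius, monodromy) from the period rings to their sheafified versions, and the identification of the Galois-invariant subsheaves. I will handle each in turn, working locally on affinoid opens $U = \Sp(A)$ where $A$ is a $\Q_p$-Banach algebra of countable type; since $\Q_p$ is discretely valued, Proposition~\ref{discrete-ortho} makes $A$ potentially orthonormalizable.

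\textbf{Transport of structure.} Each operator in question is $\Q_p$-linear and continuous for the Banach/Fr\'echet topology on the relevant period ring: the grading operator on $\B_{\HT} = \bigoplus_{i \in \Z} \C_K \cdot t^i$ (a topological direct sum), the $t$-adic filtration on $\B_{\dR}^+$ (given by closed Fr\'echet subspaces $t^i \B_{\dR}^+ \hookrightarrow \B_{\dR}^+$), the Frobenius on $\B_{\max}^+$ (extending by continuity to $\B_{\max}$ after inverting $t$), and the monodromy $N$ on $\B_{\st}$. Each therefore extends to an $\mathscr{O}_X$-linear endomorphism of $\mathscr{B}_{X,\ast}$ simply by completing tensor with the identity on $\mathscr{O}_X$. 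The grading on $\mathscr{B}_{X,\HT}$ comes from the fact that sheafification commutes with direct sums of the relevant form, and the filtration on $\mathscr{B}_{X,\dR}$ is produced using Proposition~\ref{closed-tensor}: the closed embedding $t^i \B_{\dR}^+ \hookrightarrow \B_{\dR}^+$ induces a closed embedding after completing tensor with $\mathscr{O}_X$.

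\textbf{Galois invariants.} The remaining content is the identification $(A \widehat\otimes_{\Q_p} \B_\ast)^{\Gal_K} = A \otimes_{\Q_p} \B_\ast^{\Gal_K}$, where $\B_\ast^{\Gal_K}$ is $K$ for $\ast \in \{\HT, \dR\}$ and $K_0$ for $\ast \in \{\max, \st\}$. (Note the invariant subring is a finite extension of $\Q_p$, so the completed and algebraic tensor products agree.) The inclusion $\supseteq$ is tautological. For the reverse inclusion, pick an orthonormal basis realizing $A \cong c_I(\Q_p)$; by Lemma~\ref{ortho-tensor} (and its Fr\'echet analogue for $\B_{\dR}^+$) we obtain $A \widehat\otimes_{\Q_p} \B_\ast \cong c_I(\B_\ast)$, with Galois acting componentwise. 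Hence invariants pass through the $c_I$ construction, and $c_I(\B_\ast)^{\Gal_K} = c_I(\B_\ast^{\Gal_K}) \cong A \otimes_{\Q_p} \B_\ast^{\Gal_K}$. For $\mathscr{B}_{X,\HT}$ one applies this grade-by-grade, invoking Tate's theorem: $\C_K(i)^{\Gal_K}$ is $K$ for $i = 0$ and vanishes otherwise. For the other period rings the absolute computations $\B_\ast^{\Gal_K} = K$ or $K_0$ are classical; continuity of the Galois action on $\B_{\dR}^+$, $\B_{\max}^+$, and $\B_{\st}^+$ (in the relevant Fr\'echet or Banach topology) is precisely what is needed to apply the $c_I$-formalism.

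\textbf{Main obstacle.} The principal technical subtlety is the $\B_{\dR}$ case: one must verify that the $c_I$-identification respects the Fr\'echet (not just Banach) structure on $\B_{\dR}^+$ and commutes with the $t$-adic filtration, so that taking invariants componentwise is compatible with the inverse-limit description $\B_{\dR}^+ = \varprojlim_h \widetilde{\B}_h$. This reduces to left-exactness of $(-)^{\Gal_K}$ and the Fr\'echet orthonormal-basis argument of Lemma~\ref{ortho-tensor}. Once these compatibilities are in place, the identification of invariants on $\mathscr{B}_{X,\dR}$ follows by descending to each Banach quotient $\mathscr{O}_X \widehat\otimes \widetilde{\B}_h$, computing invariants grade by grade using the $\B_{\HT}$ case, and then reassembling via inverse limits.
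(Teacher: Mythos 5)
Your proposal is correct and takes essentially the same route as the paper: reduce to an affinoid $\Sp(A)$ with $A$ of countable type over $\Q_p$, choose a Schauder basis so $A\cong c_I(\Q_p)$, apply Lemma~\ref{ortho-tensor} (and its Fr\'echet analogue) to get $A\widehat\otimes_{\Q_p}\B_\ast\cong c_I(\B_\ast)$ with Galois acting componentwise, and read off the invariants from the classical computation of $\B_\ast^{\Gal_K}$. The only cosmetic difference is that for $\B_{\dR}$ you descend to the Banach quotients $\widetilde{\B}_h=\B_{\dR}^+/t^h$ and reassemble by inverse limits, whereas one can equally well apply the Fr\'echet $c_I$-lemma to $\B_{\dR}^+$ directly; both are fine, and your extra attention to the transport of the grading/filtration/$\varphi$/$N$ structures fills in steps the paper leaves implicit.
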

\begin{proof}
For all of these, it suffices to consider the case when $X=\Sp(A)$ is affinoid for some $E$-affinoid algebra $A$.  Then $A$ is countable type over $\Q_p$, so we can choose a Schauder basis for $A$ with index set $I$.  Under the resulting isomorphism $A\cong c_I(\Q_p)$, $A\widehat\otimes B\cong c_I(B)$ for a $\Q_p$-Fr\'echet space $B$.  The Galois action on $A\widehat\otimes B$ is $g\cdot (b_i)_{i\in I} = (g\cdot b_i)_{i\in I}$, where $I$ is the index set for the Schauder basis, so the assertions follow from the corresponding classical results.
\end{proof}




\begin{lemma}\label{rings-pts-inj}
Let $A$ be a reduced $\Q_p$-affinoid algebra, and let $\B_\ast$ be a period ring.  Then the natural map
\[	A\widehat\otimes\B_\ast\rightarrow \prod_{x\in\Sp(A)}A/\mathfrak{m}_x\otimes_{\Q_p}\B_\ast	\]
is injective.
\end{lemma}
\begin{proof}
If $\B_\ast=\B_{\HT}$, it suffices to show that $A\widehat\otimes\C_p\rightarrow \prod_{x\in\Sp(A)}A/\mathfrak{m}_x\otimes_{\Q_p}\C_p$ is injective.  If $\B_\ast=\B_{\dR}$, it suffices to show that $A\widehat\otimes\widetilde\B_h\rightarrow \prod_{x\in\Sp(A)}A/\mathfrak{m}_x\otimes_{\Q_p}\widetilde\B_h$ is injective for all $h$.  And since $\B_{\st}$ is a polynomial algebra over $\B_{\max}$, it suffices to show that $A\widehat\otimes\widetilde\B_{\max}\rightarrow \prod_{x\in\Sp(A)}A/\mathfrak{m}_x\otimes_{\Q_p}\widetilde\B_{\max}$ is injective.  We are therefore reduced to showing that for any $\Q_p$-Banach space $B$, the natural map $A\widehat\otimes B\rightarrow \prod_{x\in\Sp(A)}A/\mathfrak{m}_x\otimes_{\Q_p}B$ is injective.

Since $B$ is a $\Q_p$-Banach space, it is potentially orthonormalizable in the sense of \cite{buzzard}.  That is, it admits a basis $\{e_i\}_{i\in I}$ such that 
\[	B\cong c_I(\Q_p):=\{f:I\rightarrow \Q_p| \lvert f(i)\rvert<\varepsilon\text{ for almost all }i\in I \text{ for all }\varepsilon>0\}	\]
Then $A\widehat\otimes B\cong c_I(A)$, and the desired injectivity follows from the injectivity of the natural map $A\rightarrow \prod_{x\in \Sp(A)}A/\mathfrak{m}_x$.
\end{proof}




    \bibliographystyle{alpha}
    \bibliography{thesis}

\end{document}